\documentclass[12pt,a4paper]{article}

\usepackage{graphicx}
\usepackage{subfig}
\usepackage{amsmath,amsthm,amssymb}
\usepackage{esint}
\usepackage{mathrsfs}
\usepackage{appendix}

\usepackage{hyperref}

\usepackage[left=2.2cm,right=2.2cm,top=3cm,bottom=3.5cm]{geometry}

\usepackage{color}
\usepackage{array}
\usepackage{url}

\usepackage{float}

\newtheorem{theorem}{Theorem}[section]
\newtheorem*{theorem*}{Theorem}
\newtheorem{lemma}[theorem]{Lemma}
\newtheorem{corollary}[theorem]{Corollary}
\newtheorem{proposition}[theorem]{Proposition} 

\theoremstyle{definition}
\newtheorem{definition}[theorem]{Definition}

\DeclareMathOperator{\dist}{dist}

\theoremstyle{remark}
\newtheorem{remark}[theorem]{Remark}
\usepackage{authblk}

\usepackage{mathtools}
\usepackage{amssymb}
\usepackage{appendix}

\numberwithin{equation}{section}

\usepackage[numbers,sort&compress]{natbib}


\newcommand{\vc}[1]{{\bf #1}}
\newcommand{\vu}{\vc{u}}

\newcommand{\dx}{\,{\rm d} x}
\newcommand{\dy}{\,{\rm d} y}
\newcommand{\dz}{\,{\rm d} z}
\newcommand{\dt}{\,{\rm d} t}
\newcommand{\dtau}{\,{\rm d} \tau}

\newcommand{\R}{\mathbb{R}} 
\newcommand{\N}{\mathbb{N}}
 
\newcommand{\T}{\mathbb{T}} 
\newcommand{\far}{{\rm far}}
\newcommand{\osc}{{\rm osc}}
\newcommand{\lin}{{\rm lin}}

\newcommand{\cor}{{\rm cor}}
\newcommand{\supp}{{\rm supp}\,}

\usepackage{scalerel}
\usepackage{stackengine,wasysym}

\newcommand\reallywidetilde[1]{\ThisStyle{%
  \setbox0=\hbox{$\SavedStyle#1$}%
  \stackengine{-.1\LMpt}{$\SavedStyle#1$}{%
    \stretchto{\scaleto{\SavedStyle\mkern.2mu\AC}{.5150\wd0}}{.6\ht0}%
  }{O}{c}{F}{T}{S}%
}}

\parskip 1ex
 



\setcounter{tocdepth}{2}

\begin{document}
	
\title{Nonuniqueness of generalised weak solutions to the primitive and Prandtl equations}

\author[]{Daniel W. Boutros\footnote{Department of Applied Mathematics and Theoretical Physics, University of Cambridge, Cambridge CB3 0WA, UK. Email: \textsf{dwb42@cam.ac.uk}}\space,  Simon Markfelder\footnote{Institute of Mathematics, University of W{\"u}rzburg, Emil-Fischer-Str. 40, 97074 W{\"u}rzburg, Germany. Email: \textsf{simon.markfelder@gmail.com}} \space and Edriss S. Titi\footnote{Department of Mathematics, Texas A\&M University, College Station, TX 77843-3368, USA; Department of Applied Mathematics and Theoretical Physics, University of Cambridge, Cambridge CB3 0WA, UK; also Department of Computer Science and Applied Mathematics, Weizmann Institute of Science, Rehovot 76100, Israel. Emails: \textsf{titi@math.tamu.edu} \; \textsf{Edriss.Titi@maths.cam.ac.uk} }} 
	
\date{March 21, 2024} 
	
\maketitle
	
\begin{abstract} 
We develop a convex integration scheme for constructing nonunique weak solutions to the hydrostatic Euler equations (also known as the inviscid primitive equations of oceanic and atmospheric dynamics) in both two and three dimensions. We also develop such a scheme for the construction of nonunique weak solutions to the three-dimensional viscous primitive equations, as well as the two-dimensional Prandtl equations. 

While in [D.W. Boutros, S. Markfelder and E.S. Titi, Calc. Var. Partial Differential Equations, 62 (2023), 219] the classical notion of weak solution to the hydrostatic Euler equations was generalised, we introduce here a further generalisation. For such generalised weak solutions we show the existence and nonuniqueness for a large class of initial data. Moreover, we construct infinitely many examples of generalised weak solutions which do not conserve energy. The barotropic and baroclinic modes of solutions to the hydrostatic Euler equations (which are the average and the fluctuation of the horizontal velocity in the $z$-coordinate, respectively) that are constructed have different regularities. 
\end{abstract}

\noindent \textbf{Keywords:} Convex integration, primitive equations of oceanic and atmospheric dynamics, Prandtl equations, Onsager's conjecture, energy dissipation, hydrostatic Euler equations, hydrostatic Navier-Stokes equations, weak solutions, barotropic mode, baroclinic mode, nonuniqueness of weak solutions 

\vspace{0.1cm} \noindent \textbf{Mathematics Subject Classification:} 76B03 (primary), 35Q35, 35D30, 35A01, 35A02, 76D03, 42B35, 42B37  (secondary) 

\newpage

\tableofcontents

\section{Introduction} \label{sec:intro}

\subsection{Problems considered in this paper and context} 

In this work, we consider the following general equation (with $(d+1)$-dimensional spatial domain for $d=1,2$)
\begingroup
\allowdisplaybreaks
\begin{align} 
    \partial_t u - \nu_h^* \Delta_h u - \nu_v^* \partial_{zz} u + u\cdot \nabla_h u + w \partial_z u + \nabla_h p = 0, \label{generalequation-u} \\
    \partial_z p = 0, \label{generalequation-p} \\
    \nabla_h \cdot u + \partial_z w = 0, \label{generalequation-div} 
\end{align}
\endgroup
where the horizontal velocity field $u : \mathbb{T}^{d+1} \times (0,T) \rightarrow \mathbb{R}^d$, the vertical velocity field $w : \mathbb{T}^{d+1} \times (0,T) \rightarrow \mathbb{R}$, and the pressure $p : \mathbb{T}^{d+1} \rightarrow \mathbb{R}$ are unknown, and the horizontal and vertical viscosity parameters $\nu_h^*, \nu_v^* \geq 0$ are given constants. The $d$-dimensional horizontal gradient is denoted by $\nabla_h$ and the $d$-dimensional horizontal Laplacian by $\Delta_h$. Since the pressure $p$ is only determined up to an additive constant, we may require that $p$ is mean-free. 

In this paper we are interested in the following cases:
\begin{itemize} 
    \item Taking $d = 2$ and $\nu_h^* = \nu_v^* = 0$ gives the three-dimensional hydrostatic Euler equations of an incompressible fluid (also known as the inviscid primitive equations of oceanic and atmospheric dynamics). In this paper, the terms inviscid primitive equations and hydrostatic Euler equations will be used interchangeably. 
    \item Taking $d = 2$ and $\nu_h^* , \nu_v^* > 0$ leads to the three-dimensional viscous primitive equations. We remark that the cases with anistropic viscosities ($\nu_h^* > 0$ and $\nu_v^* = 0$, or $\nu_h^* = 0$ and $\nu_v^* > 0$) have also been studied.
    \item Taking $d = 1$ and $\nu_h^* = \nu_v^* = 0$ yields the two-dimensional inviscid primitive equations (or hydrostatic Euler equations).
    \item Taking $d = 1$, $\nu_h^* = 0$ and $\nu_v^* > 0$ yields the two-dimensional Prandtl equations.
\end{itemize}

In this paper, we will develop a convex integration scheme for system \eqref{generalequation-u}-\eqref{generalequation-div} for the cases mentioned above. In particular, we will work with a generalised notion of weak solution. While classical weak solutions have sufficient Lebesgue integrability for the nonlinearity to make sense as an $L^1 (\mathbb{T}^{d+1} \times (0,T))$ function, another notion of weak solution was introduced in \cite{boutroshydrostatic} where the nonlinearity is interpreted as a paraproduct. The generalised weak solutions introduced in this paper treat the nonlinearity in an even more general way, see section~\ref{subsubsec:general-weak} below.

In all the cases of \eqref{generalequation-u}-\eqref{generalequation-div} that we are interested in, we will show the existence of such generalised weak solutions (for a dense set of initial data in the relevant spaces). In addition, we will show that such weak solutions are nonunique. 

If $\nu_h^* = \nu_v^* = 0$, we recall that classical spatially analytic solutions of \eqref{generalequation-u}-\eqref{generalequation-div} (see \cite{ghoul,kukavicatemamvicolziane,kukavicatemamvicolziane2}) conserve the energy, i.e. the spatial $L^2 (\mathbb{T}^{d+1})$ norm of $u$. In \cite{boutroshydrostatic} an analogue of Onsager's conjecture was studied for the three-dimensional hydrostatic Euler equations and it was found that there exist several sufficient regularity criteria for weak solutions which guarantee the conservation of energy. In particular, there exist several notions of weak solutions for these equations, each of which have their own version of the analogue of the Onsager conjecture. 

In this work, we will construct generalised weak solutions to these equations, which do not conserve energy and do not satisfy the regularity criteria mentioned above. In other words, in this paper we prove a first result towards the aim of resolving the dissipation part of the analogue of the Onsager conjecture for the inviscid primitive equations (hydrostatic Euler), while the conservation part of the analogue of the Onsager conjecture has been studied in \cite{boutroshydrostatic}, as was mentioned before.

\subsection{Literature overview} 
In this section we will provide an overview of some of the literature that is related to this work. As both the primitive and Prandtl equations as well as the Onsager conjecture have been the subject matter of many works in recent years, this overview is by no means comprehensive and is by necessity incomplete in reviewing all the relevant work.

Onsager's conjecture was originally posed in \cite{onsager} for the incompressible Euler equations. The conjecture states that if a weak solution lies in $L^3 ((0,T); C^{0,\alpha} (\mathbb{T}^3))$ for $\alpha > \frac{1}{3}$ it must conserve energy. If $\alpha < \frac{1}{3}$ energy might not be conserved. 

In \cite{eyink} a proof of a slightly weaker result than the first half of the conjecture was given. A full proof of the first half was then given in \cite{constantin}. In \cite{duchon} a different proof was presented, which relied on an equation of local energy balance and a defect measure. In \cite{titi2018,titi2019} (see also \cite{bardosholder}) the problem was considered in the presence of physical boundaries and the first half of the conjecture was proved in this case. 

The existence of non-energy conserving solutions of the Euler equations of an incompressible fluid was first shown in \cite{scheffer,shnirelman}. To prove the existence of dissipative weak solutions of the Euler equations (and to prove the second half of Onsager's conjecture), techniques from convex integration were used. They were introduced for the first time in the context of incompressible fluid mechanics in \cite{lellisinclusion,lellisadmissibility}. 

The second half of the conjecture was then proven in \cite{isettproof}, after gradual success in the papers \cite{lelliscontinuous,buckmaster} (and see references therein). The proof in \cite{isettproof} relied on the Mikado flows that were developed in \cite{daneri}. In the work \cite{buckmasteradmissible} dissipative H\"older continuous solutions of the Euler equations up to $\frac{1}{3}$ were constructed. 

Subsequently, an intermittent version of convex integration was developed. This was first used in \cite{buckmasternonuniqueness} to prove the nonuniqueness of very weak (not Leray-Hopf) solutions to the Navier-Stokes equations. In \cite{colombo} this result was extended to show the existence of nonunique weak solutions with a bound on the singular set. In \cite{buckmasternonconservative,novack} an intermittent scheme was constructed to prove the existence of non-energy conserving weak solutions of the Euler equations with Sobolev regularity. In \cite{luononuniqueness} the method of \cite{buckmasternonuniqueness} was generalised to the hyperviscous Navier-Stokes equations to show the sharpness of the Lions exponent.

After the works \cite{modena,modena2019,modena2020} where a spatially intermittent convex integration scheme was developed for the transport equation, temporal intermittency was introduced to the scheme in \cite{cheskidovluo1,cheskidovluo2} to prove the nonuniqueness of weak solutions to the transport equation. This scheme was then adapted to the Navier-Stokes equations in \cite{cheskidovluo3} to prove the sharpness of the Prodi-Serrin criteria, and in \cite{cheskidovluo4} to show that $L^2$ is the critical space for nonuniqueness for the 2D Navier-Stokes equations. 

The primitive equations of oceanic and atmospheric dynamics were introduced in \cite{richardson}. They were studied mathematically for the first time in \cite{lionsjl1,lionsjl2,lionsjl3}, in which the global existence of weak solutions was proved. The short time existence of strong solutions was then obtained in \cite{guillen}. The global well-posedness of the viscous primitive equations was proven in \cite{cao}, see also \cite{kobelkov}. In \cite{kukavica1,kukavica2} different boundary conditions were considered, and in \cite{hieber} global well-posedness was established using a semigroup method. 

Subsequently, the cases with only horizontal viscosity (as well as only horizontal diffusivity) were studied in \cite{cao1,cao2,CaoLiTiti}. The case with only vertical diffusivity and full viscosity was looked at in \cite{caolocal,caoglobal}. The case with only horizontal diffusivity and full viscosity was investigated in \cite{caohorizontal}. The small aspect ratio was rigorously justified in a weak sense in \cite{azerad} (see also \cite{bresch}). It was subsequently proven in a strong sense with full viscosity in \cite{LiTiti} and with only horizontal viscosity in \cite{liprimitive} with error estimates in terms of the small aspect ratio . 

The case with only vertical viscosity was studied in \cite{renardy}, in which linear ill-posedness was proven. The ill-posedness can be counteracted by adding a linear damping term, see \cite{cao3} for more details. By considering the case of initial data with Gevrey regularity with certain convexity conditions, in \cite{gerardvaret} local well-posedness was established. The local well-posedness for analytic data was proven in \cite{kukavicatemamvicolziane,kukavicatemamvicolziane2} (without rotation) and \cite{ghoul} (with rotation). By considering small data which are analytic in the horizontal variables, the paper \cite{paicu} established global well-posedness for the case without rotation and Dirichlet boundary conditions. Finally, \cite{lineffect} considered the case of impermeable and stress-free boundary conditions.

The linear and nonlinear ill-posedness of the inviscid primitive equations in all Sobolev spaces was proven in \cite{renardy,han}. The ill-posedness results in Sobolev spaces suggest that the natural space for showing local well-posedness of the inviscid primitive equations is the space of analytic functions, which was proved in \cite{ghoul,kukavicatemamvicolziane,kukavicatemamvicolziane2}. In \cite{ghoul} the role of fast rotation in prolonging the life-span of solutions was investigated. 

In \cite{cao4} it was shown that smooth solutions of the inviscid primitive equations can form a singularity in finite time, see also \cite{wong}. In \cite{chiodaroli} the existence and nonuniqueness of weak solutions with $L^\infty$ data was proven. In \cite{boutroshydrostatic} several sufficient criteria for energy conservation were proven. In the inviscid setting there have also been works studying the case of initial data with a monotonicity assumption, see \cite{brenierhomogeneous,kukavicalocal,masmoudi}.

In addition to \cite{chiodaroli}, there have been several papers in which convex integration schemes are developed for geophysical models. In particular, there has been a sequence of works \cite{tao,tao2017,tao2018,luoboussinesq} in which nonunique weak solutions for the Boussinesq equations were constructed in a variety of settings. To be precise, in \cite{tao2018} the effects of vertical viscosity were included, while in \cite{luoboussinesq} the Boussinesq system with full diffusion for the temperature was studied.  Moreover, in \cite{novackquasigeostrophic} the nonuniqueness of weak solutions was established for the quasi-geostrophic equations.

The Prandtl equations for the boundary layer were derived by Prandtl in \cite{prandtl}. In \cite{oleinik1,oleinik2} the local well-posedness of the equations was shown under a monotonicity assumption. In \cite{sammartino} the local well-posedness for analytic data was proven, while in \cite{eprandtl} the blow-up of solutions for certain classes of $C^\infty$ data was proven. Further local well-posedness results were proved in \cite{kukavicaanalytic,kukavicalocal,paicu,ignatova,paicuglobal}. In \cite{xin,xin2022,liu2016} the global existence of weak solutions was established under the assumption that the pressure is favourable. In \cite{grenierprandtl} it was shown that the equations are nonlinearly unstable.

The linear ill-posedness of the Prandtl equations in all Sobolev spaces was shown in \cite{gerardprandtl} (for further work see \cite{liuprandtl} and references therein).
 In the three-dimensional case a convex integration scheme was developed in \cite{luo}. The analytic local well-posedness has been improved to Gevrey function spaces, see \cite{ligevrey} and references therein.

\subsection{Definitions and main results}

\subsubsection{Baroclinic and barotropic modes} \label{barotropicsection}
Now we introduce the notion of barotropic and baroclinic modes, which is an important decomposition of the solutions which has been explored extensively in the investigation of the primitive equations. In the construction of the convex integration scheme for the primitive equations we will not use this decomposition explicitly. However, it is an important idea underlying the scheme.

We will illustrate this concept for the equations in the inviscid case, the viscous case is similar and can be found in \cite{cao}. The 3D inviscid primitive equations are given by
\begingroup
\allowdisplaybreaks
\begin{align} 
    \partial_t u + u\cdot \nabla_h u + w \partial_z u + \nabla_h p = 0, \label{eq:hyd-euler-u} \\
    \partial_z p = 0, \label{eq:hyd-euler-p} \\
    \nabla_h \cdot u + \partial_z w = 0, \label{eq:hyd-euler-div} 
\end{align}
\endgroup
where $u : \mathbb{T}^3 \times (0,T) \rightarrow \mathbb{R}^2$ is the horizontal velocity field, $w : \mathbb{T}^3 \times (0,T) \rightarrow \mathbb{R}$ the vertical velocity field and $p : \mathbb{T}^3 \times (0,T) \rightarrow \mathbb{R}$ the pressure.

The barotropic mode $\overline{u}$ of a velocity field $u$ is defined as follows
\begin{equation}
\overline{u} (x_1,x_2,t) \coloneqq \int_{\mathbb{T}} u(x_1,x_2,z,t) \dz.
\end{equation}
The baroclinic mode $\widetilde{u}$ is defined as the fluctuation
\begin{equation}
\widetilde{u} \coloneqq u - \overline{u}.
\end{equation}
The primitive equations \eqref{eq:hyd-euler-u}-\eqref{eq:hyd-euler-div} can then be written formally as a coupled system of evolution equations for the barotropic and baroclinic modes $\overline{u}$ and $\widetilde{u}$, which are
\begin{align} 
&\partial_t \overline{u} + (\overline{u} \cdot \nabla_h) \overline{u} + \overline{\big[ (\widetilde{u} \cdot \nabla_h) \widetilde{u} + (\nabla_h \cdot \widetilde{u}) \widetilde{u} \big]} + \nabla_h p = 0, \\
&\partial_t \widetilde{u} + (\widetilde{u} \cdot \nabla_h ) \widetilde{u} + w \partial_z \widetilde{u} + (\widetilde{u} \cdot \nabla_h) \overline{u} + (\overline{u} \cdot \nabla_h) \widetilde{u} - \overline{\big[ (\widetilde{u} \cdot \nabla_h) \widetilde{u} + (\nabla_h \cdot \widetilde{u}) \widetilde{u} \big]} = 0.
\end{align}
Moreover, we have the following incompressibility conditions
\begin{equation}
\nabla_h \cdot \overline{u} = \nabla_h \cdot \widetilde{u} + \partial_z w = 0,
\end{equation}
which formally follow from equation \eqref{eq:hyd-euler-div} and the periodicity of the functions.

In the convex integration scheme, we will add separate barotropic and baroclinic perturbations. This leads to different regularities of the barotropic and baroclinic modes of the solution and allows us to control different parts of the error. 

The following estimates on the baroclinic and barotropic modes are standard
\begin{align*}
    \lVert \overline{u} \rVert_{L^p} \lesssim \lVert u \rVert_{L^p}, \qquad
    \lVert \widetilde{u} \rVert_{L^p} \lesssim \lVert u \rVert_{L^p}.
\end{align*}

\subsubsection{Notation} \label{subsubsec:notation}

Throughout the paper we will use the following notation.
\begin{itemize}
    \item The components of the spatial variable are given by $x=(x_1,z)$ if $d=1$, and $x=(x_1,x_2,z)$ if $d=2$. For $d=1$, $x_1$ represents the horizontal direction, for $d=2$ the horizontal position is given by $(x_1,x_2)$. In both cases, $z$ is the vertical direction.
    
    \item The horizontal velocity field is called $u$, the vertical velocity is denoted by $w$ and the full velocity by $\mathbf{u}=(u,w)$. They are $d$-, $1$- and $(d+1)$-dimensional, respectively.
    
    \item We use the symbol $\nabla_h$ for the horizontal gradient (which equals $\partial_{x_1}$ if $d=1$), and $\nabla$ for the full ($(d+1)$-dimensional) gradient. 

    \item For an integrability parameter $1\leq p\leq \infty$, the H\"older conjugate is denoted by $p'$, i.e. $\frac{1}{p} + \frac{1}{p'} = 1$.

    \item Let $1< p\leq \infty$. In section~\ref{sec:intro}, $p-$ denotes any parameter $1\leq p- <p$. In the other sections we have to be a bit more precise. In particular there is a need to quantify the `$-$' in $p-$. More precisely there will be a $\delta>0$ and we set $p-\coloneqq \frac{1}{\frac{1}{p}+\delta}$. Here we tacitly assume that $\delta$ is sufficiently small, such that $p-\geq 1$.

    \item For $1\leq p,q\leq \infty$ and $s\in \mathbb{R}$ the Besov space $B_{p,q}^s(\mathbb{T}^3)$ is defined in appendix~\ref{subsec:ap-littlewood-besov}. Let us emphasise here that $B_{2,2}^s(\mathbb{T}^3)=H^s(\mathbb{T}^3)$, see Remark~\ref{rem:besov-sobolev}.

    \item Throughout this paper, we will omit the domain of a space-time norm if it is $\mathbb{T}^{d+1}\times [0,T]$, e.g. we write $\lVert \cdot \rVert_{L^p(H^s)} = \lVert \cdot \rVert_{L^p((0,T);H^s(\mathbb{T}^{d+1}))} $.

    \item In view of section~\ref{barotropicsection} we define the barotropic and baroclinic part of any quantity $a=a(x)$ by 
    \begin{equation*}
        \overline{a} = \int_{\mathbb{T}} a(x) \dz, \qquad \widetilde{a} = a - \overline{a}.
    \end{equation*}
\end{itemize}

\subsubsection{Generalised weak solutions} \label{subsubsec:general-weak}
In \cite{boutroshydrostatic} two new types of weak solutions to the hydrostatic Euler equations \eqref{eq:hyd-euler-u}-\eqref{eq:hyd-euler-div} were introduced. In the present paper we will consider a slightly different notion of weak solution, which we will refer to as a \textit{generalised weak solution}. This notion of solution is inspired by the notion of a type III weak solution, as introduced in \cite{boutroshydrostatic}.

Before we state the results for the different cases of the system \eqref{generalequation-u}-\eqref{generalequation-div}, we will be more specific regarding the notion of weak solution used in this paper. The weak solutions of \eqref{generalequation-u}-\eqref{generalequation-div} we consider are defined as follows: We assume that $u \in L^2 (\mathbb{T}^3 \times (0,T))$, $w \in \mathcal{D}' ( \mathbb{T}^3 \times (0,T) )$ and $u w \in L^1 ((0,T); B^{-s}_{1,\infty} (\mathbb{T}^3))$ for some suitably large $s \in \mathbb{R}$. System \eqref{generalequation-u}-\eqref{generalequation-div} must then be satisfied in the sense of distributions, where the vertical advection term
\begin{equation*}
\int_0^T \langle u w , \partial_z \phi \rangle_{B^{-s}_{1,\infty} \times B^s_{\infty,1}} \dt,
\end{equation*}
is interpreted as a duality bracket between the term $u w$ and the test function $\phi \in \mathcal{D} ( \mathbb{T}^3 \times (0,T))$. 

If $u$ and $w$ happen to have sufficient regularity, for example when $u \in L^2 ((0,T); H^{s + \delta} (\mathbb{T}^3))$ and $w \in L^2 ((0,T); H^{-s} (\mathbb{T}^3))$ (for some small $\delta > 0$), then by applying the paradifferential calculus (see appendix \ref{paradifferentialappendix}) we know that $u w \in L^1 ((0,T); B^{-s}_{1,\infty} (\mathbb{T}^3))$ . This is a stronger notion of solution compared to the notion of a generalised weak solution that we introduced above, as $u$ is required to have (positive) Sobolev regularity and $w$ has to possess some regularity (i.e., it is more than just a distribution). 

The reason we introduce these generalised weak solutions to the system \eqref{generalequation-u}-\eqref{generalequation-div} is that the velocity field does not have isotropic regularity. In particular, the vertical part of the advection term can be formally written as follows
\begin{equation} \label{verticaladvection}
\partial_z (w u) = - \partial_z \bigg[ \bigg( \int_0^z \nabla_h \cdot u dz' \bigg)  u \bigg].
\end{equation}
The only a priori regularity bound for $u$ in the inviscid case is that $u \in L^\infty ((0,T); L^2 (\mathbb{T}^3))$ (i.e. $u$ lies in the energy space). Therefore by relation \eqref{verticaladvection} it is natural to consider weak solutions for the system \eqref{generalequation-u}-\eqref{generalequation-div} such that $w$ lies in a negative Sobolev space with respect to the spatial variables. 

As was already mentioned, solutions of such form have been introduced in \cite{boutroshydrostatic} (by applying paradifferential calculus) but their existence was left open. One of the aims of the present work is to prove the existence of solutions of this type for the primitive and Prandtl equations. Generally speaking, this approach provides a general way of defining weak solutions for systems with a loss of derivative (for the system \eqref{generalequation-u}-\eqref{generalequation-div} the loss of derivative is in the horizontal directions). 

In the next few subsections, we will give precise definitions of the notion of weak solution we will use, and we will state the theorems we will prove for the different cases of the system \eqref{generalequation-u}-\eqref{generalequation-div}. But generally speaking, we will split the nonlinearity $u w$ into the barotropic-vertical and baroclinic-vertical interactions, i.e., the terms $\overline{u} w$ and $\widetilde{u} w$. 

The baroclinic mode $\overline{u}$ of the constructed solutions will have sufficient regularity such that $\overline{u} w$ can be interpreted as a paraproduct. The terms $\widetilde{u}$ and $w$ do not have sufficient regularity to apply the paradifferential calculus. However, as part of the convex integration scheme we will obtain separate estimates on $\widetilde{u} w$ in order to show that it lies in $ L^1 ((0,T); B^{-s}_{1,\infty} (\mathbb{T}^3))$ for some suitable $s$. Therefore the weak solutions we obtain are partly `generalised' (as for the baroclinic-vertical part of the nonlinearity) and partly `paradifferential' (for the barotropic-vertical part of the nonlinearity).

\subsubsection{Results for the 3D inviscid primitive equations}

We first introduce the notion of weak solution for the 3D inviscid primitive equations \eqref{eq:hyd-euler-u}-\eqref{eq:hyd-euler-div}. 

\begin{definition} \label{weaksolution3Dinviscid}
A triple $(u,w,p)$ is called a weak solution of the hydrostatic Euler equations \eqref{eq:hyd-euler-u}-\eqref{eq:hyd-euler-div} if $u \in L^2 (\mathbb{T}^3 \times (0,T))$, $w \in \mathcal{D}' (\mathbb{T}^3 \times (0,T) )$ and $p \in L^1 (\mathbb{T}^3 \times (0,T))$ such that $u w \in L^1 ((0,T); B^{-s}_{1,\infty} (\mathbb{T}^3))$ (where $s>0$ is referred to as the regularity parameter) and the equations are satisfied in the sense of distributions, i.e.
\begingroup
\allowdisplaybreaks
\begin{align}
    \int_0^T \int_{\mathbb{T}^3} u \cdot\partial_t \phi_1 \dx\dt + \int_0^T \int_{\mathbb{T}^3} u \otimes u : \nabla_h \phi_1 \dx \dt + \qquad\quad & \notag \\
    + \int_0^T \langle uw , \partial_z \phi_1 \rangle_{B^{-s}_{1,\infty} \times B^s_{\infty,1}} \dt + \int_0^T \int_{\mathbb{T}^3} p \nabla_h \cdot \phi_1 \dx\dt &= 0, \label{eq:3D-inv-weak-u}  \\ 
    \int_0^T \int_{\mathbb{T}^3} p \partial_z \phi_2 \dx\dt &= 0, \label{eq:3D-inv-weak-p}\\
    \int_0^T \langle \textbf{u} , \nabla \phi_3 \rangle \dt &= 0, \label{eq:3D-inv-weak-div}
\end{align}
\endgroup
for all test functions $\phi_1, \phi_2$ and $\phi_3$ in $\mathcal{D} (\mathbb{T}^3 \times (0,T))$.
\end{definition}

\begin{remark} 
We emphasise that this definition of weak solutions to \eqref{eq:hyd-euler-u}-\eqref{eq:hyd-euler-div} is more general than the notion of weak solution introduced in \cite{boutroshydrostatic}. While in \cite{boutroshydrostatic} the velocity field of a weak solution has sufficient regularity to automatically guarantee that $u w \in L^1 ((0,T); B^{-s}_{1,\infty} (\mathbb{T}^3))$ (by using the paradifferential calculus), in Definition \ref{weaksolution3Dinviscid} we do not have sufficient (separate) regularity requirements on $u$ and $w$ such that the product $u w$ is well-defined. Hence $u w \in L^1 ((0,T); B^{-s}_{1,\infty} (\mathbb{T}^3))$ is a separate independent requirement of Definition \ref{weaksolution3Dinviscid}. 
\end{remark}

\begin{remark}
It should be noted that for a general $u \in L^2 (\mathbb{T}^3 \times (0,T))$ and $w \in \mathcal{D}' (\mathbb{T}^3 \times (0,T) )$ the product $u w$ need not be well-defined. Hence the meaning of the requirement from Definition \ref{weaksolution3Dinviscid} that $u w \in L^1 ((0,T); B^{-s}_{1,\infty} (\mathbb{T}^3))$ could be unclear in such cases. Therefore we specify what we mean with $u w$ in this general setting. \\
Firstly, both $u$ and $w$ can be expressed as a Fourier series. Therefore one can formally define $u w$ as a Fourier series with coefficients which are the convolution of the Fourier coefficients of $u$ and $w$ (in general the convolutions need not to converge). Therefore the requirement $u w \in L^1 ((0,T); B^{-s}_{1,\infty} (\mathbb{T}^3))$ can be understood as the condition that the convolutions of the Fourier coefficients of $u $ and $w$ converge and also that subsequently the Fourier series is bounded in this Besov norm. 
\end{remark}

In this paper we will prove the following result.

\begin{theorem} \label{mainresult}
Let $T>0$ and suppose there exist smooth solutions of the hydrostatic Euler equations \eqref{eq:hyd-euler-u}-\eqref{eq:hyd-euler-div} $(u_1,w_1,p_1)$ on $[0,T/2]$ and $(u_2,w_2,p_2)$ on $[T/2,T]$. Moreover, let $1 \leq q_1, q_2, q_3 \leq \infty$ and\footnote{Note that $s_2$ does not appear in this paper.} $0 < s_1, s_3$ be parameters satisfying
\begin{align} \label{3dconstraints}
    q_2 &> 2, \quad q_3 \leq q_1, \quad s_1 > s_3, \quad \frac{2}{q_1} > s_1 + 1.
\end{align}
Then there exists a weak solution $(u,w,p)$ in the sense of Definition \ref{weaksolution3Dinviscid} with regularity parameter $s=1$ and with the following properties: 
\begin{enumerate}
    \item The solution satisfies that
    \begin{equation} \label{solutionagreement}
            (u,w,p)(\cdot,t) = \left\{ \begin{array}{ll}
            (u_1,w_1,p_1)(\cdot,t) & \text{ if } t\in[0,T/4), \\
            (u_2,w_2,p_2)(\cdot,t) & \text{ if } t\in(3T/4,T].
            \end{array} \right.
    \end{equation}
    \item We have that 
    \begin{align*}
        \overline{u} &\in L^2(\mathbb{T}^3\times (0,T)) \cap L^{q_1} ((0,T); H^{s_1} (\mathbb{T}^3)), \\
        \widetilde{u} &\in L^{q_2-} ((0,T); L^2 (\mathbb{T}^3)) \cap L^{q_3-} ((0,T); H^{s_3} (\mathbb{T}^3)) , \\
        w &\in L^{q_2'} ((0,T); L^2 (\mathbb{T}^3)) \cap L^{q_3'} ((0,T) ; H^{-s_3} (\mathbb{T}^3)),
    \end{align*} 
    where $\overline{u}$ and $\widetilde{u}$ denote the barotropic and baroclinic modes of $u$ respectively.
\end{enumerate}
\end{theorem}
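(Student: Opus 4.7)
The plan is to construct the weak solution by a convex integration iteration starting from a smooth ``subsolution'' obtained by gluing the two given classical solutions. First I would use a smooth temporal cutoff $\chi(t)$ equal to $1$ on $[0,T/4]$ and to $0$ on $[3T/4,T]$ to define $(u_0,w_0,p_0)\coloneqq \chi\,(u_1,w_1,p_1)+(1-\chi)\,(u_2,w_2,p_2)$ on $[0,T]$. Plugging this into \eqref{eq:hyd-euler-u}--\eqref{eq:hyd-euler-div} produces a smooth error supported in $[T/4,3T/4]$, which I would absorb into a (traceless) Reynolds-type stress $\mathring{R}_0$, obtaining a base subsolution that already satisfies \eqref{solutionagreement}.

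The core of the proof is an inductive scheme producing subsolutions $(u_q,w_q,p_q,\mathring{R}_q)$ at frequencies $\lambda_q\to\infty$, with $\mathring{R}_q\to 0$ in $L^1$ and with $u_q=\overline{u}_q+\widetilde{u}_q$, where the two modes are perturbed \emph{separately} so that they can be summed in different function spaces. Following the intermittent convex integration template of \cite{buckmasternonuniqueness,modena,cheskidovluo1}, the perturbations are built from spatially and temporally concentrated Mikado-type profiles, with different intermittency dimensions chosen so that $\overline{u}_q$ converges in the high-regularity space $L^{q_1}(H^{s_1})$ while $\widetilde{u}_q$ only needs to converge in the weaker $L^{q_2-}(L^2)\cap L^{q_3-}(H^{s_3})$. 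The vertical component $w_{q+1}$ is then reconstructed from $\widetilde{u}_{q+1}$ by integrating $\partial_z w_{q+1}=-\nabla_h\cdot\widetilde{u}_{q+1}$ in $z$; the dual time-integrability of $w_q$ in $L^{q_2'}(L^2)\cap L^{q_3'}(H^{-s_3})$ reflects the Hölder pairings needed to close $uw$-estimates in the iteration, which is where $q_2>2$ and $q_3\le q_1$ enter.

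The nonlinear product $uw$ would be controlled in $L^1((0,T);B^{-1}_{1,\infty}(\mathbb{T}^3))$ by splitting
\[
 uw \;=\; \overline{u}\,w \;+\; \widetilde{u}\,w .
\]
Since $\overline{u}\in L^{q_1}(H^{s_1})$ has sufficient positive horizontal regularity and $w$ has matching negative regularity in the Hölder-dual time exponent, $\overline{u}\,w$ can be interpreted as a Bony paraproduct and placed in $L^1(B^{-1}_{1,\infty})$ by the paradifferential calculus recalled in appendix~\ref{paradifferentialappendix}; this is the step in which the ordering $s_1>s_3$ gets used. The baroclinic--vertical piece $\widetilde{u}\,w$ lacks the regularity for a paraproduct interpretation, so I would track it as an independent object along the iteration, estimating each increment $\widetilde{u}_{q+1}w_{q+1}-\widetilde{u}_q w_q$ directly in $B^{-1}_{1,\infty}$ by testing against $B^{1}_{\infty,1}$ functions and exploiting the frequency localisation of the building blocks, so that the telescoping series converges in $L^1(B^{-1}_{1,\infty})$.

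The main obstacle, in my view, is twofold. First, the hydrostatic constraint $\partial_z p=0$ prevents using a standard three-dimensional Leray projection to cancel divergences of the perturbation: the ansatz must instead be designed so that horizontally solenoidal corrections to $\overline{u}$ are handled by a two-dimensional Leray projection on $\mathbb{T}^2$, while three-dimensional incompressibility is restored purely via the $z$-antiderivative of $-\nabla_h\cdot\widetilde{u}$. Second, because $w$ is only a distribution, the product $\widetilde{u}\,w$ must be assembled along the scheme, and its convergence in $L^1(B^{-1}_{1,\infty})$ has to close up \emph{simultaneously} with the decay of $\mathring{R}_q$ and the summability of the velocity increments; balancing these competing requirements is precisely what forces the constraints \eqref{3dconstraints}, the borderline one being $2/q_1>s_1+1$, which encodes a CFL-type relation between the admissible Mikado frequencies and the regularity one asks of the barotropic mode.
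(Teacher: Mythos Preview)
Your high-level strategy (gluing via a temporal cutoff, iterating an inductive proposition with intermittent Mikado building blocks, splitting $uw=\overline{u}w+\widetilde{u}w$ and treating the first piece by paraproduct and the second directly along the scheme) matches the paper. However, two structural choices you make diverge from the paper in ways that would prevent the iteration from closing as stated.

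First, the paper does \emph{not} work with a single traceless Reynolds stress $\mathring{R}_q$ measured in $L^1$. It decomposes the error as $\nabla_h\cdot R_h+\partial_z R_v$, where $R_h:\mathbb{T}^2\times[0,T]\to\mathcal{S}^{2\times 2}$ is $z$-independent and is driven to zero in $L^1(L^1)$, while $R_v:\mathbb{T}^3\times[0,T]\to\mathbb{R}^2$ is mean-free in $z$ and is driven to zero only in $L^1(B^{-1}_{1,\infty})$. The barotropic perturbation $\overline{u}_p$ is designed so that $\overline{u}_p\otimes\overline{u}_p$ cancels $R_h$ (this is the standard 2D Mikado mechanism), whereas $R_v$ is cancelled by the \emph{vertical} advection $w_p\widetilde{u}_p$. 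These are genuinely different cancellation mechanisms, handled by different inverse divergence operators $\mathcal{R}_h$ and $\mathcal{R}_v$, and the weaker $B^{-1}_{1,\infty}$ norm on $R_v$ is precisely what makes estimate \eqref{eq:main-product} for $\widetilde{u}w$ closable.

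Second, and more critically, your claim that $w_{q+1}$ is reconstructed by integrating $\partial_z w_{q+1}=-\nabla_h\cdot\widetilde{u}_{q+1}$ in $z$ is not how the scheme works and would break the key cancellation. In the paper the principal baroclinic perturbation $u_{p,v}+u_{c,v}$ is made \emph{horizontally} divergence-free via a corrector built from a skew-symmetric potential $\Omega_k$; hence it contributes nothing to $\partial_z w$. The principal vertical perturbation $w_{p,v}$ is then a \emph{separately chosen} Mikado density $\phi_k(\sigma_v x)$, independent of $z$, with its own temporal intermittency function $g_{v,k}^+$ (dual to the $g_{v,k}^-$ used in $u_{p,v}$). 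The identity $\int_{\mathbb{T}^2}\phi_k W_k\,\dx=\mathbf{e}_k$ is what produces $\partial_z(w_{p,v}u_{p,v}+R_v)=\partial_z R_{\osc,v}$ with a small oscillation error. If $w$ were instead a $z$-antiderivative of $-\nabla_h\cdot\widetilde{u}$, it would depend on $z$, inherit an extra horizontal derivative, and the product $w\widetilde{u}$ would not have the algebraic structure needed to absorb $R_v$; the dual time-integrability $L^{q_2'}\times L^{q_2}$ between $w_p$ and $\widetilde{u}_p$ would also be lost since both would share the same intermittency profile.
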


\begin{remark} \label{rem:endpoint}
Alternatively one can construct a weak solution with the properties stated in Theorem~\ref{mainresult} where the only difference is that the endpoint time integrability is attained for $\widetilde{u}$ rather than $w$. In other words 
\begin{align*}
    \widetilde{u} &\in L^{q_2} ((0,T); L^2 (\mathbb{T}^3)) \cap L^{q_3} ((0,T); H^{s_3} (\mathbb{T}^3)) , \\
    w &\in L^{q_2'-} ((0,T); L^2 (\mathbb{T}^3)) \cap L^{q_3'-} ((0,T) ; H^{-s_3} (\mathbb{T}^3)),
\end{align*} 
see Remarks~\ref{rem:uptilde-endpoint} and \ref{rem:uptilde-endpoint-proof} below. To this end however, we have to require that $q_3< q_1$ (strictly) in \eqref{3dconstraints}.
\end{remark}

\begin{remark} \label{rem:u-sobolev}
By proceeding as in section~\ref{viscoussection} below, we can achieve in addition that $\overline{u},\widetilde{u}\in L^1((0,T);W^{1,1}(\mathbb{T}^3))$. To this end, however, we have to require the constraints \eqref{viscousconstraints} rather than \eqref{3dconstraints}, see also Theorem~\ref{viscousmainthm} and Remark~\ref{rem:viscosities}, below.
\end{remark}

\begin{remark} 
Again we would like to remark that the solutions constructed in Theorem \ref{mainresult} are partially `generalised' (see section~\ref{subsubsec:general-weak}) and partially `paradifferential' as in \cite{boutroshydrostatic}. In particular, they have been inspired by the type III weak solutions that were introduced in \cite{boutroshydrostatic}. \\
More precisely, from the regularities of $\overline{u}$ and $w$ stated in Theorem \ref{mainresult} it follows that $\overline{u} w \in L^1 ((0,T); B^{- 1}_{1,\infty} (\mathbb{T}^3))$ (see the proof of Theorem~\ref{mainresult} in sections \ref{perturbationsection}-\ref{reynoldsestimates} for details). The term $\widetilde{u} w$ is estimated directly in $L^1 ((0,T); B^{-1}_{1,\infty} (\mathbb{T}^3))$ as part of the convex integration scheme, as one cannot obtain the regularity of the product $\widetilde{u} w$ simply from the regularities of $\widetilde{u}$ and $w$ (as they are insufficient to apply the paradifferential calculus directly). \\
The specific form of the perturbations allows for a direct estimate, as was done for example in \cite{cheskidovluo2}. Therefore the interpretation of the term $\overline{u} w$ can be seen as `paradifferential', while the interpretation of the term $\widetilde{u} w$ is in the sense of a `generalised weak solution' (as in Definition \ref{weaksolution3Dinviscid}).
\end{remark}

\begin{remark} 
In addition, we would like to emphasise that in the presence of physical boundaries the primitive equations are often studied with no-normal flow boundary conditions on the top and bottom of the channel, i.e. $w \lvert_{z=0,1} = 0$. 
However, in the convex integration scheme developed in this paper we will work on the three-dimensional torus rather than the channel. Note that solutions in the torus can be understood as solutions in the channel with an in-flow out-flow boundary condition, i.e.
\begin{equation*}
w ( x_1, x_2, 0, t) = w (x_1,x_2, 1, t) = w_B (x_1,x_2,t),
\end{equation*}
for a flow $w_B$. In our case $w_B$ will be constructed as part of the convex integration scheme. In other words we will not solve the boundary value problem for given $w_B$ and in particular, not for the case of the impermeability boundary condition $w_B = 0$. \\
We also remark that the constructed flow $w_B$ belongs to the space $L^{q_2'} ((0,T); L^2 (\mathbb{T}^2)) \cap L^{q_3'} ((0,T) ; H^{-s_3} (\mathbb{T}^2))$, where the parameters $q_2', q_3'$ and $s_3$ are the same as in Theorem \ref{mainresult}. 
\end{remark} 

\begin{remark}
In this paper we will not consider the role of density variations in the Boussinesq approximation model of the primitive equations. If one takes density effects into account and applies the Boussinesq approximation, the full inviscid primitive equations are given by
\begingroup
\allowdisplaybreaks
\begin{align} 
    \partial_t u + u\cdot \nabla_h u + w \partial_z u + \nabla_h p = 0, \label{fullprimeq1} \\
    \nabla_h \cdot u + \partial_z w = 0, \\
    \partial_z p + g \rho = 0, \\
    \rho = \rho (T,S,p), \\
    \partial_t T + u \cdot \nabla_h T + w \partial_z T = 0, \\
    \partial_t S + u \cdot \nabla_h S + w \partial_z S = 0, \label{fullprimeq6}
\end{align}
\endgroup
where $\rho$ is the density, $g$ is the gravitational constant, $T$ is the temperature and $S$ is the salinity. Below we will ignore the salinity effects. \\
We note that for system \eqref{fullprimeq1}-\eqref{fullprimeq6} to be fully determined, an equation of state $\rho (T,p)$ needs to be provided. The well-posedness results for the viscous case in \cite{cao,kukavica2} consider the case of a linear equation of state, i.e. the assumption
\begin{equation*}
\rho = \rho_0 - \alpha (T - T_0),
\end{equation*}
where $\rho_0$ and $T_0$ are the mean density and temperature, respectively. The thermal expansion coefficient is denoted by $\alpha$. In \cite{korn} the global well-posedness of the viscous primitive equations with a nonlinear equation of state was established. \\
If one includes density variations as part of the equations, the nature of the pressure changes. Namely, the pressure can now be split into a surface pressure $p_{\rm s}$ (which is independent of $z$ and in our case can be taken to be equal to the pressure at $z = 0$) and the hydrostatic pressure $p_{\rm hyd}$ (which is the integral with respect to the vertical coordinate of the density)
\begin{equation*}
p (x_1,x_2,z,t) = p_{\rm s} (x_1,x_2,t) + p_{\rm hyd} (x_1,x_2,z,t), \quad p_{\rm hyd} (x_1,x_2,z,t) = \int_0^z \rho (x_1,x_2,z',t) \dz'.
\end{equation*}
The surface pressure solves an elliptic problem, and it has played an important role in the development of numerical schemes to solve the primitive equations, see for example \cite{samelson,pinardi,smith,dukowicz}. \\
In the convex integration scheme developed in this paper, introducing density variations as part of the dynamics leads to an additional linear error in the construction (which is coming from the hydrostatic pressure) which with the current approach can probably not be controlled. We leave the consideration of this very important additional effect to future work.
\end{remark}

Theorem~\ref{mainresult} allows to show the nonuniqueness and existence of solutions which do not conserve energy: 

\begin{corollary} \label{infweaksolcor}
    For any analytic initial data there exist infinitely many global-in-time weak solutions $(u,w,p)$ of the hydrostatic Euler equations \eqref{eq:hyd-euler-u}-\eqref{eq:hyd-euler-div} (in the sense of Definition \ref{weaksolution3Dinviscid} which satisfy the regularity properties of Theorem \ref{mainresult}) and they do not conserve energy. 
\end{corollary}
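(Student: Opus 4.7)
The approach is a direct deployment of Theorem~\ref{mainresult}, varying only the smooth solution $(u_2,w_2,p_2)$ attained at late times. First, given the analytic initial datum $u_0$, the local well-posedness results for the hydrostatic Euler equations in the analytic class (see \cite{ghoul,kukavicatemamvicolziane,kukavicatemamvicolziane2}) produce an analytic, hence smooth, solution $(u_1,w_1,p_1)$ on some maximal time interval $[0,T^*)$. Fix any $T\in(0,2T^*)$, so that $(u_1,w_1,p_1)$ is available as the required smooth solution on $[0,T/2]$.

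For the second piece I exploit the fact that Theorem~\ref{mainresult} does \emph{not} require $(u_1,w_1,p_1)$ and $(u_2,w_2,p_2)$ to match at $t=T/2$. Concretely, I parametrise by $\alpha\in(0,\infty)$ and choose smooth shear profiles $\psi_\alpha\colon\mathbb{T}\to\mathbb{R}^2$ depending only on the vertical variable $z$, with $\alpha\mapsto\lVert\psi_\alpha\rVert_{L^2}$ strictly increasing. The triple $(u_2^{(\alpha)},w_2^{(\alpha)},p_2^{(\alpha)})\coloneqq(\psi_\alpha(z),0,0)$ is a stationary smooth solution of \eqref{eq:hyd-euler-u}--\eqref{eq:hyd-euler-div} (since $\nabla_h\psi_\alpha=0$ and $\partial_z 0=0$), defined globally in time. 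For each $\alpha$, pick any parameters satisfying \eqref{3dconstraints} and invoke Theorem~\ref{mainresult} to obtain a weak solution $(u^{(\alpha)},w^{(\alpha)},p^{(\alpha)})$ on $[0,T]$ satisfying the stated regularity and the agreement \eqref{solutionagreement}. Because on $(3T/4,T]$ the weak solution coincides with the stationary shear flow $(\psi_\alpha,0,0)$, one extends by this same stationary flow to all $t\geq T$, yielding a global-in-time weak solution without altering any of the regularity properties.

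Distinctness of the family follows immediately from \eqref{solutionagreement}: for $t\in(3T/4,T]$ one has $u^{(\alpha)}(\cdot,t)=\psi_\alpha$, and distinct $\alpha$ give distinct $\psi_\alpha$. Failure of energy conservation is equally direct: $\lVert u^{(\alpha)}(0)\rVert_{L^2}^2=\lVert u_0\rVert_{L^2}^2$ while $\lVert u^{(\alpha)}(t)\rVert_{L^2}^2=\lVert\psi_\alpha\rVert_{L^2}^2$ for $t\in(3T/4,T]$. Since $\alpha\mapsto\lVert\psi_\alpha\rVert_{L^2}$ is strictly monotone, at most one value of $\alpha$ can satisfy $\lVert\psi_\alpha\rVert_{L^2}=\lVert u_0\rVert_{L^2}$; discarding it still leaves an uncountable family of weak solutions, each of which fails to conserve energy.

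There is essentially no obstacle here: all the hard analysis is concentrated in Theorem~\ref{mainresult}. The only points requiring attention are (i) that Theorem~\ref{mainresult} is applicable for any choice of $T<2T^*$ and for the trivial smooth endpoint solutions $(\psi_\alpha,0,0)$, which it is, and (ii) the extension beyond $T$, which is immediate because the weak solution literally coincides with a globally defined stationary smooth flow on $(3T/4,T]$.
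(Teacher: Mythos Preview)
Your argument is correct and takes a genuinely different route from the paper. The paper fixes the second solution to be the zero solution and produces infinitely many weak solutions by varying the time scale $T$ (replacing $T$ by $T/4$, then $T/16$, etc.), so that the resulting solutions vanish on nested intervals and hence differ. It then treats zero initial data by a separate two-step argument: first connect to some analytic state with positive energy, then run the previous construction. You instead fix $T$ once and vary the terminal smooth solution over a one-parameter family of stationary shear flows $(\psi_\alpha(z),0,0)$; distinctness and non-conservation are read off directly from the values on $(3T/4,T]$, with at most one exceptional $\alpha$ discarded.

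Your approach is arguably cleaner: it yields an uncountable family in a single application per $\alpha$, handles zero and non-zero initial data uniformly, and avoids the iteration over shrinking time windows. The paper's approach, on the other hand, has the minor aesthetic advantage that the constructed solutions eventually vanish identically, and it makes the ``gluing to zero'' nature of the construction more explicit. Both routes rest entirely on Theorem~\ref{mainresult} and the local analytic well-posedness; neither requires any additional analysis.
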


\begin{proof} 
We take the smooth local-in-time solution for the given choice of analytic data (whose existence can be proven using the methods from \cite{kukavicatemamvicolziane2,ghoul}) as the first solution $(u_1, w_1, p_1)$ on $[0,T/2]$, and the zero solution on $[T/2,T]$ as $(u_2, w_2, p_2)$. Then Theorem \ref{mainresult} yields a weak solution, which we may extend by zero for $t>T$. If the initial data are non-zero, we can conclude that the energy is not conserved as it is positive on $[0,T/4)$ and zero on $(3T/4,\infty)$. 

Another global-in-time weak solution can be constructed similarly with replacing $T$ by $T/4$. This solution has positive energy on $[0,T/16)$ while the energy is zero on $(3T/16,\infty)$. Consequently the two solutions cannot coincide. Repeating this argument leads to infinitely many global-in-time weak solutions with the same initial data, which are smooth and unique for a small initial interval of time, but which do not conserve energy. 

For zero initial data, we observe that Theorem~\ref{mainresult} allows one to `connect' any analytic initial data with any analytic data in finite time. Hence we may connect the zero initial data to arbitrary analytic data with positive energy at $t=\widetilde{T}$. On the time interval $[\widetilde{T},\infty)$ we then proceed as above.
\end{proof}

\begin{remark}
Finally, we should emphasise that we do not claim that the solutions constructed in Theorem \ref{mainresult} belong to $u \in L^\infty ((0,T); L^2 (\mathbb{T}^3))$. Therefore it is not possible to directly compare the results in this paper with the Onsager-type conjectures which were formulated in \cite{boutroshydrostatic}, and there is no admissibility criterion for the constructed weak solutions at this point. We leave the study of the sharpness of the conjectures from \cite{boutroshydrostatic} to future work.
\end{remark}

\subsubsection{Results for the 3D viscous primitive equations} 
We now consider the viscous primitive equations, which are given by
\begingroup
\allowdisplaybreaks
\begin{align} 
    \partial_t u - \nu_h^* \Delta_h u - \nu_v^* \partial_{zz} u + u\cdot \nabla_h u + w\partial_z u + \nabla_h p &= 0, \label{eq:visc-u} \\ 
    \partial_z p &= 0, \label{eq:visc-p} \\
    \nabla_h \cdot u + \partial_z w &= 0, \label{eq:visc-div}
\end{align}
\endgroup
where $\nu_h^*$ and $\nu_v^*$ are the horizontal and vertical viscosities. As before, $u : \mathbb{T}^3 \times (0,T) \rightarrow \mathbb{R}^2$ is the horizontal velocity field, $w : \mathbb{T}^3 \times (0,T) \rightarrow \mathbb{R}$ the vertical velocity and $p : \mathbb{T}^3 \times (0,T) \rightarrow \mathbb{R}$ the pressure. We have the following notion of weak solution for these equations.

\begin{definition} \label{viscousweaksoldef} 
A triple $(u,w,p)$ is called a weak solution of the viscous primitive equations \eqref{eq:visc-u}-\eqref{eq:visc-div} if $u \in L^2 (\mathbb{T}^3 \times (0,T)) \cap L^1 ((0,T); W^{1,1} (\mathbb{T}^3))$, $w \in \mathcal{D}' (\mathbb{T}^3 \times (0,T))$ and $p \in L^1 ( \mathbb{T}^3 \times (0,T))$ such that $u w \in L^1 ((0,T); B^{-s}_{1,\infty} (\mathbb{T}^3))$ (where $s > 0$ is referred to as the regularity parameter) and the equations are satisfied in the sense of distributions, i.e.
\begingroup
\allowdisplaybreaks
\begin{align*}
    \int_0^T \int_{\mathbb{T}^3} u \cdot \partial_t \phi_1 \dx\dt - \nu_h^* \int_0^T \int_{\mathbb{T}^3}\nabla_h u : \nabla_h \phi_1 \dx\dt - \nu_v^* \int_0^T \int_{\mathbb{T}^3} \partial_z u \cdot\partial_z \phi_1 \dx\dt + \qquad \quad & \\
    + \int_0^T \int_{\mathbb{T}^3} u\otimes u : \nabla_h \phi_1 \dx\dt + \int_0^T \langle uw, \partial_z \phi_1 \rangle_{B^{-s}_{1,\infty} \times B^s_{\infty,1}} \dt + \int_0^T \int_{\mathbb{T}^3} p \nabla_h \cdot \phi_1 \dx\dt &= 0, \\
    \int_0^T \int_{\mathbb{T}^3} p \partial_z \phi_2 \dx\dt &= 0, \\
    \int_0^T \langle \vu , \nabla \phi_3 \rangle \dt &= 0,
\end{align*}
\endgroup
for all test functions $\phi_1$, $\phi_2$ and $\phi_3$ in $\mathcal{D} (\mathbb{T}^3 \times (0,T))$. 
\end{definition}

In this paper we will prove the following result.

\begin{theorem} \label{viscousmainthm} 
Let $T>0$ and suppose there exist smooth solutions of the viscous primitive equations \eqref{eq:visc-u}-\eqref{eq:visc-div} $(u_1,w_1,p_1)$ on $[0,T/2]$ and $(u_2,w_2,p_2)$ on $[T/2,T]$. Moreover, let $1 \leq q_1, q_2, q_3 \leq \infty$ and $0 < s_1, s_3$ be parameters satisfying the following relations
\begin{align} \label{viscousconstraints} 
    q_2 &> 2, \quad q_3 < q_1 , \quad s_1 > s_3, \quad \frac{2}{q_1} > s_1 + 1, \quad s_3> \frac{1}{2\left(1-\frac{1}{q_2}\right)} \left(\frac{1}{q_3} - \frac{1}{q_2}\right).
\end{align}
Then there exists a weak solution $(u,w,p)$ in the sense of Definition~\ref{viscousweaksoldef} with regularity parameter $s=1$ and with the following properties:
\begin{enumerate}
    \item The solution satisfies that
    $$
        (u,w,p)(\cdot,t) = \left\{ \begin{array}{ll}
        (u_1,w_1,p_1)(\cdot,t) & \text{ if } t\in[0,T/4), \\
        (u_2,w_2,p_2)(\cdot,t) & \text{ if } t\in(3T/4,T].
        \end{array} \right.
    $$

    \item We have that 
    \begin{align*}
        \overline{u} &\in L^2 (\mathbb{T}^3 \times (0,T)) \cap L^{q_1} ((0,T); H^{s_1} (\mathbb{T}^3)) \cap L^1 ((0,T); W^{1,1} (\mathbb{T}^3)), \\ 
        \widetilde{u} &\in L^{q_2-} ((0,T); L^2 (\mathbb{T}^3)) \cap L^{q_3-} ((0,T); H^{s_3} (\mathbb{T}^3)) \cap L^1 ((0,T); W^{1,1} (\mathbb{T}^3)) ,\\
        w &\in L^{q_2'-} ((0,T); L^2 (\mathbb{T}^3)) \cap L^{q_3'-} ((0,T); H^{-s_3} (\mathbb{T}^3)).
    \end{align*}
\end{enumerate}
\end{theorem}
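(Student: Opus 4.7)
I would follow the same convex integration blueprint as for Theorem~\ref{mainresult}, running an iteration $(u_q, w_q, p_q, \mathring{R}_q)$ of approximate solutions to a ``viscous hydrostatic Euler--Reynolds'' system
\begin{align*}
\partial_t u_q - \nu_h^* \Delta_h u_q - \nu_v^* \partial_{zz} u_q + \nabla_h \cdot (u_q \otimes u_q) + \partial_z(u_q w_q) + \nabla_h p_q &= \nabla_h \cdot \mathring{R}_q, \\
\partial_z p_q &= 0, \\
\nabla_h \cdot u_q + \partial_z w_q &= 0,
\end{align*}
with the aim $\mathring{R}_q \to 0$ in $L^1((0,T); B^{-1}_{1,\infty}(\mathbb{T}^3))$. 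The initial iterate glues $(u_1, w_1, p_1)$ on $[0, T/4]$ to $(u_2, w_2, p_2)$ on $[3T/4, T]$, with $\mathring{R}_0$ compactly supported in $[T/4, 3T/4]$. At each step I would add barotropic and baroclinic Mikado-type perturbations with spatial concentration $\lambda_{q+1}$ and temporal intermittency $\tau_{q+1}^{-1}$, exactly as in the inviscid scheme; their quadratic self-interaction cancels $\mathring{R}_q$ up to an oscillation error via the geometric lemma.

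The two genuinely new tasks compared to Theorem~\ref{mainresult} are controlling the dissipation error $-\nu_h^* \Delta_h - \nu_v^* \partial_{zz}$ applied to the perturbation, and delivering the $L^1_t W^{1,1}_x$ bound on $u$ that is built into Definition~\ref{viscousweaksoldef}. For the dissipation error, applying the Laplacian to a building block of the schematic form $a(x,t)\,\Psi_{\lambda_{q+1}}(x)\,\Phi_{\tau_{q+1}}(t)$ produces a factor $\lambda_{q+1}^2$, which must be absorbed into $\mathring{R}_{q+1}$ through an inverse-divergence operator (gaining one spatial derivative) together with the $L^p$ decay supplied by the intermittent Mikado profile. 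These demands compete with the prescribed regularities $\overline{u} \in L^{q_1}_t H^{s_1}_x$, $\widetilde{u} \in L^{q_2-}_t L^2_x \cap L^{q_3-}_t H^{s_3}_x$ and $w \in L^{q_2'-}_t L^2_x \cap L^{q_3'-}_t H^{-s_3}_x$, and it is precisely this compatibility---once one asks further for $\overline{u}, \widetilde{u} \in L^1_t W^{1,1}_x$---that forces the strict inequality $q_3 < q_1$ and the new constraint $s_3 > \frac{1}{2(1-1/q_2)}(1/q_3 - 1/q_2)$ in \eqref{viscousconstraints}. The transfer of endpoint time integrability from $w$ to $\widetilde{u}$, in contrast to Theorem~\ref{mainresult}, reflects the asymmetric way in which $\partial_{zz}$ couples the perturbation back into the horizontal velocity, in the spirit of Remark~\ref{rem:endpoint}.

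The hard part will be choosing $\lambda_q$, $\tau_q$, and the Mikado concentration parameter so that the oscillation error, the transport error, the dissipation error, and the modulation error all decay geometrically in $L^1_t B^{-1}_{1,\infty}$, while simultaneously matching the regularity budget in \eqref{viscousconstraints} and delivering the $L^1_t W^{1,1}_x$ bound on the full horizontal velocity. As in Theorem~\ref{mainresult} the stress splits into a barotropic-vertical piece $\overline{u}w$, which is controlled paradifferentially from the separate regularities of $\overline{u}$ and $w$, and a baroclinic-vertical piece $\widetilde{u}w$, which must be estimated directly from the explicit structure of the building blocks since $\widetilde{u}$ and $w$ alone are too rough to apply the paradifferential calculus. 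Once the inductive bounds close, passing to the limit $q \to \infty$ produces a weak solution in the sense of Definition~\ref{viscousweaksoldef} with all the stated regularity, and the smooth match at $t = T/4$ and $t = 3T/4$ is automatic since $\mathring{R}_0$ vanishes there.
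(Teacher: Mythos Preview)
Your high-level strategy matches the paper's: reuse the perturbations from the inviscid scheme verbatim, put the dissipation into the linear error, and add an $L^1(W^{1,1})$ estimate on the perturbation. But several specifics are off and would cause trouble if implemented as written.

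First, your Reynolds system carries only a horizontal stress $\nabla_h\cdot\mathring R_q$. The paper's iteration depends on the split $\nabla_h\cdot R_h+\partial_z R_v$ with $R_h$ barotropic (independent of $z$) and $R_v$ baroclinic (mean-free in $z$), each reduced by its own perturbation, handled by its own inverse divergence ($\mathcal R_h$ versus $\mathcal R_v$), and measured in its own norm ($L^1(L^1)$ versus $L^1(B^{-1}_{1,\infty})$). Without this split you cannot separate the two intermittency scales or state the correct inductive proposition.

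Second, your reading of \eqref{viscousconstraints} is inaccurate. There is no ``transfer of endpoint integrability from $w$ to $\widetilde u$'': in Theorem~\ref{viscousmainthm} as stated \emph{neither} attains the endpoint (both carry a minus). The strict $q_3<q_1$ is not forced by the viscosity; it is needed only because with $w\in L^{q_3'-}(H^{-s_3})$ rather than $L^{q_3'}(H^{-s_3})$ the paraproduct $\overline u\,w$ lands in $L^1_t$ only when $1/q_1+1/q_3'<1$. The constraint that \emph{is} forced by the viscous term is the last one, $s_3>\tfrac{1}{2(1-1/q_2)}(1/q_3-1/q_2)$: this is precisely what makes $\kappa_v^{1/q_2-1}\sigma_v\mu_v^{1/2}$ small (Lemma~\ref{viscousparameter}) and hence $\lVert u_{p,v}\rVert_{L^1(W^{1,1})}\to 0$. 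Finally, the paper does not treat the dissipation via a generic inverse-divergence gain; it uses the exact identities $\mathcal R_h\Delta_h v=\nabla_h v+\nabla_h v^T$ and $\mathcal R_v\partial_{zz}v=\partial_z v$, so the viscous contribution to the new stresses is bounded \emph{directly} by $\lVert\overline u_p\rVert_{L^1(W^{1,1})}+\lVert\widetilde u_p\rVert_{L^1(W^{1,1})}$, which is why the $W^{1,1}$ bound and the dissipation estimate are a single task rather than two.
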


\begin{remark} \label{rem:endpoint-othercases}
Similar to Theorem~\ref{mainresult} one can even obtain endpoint time integrability for $w$. With the modification described in Remarks~\ref{rem:endpoint}, \ref{rem:uptilde-endpoint} and \ref{rem:uptilde-endpoint-proof} one can alternatively establish endpoint time integrability for $\widetilde{u}$.
\end{remark} 

\begin{remark}
The reader should notice that there exist parameters $1 \leq q_1, q_2, q_3 \leq \infty$ and $0 < s_1, s_3$ satisfying \eqref{viscousconstraints}. Indeed for every $q_3<3/2$, we have 
$$
    \frac{1}{q_3} > \frac{1}{4q_3} + \frac{1}{2}.
$$
Hence there exists $q_1$ with 
$$
    \frac{1}{q_3} > \frac{1}{q_1} > \frac{1}{4q_3} + \frac{1}{2}.
$$
Thus $q_3<q_1$ and for $q_2>2$ sufficiently large, the estimate
$$
    \frac{2}{q_1} -1 > \frac{1}{2\left(1-\frac{1}{q_2}\right)} \left(\frac{1}{q_3} - \frac{1}{q_2}\right)
$$
holds since the right-hand side converges to $\frac{1}{2q_3}$ for $q_2\to\infty$. This allows to choose $s_1$ and $s_3$ such that 
$$
    \frac{2}{q_1} -1 > s_1 > s_3 > \frac{1}{2\left(1-\frac{1}{q_2}\right)} \left(\frac{1}{q_3} - \frac{1}{q_2}\right),
$$
so all constraints in \eqref{viscousconstraints} are satisfied.
\end{remark}

\begin{remark} \label{rem:viscosities} 
We would like to emphasise that Theorem~\ref{viscousmainthm} holds for any choice of viscosities $\nu_h^*,\nu_v^*\in \mathbb{R}$, in particular even for the inviscid case $\nu_h^*=\nu_v^*=0$.
\end{remark} 

\begin{remark} \label{sharpviscousregularity}
In the case of the 2D and 3D viscous primitive equations, it has been shown in \cite{medjo,ju1,petcu} that the so-called $z$-weak solutions of the viscous primitive equations are unique. Such weak solutions possess the regularity
\begin{equation*}
u, \partial_z u \in L^\infty ((0,T); L^2 (\mathbb{T}^3)) \cap L^2 ( (0,T); H^1 (\mathbb{T}^3)).
\end{equation*}
The solutions constructed in Theorem \ref{viscousmainthm} possess the regularity $u, \partial_z u \in L^1 ((0,T); W^{1,1} (\mathbb{T}^3)) \cap L^1 ((0,T); H^{1-} (\mathbb{T}^3))$. However, the convex integration scheme used to prove Theorem \ref{viscousmainthm} is currently unable to obtain solutions such that $u, \partial_z u \in L^{2-} ((0,T); H^1 (\mathbb{T}^3))$. We leave the study of the sharpness of the $z$-weak solution regularity class with respect to the (non)uniqueness problem to future work.
\end{remark}

\begin{remark}
Finally, we emphasise that the solutions constructed in Theorem \ref{viscousmainthm} are not of Leray-Hopf type, as they do not have a finite rate of mean energy dissipation (i.e. the horizontal velocity field does not belong to the space $L^2 ((0,T);H^1(\mathbb{T}^3))$).
\end{remark}

We now obtain the global existence of weak solutions as a corollary.
\begin{corollary} 
    For $\nu_h^*,\nu_v^*>0$ and any initial data $u_0 \in H^{1} (\mathbb{T}^3)$ there exists infinitely many global-in-time weak solutions $(u,w,p)$ of the viscous primitive equations \eqref{eq:visc-u}-\eqref{eq:visc-div} (in the sense of Definition~\ref{viscousweaksoldef}) which satisfy the regularity properties of Theorem~\ref{viscousmainthm}. 
\end{corollary}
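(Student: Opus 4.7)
The plan is to mimic the argument of Corollary~\ref{infweaksolcor} but replace the local-in-time analytic well-posedness of the inviscid primitive equations by the global-in-time well-posedness of the viscous primitive equations for $H^1$ initial data due to Cao and Titi~\cite{cao}. Since $\nu_h^*,\nu_v^*>0$, for the given $u_0 \in H^1(\mathbb{T}^3)$ there exists a unique global smooth (in fact, strong) solution $(u^*,w^*,p^*)$ of \eqref{eq:visc-u}-\eqref{eq:visc-div} with $u^*(\cdot,0)=u_0$. This solution is the input to Theorem~\ref{viscousmainthm} that plays the role of $(u_1,w_1,p_1)$.

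Fix an arbitrary $T>0$. First I would set $(u_1,w_1,p_1) \coloneqq (u^*,w^*,p^*)\restriction_{[0,T/2]}$, which is smooth on $[0,T/2]$. Next, I would pick any other smooth global solution $(u^{**},w^{**},p^{**})$ of \eqref{eq:visc-u}-\eqref{eq:visc-div} whose value at time $T$ is arbitrary smooth data (again using \cite{cao}), and set $(u_2,w_2,p_2) \coloneqq (u^{**},w^{**},p^{**})\restriction_{[T/2,T]}$. Both inputs to Theorem~\ref{viscousmainthm} are then smooth on their respective time intervals, so Theorem~\ref{viscousmainthm} applies and yields a weak solution $(u,w,p)$ on $[0,T]$ in the sense of Definition~\ref{viscousweaksoldef} which coincides with $u^*$ on $[0,T/4)$ and with $u^{**}$ on $(3T/4,T]$, and whose barotropic and baroclinic modes satisfy the regularity stated in Theorem~\ref{viscousmainthm}. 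Finally, I would extend $(u,w,p)$ to $[T,\infty)$ by the unique global smooth solution of \eqref{eq:visc-u}-\eqref{eq:visc-div} issuing from the (smooth) trace $u(\cdot,T)=u^{**}(\cdot,T)$, again by \cite{cao}.

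To obtain infinitely many such solutions with the same initial datum $u_0$, I would then repeat the construction above with $T$ replaced by $T/4^k$, $k=0,1,2,\ldots$, exactly as in the proof of Corollary~\ref{infweaksolcor}. For each $k$ the resulting solution agrees with $u^*$ on $[0,T/4^{k+1})$ but deviates from $u^*$ on a nontrivial subinterval of $[T/4^{k+1},\infty)$ (choose the gluing target $u^{**}(\cdot,T/4^k)$ different from $u^*(\cdot,T/4^k)$); consequently the solutions indexed by different $k$ are pairwise distinct. The regularity properties of Theorem~\ref{viscousmainthm} are preserved under these gluings since on the time intervals where the solution coincides with $u^*$ or a smooth Cao--Titi solution, all norms involved are trivially controlled.

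The main technical point is to check that the concatenation across the gluing times $t=T$ and $t=0$ (for the $k=0$ case there is no issue at $t=0$, but for $t=T$ and subsequent iterations it must be verified) produces a genuine weak solution in the sense of Definition~\ref{viscousweaksoldef} on $[0,\infty)$, in particular that the duality pairing $\langle uw,\partial_z\phi\rangle_{B^{-1}_{1,\infty}\times B^{1}_{\infty,1}}$ is well-defined globally in time and that the distributional equations hold across the gluing instants. This follows by testing against test functions $\phi$ whose temporal support straddles the gluing time: on the smooth side the product $uw$ is classical, on the convex-integration side it satisfies the Besov bound from Theorem~\ref{viscousmainthm}, and the traces match because Theorem~\ref{viscousmainthm} guarantees agreement with the smooth solution on a neighbourhood of the endpoints. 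The hard part of the corollary is thus not the proof itself but has already been absorbed into Theorem~\ref{viscousmainthm}; the corollary merely assembles its conclusion with the classical global well-posedness result of~\cite{cao}.
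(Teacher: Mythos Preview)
Your proposal is correct and follows essentially the same route as the paper: both arguments reduce to the proof of Corollary~\ref{infweaksolcor}, replacing the analytic local well-posedness input by the global well-posedness for $H^1$ data due to Cao--Titi~\cite{cao}. The paper's proof is just a one-line reference back to Corollary~\ref{infweaksolcor}; you have spelled out the details and added a harmless generalisation, namely gluing to an arbitrary smooth Cao--Titi solution $u^{**}$ rather than to the zero solution and then continuing with the smooth flow rather than extending by zero.

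One small remark on your write-up: the sentence ``whose value at time $T$ is arbitrary smooth data'' is phrased backwards, since Cao--Titi is a forward-in-time result and you cannot prescribe the terminal value directly. What you actually need (and what suffices) is a smooth solution $u^{**}$ on $[T/2,T]$ with $u^{**}(\cdot,T/4^{k'})\neq u^*(\cdot,T/4^{k'})$; this is easily arranged by taking initial data at $T/2$ different from $u^*(\cdot,T/2)$ and invoking forward uniqueness of strong solutions (if the two smooth solutions agreed at the terminal time they would agree thereafter, hence also at $T/2$ by another application of uniqueness started from $T/2$ --- but that is exactly where they differ). Equivalently, and more simply, you can just take $u^{**}\equiv 0$ as in Corollary~\ref{infweaksolcor} when $u_0\neq 0$, and handle $u_0=0$ separately.
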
 

\begin{proof}
The proof works exactly as the proof of Corollary~\ref{infweaksolcor} where the corresponding local (even global) well-posedness result can be achieved by using the methods from \cite{cao}.
\end{proof}

\begin{remark}
The proof of nonuniqueness of global weak solutions works equally well in the three cases of full, horizontal or vertical viscosity, which were studied in the works \cite{cao1,cao2,CaoLiTiti}. Moreover, in the case of full viscosity the result can also be adapted to classes of initial data belonging to different function spaces, by relying on the well-posedness results from \cite{giga}.
\end{remark} 

\subsubsection{Results for the 2D hydrostatic Euler equations}
It is also possible to develop a convex integration scheme for the two-dimensional hydrostatic Euler equations. They are given by
\begingroup
\allowdisplaybreaks
\begin{align}
    \partial_t u + u \partial_{x_1} u + w \partial_z u + \partial_{x_1} p &= 0, \label{2Dhydreuler1} \\
    \partial_z p &= 0, \label{2Dhydreuler2} \\
    \partial_{x_1} u + \partial_z w &=0, \label{2Dhydreuler3}
\end{align}
\endgroup
where $u : \mathbb{T}^2 \times (0,T) \rightarrow \mathbb{R}$ is the horizontal velocity, $w : \mathbb{T}^2 \times (0,T) \rightarrow \mathbb{R}$ is the vertical velocity and $p : \mathbb{T}^2 \times (0,T) \rightarrow \mathbb{R}$ is the pressure. We first state the definition of weak solution to these equations.

\begin{definition} \label{2Dweaksoldef} 
A triple $(u,w,p)$ is called a weak solution of the two-dimensional hydrostatic Euler equations \eqref{2Dhydreuler1}-\eqref{2Dhydreuler3} if $u \in L^2 (\mathbb{T}^2 \times (0,T))$, $w \in \mathcal{D}' (\mathbb{T}^2 \times (0,T))$ and $p \in L^1 (\mathbb{T}^2 \times (0,T))$ such that $u w \in L^1 ( (0,T); B^{-s}_{1,\infty} (\mathbb{T}^2))$ (where $s > 0$ is the regularity parameter) and the equations are satisfied in the sense of distributions, i.e.,
\begin{align*}
    \int_0^T \int_{\mathbb{T}^2} u \partial_t \phi_1 \dx\dt + \int_0^T \int_{\mathbb{T}^2} u^2 \partial_{x_1} \phi_1 \dx \dt + \qquad \quad &\\
    +\int_0^T \langle u w , \partial_z \phi_1 \rangle_{B^{-s}_{1,\infty} \times B^s_{\infty,1}} \dt + \int_0^T \int_{\mathbb{T}^2} p \partial_{x_1} \phi_1 \dx\dt &= 0, \\
    \int_0^T \int_{\mathbb{T}^2} p \partial_z \phi_2 \dx\dt &= 0, \\
    \int_0^T \langle \vu , \nabla \phi_3 \rangle \dt &= 0,
\end{align*}
for all test functions $\phi_1$, $\phi_2$ and $\phi_3$ in $\mathcal{D}(\mathbb{T}^2 \times (0,T))$.
\end{definition}

In particular, we will prove the following theorem.

\begin{theorem} \label{2Dtheorem} 
Let $T>0$ and suppose there exist smooth solutions of the two-dimensional hydrostatic Euler equations \eqref{2Dhydreuler1}-\eqref{2Dhydreuler3} $(u_1,w_1,p_1)$ on $[0,T/2]$ and $(u_2,w_2,p_2)$ on $[T/2,T]$. Moreover, let $1 \leq q_2, q_3 \leq \infty$ and $0 < s_3$ be parameters satisfying\footnote{Note that \eqref{2dconstraints} implies $q_3<q_2$ and $q_2>2$.} 
\begin{equation} \label{2dconstraints} 
    \frac{3}{2} q_2 \left(\frac{1}{q_3} - \frac{1}{q_2}\right)> s_3 > \frac{1}{1-\frac{2}{q_2}} \left(\frac{1}{q_3} - \frac{1}{q_2}\right)>0, \quad 1\geq s_3.
\end{equation} 
Then there exists a weak solution $(u,w,p)$ in the sense of Definition~\ref{2Dweaksoldef} with regularity parameter $s=1$ and with the following properties:
\begin{enumerate}
    \item The solution satisfies that
    $$
        (u,w,p)(\cdot,t) = \left\{ \begin{array}{ll}
        (u_1,w_1,p_1)(\cdot,t) & \text{ if } t\in[0,T/4), \\
        (u_2,w_2,p_2)(\cdot,t) & \text{ if } t\in(3T/4,T].
        \end{array} \right.
    $$

    \item We have that 
    \begin{align*}
        u &\in L^{q_2-} ((0,T); L^2 (\mathbb{T}^2)) \cap L^{q_3-} ((0,T); H^{s_3} (\mathbb{T}^2)), \\
        w &\in L^{q_2'-} ((0,T); L^2 (\mathbb{T}^2)) \cap L^{q_3'-} ((0,T); H^{-s_3} (\mathbb{T}^2)).
    \end{align*}
\end{enumerate}
\end{theorem}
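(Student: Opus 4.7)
The plan is to adapt the convex-integration scheme developed for Theorem~\ref{mainresult} to the two-dimensional setting, exploiting a structural simplification: in 2D the barotropic mode is trivial. Integrating \eqref{2Dhydreuler3} in $z$ yields $\partial_{x_1}\overline{u}=0$, so $\overline{u}=\overline{u}(t)$; integrating \eqref{2Dhydreuler1} in $z$ then forces $\overline{u}$ to be a constant, which may be normalised to zero. Hence, in contrast to Theorem~\ref{mainresult}, no barotropic/baroclinic split of the perturbations is required, and a single family of intermittent building blocks suffices; this is reflected in the absence of the parameters $q_1, s_1$ in the constraints~\eqref{2dconstraints}. I would then set up the standard iterative sequence $(u_q, w_q, p_q, R_q)$ of approximate solutions with symmetric Reynolds-stress defect $R_q$, initialised by temporally gluing $(u_1,w_1,p_1)$ and $(u_2,w_2,p_2)$ via a cutoff supported in $(T/4, 3T/4)$.

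At each inductive step the perturbation would have the schematic form
\begin{equation*}
    u_{q+1}-u_q \;=\; \sum_k a_k(x,t)\, \Theta_k(\lambda_q t)\, \Psi_k(\lambda_q x_1,\,\mu_q z),
\end{equation*}
where $\Theta_k$ provides the temporal intermittency tuned to achieve the exponents $q_2$ and $q_3$, $\Psi_k$ oscillates in $x_1$ at scale $\lambda_q^{-1}$ and is concentrated in $z$ at scale $\mu_q^{-1}$, and the amplitudes $a_k$ are read off from a geometric decomposition of $R_q$. The perturbation of $w$ is then determined by inverting the divergence-free condition in $z$; its $z$-mean must be chosen so that the projection required to absorb the nonlinear error is compatible with a pressure depending only on $(x_1,t)$, which is the 2D form of the hydrostatic constraint $\partial_z p=0$.

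The crucial analytical step is the direct estimate of $uw$ in $L^1((0,T); B^{-1}_{1,\infty}(\mathbb{T}^2))$. Since $u\in L^{q_3-}(H^{s_3})$ and $w\in L^{q_3'-}(H^{-s_3})$ have total Sobolev regularity zero, the paraproduct approach alone is inadequate, and one must exploit the explicit frequency localisation of the building blocks: each $\Psi_k$ is concentrated at frequency $\lambda_q$ in $x_1$, so that $\Delta_j(uw)$ is non-negligible only for $j\sim\log_2\lambda_q$, and $\|\Delta_j(uw)\|_{L^1_{x,t}}$ can be controlled via Bernstein and the intermittency parameters, in the spirit of \cite{cheskidovluo2}. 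The two inequalities defining the range of $s_3$ in \eqref{2dconstraints} then emerge by balancing: roughly, the upper bound ensures summability of $\sum_q\|u_{q+1}-u_q\|_{L^{q_3-}(H^{s_3})}$, while the lower bound enforces the summability of the direct Besov estimate on the nonlinearity. The main obstacle will be to design the 2D building blocks so as to meet these competing constraints simultaneously while respecting the hydrostatic asymmetry between the $x_1$ and $z$ directions; once this is accomplished, the limit $q\to\infty$ yields a weak solution satisfying (1) automatically, since the perturbations vanish outside $(T/4, 3T/4)$, and the claimed regularities in (2).
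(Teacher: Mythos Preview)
Your high-level strategy---drop the barotropic perturbation, use temporal intermittency, estimate $uw$ directly in $L^1(B^{-1}_{1,\infty})$---agrees with the paper's. But two concrete elements are off, and the second points to a genuine gap.

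First, the building blocks in the paper depend \emph{only} on $x_1$: one takes a one-dimensional Mikado density $\phi(\sigma_v x_1)$ (with both oscillation $\sigma_v$ and concentration $\mu_v$ in the horizontal variable) and sets $u_{p,v}=-g^{-}(\nu_v t)\,\theta\,R_v\,\phi(\sigma_v x_1)$, $w_{p,v}=g^{+}(\nu_v t)\,\theta\,\phi(\sigma_v x_1)$. There is no $z$-concentration and no geometric decomposition; $R_v$ is scalar and serves directly as the amplitude. Also, the reason no barotropic perturbation is needed is not that $\overline{u}$ is constant for solutions, but that in the 2D Euler--Reynolds system the horizontal stress $R_h$ is a scalar function of $(x_1,t)$ and can be absorbed into the pressure via $p'=p-R_h$; this is what licenses a purely baroclinic perturbation at every step.

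Second, and more importantly, your account of where \eqref{2dconstraints} comes from is incorrect, and the error signals a missing ingredient. Because $\phi$ is scalar and $w_{p,v}$ is independent of $z$, the principal perturbation is \emph{not} divergence-free; one must add a vertical spatial corrector
\[
w_{c,v}=\mathcal{R}_v\Big(g^{-}(\nu_v t)\,\theta\,\partial_{x_1}\big(R_v\,\phi(\sigma_v\cdot)\big)\Big)
\]
(note: no $u_{c,v}$ in 2D, in contrast to the 3D scheme). The estimate $\|w_{c,v}\|_{L^{q_2'}(L^2)}\lesssim\kappa_v^{2/q_2-1}\sigma_v\mu_v$ is precisely what forces the \emph{lower} bound $s_3>\frac{1}{1-2/q_2}\big(\tfrac{1}{q_3}-\tfrac{1}{q_2}\big)$. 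The \emph{upper} bound $s_3<\tfrac{3}{2}q_2\big(\tfrac{1}{q_3}-\tfrac{1}{q_2}\big)$ comes from the time-derivative part of the linear error $\|\mathcal{R}_v\partial_t u_{p,v}\|_{L^1(B^{-1}_{1,\infty})}$, not from summability of $\|u_{q+1}-u_q\|_{L^{q_3-}(H^{s_3})}$; the latter is governed by $\kappa_v^{1/q_2-1/q_3}(\sigma_v\mu_v)^{s_3}=1$ exactly as in 3D. The direct Besov estimate on the product (the 2D analogue of Lemma~\ref{lemma:prod}) gives a gain of $\sigma_v^{-1}$ and does not involve $s_3$ at all. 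Without identifying $w_{c,v}$ and its cost, your sketch cannot recover the stated parameter window.
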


\begin{remark}
It might seem slightly odd to label the parameters by $q_2$, $q_3$ and $s_3$ (rather than $q_1$ etc.). The reason we chose to do so is because it will allow for easy comparisons with the three-dimensional scheme from Theorem \ref{mainresult}. We emphasise that there are no equivalent parameters to $q_1$ and $s_1$ in the two-dimensional version of the scheme.
\end{remark}

\begin{remark} \label{rem:endpoint-2D} 
Remark~\ref{rem:endpoint-othercases} is also true in the context of the two-dimensional hydrostatic Euler equations \eqref{2Dhydreuler1}-\eqref{2Dhydreuler3}, see Remark~\ref{rem:endpoint-2D-proof} below.
\end{remark}

\begin{remark} 
By proceeding as in section~\ref{prandtlsection} we can achieve in addition that $\overline{u},\widetilde{u}\in L^1((0,T);W^{1,1}(\mathbb{T}^3))$. In contrast to the three-dimensional case (cf. Remark~\ref{rem:u-sobolev}) in two dimensions there is no need to require stronger constraints for the parameters, see also Theorem~\ref{prandtltheorem} and Remark~\ref{rem:viscosities-2D}.
\end{remark}

We observe that it is possible to establish a two-dimensional analogue of Corollary \ref{infweaksolcor} using the local well-posedness result from \cite{kukavicatemamvicolziane,kukavicatemamvicolziane2} for analytic data in the channel. This yields existence of infinitely many global weak solutions for suitable initial data.

\begin{remark}
We observe that a suitable modification (essentially by changing the scaling of the Mikado flows and densities in Proposition \ref{prop:mikado-v}) of the proof of Theorem \ref{2Dtheorem} yields the existence of nonunique weak solutions of the two-dimensional hydrostatic Euler equations in the energy space, i.e. solutions such that $u \in L^\infty ((0,T); L^2 (\mathbb{T}^2))$. To the knowledge of the authors, this is the first example of a temporally intermittent convex integration scheme that is able to construct weak solutions lying in the energy space (which in essence is due to the structure of the primitive equations). Moreover, we expect that by using some of the ideas in for example \cite{cheskidovluo4} one can achieve control over the energy profile (but this lies outside the scope of this work).
\end{remark}
\begin{remark}
By taking $q_3 = 3$, $s_3 > \frac{2}{3}$ and $q_2 > \frac{13}{3}$ in Theorem \ref{2Dtheorem} we observe that the solutions constructed have the regularity $u \in L^3 ((0,T); B^{1/3+}_{3,\infty} (\mathbb{T}^2))$. This regularity is sufficient (by an argument similar to \cite{boutroshydrostatic}) to conclude that the horizontal advection term is not responsible for any dissipation or increase of energy. In this setting, the mechanism resulting in non-conservation of energy is `purely hydrostatic', i.e. completely due to the irregularity of the vertical velocity $w$. \\
In fact, by taking $q_3 = 4$, $s_3 > \frac{3}{4}$ and $q_2 > 6$ and using a more refined argument one can show that $u \in L^4 ((0,T); B^{1/2+}_{4,\infty} (\mathbb{T}^2))$ and that $w \in L^{6/5-} ((0,T); L^2 (\mathbb{T}^2))$, by using the estimates from Proposition \ref{prop:mikado-v-2D} and the specific form of the perturbation. We recall from \cite{boutroshydrostatic} that if $u \in L^4 ((0,T); B^{1/2+}_{4,\infty} (\mathbb{T}^2))$ and $w \in L^2 (\mathbb{T}^2 \times (0,T))$ then the energy is conserved. Therefore the result from Theorem \ref{2Dtheorem} would be sharp if we could show that $w \in L^{2-} ((0,T); L^2 (\mathbb{T}^2))$.
\end{remark}

\subsubsection{Results for the two-dimensional Prandtl equations}
Now we turn to studying the two-dimensional Prandtl equations, which are given by
\begingroup
\allowdisplaybreaks
\begin{align}
    \partial_t u - \nu_v^* \partial_{zz} u + u \partial_{x_1} u + w \partial_z u + \partial_{x_1} p &= 0, \label{eq:prandtl-u} \\
    \partial_z p &= 0, \label{eq:prandtl-p} \\
    \partial_{x_1} u + \partial_z w &=0, \label{eq:prandtl-div}
\end{align}
\endgroup
where $u : \mathbb{T}^2 \times (0,T) \rightarrow \mathbb{R}$ is the horizontal velocity, $w : \mathbb{T}^2 \times (0,T) \rightarrow \mathbb{R}$ is the vertical velocity and $p : \mathbb{T}^2 \times (0,T) \rightarrow \mathbb{R}$ the pressure. 

We observe that these equations differ from the two-dimensional hydrostatic Euler equations \eqref{2Dhydreuler1}-\eqref{2Dhydreuler3} by the vertical viscosity term $\nu_v^* \partial_{zz} u$. We introduce the following notion of weak solution to the Prandtl equations \eqref{eq:prandtl-u}-\eqref{eq:prandtl-div}.

\begin{definition} \label{prandtlweaksolutiondef} 
A triple $(u,w,p)$ is called a weak solution of the two-dimensional Prandtl equations \eqref{eq:prandtl-u}-\eqref{eq:prandtl-div} if $u \in L^2 (\mathbb{T}^2 \times (0,T))\cap L^1((0,T);W^{1,1}(\mathbb{T}^2))$, $w \in \mathcal{D}' ( \mathbb{T}^2 \times (0,T))$ and $p \in L^1 (\mathbb{T}^2 \times (0,T))$ such that $u w \in L^1 ((0,T); B^{-s}_{1,\infty} (\mathbb{T}^2))$ (where $s > 0$ is the regularity parameter) and the equations are satisfied in the sense of distributions, i.e.,
\begin{align*}
    \int_0^T \int_{\mathbb{T}^2} u \partial_t \phi_1 \dx\dt - \nu_v^* \int_0^T \int_{\mathbb{T}^2} \partial_z u \partial_{z} \phi_1 \dx \dt + \int_0^T \int_{\mathbb{T}^2} u^2 \partial_{x_1} \phi_1 \dx \dt + \qquad \quad & \\
    + \int_0^T \langle u w , \partial_z \phi_1 \rangle_{B^{-s}_{1,\infty} \times B^s_{\infty,1}} \dt + \int_0^T \int_{\mathbb{T}^2} p \partial_{x_1} \phi_1 \dx\dt &= 0, \\ 
    \int_0^T \int_{\mathbb{T}^2} p \partial_z \phi_2 \dx\dt &= 0, \\
    \int_0^T \langle \textbf{u} , \nabla \phi_3 \rangle \dt &= 0,
\end{align*}
for all test functions $\phi_1$, $\phi_2$ and $\phi_3$ in $\mathcal{D} ( \mathbb{T}^2 \times (0,T))$.
\end{definition}

We will prove the following result. 

\begin{theorem} \label{prandtltheorem} 
Let $T>0$ and suppose there exist smooth solutions of the two-dimensional Prandtl equations \eqref{eq:prandtl-u}-\eqref{eq:prandtl-div} $(u_1,w_1,p_1)$ on $[0,T/2]$ and $(u_2,w_2,p_2)$ on $[T/2,T]$. Moreover, let $1 \leq q_2, q_3 \leq \infty$ and $0 < s_3$ be parameters satisfying 
\begin{equation} \label{prandtlconstraints} 
    \frac{3}{2} q_2 \left(\frac{1}{q_3} - \frac{1}{q_2}\right)> s_3 > \frac{1}{1-\frac{2}{q_2}} \left(\frac{1}{q_3} - \frac{1}{q_2}\right)>0, \quad 1\geq s_3.
\end{equation}
Then there exists a weak solution $(u,w,p)$ in the sense of Definition~\ref{prandtlweaksolutiondef} with regularity parameter $s=1$ and with the following properties:
\begin{enumerate}
    \item The solution satisfies that
    $$
        (u,w,p)(\cdot,t) = \left\{ \begin{array}{ll}
        (u_1,w_1,p_1)(\cdot,t) & \text{ if } t\in[0,T/4), \\
        (u_2,w_2,p_2)(\cdot,t) & \text{ if } t\in(3T/4,T].
        \end{array} \right.
    $$

    \item We have that 
    \begin{align*}
        u &\in L^{q_2-} ((0,T); L^2 (\mathbb{T}^2)) \cap L^{q_3-} ((0,T); H^{s_3} (\mathbb{T}^2)) \cap L^1((0,T);W^{1,1}(\mathbb{T}^2)), \\
        w &\in L^{q_2'-} ((0,T); L^2 (\mathbb{T}^2)) \cap L^{q_3'-} ((0,T); H^{-s_3} (\mathbb{T}^2)) . 
    \end{align*}
\end{enumerate}
\end{theorem}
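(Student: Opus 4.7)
The plan is to upgrade the convex integration scheme that establishes Theorem~\ref{2Dtheorem} for the 2D hydrostatic Euler equations by incorporating the vertical viscosity term $-\nu_v^*\partial_{zz}u$ of the Prandtl system, and by simultaneously refining the construction so as to additionally guarantee $u\in L^1((0,T);W^{1,1}(\mathbb{T}^2))$. The framework is an iteration producing a sequence of smooth approximate solutions $(u_q,w_q,p_q,R_q)$ to the Prandtl--Reynolds system
\begin{equation*}
\partial_t u_q - \nu_v^*\partial_{zz} u_q + \partial_{x_1}(u_q^2) + \partial_z(u_q w_q) + \partial_{x_1} p_q = \partial_{x_1} R^{1}_q + \partial_z R^{2}_q, \qquad \partial_{x_1} u_q + \partial_z w_q = 0,
\end{equation*}
where at each stage a perturbation $(d_q,e_q)$ is added to diminish the size of the Reynolds defect $R_q$ in $L^1((0,T);B^{-1}_{1,\infty}(\mathbb{T}^2))$.

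The perturbation $(d_q,e_q)$ has the same spatio-temporally intermittent structure as in the proof of Theorem~\ref{2Dtheorem}: $d_q$ is built from principal blocks oscillating in the horizontal variable $x_1$, modulated by a slow vertical profile and a temporal cut-off, together with divergence correctors that determine $e_q$ via the constraint $\partial_{x_1} d_q + \partial_z e_q = 0$. The transport-, Nash- and corrector errors coming from these blocks lead to exactly the estimates that enforce the constraints \eqref{prandtlconstraints}, identical to \eqref{2dconstraints}. The boundary matching with $(u_1,w_1,p_1)$ on $[0,T/4)$ and $(u_2,w_2,p_2)$ on $(3T/4,T]$ is realised through a standard gluing step that leaves the perturbation supported in a slightly smaller time interval at each stage of the iteration, as in Theorem~\ref{2Dtheorem}.

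The two genuinely new ingredients are handled separately. The viscous contribution $-\nu_v^*\partial_{zz} d_q$ is absorbed into $R_{q+1}^2$ through the trivial antidivergence $-\nu_v^*\partial_{zz} d_q = \partial_z(-\nu_v^*\partial_z d_q)$; its $L^1((0,T);B^{-1}_{1,\infty}(\mathbb{T}^2))$ norm is controlled by $C(\nu_v^*)\|\partial_z d_q\|_{L^1_{t,x}}$, a quantity that is summable along the iteration because vertical derivatives act only on the slow $z$-profile of the building blocks and do not interact with the horizontal concentration parameter. The additional regularity $u\in L^1((0,T);W^{1,1}(\mathbb{T}^2))$ is obtained by interpolating the standard $L^\infty_t L^1_x$ and $L^1_t W^{2,1}_x$ estimates on the perturbations; in two dimensions the concentration exponents have enough slack for this bound to be summable without strengthening \eqref{prandtlconstraints}.

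The main obstacle is exactly this viscous error. One might fear that, as happens when passing from Theorem~\ref{mainresult} to Theorem~\ref{viscousmainthm} in three dimensions, the Laplacian forces stronger constraints on $(q_2,q_3,s_3)$. The resolution, and the key structural observation behind Theorem~\ref{prandtltheorem}, is that only $\partial_{zz}$ is present (there is no horizontal viscosity), while the building blocks oscillate solely in $x_1$; the vertical derivative therefore never sees the small spatial scale introduced by the concentration, so the viscous error is genuinely subcritical. Once this point is established, the inductive construction, the convergence of $(u_q,w_q,p_q)$ in the function spaces stated in item~(2), and the verification of Definition~\ref{prandtlweaksolutiondef} in the limit proceed exactly as in the 2D hydrostatic Euler scheme.
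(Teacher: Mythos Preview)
Your overall strategy---modify the two-dimensional hydrostatic Euler scheme of Theorem~\ref{2Dtheorem}, absorb the viscous term into the vertical Reynolds error, and additionally establish $u\in L^1((0,T);W^{1,1}(\mathbb{T}^2))$---is exactly what the paper does. The conclusion that the constraints \eqref{prandtlconstraints} need not be strengthened relative to \eqref{2dconstraints} is also correct. However, your diagnosis of \emph{why} this is so is off, and your handling of the $W^{1,1}$ bound has a gap.

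You identify the ``main obstacle'' as the viscous Reynolds error and resolve it by noting that $\partial_z$ never touches the horizontal oscillation $\phi(\sigma_v x_1)$. That observation is correct and does make the viscous contribution $\mathcal{R}_v(\nu_v^*\partial_{zz}\widetilde{u}_p)=\nu_v^*\partial_z\widetilde{u}_p$ harmless in $L^1(B^{-1}_{1,\infty})$. But this is not where the potential tension with the parameters lies. The genuine issue is the requirement $u\in L^1((0,T);W^{1,1}(\mathbb{T}^2))$ itself (needed in Definition~\ref{prandtlweaksolutiondef} and claimed in item~2), which forces control of $\partial_{x_1}\widetilde{u}_p$ in $L^1(L^1)$---and $\partial_{x_1}$ \emph{does} hit the fast scale. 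Your ``interpolation between $L^\infty_t L^1_x$ and $L^1_t W^{2,1}_x$'' is not a valid route to this bound; no such interpolation yields $L^1_t W^{1,1}_x$.

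The paper instead estimates directly
\[
\lVert u_{p,v}\rVert_{L^1(W^{1,1})}\lesssim \kappa_v^{1/q_2-1}\sigma_v\mu_v^{1/2},
\]
and shows that the inequality $\kappa_v^{1/q_2-1}\sigma_v\mu_v^{1/2}\leq\lambda_v^{-\gamma_v}$ can be added to the parameter hierarchy of Lemma~\ref{2Dparameterlemma} without new hypotheses. This reduces to $s_3>\tfrac{1}{2(1-1/q_2)}\big(\tfrac{1}{q_3}-\tfrac{1}{q_2}\big)$, which is \emph{weaker} than the lower bound $s_3>\tfrac{1}{1-2/q_2}\big(\tfrac{1}{q_3}-\tfrac{1}{q_2}\big)$ already present in \eqref{2dconstraints}, since $\tfrac{1}{1-2/q_2}>\tfrac{1}{2(1-1/q_2)}$. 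That arithmetic inequality---not the absence of horizontal viscosity---is the reason no strengthening of \eqref{prandtlconstraints} is needed, and it is precisely what distinguishes the two-dimensional situation from the three-dimensional passage Theorem~\ref{mainresult}$\to$Theorem~\ref{viscousmainthm}, where the analogous lower bound on $s_3$ must be imposed as an extra constraint.
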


\begin{remark} 
Remark~\ref{rem:endpoint-othercases} is also true in the context of the Prandtl equations \eqref{eq:prandtl-u}-\eqref{eq:prandtl-div}.
\end{remark}

\begin{remark} \label{rem:viscosities-2D} 
Similar to Remark~\ref{rem:viscosities}, Theorem~\ref{prandtltheorem} holds for any $\nu_v^* \in \mathbb{R}$, in particular even for the inviscid case $\nu_v^* = 0$.
\end{remark}

We note that it is possible to establish an analogue of Corollary \ref{infweaksolcor} where one has to use the local well-posedness result from \cite[p.~6]{paicu} (see also \cite[p.~7186]{wang}) for analytic data in the strip/channel. A straightforward adaption of the proof of Corollary \ref{infweaksolcor} yields the existence of infinitely many global weak solutions for suitable initial data. 

\begin{remark}
We note that Theorem \ref{prandtltheorem} also applies to the 2D viscous primitive equations, with the same constraints \eqref{prandtlconstraints}. This is because the $L^1 ((0,T); W^{1,1} (\mathbb{T}^2))$ regularity needed to control the horizontal and vertical viscosities, has already been obtained in the proof of Theorem \ref{prandtltheorem} (and Proposition \ref{prop:prandtl-perturbative}).
\end{remark}

\subsection{Further remarks and outline of the paper} 
Now that we have presented the results for the four cases of system \eqref{generalequation-u}-\eqref{generalequation-div} that we consider in this paper, we would like to make some further remarks on these results. Some conclusions that can be drawn are:
\begin{enumerate}
    \item There exist weak solutions of the inviscid primitive equations \eqref{eq:hyd-euler-u}-\eqref{eq:hyd-euler-div} that do not conserve energy. Compared to the solutions constructed in \cite{chiodaroli}, the solutions that we construct in this paper have Sobolev regularity. Moreover, they are related to the notion of type III weak solutions, as introduced in \cite{boutroshydrostatic}, as the barotropic-vertical part of the nonlinearity is interpreted as a paraproduct.
    \item In addition, the scheme is able to construct solutions where the baroclinic and ba\-ro\-tro\-pic modes have different regularities. This is expected, as the loss of derivative in the advective term only occurs in the baroclinic equation. The barotropic mode must have higher Sobolev regularity than the baroclinic mode in the scheme as otherwise the paraproduct between the vertical velocity and the barotropic mode will not make sense.
    \item As far as we can tell, this is the first proof of nonuniqueness of weak solutions for the viscous primitive equations. It shows that although the system is globally well-posed (as shown in \cite{cao}), at low regularity the system has nonunique weak solutions. This is true even if one has sufficiently regular Sobolev data for which global well-posedness holds in the class of strong solutions. 
    \item To the best of our knowledge, this is the first convex integration scheme for the two-dimensional Prandtl equations (in the three-dimensional case there is the work \cite{luo}), as well as the two-dimensional hydrostatic Euler equations. Indeed, the fact that in this paper we consider a new class of weak solutions to the Prandtl equations (as discussed in section \ref{subsubsec:general-weak}), makes it possible to address several of the issues raised in \cite{luo}. In particular, we are able to use Mikado flows (while in \cite{luo} linear plane waves are used for the perturbation) and (temporal and spatial) intermittency as part of the construction for the Prandtl equations. It was left open in \cite{luo} whether these tools can be applied to the Prandtl case.
\end{enumerate}
There are a few new features of the scheme that we wish to highlight:
\begin{itemize}
    \item We have introduced a splitting of the Reynolds stress tensor into a barotropic and baroclinic part. We add perturbations to separately deal with both these parts of the error. We then ensure that the interactions between the two perturbations are controlled. 
    \item The splitting of the Reynolds stress tensor requires us to construct and use horizontal and vertical inverse divergence operators, as the barotropic part depends only on the horizontal variables, while the baroclinic part is mean-free with respect to the $z$-variable.
    \item Having two parts of the perturbation allows us to use different scalings of the temporal intermittency functions for the barotropic and baroclinic parts of the perturbation. This makes it possible to ensure that the different perturbations have different regularities, such that the interactions between the different parts can be controlled. In particular, this is crucial to control the terms $\overline{u}_p \otimes \widetilde{u}_p$ and $w_p \overline{u}_p$ (the barotropic-baroclinic and vertical-baroclinic parts of the nonlinearity). 
\end{itemize} 

Now we present an outline of the paper. In sections \ref{inductivesection}-\ref{reynoldsestimates} we will develop the convex integration scheme for the 3D inviscid primitive equations, in order to prove Theorem \ref{mainresult}. In section \ref{inductivesection} we state the core inductive proposition of the convex integration scheme and prove Theorem \ref{mainresult} using this proposition. In section \ref{preliminaries} we discuss several preliminaries. In particular, we introduce the inverse divergence operators, the spatial building blocks for the convex integration, as well as the temporal intermittency functions. In addition, we will discuss the choice of the frequency parameters. 

In section \ref{perturbationsection} we introduce the perturbation that will be used in each iteration of the convex integration scheme, and compute the new Reynolds stress tensor after adding the perturbation. We will prove the estimates on the perturbation required for Proposition \ref{perturbativeproposition} in section \ref{perturbationestimates}. The estimates on the Reynolds stress tensor will be proven in section \ref{reynoldsestimates}. 

In sections \ref{viscoussection}-\ref{prandtlsection} we will develop convex integration schemes to study the other cases of equations \eqref{generalequation-u}-\eqref{generalequation-div} that we are interested in this paper. These schemes differ from the scheme presented in sections \ref{preliminaries}-\ref{reynoldsestimates} in some aspects, while other parts are similar. Therefore for the sake of brevity, in sections \ref{viscoussection}-\ref{prandtlsection} we will focus on the parts that differ from the convex integration scheme for the 3D inviscid primitive equations.

In section \ref{viscoussection} we provide an extension of the convex integration scheme to the viscous primitive equations with full viscosity and prove Theorem \ref{viscousmainthm}. The cases with anisotropic viscosities can be studied in a similar manner. In section \ref{2Dsection} we investigate the two-dimensional hydrostatic Euler equations and prove Theorem \ref{2Dtheorem}. Finally, in section \ref{prandtlsection} we consider the (two-dimensional) Prandtl equations and provide the proof for Theorem \ref{prandtltheorem}.

In appendix \ref{paradifferentialappendix} we give a short introduction to Littlewood-Paley theory, Besov spaces and paradifferential calculus, in order to make the paper self-contained. In appendix \ref{holdersection} we state the improved H\"older inequality, which was introduced in \cite[Lemma~2.1]{modena}, and we prove an oscillatory paraproduct estimate based on this inequality. Moreover, we provide another proof of Lemma \ref{lemma:prod} as an alternative to the proof given in section \ref{subsubsec:vpp}. This Lemma states a new inequality needed to control the interaction between the vertical velocity and the baroclinic mode, which turns out to be a critical part of the scheme.

\section{The inductive proposition} \label{inductivesection}

The following underdetermined system of equations is called the hydrostatic Euler-Reynolds system
\begingroup
\allowdisplaybreaks
\begin{align}
    \partial_t u + u \cdot \nabla_h u + w \partial_z u + \nabla_h p &= \nabla_h \cdot R_h + \partial_z R_v, \label{eq:hyd-er-u} \\
    \partial_z p &= 0, \label{eq:hyd-er-p} \\
    \nabla_h \cdot u + \partial_z w &=0 , \label{eq:hyd-er-div}
\end{align}
\endgroup
where $u, w, p, R_h$ and $R_v$ are the unknowns. 
Here the horizontal Reynolds stress tensor\footnote{We denote the set of all symmetric $2\times 2$ matrices by $\mathcal{S}^{2 \times 2}$.} $R_h : \mathbb{T}^2 \times [0,T] \rightarrow \mathcal{S}^{2 \times 2}$ is a function of $(x_1,x_2,t)$, while the vertical Reynolds stress tensor $R_v : \mathbb{T}^3 \times [0,T] \rightarrow \mathbb{R}^2$ is a function of $(x_1,x_2,z,t)$, and which is mean-free with respect to $z$, i.e. $\int_0^1 R_v \dz=0$. We will only work with smooth solutions to this system.

\begin{remark}
    Notice that $R_h$ is independent of $z$. Hence we have $\overline{R_h}=R_h$, see section~\ref{subsubsec:notation}. Moreover by definition $R_v$ is mean-free with respect to $z$ and thus $\widetilde{R_v}=R_v$.
\end{remark}

The following definition is inspired by \cite[Definition~2.1]{cheskidovluo3}.

\begin{definition} \label{wellprepareddef}
    We say that a smooth solution $(u,w,p,R_h,R_v)$ of the hydrostatic Euler-Reynolds system \eqref{eq:hyd-er-u}-\eqref{eq:hyd-er-div} is \emph{well-prepared} if there exists a time interval $I \subseteq [0,T]$ and parameter $\tau > 0$ such that $R_h (x,t) = 0$, $R_v (x,t) = 0$ whenever $\dist (t, I^c) \leq \tau$.
\end{definition}

\begin{remark}
In the definition of well-preparedness, the trivial case $I = [0,T]$ (i.e. without restrictions on the support of $R_h$ and $R_v$) has not been excluded. In this case, the perturbations considered in the inductive proposition will be supported on the whole time interval, but the estimates stated in Proposition \ref{perturbativeproposition} below also hold when $I = [0,T]$. Including the trivial case in Definition \ref{wellprepareddef} therefore allows us to phrase Proposition \ref{perturbativeproposition} in a more general way.
\end{remark}

The core of the proof of Theorem \ref{mainresult} will revolve around proving the following inductive proposition.

\begin{proposition} \label{perturbativeproposition}
    Suppose $(u, w, p, R_h,R_v)$ is a smooth solution of the hydrostatic Euler-Reynolds system \eqref{eq:hyd-er-u}-\eqref{eq:hyd-er-div} which is well-prepared with associated time interval $I$ and parameter $\tau>0$. Moreover consider parameters $1 \leq q_1, q_2, q_3 \leq \infty$ and $0<s_1, s_3 $ which satisfy the following constraints\footnote{Note that the constraints \eqref{constraints-prop} are weaker than \eqref{3dconstraints}. Indeed from \eqref{3dconstraints} we deduce
    \begin{equation*}
        s_3 + \frac{2}{q_2} < s_1 + 1 < \frac{2}{q_1} \leq \frac{2}{q_3}. 
    \end{equation*}}
    \begin{align} \label{constraints-prop}
        q_2 &> 2, \quad \frac{2}{q_1} > s_1 + 1, \quad \frac{2}{q_3} > s_3 + \frac{2}{q_2}.
    \end{align}
    Finally let $\delta, \epsilon > 0$ be arbitrary. Then there exists another smooth solution $(u + \overline{u}_p + \widetilde{u}_p, w + w_p, p + P, R_{h,1}, R_{v,1})$ of the hydrostatic Euler-Reynolds system \eqref{eq:hyd-er-u}-\eqref{eq:hyd-er-div} which is well-prepared with respect to the same time interval $I$ and parameter $\tau/2$, and has the following properties:
    \begin{enumerate}
        \item $(\overline{u}_p,\widetilde{u}_p,w_p)(x,t)=(0,0,0)$ whenever $\dist (t, I^c) \leq \tau/2$.
        
        \item The following estimates are satisfied\footnote{As usual we write $X\lesssim Y$ if $X\leq C Y$ with a constant $C$. The implicit constant $C$ in \eqref{eq:main-w1e}-\eqref{eq:main-product} does not depend on $u,w,p,R_h,R_v$ or $\epsilon$.}\footnote{As mentioned earlier, we quantify the `$-$' in $p-$ via $p-\coloneqq\frac{1}{\frac{1}{p} + \delta}$, where $\delta > 0$ was fixed in the statement of the proposition.} 
        \begingroup
        \allowdisplaybreaks
        \begin{align}
            \lVert R_{h,1} \rVert_{L^1 (L^1)} &\leq \epsilon, \label{eq:main-Rh} \\
            \lVert R_{v,1} \rVert_{L^1 (B^{-1}_{1,\infty})} &\leq \epsilon, \label{eq:main-Rv} \\
            \lVert \overline{u}_p \rVert_{L^{q_1} (H^{s_1})} &\leq \epsilon, \label{eq:main-ubar1} \\
            \lVert \widetilde{u}_p \rVert_{L^{q_2-} (L^2)} &\leq \epsilon, \label{eq:main-utilde1} \\
            \lVert \widetilde{u}_p \rVert_{L^{q_3-} (H^{s_3})} &\leq \epsilon, \label{eq:main-utilde2} \\
            \lVert w_p \rVert_{L^{q_2'-} (L^2)} &\leq \epsilon, \label{eq:main-w1} \\
            \lVert w_p \rVert_{L^{q_3'-} (H^{-s_3})} &\leq \epsilon. \label{eq:main-w2} \\
            \lVert w_p \rVert_{L^{q_2'} (L^2)} &\lesssim \lVert R_h \rVert_{L^1(L^1)}, \label{eq:main-w1e} \\ 
            \lVert w_p \rVert_{L^{q_3'} (H^{-s_3})} &\lesssim \lVert R_h \rVert_{L^1(L^1)}. \label{eq:main-w2e} 
        \end{align} 
        \endgroup
        \item Moreover, we have the following bounds
        \begin{align}
            \lVert \overline{u}_p \rVert_{L^{2} (L^2)} &\lesssim \lVert R_h \rVert_{L^1 (L^1)}^{1/2}, \label{eq:main-ubar2} \\
            \lVert w_p \widetilde{u}_p + w \widetilde{u}_p + w_p u \rVert_{L^1 (B^{-1}_{1,\infty})} &\lesssim \lVert R_v \rVert_{L^1 (B^{-1}_{1,\infty})}. \label{eq:main-product} 
        \end{align}
    \end{enumerate}
\end{proposition}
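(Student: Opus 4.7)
The plan is to construct the perturbation as a sum of a purely horizontal barotropic piece $\overline{u}_p(x_1,x_2,t)$ and a baroclinic piece $\widetilde{u}_p(x,t)$ that is mean-free in $z$, each of the familiar convex-integration form amplitude $\times$ spatial building block $\times$ temporal intermittency function, but with two different concentration and intermittency scales calibrated to $(q_1,s_1)$ and $(q_2,q_3,s_3)$ respectively. The vertical velocity $w_p$ is then defined via the incompressibility relation $\partial_z w_p = -\nabla_h\cdot(\overline{u}_p+\widetilde{u}_p)$; because $\widetilde{u}_p$ is $z$-mean-free by construction, $w_p$ inherits both the high baroclinic spatial frequency and an extra factor of $1/\lambda$, which is what ultimately allows it to be small in negative Sobolev norms. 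The key parameters are a large spatial frequency $\lambda$ and a large temporal frequency $\mu$, which will be chosen at the very end in terms of $\epsilon$, $u$, $w$, $R_h$, $R_v$.

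Concretely I would fix finite sets of wavevectors, a geometric decomposition writing any symmetric positive $2\times 2$ matrix as $\sum_k \gamma_k^2\, k\otimes k$, and two families of spatial building blocks $\Phi_k^{\mathrm{bar}}(x_1,x_2)$ and $\Phi_k^{\mathrm{bcl}}(x_1,x_2,z)$ oscillating at frequency $\lambda$, with $\Phi_k^{\mathrm{bcl}}$ additionally $z$-mean-free. Amplitudes $a_k^{\mathrm{bar}}$, $a_k^{\mathrm{bcl}}$ are chosen so that the on-diagonal quadratic average $\sum_k (a_k^{\mathrm{bar}})^2\,\Phi_k^{\mathrm{bar}}\!\otimes\!\Phi_k^{\mathrm{bar}}$ cancels $R_h$ and the corresponding interaction $\widetilde{u}_p w_p$ cancels $R_v$; a smooth time cutoff supported where $\dist(t,I^c)>\tau/2$ is multiplied onto both amplitudes to enforce item~1. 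Temporal intermittency functions $g_k^{\mathrm{bar}},g_k^{\mathrm{bcl}}$ of period $\mu^{-1}$ and mean $1$ are then rescaled with different $L^p$ concentrations so as to hit the prescribed target norms; it is essential that these two intermittency scales differ, because this is what makes the cross terms $\overline{u}_p\otimes\widetilde{u}_p$ and $w_p\overline{u}_p$ smaller than the diagonal contributions after H\"older. Finally, a divergence corrector of size $O(1/\lambda)$ restores exact incompressibility.

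Plugging $(u+\overline{u}_p+\widetilde{u}_p,\,w+w_p,\,p+P)$ into \eqref{eq:hyd-er-u} and collecting terms, the new error decomposes into a linear/Nash-type part $\partial_t(\overline{u}_p+\widetilde{u}_p)+(u\cdot\nabla_h)(\cdots)+(\cdots\cdot\nabla_h)u+w\partial_z\widetilde{u}_p+w_p\partial_z u$, an oscillation part $\overline{u}_p\otimes\overline{u}_p - R_h$ and $\widetilde{u}_p w_p - R_v$, and quadratic cross-interaction parts. Each of these is rewritten as $\nabla_h\cdot R_{h,1}+\partial_z R_{v,1}$ by applying a horizontal inverse divergence (for $z$-independent contributions to $R_{h,1}$) and a vertical inverse divergence (for $z$-mean-free contributions to $R_{v,1}$), constructed in the preliminaries. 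The gain comes from derivatives landing on the highly oscillating spatial factors, producing $1/\lambda$, and from temporal averaging of the $g_k^2-1$ factor, producing $1/\mu$; the improved H\"older inequality of appendix~\ref{holdersection} is then used systematically to convert the heuristics into rigorous $L^p$ bounds. Taking $\lambda,\mu$ large enough yields \eqref{eq:main-Rh}--\eqref{eq:main-w2}, while the sharper bounds \eqref{eq:main-w1e}, \eqref{eq:main-w2e}, \eqref{eq:main-ubar2} follow because by design $\overline{u}_p$ is essentially of size $R_h^{1/2}$ and $w_p$ is essentially $1/\lambda$ times a horizontal derivative of $\widetilde{u}_p$.

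The main technical obstacle will be \eqref{eq:main-product}. Neither $w_p$ nor $\widetilde{u}_p$ has enough regularity for the paradifferential calculus to yield a product estimate, so the bound in $L^1(B^{-1}_{1,\infty})$ must be produced directly. The key point, engineered through the choice of the baroclinic amplitudes, is that the on-diagonal part of $w_p\widetilde{u}_p$ matches $R_v$ up to temporal averaging, while the off-diagonal and high-frequency remainders can be written as a $z$-derivative of a small quantity, so that applying the vertical inverse divergence places them in $B^{-1}_{1,\infty}$ with a $1/\lambda$ gain. The mixed terms $w\widetilde{u}_p$ and $w_p u$ are controlled via the oscillatory paraproduct estimate of appendix~\ref{holdersection}, and it is precisely here that the two-speed temporal intermittency is needed to absorb the slowly varying factors $u$ and $w$. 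The algebraic constraints \eqref{constraints-prop} on $q_1,q_2,q_3,s_1,s_3$ enter exactly at this step, as the compatibility conditions between the concentration and intermittency exponents required to close all the estimates simultaneously.
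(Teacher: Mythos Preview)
Your outline captures the right convex-integration ingredients but contains a structural error that would prevent the scheme from closing: you define $w_p$ via $\partial_z w_p=-\nabla_h\cdot(\overline u_p+\widetilde u_p)$. The paper does \emph{not} do this. Both $\overline u_p$ and $\widetilde u_p$ are made horizontally divergence-free in their own right via skew-symmetric potential correctors (so they contribute nothing to $\partial_z w_p$), and the principal part $w_{p,v}$ is an \emph{independent}, $z$-independent perturbation $w_{p,v}=\sum_k g_{v,k}^+(\nu_v t)\,\theta\,\phi_k(\sigma_v x)\,\|R_h\|_{L^1(L^1)}$ carrying its own temporal intermittency profile $g_{v,k}^+$ with concentration $\kappa_v^{1-1/q_2}$, distinct from the profile $g_{v,k}^-$ (concentration $\kappa_v^{1/q_2}$) appearing in $u_{p,v}$.

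This independence is essential, not cosmetic. If $w_p$ is obtained from $\widetilde u_p$ by $z$-integration, the two inherit the same temporal factor and you cannot achieve the asymmetric split $\widetilde u_p\in L^{q_2-}_t$ versus $w_p\in L^{q_2'}_t$ required in \eqref{eq:main-utilde1}--\eqref{eq:main-w2e}. Nor can you explain why $\|w_p\|\lesssim\|R_h\|_{L^1(L^1)}$ in \eqref{eq:main-w1e}--\eqref{eq:main-w2e}, since nothing in your construction links $w_p$ to $R_h$; in the paper this factor is inserted by hand into the amplitude of $w_{p,v}$. Most critically, the cancellation $w_p\widetilde u_p\approx -R_v$ underlying \eqref{eq:main-product} cannot come from a square: if $w_p$ and $\widetilde u_p$ share an amplitude, their product is essentially sign-definite, whereas $R_v$ has no sign. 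The paper instead pairs $u_{p,v}\sim -R_{v,k}W_k/\|R_h\|$ with $w_{p,v}\sim\|R_h\|\,\phi_k$, so that $w_{p,v}u_{p,v}\sim -R_{v,k}\phi_kW_k$, whose horizontal average is exactly $-R_v$; the high-frequency remainder is then placed in $B^{-1}_{1,\infty}$ with a $\sigma_v^{-1}$ gain via Lemma~\ref{lemma:prod}. As a minor point, the constraints \eqref{constraints-prop} enter through Lemma~\ref{parameterlemma} to make the principal perturbation norms \eqref{eq:main-ubar1}, \eqref{eq:main-utilde2}, \eqref{eq:main-w2} small, not primarily in the product estimate as you suggest.
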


\begin{remark}
    Note that when writing $\overline{u}_p$, $\widetilde{u}_p$, we implicitly require $\overline{u}_p$ to be independent of $z$ and $\widetilde{u}_p$ mean-free with respect to $z$, see section~\ref{barotropicsection}.
\end{remark}

\begin{remark} \label{rem:uptilde-endpoint}
    Another version of Proposition~\ref{perturbativeproposition} where \eqref{eq:main-w1e} and \eqref{eq:main-w2e} are replaced by 
    \begin{align}
        \lVert \widetilde{u}_p \rVert_{L^{q_2} (L^2)} &\lesssim \lVert R_h \rVert_{L^1(L^1)}, \label{eq:main-utilde1e} \\
        \lVert \widetilde{u}_p \rVert_{L^{q_3} (H^{s_3})} &\lesssim \lVert R_h \rVert_{L^1(L^1)}, \label{eq:main-utilde2e} 
    \end{align}
    is true as well, see Remark~\ref{rem:uptilde-endpoint-proof}. This way we obtain the endpoint time integrability for $\widetilde{u}$ rather than $w$ in Theorem~\ref{mainresult}, see Remark~\ref{rem:endpoint}.
\end{remark}

Next we prove Theorem \ref{mainresult} using Proposition \ref{perturbativeproposition}.

\begin{proof}[Proof of Theorem \ref{mainresult}]
We first take $\chi_1$ and $\chi_2$ to be a $C^\infty$ partition of unity of $[0,T]$ such that $\chi_1 \equiv 1$ on $[0,3T/8]$ and $\chi_2 \equiv 1$ on $[5T/8, T]$.  
Then we define $(u_0, w_0, p_0)$ as follows
\begin{equation}
    (u_0, w_0, p_0) \coloneqq \chi_1 (u_1, w_1, p_1) + \chi_2 (u_2, w_2, p_2).
\end{equation}
For a suitable choice of $\chi_1$ and $\chi_2$, $(u_0, w_0, p_0)$ is no longer a solution of the hydrostatic Euler equations, but with a proper definition\footnote{Using the inverse divergence operators from section~\ref{inversedivergenceappendix}, a precise definition of $R_{h,0},R_{v,0}$ is straightforward.} of $R_{h,0},R_{v,0}$ it solves the hydrostatic Euler-Reynolds system \eqref{eq:hyd-er-u}-\eqref{eq:hyd-er-div}. Moreover $(u_0,w_0,p_0,R_{h,0},R_{v,0})$ is well-prepared for the time interval $I \coloneqq [T/4,3T/4] \subset [0,T]$ and parameter $\tau_0 \coloneqq T/16$. 

Taking the sequence $\epsilon_n = 2^{-n}$, $\delta_n=\delta$ with a suitable choice of $\delta>0$ (see below) and applying Proposition \ref{perturbativeproposition} inductively, we find a sequence of well-prepared solutions 
\begin{equation} \label{eq:sequence-subsol}
    \bigg(u_0 + \sum_{k=1}^n \big( \overline{u}_k + \widetilde{u}_k \big), w_0 + \sum_{k=1}^n w_{k} , p_0 + \sum_{k=1}^n P_{k} , R_{h,n}, R_{v,n}\bigg) ,
\end{equation}
of the hydrostatic Euler-Reynolds system with a sequence of well-preparedness parameters $\{ \tau_n \}$ (and the same time interval $I$). Note that $\tau_n \rightarrow 0$.

Estimates \eqref{eq:main-Rh}, \eqref{eq:main-ubar1}-\eqref{eq:main-ubar2} imply that the sequence $\Big\{ \overline{u}_0 + \sum_{k=1}^n \overline{u}_k \Big\}$ is a Cauchy sequence in the space $L^2 (\mathbb{T}^3 \times (0,T)) \cap L^{q_1} ((0,T); H^{s_1} (\mathbb{T}^3))$, the sequence $\Big\{ \widetilde{u}_0 + \sum_{k=1}^n \widetilde{u}_k \Big\}$ is Cauchy in $L^{q_2-} ((0,T); L^2 (\mathbb{T}^3)) \cap L^{q_3-} ((0,T); H^{s_3} (\mathbb{T}^3))$ and the sequence $\Big\{ w_0 + \sum_{k=1}^n w_{k} \Big\}$ is Cauchy in $L^{q_2'} ((0,T) ; L^2 (\mathbb{T}^3)) \cap L^{q_3'} ((0,T); H^{-s_3} (\mathbb{T}^3))$. In particular, by choosing $\delta$ appropriately we can identify the limits $\overline{u}, \widetilde{u}$ and $w$, as $n \rightarrow \infty$, lying in the spaces stated in Theorem~\ref{mainresult}.

Now we define the pressure $p$ by
\begin{equation} \label{pressuredefinition}
    p \coloneqq - (\Delta_h)^{-1} \big( \nabla_h \cdot ( \nabla_h \cdot (\overline{u\otimes u})) \big),
\end{equation}
where $u=\overline{u}+\widetilde{u}$.

Next, we check that the triple $(\overline{u} + \widetilde{u}, w,p)$ is a weak solution in the sense of Definition~\ref{weaksolution3Dinviscid}. We first show that $uw\in L^1((0,T);B_{1,\infty}^{-1}(\mathbb{T}^3))$. According to Lemma~\ref{lemma:paradiff-summary}, $\Big( \overline{u}_0 + \sum_{k=1}^n \overline{u}_k \Big) \Big( w_0 + \sum_{k=1}^n w_{k} \Big) \xrightarrow[]{n \rightarrow \infty} \overline{u} w$ in $L^1 ((0,T); B_{1,\infty}^{-1} (\mathbb{T}^3))$. Here we have also used that $1\geq s_1>s_3$ (which follows from \eqref{3dconstraints}) as well as Lemma~\ref{lemma:essential-besov}, and the fact that $\frac{1}{q_1} + \frac{1}{q_3'}\leq 1$ (which follows from $q_3\leq q_1$, see \eqref{3dconstraints}) in order to obtain $L^1$ integrability in time. 

Next, we need to check that $\Big( \widetilde{u}_0 + \sum_{k=1}^n \widetilde{u}_k \Big) \Big( w_0 + \sum_{k=1}^n w_{k} \Big) \xrightarrow[]{n \rightarrow \infty} \widetilde{u} w$ in $L^1 ((0,T); B^{-1}_{1,\infty} (\mathbb{T}^3))$. By estimates \eqref{eq:main-Rv} and \eqref{eq:main-product}, it follows that $\Big( \widetilde{u}_0 + \sum_{k=1}^n \widetilde{u}_k \Big) \Big( w_0 + \sum_{k=1}^n w_{k} \Big) \xrightarrow[]{n \rightarrow \infty} \widetilde{U} W$ in $L^1 ((0,T); B^{-1}_{1,\infty} (\mathbb{T}^3))$ for some element $\widetilde{U} W \in L^1 ((0,T); B^{-1}_{1,\infty} (\mathbb{T}^3))$. We need to show that $\widetilde{U} W = \widetilde{u} w$ (in a suitable Besov space).

By using estimates \eqref{eq:main-Rh}, \eqref{eq:main-utilde2} and \eqref{eq:main-w2e} we find that $\Big( \widetilde{u}_0 + \sum_{k=1}^n \widetilde{u}_k \Big) \Big( w_0 + \sum_{k=1}^n w_{k} \Big) \xrightarrow[]{n \rightarrow \infty} \widetilde{u} w$ in $L^{1-} ((0,T); B^{-s_3}_{1,\infty} (\mathbb{T}^3))$, where we have also applied Lemmas \ref{paraproduct} and \ref{resonance} (as well as the completeness of $L^p$ spaces for $0 < p < 1$, see for example \cite[~Proposition 10.5]{komornik}). Therefore because of the fact that $\Big( \widetilde{u}_0 + \sum_{k=1}^n \widetilde{u}_k \Big) \Big( w_0 + \sum_{k=1}^n w_{k} \Big)$ converges both to $\widetilde{U} W$ and $\widetilde{u} w$ in $L^{1-} ((0,T); B^{-1}_{1,\infty} (\mathbb{T}^3))$ (which is a quasi-normed space), it follows that $\widetilde{U} W = \widetilde{u} w$ in $L^{1-} ((0,T); B^{-1}_{1,\infty} (\mathbb{T}^3))$.

We note that the identification can also be made in a different way. The fact that $\Big\{ \overline{u}_0 + \sum_{k=1}^n \overline{u}_k \Big\}$ and $\Big\{ w_0 + \sum_{k=1}^n w_{k} \Big\}$ are Cauchy sequences in $L^1 (\mathbb{T}^3 \times (0,T))$ means that subsequences converge to $\tilde{u}$ respectively $w$ almost everywhere in $\mathbb{T}^3 \times (0,T)$. Hence $\Big( \widetilde{u}_0 + \sum_{k=1}^n \widetilde{u}_k \Big) \Big( w_0 + \sum_{k=1}^n w_{k} \Big)$ converges to $\tilde{u} w$ almost everywhere in $\mathbb{T}^3 \times (0,T)$. 

One can also show that $\Big( \widetilde{u}_0 + \sum_{k=1}^n \widetilde{u}_k \Big) \Big( w_0 + \sum_{k=1}^n w_{k} \Big)$ converges in $L^{1-} ((0,T); L^1(\mathbb{T}^3))$ to $\Tilde{U} W$ (which follows from slightly adapting the proof of Lemma \ref{verticalprinc} and related estimates). Therefore by Proposition 10.5 in \cite{komornik} there is another subsequence that converges to $\Tilde{U} W$ almost everywhere in $\mathbb{T}^3 \times (0,T)$. Therefore we are able to conclude that $\Tilde{U} W = \tilde{u} w$ almost everywhere in $\mathbb{T}^3 \times (0,T)$.

Furthermore, we observe that \eqref{eq:3D-inv-weak-p} immediately follows from the definition of $p$. Moreover since for any $n\in \mathbb{N}$ the quintuple \eqref{eq:sequence-subsol} satisfies \eqref{eq:hyd-er-div}, we find that $(u,w,p)$ complies with \eqref{eq:3D-inv-weak-div}. In order to show \eqref{eq:3D-inv-weak-u} we define the abbreviations
\begin{align*}
    u_n &\coloneqq u_0 + \sum_{k=1}^n \big( \overline{u}_k + \widetilde{u}_k \big), \\
    w_n &\coloneqq w_0 + \sum_{k=1}^n w_{k} ,\\
    p_n &\coloneqq p_0 + \sum_{k=1}^n P_{k} .
\end{align*}
Since \eqref{eq:sequence-subsol} satisfies \eqref{eq:hyd-er-u}, we observe that
\begingroup
\allowdisplaybreaks
\begin{align}
    &\int_0^T \int_{\mathbb{T}^3} u_n\cdot \partial_t\varphi \dx\dt + \int_0^T \int_{\mathbb{T}^3} \big( u_n\otimes u_n \big) : \nabla_h \varphi \dx \dt \notag\\
    &\quad+ \int_0^T \langle u_n w_n, \partial_z \varphi\rangle_{B^{-1}_{1,\infty} \times B^1_{\infty,1}} \dt + \int_0^T \int_{\mathbb{T}^3} p_n \nabla_h\cdot \varphi \dx\dt \notag \\
    &= \int_0^T \int_{\mathbb{T}^3} R_{h,n} : \nabla_h \varphi \dx\dt+ \int_0^T\langle R_{v,n} , \partial_z \varphi \rangle_{B^{-1}_{1,\infty} \times B^1_{\infty,1}} \dt, \label{eq:proof-2}
\end{align}
\endgroup
for any $n\in \mathbb{N}$ and any test function $\varphi\in \mathcal{D} (\mathbb{T}^3 \times (0,T))$. Note that \eqref{eq:main-Rh} and \eqref{eq:main-Rv} imply $R_{h,n} \xrightarrow[]{n \rightarrow \infty} 0$ in $L^1 ((0,T); L^1 (\mathbb{T}^3))$ and $R_{v,n} \xrightarrow[]{n \rightarrow \infty} 0$ in $L^1 ((0,T); B^{-1}_{1,\infty} (\mathbb{T}^3))$, respectively. Hence by taking the limit we deduce from \eqref{eq:proof-2} that 
\begin{equation} \label{eq:proof-3} 
    \int_0^T \int_{\mathbb{T}^3} u\cdot \partial_t\varphi \dx\dt + \int_0^T \int_{\mathbb{T}^3} \big( u\otimes u \big): \nabla_h \varphi\dx \dt + \int_0^T \langle uw, \partial_z \varphi\rangle_{B^{-1}_{1,\infty} \times B^1_{\infty,1}} \dt = 0, 
\end{equation} 
for any test function $\varphi\in \mathcal{D} (\mathbb{T}^3 \times (0,T))$ which is either mean-free with respect to $z$, or independent of $z$ with $\nabla_h\cdot \varphi=0$. Here we have used that for any $n\in \mathbb{N}$ and $\varphi$ mean-free with respect to $z$, 
$$
    \int_0^T \int_{\mathbb{T}^3} p_n \nabla_h \cdot \varphi \dx\dt = \int_0^T \int_{\mathbb{T}^2} \bigg[ p_n \nabla_h \cdot \bigg( \int_{\mathbb{T}} \varphi \dz \bigg) \bigg] \dx_1\dx_2\dt = 0, 
$$
according to \eqref{eq:hyd-er-p}, and, furthermore, that for any $n\in \mathbb{N}$ and $\varphi$ independent of $z$ with $\nabla_h\cdot \varphi=0$
$$
    \int_0^T \int_{\mathbb{T}^3} p_n \nabla_h \cdot \varphi \dx\dt = 0 .
$$
Now we are ready to prove \eqref{eq:3D-inv-weak-u}. We may split the test function $\phi_1 = \overline{\phi}_1 + \widetilde{\phi}_1$ into the barotropic and baroclinic parts, and use the Helmholtz decomposition to find test functions $\varphi,\psi$, which are independent of $z$, and such that $\overline{\phi}_1= \varphi + \nabla_h \psi$ and $\nabla_h\cdot \varphi = 0$. Then by \eqref{eq:proof-3} we have 
\begin{align*} 
    &\int_0^T \int_{\mathbb{T}^3} u \cdot\partial_t \phi_1 \dx\dt + \int_0^T \int_{\mathbb{T}^3} \big( u \otimes u \big) : \nabla_h \phi_1 \dx \dt \\
    &\quad + \int_0^T \langle uw , \partial_z \phi_1 \rangle_{B^{-s}_{1,\infty} \times B^s_{\infty,1}} \dt + \int_0^T \int_{\mathbb{T}^3} p \nabla_h \cdot \phi_1 \dx\dt \\
    &= \int_0^T \int_{\mathbb{T}^3} p \nabla_h\cdot \widetilde{\phi}_1 \dx\dt + \int_0^T \int_{\mathbb{T}^3} u \cdot \partial_t \nabla_h \psi \dx\dt \\
    &\quad + \int_0^T \int_{\mathbb{T}^3} \Big( (u\otimes u ):\nabla_h (\nabla_h \psi) + p \Delta_h\psi \Big) \dx\dt \\
    &= 0,
\end{align*}
where the latter equality follows from the fact that $p$ is independent of $z$, \eqref{eq:3D-inv-weak-div} and the definition of $p$ in equation \eqref{pressuredefinition}.

Finally, we observe that equation \eqref{solutionagreement} follows from Proposition \ref{perturbativeproposition} because the time interval $I$ of well-preparedness stays the same for the sequence \eqref{eq:sequence-subsol}. In particular, all the perturbations have support in the time interval $I$. Therefore since $(u_0, w_0, p_0)$ agrees with $(u_1, w_1, p_1)$ on $[0, T/4)$ and with $(u_2, w_2, p_2)$ on $(3T/4, T]$, the constructed solution $(u,w,p)$ will have the same properties, as no perturbations with support in $[0,T/4) \cup (3T/4, T]$ have been added. 
\end{proof}

After recalling some preliminaries in section~\ref{preliminaries}, we will prove Proposition~\ref{perturbativeproposition} in sections~\ref{perturbationsection}-\ref{reynoldsestimates}.

\section{Preliminaries} \label{preliminaries}

\subsection{Outline} \label{subsec:outline}
In this paper we are going to use Mikado flows as building blocks. These have been introduced in \cite{daneri} and are built upon a geometric lemma which goes back to \cite{Nash54}, see also \cite[Lemma~3.3]{szekelyhidi}. Later on \emph{concentrated} Mikado flows have been introduced in \cite{modena} in order to construct solutions with Sobolev regularity. In this paper the term \emph{Mikado flows} will always refer to such Mikado flows with concentration. 

In the proof of Proposition~\ref{perturbativeproposition} we will handle the two error terms $R_h$ and $R_v$ separately. To treat $R_v$ we use two-dimensional Mikado flows and Mikado densities in two directions, whereas for $R_h$ we use two-dimensional Mikado flows in several directions which are given by the above mentioned geometric lemma. We use the version of the Mikado flows which was introduced in \cite{cheskidovluo3}. We recall these flows in section~\ref{subsec:buildingblocks}. We call the perturbation which reduces the error $R_v$ \emph{vertical} and the one which reduces $R_h$ \emph{horizontal}.

The above mentioned concentration is represented by the spatial concentration parameters $\mu_h$ and $\mu_v$ which are used in the horizontal and vertical perturbation, respectively. Moreover, the perturbations will be highly oscillating flows, and this oscillation is represented by the spatial oscillation parameters $\sigma_h$ and $\sigma_v$, which are again used in the horizontal and vertical perturbation, respectively.

Finally we will use \emph{intermittent} flows. To this end we introduce temporal intermittency functions in section~\ref{subsec:intermittency}. These are time-dependent functions which contain the temporal concentration parameters $\kappa_h$, $\kappa_v$ and temporal oscillation parameters $\nu_h$, $\nu_v$.

\subsection{The parameters} 

As mentioned in section~\ref{subsec:outline} we have two sets of four parameters, namely $\{\mu_h,\sigma_h,\kappa_h,\nu_h\}$ and $\{\mu_v,\sigma_v,\kappa_v,\nu_v\}$, so eight parameters in total. In addition to that, we work with two ``master parameters'' $\lambda_h$ and $\lambda_v$, which the other parameters depend on via 
\begingroup
\allowdisplaybreaks
\begin{equation} \label{parameters}
    \begin{split}
        \nu_i &= \lambda^{a_i}_i, \quad \sigma_i = \lambda^{b_i}_i, \\
        \kappa_i &= \lambda^{c_i}_i, \quad \mu_i = \lambda_i
    \end{split}
\end{equation}
\endgroup
for $i=h,v$ and fixed exponents $a_i,b_i,c_i>0$. These exponents are determined in the following Lemma. We will later fix $\lambda_h,\lambda_v$. These parameters will be very large and such that $\sigma_h,\sigma_v\in \N$, as well as $\kappa_h, \kappa_v > 1$. 

\begin{lemma} \label{parameterlemma} 
    Let $1 \leq q_1, q_2, q_3 \leq \infty$ and $0<s_1, s_3$ satisfy the following conditions\footnote{Obviously conditions \eqref{constraints-lemma} are weaker than constraints \eqref{constraints-prop}, which again are weaker than \eqref{3dconstraints}.} 
    \begin{equation} \label{constraints-lemma}
        \frac{2}{q_1} > s_1 + 1, \quad \frac{2}{q_3} > s_3 + \frac{2}{q_2}.
    \end{equation} 
    Then we can choose $a_i,b_i,c_i>0$ for $i=h,v$ in \eqref{parameters} with the property that there exist $\gamma_h, \gamma_v > 0$ such that
    \begin{align}
        \kappa_h^{1/2-1/q_1} (\sigma_h \mu_h)^{s_1} &\leq \lambda_h^{-\gamma_h}, \label{parameterineq1} \\
        \sigma_h^{-1} \nu_h \kappa_h^{1/2} \mu_h^{-1} &\leq \lambda_h^{-\gamma_h}, \label{parameterineq2} \\ 
        \kappa_v^{1/q_2 - 1/q_3} (\sigma_v \mu_v)^{s_3} &= 1, \label{parameterineq3} \\ 
        \sigma_v^{-1} \nu_v \kappa_v^{1/2} \mu_v^{-1} &\leq \lambda_v^{-\gamma_v}, \label{parameterineq4} \\
        \kappa_v^{-\delta} &\leq \lambda_v^{-\gamma_v} ,\label{parameterineq5} 
    \end{align}
    and in addition $\mu_i,\sigma_i,\kappa_i,\nu_i\geq \lambda_i^{\gamma_i}$ for $i=h,v$.
\end{lemma}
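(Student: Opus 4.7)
The plan is to translate each of \eqref{parameterineq1}--\eqref{parameterineq5} into a linear (in)equality on the exponents $a_i,b_i,c_i$ and to verify that the two strict hypotheses in \eqref{constraints-lemma} provide exactly the slack needed to choose these exponents strictly positive. Since $\lambda_h,\lambda_v$ may be taken arbitrarily large, taking $\log_{\lambda_i}$ of \eqref{parameterineq1}--\eqref{parameterineq5} reduces the problem to finding $a_i,b_i,c_i>0$ for $i=h,v$ and $\gamma_h,\gamma_v\in(0,1]$ such that
\begin{align*}
    c_h\Big(\tfrac{1}{q_1}-\tfrac{1}{2}\Big) &\geq s_1(b_h+1) + \gamma_h, \\
    b_h + 1 - a_h - \tfrac{c_h}{2} &\geq \gamma_h, \\
    c_v\Big(\tfrac{1}{q_3}-\tfrac{1}{q_2}\Big) &= s_3(b_v+1), \\
    b_v + 1 - a_v - \tfrac{c_v}{2} &\geq \gamma_v, \\
    \delta c_v &\geq \gamma_v,
\end{align*}
together with $a_i,b_i,c_i\geq \gamma_i$ (so that $\mu_i,\sigma_i,\kappa_i,\nu_i\geq \lambda_i^{\gamma_i}$).

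I would handle the horizontal block first. The hypothesis $\frac{2}{q_1}>s_1+1$ forces $q_1<2$, hence $\frac{1}{q_1}-\frac{1}{2}>0$, and is algebraically equivalent to $\frac{s_1}{2(1/q_1-1/2)}<1$. Fixing $b_h>0$ sufficiently small therefore still yields $\frac{s_1(b_h+1)}{2(1/q_1-1/2)}<b_h+1$. I then pick $c_h$ slightly above $\frac{s_1(b_h+1)}{1/q_1-1/2}$, which secures the first line with a positive slack and simultaneously satisfies $c_h<2(b_h+1)$, and finally any $a_h\in\bigl(0,b_h+1-\tfrac{c_h}{2}\bigr)$; any $\gamma_h\in(0,1]$ smaller than the two available slacks and than $\min(a_h,b_h,c_h)$ works. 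The vertical block proceeds in the same spirit: $\frac{2}{q_3}>s_3+\frac{2}{q_2}$ combined with $s_3>0$ yields $q_3<q_2$, and is equivalent to $\frac{s_3}{2(1/q_3-1/q_2)}<1$. Fix $b_v>0$ arbitrarily and let $c_v\coloneqq \frac{s_3(b_v+1)}{1/q_3-1/q_2}$ as forced by the equality \eqref{parameterineq3}; the hypothesis gives $\frac{c_v}{2}<b_v+1$, so one can pick $a_v\in\bigl(0,b_v+1-\tfrac{c_v}{2}\bigr)$, and then any $\gamma_v\in(0,1]$ below $\delta c_v$, below $b_v+1-a_v-c_v/2$, and below $\min(a_v,b_v,c_v)$.

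There is no substantive obstacle; the lemma is essentially bookkeeping. The qualitative point worth noting is that the two inequalities in \eqref{constraints-lemma} are sharp: $\frac{2}{q_1}>s_1+1$ is precisely what is needed for \eqref{parameterineq1} (demanding $c_h$ large) and \eqref{parameterineq2} (demanding $c_h<2(b_h+1)-2a_h$) to be compatible with $a_h>0$ in the regime $b_h\to 0^+$, and analogously $\frac{2}{q_3}>s_3+\frac{2}{q_2}$ is exactly the condition that \eqref{parameterineq4} leaves room for $a_v>0$ once $c_v$ has been fixed by \eqref{parameterineq3}. Condition \eqref{parameterineq5} plays no independent role beyond requiring $\gamma_v$ to be chosen sufficiently small.
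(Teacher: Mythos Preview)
Your proof is correct and follows essentially the same approach as the paper's: both translate \eqref{parameterineq1}--\eqref{parameterineq5} via $\log_{\lambda_i}$ into the same linear system on $a_i,b_i,c_i$, then verify that the strict inequalities in \eqref{constraints-lemma} leave room for positive exponents and a positive $\gamma_i$. The only cosmetic difference is the order of choices: the paper fixes $a_i\in(0,1)$ first and then sets $c_i=2b_i$ with a specific $b_i$ so that the second line collapses to $1-a_i>0$, whereas you fix $b_i$ first, then $c_i$, then $a_i$; both arrive at the same conclusion (and your ``sufficiently small $b_h$'' is in fact unnecessary since the relevant interval for $c_h$ is nonempty for every $b_h>0$).
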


\begin{proof} 
We choose $0< a_h, a_v < 1$ and set
\begin{align*} 
    b_h &\coloneqq \frac{2s_1}{\frac{2}{q_1} -s_1 - 1}, \\ 
    b_v &\coloneqq \frac{s_3}{\frac{2}{q_3}-s_3 - \frac{2}{q_2}}, \\
    c_h &\coloneqq 2b_h, \\
    c_v &\coloneqq 2b_v.
\end{align*}
Notice that \eqref{constraints-lemma} ensures that $b_h,b_v>0$. Consequently $c_h,c_v>0$.

By taking logarithms, inequalities \eqref{parameterineq1}-\eqref{parameterineq5} are equivalent to 
\begingroup
\allowdisplaybreaks
\begin{align}
    -\bigg( \frac{1}{2} - \frac{1}{q_1} \bigg) c_h - s_1 (b_h+1) &\geq \gamma_h, \label{eq:p1}\\
    b_h - a_h - \frac{1}{2} c_h + 1 &\geq \gamma_h, \label{eq:p2}\\
    -\bigg( \frac{1}{q_2} - \frac{1}{q_3} \bigg) c_v - s_3 (b_v+1) &=0 , \label{eq:p3}\\
    b_v - a_v - \frac{1}{2} c_v + 1 &\geq \gamma_v, \label{eq:p4}\\
    \delta c_v &\geq \gamma_v. \label{eq:p5}
\end{align}
\endgroup
Using the definition of $b_v$ and $c_v$ we immediately conclude that \eqref{eq:p3} is valid. 

The required additional estimates $\mu_i,\sigma_i,\kappa_i,\nu_i\geq \lambda_i^{\gamma_i}$ for $i=h,v$ translate into the bounds $1,a_i,b_i,c_i\geq \gamma_i$. Since these upper bounds for $\gamma_i$ are positive, it remains to show that the upper bounds given by the left-hand sides of \eqref{eq:p1}, \eqref{eq:p2}, \eqref{eq:p4} and \eqref{eq:p5} are positive as well.

It is obvious that $\delta c_v >0$. Furthermore from our choice of $a_i,b_i,c_i$ we obtain
\begin{equation*}
    b_i - a_i -\frac{1}{2}c_i + 1 = 1 -a_i >0,
\end{equation*}
and 
\begin{equation*}
    -\bigg( \frac{1}{2} - \frac{1}{q_1} \bigg) c_h - s_1 (b_h+1) =  b_h \bigg( \frac{2}{q_1} -s_1 - 1 \bigg) -s_1 = s_1 > 0.
\end{equation*}
\end{proof}

\begin{remark} 
    When proving Proposition~\ref{perturbativeproposition}, inequality \eqref{parameterineq1} ensures that $\lVert \overline{u}_p\rVert_{L^{q_1}(H^{s_1})}$ can be made small (see section~\ref{subsubsec:hpp}), while inequality \eqref{parameterineq3} guarantees that both $\lVert \widetilde{u}_p\rVert_{L^{q_3-}(H^{s_3})}$ and $\lVert w_p\rVert_{L^{q_3'-}(H^{-s_3})}$ can be made small (see section~\ref{subsubsec:vpp}). Moreover \eqref{parameterineq5} will be used at several points during the proof. Finally, inequalities \eqref{parameterineq2} and \eqref{parameterineq4} make sure that the temporal parts of the linear error are controlled, see section~\ref{subsubsec:lin-err-time}. 
\end{remark}

\subsection{Inverse divergence operators} \label{inversedivergenceappendix}

Like in most of the convex integration schemes in the context of fluid dynamics in the literature, we will need inverse divergence operators in order to define the new Reynolds stress tensors $R_{h,1}$ and $R_{v,1}$. In this context the first inverse divergence operator goes back to \cite{lelliscontinuous}. In this paper we will work with three inverse divergence operators. The horizontal inverse divergence $\mathcal{R}_h$ and its bilinear version $\mathcal{B}$ will be used to define the new horizontal Reynolds stress tensor $R_{h,1}$. Those operators are treated in sections \ref{subsubsec:hid} and \ref{subsubsec:bid}, respectively. In order to determine the new vertical Reynolds stress tensor $R_{v,1}$ we need a ``vertical inverse divergence'' which is just an integral in $z$. It is introduced in section~\ref{subsubsec:vid}.

\subsubsection{Horizontal inverse divergence} \label{subsubsec:hid}

Our horizontal inverse divergence coincides with the two-dimensional inverse divergence from \cite{cheskidovluo3}. It is based upon the inverse divergence introduced in \cite{lelliscontinuous} and is defined as follows.

\begin{definition}
    We define the map\footnote{We denote the set of all symmetric $2\times 2$ matrices with zero trace by $\mathcal{S}_0^{2 \times 2}$.} $\mathcal{R}_h : C^\infty (\mathbb{T}^2; \mathbb{R}^2) \rightarrow C^\infty (\mathbb{T}^2; \mathcal{S}_0^{2 \times 2} )$ by\footnote{We are using the Einstein summation convention here, in particular we sum over $k=1,2$. Moreover, we recall that $\delta_{ij}$ is the Kronecker delta and in the definition of the inverse horizontal Laplacian $\Delta_h^{-1}$ we assume the spatial horizontal average to be zero (in order to ensure uniqueness).} 
    \begin{equation}
        (\mathcal{R}_h v)_{ij} \coloneqq \mathcal{R}_{ijk,h} v_k,
    \end{equation}
    where 
    \begin{equation}
        \mathcal{R}_{ijk,h} \coloneqq - \Delta_h^{-1} \partial_k \delta_{ij} + \Delta_h^{-1} \partial_i \delta_{jk} + \Delta_h^{-1} \partial_j \delta_{ik}
    \end{equation}
    for $i,j,k\in \{1,2\}$.
\end{definition} 

The following Lemma, which can also be found in \cite[Appendix~B]{cheskidovluo3}, summarizes some properties of the map $\mathcal{R}_h$.

\begin{lemma} \label{lemma:hid}
    \begin{enumerate}
        \item The following identities hold
        \begin{align}
            \nabla_h \cdot (\mathcal{R}_h v) &= v  - \int_{\mathbb{T}^2} v \dx, \qquad \text{ for all } v\in C^\infty(\T^2;\R^2), \label{eq:lem-Rh-id1} \\
            \mathcal{R}_h \Delta_h v &= \nabla_h v + \nabla_h v^T, \qquad \text{ for all divergence-free } v\in C^\infty(\T^2;\R^2). \label{eq:lem-Rh-id2}
        \end{align}
        
        \item For $1 \leq p \leq \infty$, the operator $\mathcal{R}_h$ is bounded, i.e., for all $f \in C^\infty (\mathbb{T}^2 ; \mathbb{R}^2)$ we have that
        \begin{equation} \label{eq:lem-Rh-est1}
            \lVert \mathcal{R}_h f \rVert_{L^p} \lesssim \lVert f \rVert_{L^p}.
        \end{equation}
        If $f$ is mean-free, i.e. $\int_{\mathbb{T}^2} f \dx = 0$, then 
        \begin{equation} \label{eq:lem-Rh-est2}
            \lVert \mathcal{R}_h f (\sigma \cdot ) \rVert_{L^p } \lesssim \sigma^{-1} \lVert f \rVert_{L^p}, \qquad \text{for any } \sigma \in \mathbb{N}. 
        \end{equation}

        \item The operator $\mathcal{R}_h\nabla_h: C^\infty(\T^2;\R^{2\times 2}) \to C^\infty(\T^2;\mathcal{S}_0^{2\times 2})$ is a Calder{\'o}n-Zygmund operator, in particular for any $1<p<\infty$ and all $A\in C^\infty(\T^2;\R^{2\times 2})$ we have 
        \begin{equation} \label{eq:lem-Rh-est3}
            \|\mathcal{R}_h\nabla_h \cdot A \|_{L^p} \lesssim \|A\|_{L^p}.
        \end{equation} 
    \end{enumerate}
\end{lemma}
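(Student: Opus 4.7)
The plan is to handle the three parts separately; each reduces to a calculation on the Fourier side. For Part 1, I would expand the definition to obtain
\[
    (\mathcal{R}_h v)_{ij} = -\Delta_h^{-1}(\nabla_h \cdot v)\,\delta_{ij} + \Delta_h^{-1}\partial_i v_j + \Delta_h^{-1}\partial_j v_i.
\]
Taking $\partial_j$ and using $\Delta_h^{-1}\Delta_h = I - P_0$ (with $P_0$ the mean-projector on $\mathbb{T}^2$), the first and second summands cancel after relabeling dummy indices, producing $v_i - \int_{\mathbb{T}^2} v_i \dx$, which is \eqref{eq:lem-Rh-id1}. For \eqref{eq:lem-Rh-id2}, the divergence-freeness of $v$ kills the first summand, and then $\Delta_h^{-1}\partial_i \Delta_h v_j = \partial_i v_j$ (since $\partial_i v_j$ is automatically mean-free), so the remaining operator applied to $\Delta_h v$ yields $\partial_i v_j + \partial_j v_i = \nabla_h v + \nabla_h v^T$.

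For the $L^p$ bounds in Part 2, the key observation is that $\mathcal{R}_h$ is an operator of order $-1$. Each summand is built from $\Delta_h^{-1}\partial_k$, whose convolution kernel on $\mathbb{T}^2$ is the gradient of the fundamental solution of the Laplacian and therefore has the singularity $x_k/|x|^2 \sim |x|^{-1}$ near the origin. In two dimensions this singularity is \emph{locally integrable}, so the kernel belongs to $L^1(\mathbb{T}^2)$, and Young's convolution inequality yields \eqref{eq:lem-Rh-est1} uniformly in $1\leq p\leq \infty$. For the scaling gain \eqref{eq:lem-Rh-est2}, I would proceed by Fourier analysis: the symbol of each component of $\mathcal{R}_h$ is homogeneous of degree $-1$, and if $f$ is mean-free with $\sigma\in\mathbb{N}$, then $f(\sigma\cdot)$ has Fourier support in $\sigma\mathbb{Z}^2\setminus\{0\}$. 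A direct symbol computation gives the identity
\[
    \mathcal{R}_h\bigl[f(\sigma\cdot)\bigr](x) = \sigma^{-1}(\mathcal{R}_h f)(\sigma x),
\]
which combined with \eqref{eq:lem-Rh-est1} and the periodic invariance $\|g(\sigma\cdot)\|_{L^p(\mathbb{T}^2)} = \|g\|_{L^p(\mathbb{T}^2)}$ (for $\sigma\in\mathbb{N}$) yields the claim.

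Part 3 is the standard Calderón--Zygmund statement. Composing $\mathcal{R}_h$ with $\nabla_h\cdot$ produces, on each component, Fourier multipliers with bounded zero-order symbol of the form $n_k n_l/|n|^2$, i.e., combinations of double Riesz transforms. Classical Calderón--Zygmund theory on $\mathbb{T}^2$ then gives boundedness on $L^p$ for every $1<p<\infty$, establishing \eqref{eq:lem-Rh-est3}.

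The only slightly delicate point, and the step I would verify most carefully, is the validity of \eqref{eq:lem-Rh-est1} at the endpoints $p = 1, \infty$. Naively one might expect trouble here, since individual multipliers of Riesz-transform type fail at these endpoints; the resolution is that $\mathcal{R}_h$ has order $-1$ rather than $0$, so in dimension two its convolution kernel has only an $L^1$-integrable singularity, and Young's inequality (rather than Calderón--Zygmund theory) suffices. Everything else is routine algebra or standard multiplier theory.
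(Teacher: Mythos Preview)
Your proposal is correct and follows the standard route. The paper itself does not give a proof of this lemma; it simply refers to \cite[Appendix~B]{cheskidovluo3}, where exactly the argument you outline appears: direct algebra on the symbol for the identities in Part~1, the observation that in two dimensions the kernel of $\Delta_h^{-1}\partial_k$ has an $L^1$-integrable singularity (so Young's inequality covers the endpoints $p=1,\infty$) together with the degree-$(-1)$ homogeneity of the symbol for Part~2, and the identification of $\mathcal{R}_h\nabla_h\cdot$ with double Riesz transforms for Part~3. Your emphasis on why the endpoints work is well placed and is precisely the point that distinguishes this two-dimensional operator from its higher-dimensional analogues.
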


For the proof we refer to \cite[Appendix~B]{cheskidovluo3}.

\subsubsection{Horizontal bilinear inverse divergence} \label{subsubsec:bid}

Next we recall the bilinear inverse divergence operator from \cite{cheskidovluo3}. For our purposes we call it \emph{horizontal} bilinear inverse divergence. 

\begin{definition}
    We define $\mathcal{B}: C^\infty(\T^2;\R^2) \times C^\infty(\T^2;\R^{2\times 2}) \to C^\infty(\T^2;\mathcal{S}_0^{2\times 2})$ by
    \begin{equation}
        ( \mathcal{B} (b, A) )_{ij} = b_l \mathcal{R}_{ijk,h} A_{lk} - \mathcal{R}_h (\partial_i b_l \mathcal{R}_{ijk,h} A_{lk} ),
    \end{equation}
    or written without components (where we have abused notation)
    \begin{equation}
        \mathcal{B} (b,A) = b \mathcal{R}_h A - \mathcal{R}_h ( \nabla_h b \mathcal{R}_h A).
    \end{equation}
\end{definition}

We will also use the following Lemma from \cite{cheskidovluo3}. 

\begin{lemma} \label{lemma:bid}
    For $1 \leq p \leq \infty$, $b \in C^\infty (\mathbb{T}^2; \mathbb{R}^2)$ and $A \in C^\infty (\mathbb{T}^2 ; \mathbb{R}^{2 \times 2})$ with $\int_{\mathbb{T}^2} A \dx = 0$, it holds that
    \begin{equation}
        \nabla_h \cdot ( \mathcal{B} (b, A)) = b A - \int_{\mathbb{T}^2} b A \dx. 
    \end{equation}
    Moreover, we have the following estimate
    \begin{equation}
        \lVert \mathcal{B} (b, A) \rVert_{L^p} \lesssim \lVert b \rVert_{C^1} \lVert \mathcal{R}_h A \rVert_{L^p}. 
    \end{equation}
\end{lemma}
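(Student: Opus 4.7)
My plan is to verify both claims by direct computation, relying exclusively on the identities and the $L^p$-bound for $\mathcal{R}_h$ recorded in Lemma~\ref{lemma:hid}. The key structural observation is that the second summand of $\mathcal{B}$ is engineered precisely to cancel the commutator produced when one distributes the divergence across the product $b\cdot\mathcal{R}_h A$ in the first summand.

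For the divergence identity I would work in components. Write the first summand as $D_{ij}:=b_l(\mathcal{R}_h A_l)_{ij}$, where $A_l$ denotes the $l$-th row of $A$. Since $\int_{\mathbb{T}^2}A\dx=0$, each row $A_l$ is mean-free, so \eqref{eq:lem-Rh-id1} gives $\partial_j(\mathcal{R}_h A_l)_{ij}=A_{li}$. The product rule yields
\[
\partial_j D_{ij}=\partial_j b_l\,(\mathcal{R}_h A_l)_{ij}+b_l A_{li},
\]
whose second summand is $(bA)_i$ and whose first summand is the unwanted commutator. Applying \eqref{eq:lem-Rh-id1} once more to the second piece of $\mathcal{B}$ shows that $\nabla_h\cdot\mathcal{R}_h(\nabla_h b\cdot\mathcal{R}_h A)$ equals $\nabla_h b\cdot\mathcal{R}_h A$ minus its spatial average, so the commutator cancels and only the constant $\int_{\mathbb{T}^2}\partial_j b_l\,(\mathcal{R}_h A_l)_{ij}\dx$ survives. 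Integration by parts on the torus combined with $\partial_j(\mathcal{R}_h A_l)_{ij}=A_{li}$ turns this average into $-\int_{\mathbb{T}^2}(bA)_i\dx$, and assembling the pieces produces $\nabla_h\cdot\mathcal{B}(b,A)=bA-\int_{\mathbb{T}^2}bA\dx$, as required.

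For the $L^p$ estimate I would split $\mathcal{B}(b,A)$ by the triangle inequality. The first summand obeys $\|b\,\mathcal{R}_h A\|_{L^p}\leq\|b\|_{L^\infty}\|\mathcal{R}_h A\|_{L^p}$, while for the second summand I apply \eqref{eq:lem-Rh-est1} and then H\"older to get
\[
\|\mathcal{R}_h(\nabla_h b\cdot\mathcal{R}_h A)\|_{L^p}\lesssim\|\nabla_h b\cdot\mathcal{R}_h A\|_{L^p}\leq\|\nabla_h b\|_{L^\infty}\|\mathcal{R}_h A\|_{L^p}.
\]
Both pieces are dominated by $\|b\|_{C^1}\|\mathcal{R}_h A\|_{L^p}$, which is the claimed bound. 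I would highlight that this route avoids the endpoint restriction $1<p<\infty$ of the Calder\'on--Zygmund estimate \eqref{eq:lem-Rh-est3}: only the cleaner bound \eqref{eq:lem-Rh-est1}, valid for the full range $1\leq p\leq\infty$, is used. There is no serious obstacle here; the only subtlety is the book-keeping of indices needed to confirm that the commutator really does cancel, and the one integration by parts required to rewrite the residual mean in the form $\int_{\mathbb{T}^2}bA\dx$.
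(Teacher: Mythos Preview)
Your proof is correct. The paper itself does not supply an argument for this lemma but simply refers the reader to \cite[Appendix~B]{cheskidovluo3}; your direct verification via the product rule, \eqref{eq:lem-Rh-id1}, and one integration by parts is exactly the natural computation, and the $L^p$ bound follows just as you say from \eqref{eq:lem-Rh-est1} without any Calder\'on--Zygmund restriction on $p$.
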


The proof of Lemma~\ref{lemma:bid} can be found in \cite[Appendix~B]{cheskidovluo3}.

\subsubsection{Vertical inverse divergence} \label{subsubsec:vid}

Finally we introduce the vertical inverse divergence as follows.

\begin{definition} \label{defn:vid}
    We define the map\footnote{We denote the space of all functions in $C^\infty(\T^3;\R^2)$ which have zero-mean with respect to $z$ by $C^\infty_{0,z}(\T^3;\R^2)$.} $\mathcal{R}_v: C^\infty_{0,z}(\T^3;\R^2) \to C^\infty(\T^3;\R^2)$ by 
    \begin{equation}
        (\mathcal{R}_v v) (x_1,x_2,z) \coloneqq \int_0^z v (x_1,x_2,z') \dz' - \int_0^1 \int_0^{z'} v (x_1,x_2,z'') \dz'' \dz'. 
    \end{equation} 
\end{definition}

The vertical inverse divergence operator has the following properties.

\begin{lemma} \label{Lpverticalbound}
    \begin{enumerate}
        \item The following identities hold for any $v\in C^\infty_{0,z}(\T^3;\R^2)$
        \begin{align}
            \int_{\T} \mathcal{R}_v v \dz &= 0, \label{eq:lem-Rv-id1} \\
            \partial_z \mathcal{R}_v v &= v , \label{eq:lem-Rv-id2} \\
            \mathcal{R}_v (\partial_{zz} v) &= \partial_z v. \label{eq:lem-Rv-id3}
        \end{align}
        
        \item For $1 \leq p,q \leq \infty$ and $s\in \mathbb{R}$, the operator $\mathcal{R}_v$ satisfies the following estimates
        \begingroup
        \allowdisplaybreaks
        \begin{align}
            \lVert \mathcal{R}_v f \rVert_{L^p} &\lesssim \lVert f \rVert_{L^p}, \label{eq:lem-Rv-est1}\\
            \lVert \mathcal{R}_v f \rVert_{B^{s}_{p,q}} &\lesssim \lVert f \rVert_{B^{s}_{p,q}}. \label{eq:lem-Rv-est2} 
        \end{align}
        \endgroup
        Moreover,
        \begin{equation} \label{eq:lem-Rv-est3}
            \lVert \mathcal{R}_v f (\sigma \cdot ) \rVert_{L^p } \lesssim \sigma^{-1} \lVert f \rVert_{L^p} \qquad \text{ for any }\sigma \in \mathbb{N}.
        \end{equation}

        \item For any $1\leq p\leq \infty$ and all $v\in C^\infty(\T^3;\R^2)$ we have 
        \begin{equation} \label{eq:lem-Rv-est4}
            \|\mathcal{R}_v\partial_z v \|_{L^p} \lesssim \|v\|_{L^p}.
        \end{equation} 
    \end{enumerate}
\end{lemma}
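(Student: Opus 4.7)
The plan is to verify the three items essentially by direct computation, with the Besov bound being the only part that requires genuine care.

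For the three identities in item (1), I would just unfold Definition~\ref{defn:vid}. The zero-mean property \eqref{eq:lem-Rv-id1} is immediate: integrating $\mathcal{R}_v v$ in $z$, the two terms cancel by Fubini. The identity \eqref{eq:lem-Rv-id2} is the fundamental theorem of calculus, since the second term in the definition is independent of $z$. For \eqref{eq:lem-Rv-id3}, first note $\partial_{zz}v$ has zero $z$-mean by periodicity, so it lies in the domain of $\mathcal{R}_v$; then
\begin{equation*}
    \mathcal{R}_v(\partial_{zz}v)(z) = [\partial_z v(z)-\partial_z v(0)] - \int_0^1[\partial_z v(z')-\partial_z v(0)]\dz' = \partial_z v(z),
\end{equation*}
using $\int_0^1 \partial_z v \dz' = 0$ by periodicity again.

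For item (2), the pointwise bound $|\mathcal{R}_v v(x_1,x_2,z)| \leq 2\int_0^1 |v(x_1,x_2,z')|\dz'$ and Minkowski/Hölder in the remaining variables deliver \eqref{eq:lem-Rv-est1}. The Besov estimate \eqref{eq:lem-Rv-est2} I would derive from the kernel representation
\begin{equation*}
    \mathcal{R}_v v(x_1,x_2,z) = \int_0^1 K(z,z')\, v(x_1,x_2,z')\dz', \qquad K(z,z') \coloneqq \mathbf{1}_{\{z'\leq z\}} - (1-z'),
\end{equation*}
obtained by Fubini on the second term in the definition. Since $K$ is a bounded kernel depending only on $(z,z')$, the operator $\mathcal{R}_v$ commutes with every horizontal Fourier multiplier and every 3D Littlewood--Paley block can then be bounded by an application of Minkowski's inequality in $z$ followed by the $L^p$ bound on the kernel operator; plugging this into the Besov norm characterisation from appendix~\ref{subsec:ap-littlewood-besov} gives \eqref{eq:lem-Rv-est2}. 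The oscillation bound \eqref{eq:lem-Rv-est3} follows from the scaling identity
\begin{equation*}
    \mathcal{R}_v\bigl(f(\sigma\,\cdot)\bigr)(x) = \sigma^{-1}\,(\mathcal{R}_v f)(\sigma x), \qquad \sigma\in\mathbb{N},
\end{equation*}
which is proved by the substitution $u=\sigma z'$ in both integrals of Definition~\ref{defn:vid}, crucially using that $f$ is zero-mean and $1$-periodic in $z$ so that the tail integral $\int_0^{\sigma z} f \,du$ reduces, modulo the periodic antiderivative, to a bounded function. Combining with periodicity of the torus change of variables and \eqref{eq:lem-Rv-est1} yields \eqref{eq:lem-Rv-est3}.

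For item (3), observe that $\partial_z v$ has zero $z$-mean by periodicity, so $\mathcal{R}_v \partial_z v$ is well-defined, and a direct computation analogous to the one for \eqref{eq:lem-Rv-id3} gives
\begin{equation*}
    \mathcal{R}_v(\partial_z v) = v - \int_0^1 v(\cdot,\cdot,z')\dz' = \widetilde{v},
\end{equation*}
the baroclinic mode of $v$ in the notation of section~\ref{subsubsec:notation}. The desired bound \eqref{eq:lem-Rv-est4} then reduces to the standard estimate $\|\widetilde{v}\|_{L^p}\lesssim \|v\|_{L^p}$ recalled at the end of section~\ref{barotropicsection}. The only genuine obstacle is the Besov estimate \eqref{eq:lem-Rv-est2}, for which the kernel representation is the cleanest route; all other parts are short computations from the definition.
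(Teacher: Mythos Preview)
Your argument is essentially the paper's for items~(1), (3), the $L^p$ bound, and the scaling estimate~\eqref{eq:lem-Rv-est3}; in fact your scaling identity $\mathcal{R}_v(f(\sigma\cdot))(x)=\sigma^{-1}(\mathcal{R}_v f)(\sigma x)$ is a clean repackaging of exactly the computation the paper carries out.

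The one place where your reasoning is incomplete is the Besov bound~\eqref{eq:lem-Rv-est2}. Your kernel representation shows that $\mathcal{R}_v$ acts pointwise in $(x_1,x_2)$, hence commutes with \emph{horizontal} Fourier multipliers, but that is not what you need: the Littlewood--Paley blocks $\Delta_j$ are full 3D multipliers. The vague appeal to ``Minkowski in $z$ followed by the $L^p$ bound on the kernel operator'' does not produce a bound on $\lVert \Delta_j \mathcal{R}_v f\rVert_{L^p}$ in terms of $\lVert \Delta_j f\rVert_{L^p}$ without first knowing that $\mathcal{R}_v$ and $\Delta_j$ commute. The paper closes this gap by observing that on $C^\infty_{0,z}$ the operator $\mathcal{R}_v$ is the Fourier multiplier $k\mapsto (2\pi i k_3)^{-1}$, from which $\mathcal{R}_v\Delta_j=\Delta_j\mathcal{R}_v$ is immediate; combined with~\eqref{eq:lem-Rv-est1} this gives~\eqref{eq:lem-Rv-est2}. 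Equivalently, from your kernel viewpoint you could verify that $\mathcal{R}_v$ is also translation-invariant in $z$ on mean-free functions (a short computation using periodicity of the antiderivative), which again yields commutation with every 3D Fourier multiplier. Either way, one extra line is needed.
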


\begin{proof}
The identities \eqref{eq:lem-Rv-id1}, \eqref{eq:lem-Rv-id2} are just a simple consequence of the definition of $\mathcal{R}_v$. We also observe that
\begin{align*} 
    \mathcal{R}_v (\partial_{zz} v) (x_1,x_2,z) &= \int_0^z \partial_{zz} v (x_1,x_2,z') \dz' - \int_0^1 \int_0^{z'} \partial_{zz} v (x_1,x_2,z'') \dz'' \dz' \\
    &= \partial_z v (x_1,x_2,z) - \int_0^1 \partial_z v (x_1,x_2,z') \dz' = \partial_z v (x_1,x_2,z),
\end{align*}
i.e. \eqref{eq:lem-Rv-id3}.

Estimate \eqref{eq:lem-Rv-est1} is established simply by moving the $L^p$ norm inside the integral. In order to prove estimate \eqref{eq:lem-Rv-est2}, we first observe that 
\begin{equation} \label{eq:proof-1}
    \mathcal{R}_v \Delta_j f = \Delta_j \mathcal{R}_v f,
\end{equation}
which can be verified by a direct computation. Alternatively, thanks to equation \eqref{eq:lem-Rv-id1} we have $(\widehat{\mathcal{R}_v f})_k = \frac{\widehat{f}_k}{2\pi i k_3}$ (where $k_3 \neq 0$) and then equation \eqref{eq:proof-1} follows by using the definition of the Littlewood-Paley blocks. From \eqref{eq:proof-1} and \eqref{eq:lem-Rv-est1} we obtain
\begin{equation*}
    \lVert \Delta_j \mathcal{R}_v f \rVert_{L^p} = \lVert \mathcal{R}_v \Delta_j f \rVert_{L^p} \lesssim \lVert \Delta_j f \rVert_{L^p},
\end{equation*}
which implies \eqref{eq:lem-Rv-est2}.

To prove \eqref{eq:lem-Rv-est3} we set $f_{\sigma} (x_1,x_2,z) := f (x_1,x_2,\sigma z)$ and compute
\begingroup
\allowdisplaybreaks
\begin{align*}
    \mathcal{R}_v f_{\sigma}(x_1,x_2,z) &= \int_0^z f_{\sigma}(x_1,x_2,z') \dz' - \int_0^1 \int_0^{z'} f_{\sigma}(x_1,x_2,z'') \dz'' \dz' \\
    &= \sigma^{-1} \int_0^{\sigma z} f(x_1,x_2,z') \dz' - \sigma^{-1} \int_0^1 \int_0^{\sigma z'} f(x_1,x_2,z'') \dz'' \dz' \\
    &= \sigma^{-1} \int_0^{\sigma z} f(x_1,x_2,z') \dz' - \sigma^{-2} \int_0^{\sigma} \int_0^{z'} f(x_1,x_2,z'') \dz'' \dz' \\
    &= \sigma^{-1} \int_0^{\sigma z} f(x_1,x_2,z') \dz' - \sigma^{-1} \int_0^{1} \int_0^{z'} f(x_1,x_2,z'') \dz'' \dz'.
\end{align*}
\endgroup
Hence
\begingroup
\allowdisplaybreaks
\begin{align*}
    &\lVert \mathcal{R}_v f_{\sigma}(x_1,x_2,\cdot) \rVert_{L^p(\T)} \\
    &\leq \sigma^{-1} \bigg( \int_{\mathbb{T}} \bigg\lvert  \int_0^{\sigma z} f(x_1,x_2,z') \dz' \bigg\rvert^p \dz \bigg)^{1/p} + \sigma^{-1} \bigg( \int_{\mathbb{T}} \bigg\lvert \int_0^{1} \int_0^{z'} f(x_1,x_2,z'') \dz'' \dz' \bigg\rvert^p \dz \bigg)^{1/p} \\
    &\leq \sigma^{-1} \bigg( \sigma^{-1} \int_{\sigma \mathbb{T}} \bigg\lvert  \int_0^{z} f(x_1,x_2,z') \dz' \bigg\rvert^p \dz \bigg)^{1/p} + \sigma^{-1} \bigg( \int_{\mathbb{T}} \bigg\lvert \int_0^{1} \int_0^{z'} f(x_1,x_2,z'') \dz'' \dz' \bigg\rvert^p \dz \bigg)^{1/p} \\
    &\leq \sigma^{-1} \bigg( \int_{\mathbb{T}} \bigg\lvert  \int_0^{z} f(x_1,x_2,z') \dz' \bigg\rvert^p \dz \bigg)^{1/p} + \sigma^{-1} \bigg( \int_{\mathbb{T}} \bigg\lvert \int_0^{1} \int_0^{z'} f(x_1,x_2,z'') \dz'' \dz' \bigg\rvert^p \dz \bigg)^{1/p} \\
    &\lesssim \sigma^{-1} \lVert f(x_1,x_2,\cdot) \rVert_{L^p(\T)}. 
\end{align*}
\endgroup
This implies
\begingroup
\allowdisplaybreaks
\begin{align*}
    \lVert \mathcal{R}_v f(\sigma\cdot) \rVert_{L^p(\T^3)} &= \bigg(\int_{\T^2} \lVert \mathcal{R}_v f(\sigma x_1, \sigma x_2, \sigma\cdot) \rVert^p_{L^p(\T)} \dx_1 \dx_2 \bigg)^{1/p} \\
    &\lesssim \sigma^{-1} \bigg(\int_{\T^2} \lVert f(\sigma x_1,\sigma x_2,\cdot) \rVert^p_{L^p(\T)} \dx_1 \dx_2 \bigg)^{1/p} \\
    &= \sigma^{-1} \lVert f \rVert_{L^p(\T^3)} ,
\end{align*}
\endgroup
i.e. \eqref{eq:lem-Rv-est3}. The case $p = \infty$ follows in a similar fashion.

Finally observe that 
\begin{align*}
    \mathcal{R}_v \partial_z v &= \int_0^z \partial_z v \dz' - \int_0^1 \int_0^{z'} \partial_z v \dz'' \dz' \\
    &= v - \int_0^1 v \dz' ,
\end{align*}
which immediately yields \eqref{eq:lem-Rv-est4}.
\end{proof}

\subsection{Building blocks for the perturbation} \label{subsec:buildingblocks}

Next we recall the building blocks. We begin with the Mikado flows and Mikado densities which we use to handle $R_v$. We state their existence together with their most important properties in the following proposition. The construction of the Mikado flows and densities is nowadays standard and goes back to \cite{daneri}. For the proof of the following proposition we refer to \cite[Section~4.1]{cheskidovluo3}.

\begin{proposition} \label{prop:mikado-v}
    For each $k\in \{1,2\}$ there exist functions $W_k\in C^\infty(\T^2; \R^2)$ and $\phi_k\in C^\infty(\T^2;\R)$ (referred to as the Mikado flows and Mikado densities respectively) depending on a parameter $\mu_v$, with the following properties: 
    \begin{enumerate}
        \item The functions $W_k, \phi_k$ have zero mean for all $k\in \{1,2\}$. Moreover
        \begin{equation} \label{eq:mean-Wvphiv}
            \int_{\T^2} W_k \phi_k \dx = \mathbf{e}_k \quad \text{ for all }k\in \{1,2\},
        \end{equation}
        where $\mathbf{e}_k$ denotes the $k$-th standard basis vector in $\R^2$, and by construction $W_k=\phi_k \mathbf{e}_k$.
        
        \item For any $k\in \{1,2\}$ there exists\footnote{We denote the set of all skew-symmetric $2\times 2$ matrices by $\mathcal{A}^{2\times 2}$.} $\Omega_k\in C^\infty(\T^2;\mathcal{A}^{2\times 2})$ with zero mean such that $W_k=\nabla_h \cdot \Omega_k$. In particular, $\nabla_h\cdot W_k=0$. Moreover $\nabla_h\cdot (W_k \phi_k)= W_k \cdot \nabla_h \phi_k = 0$.
        
        \item For all $s\geq 0$, $1\leq p\leq \infty$ and $k,k'\in \{1,2\}$ with $k\neq k'$ the following estimates hold:
        \begin{align}
            \|\phi_k\|_{W^{s,p}(\T^2)} &\lesssim \mu_v^{\frac{1}{2}-\frac{1}{p} + s} ; \label{eq:est-phiv}\\
            \|W_k\|_{W^{s,p}(\T^2)} &\lesssim \mu_v^{\frac{1}{2}-\frac{1}{p} + s}; \label{eq:est-Wv}\\
            \|\Omega_k\|_{W^{s,p}(\T^2)} &\lesssim \mu_v^{-\frac{1}{2}-\frac{1}{p}+s}; \label{eq:est-Ov}\\
            \|W_k\otimes W_{k'}\|_{L^p(\T^2)} &\lesssim \mu_v^{1-\frac{2}{p}}. \label{eq:est-WvWv}
        \end{align}
        Here the implicit constant may depend on $s,p$, but it does not depend on $\mu_v$.
    \end{enumerate}
\end{proposition}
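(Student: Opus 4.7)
The plan is to build $\phi_k$ and $W_k$ explicitly from a single one-dimensional bump function rescaled by $\mu_v$, so that all the stated properties (zero mean, the normalization integral, the divergence-free structure, and the $W^{s,p}$ estimates) can be verified by direct computation.

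First, I would fix a smooth function $\varphi \in C^\infty(\T;\R)$ which is supported in a small interval strictly inside $\T$, has zero mean, and satisfies $\int_\T \varphi^2 \dy = 1$. Define the concentrated profile
\begin{equation*}
    \varphi_{\mu_v}(y) \coloneqq \mu_v^{1/2}\,\varphi(\mu_v y),
\end{equation*}
where we tacitly assume $\mu_v \in \N$ so that $\varphi_{\mu_v}$ is well-defined on $\T$. Then $\varphi_{\mu_v}$ is smooth, still has zero mean, still satisfies $\int_\T \varphi_{\mu_v}^2 = 1$, and a change of variables gives $\|\varphi_{\mu_v}\|_{L^p(\T)} \lesssim \mu_v^{1/2-1/p}$ for every $1\leq p\leq \infty$, with the derivative bounds $\|\varphi_{\mu_v}\|_{W^{s,p}(\T)} \lesssim \mu_v^{1/2-1/p+s}$ following from the chain rule. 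Next, for $k \in \{1,2\}$ let $k'$ denote the unique element of $\{1,2\}\setminus\{k\}$, and set
\begin{equation*}
    \phi_k(x_1,x_2) \coloneqq \varphi_{\mu_v}(x_{k'}), \qquad W_k \coloneqq \phi_k \mathbf{e}_k.
\end{equation*}
Since $\phi_k$ depends only on $x_{k'}$, which is transverse to the direction $\mathbf{e}_k$ of $W_k$, we immediately obtain $\nabla_h \cdot W_k = \partial_{x_k}\phi_k = 0$ and $W_k \cdot \nabla_h \phi_k = \phi_k \partial_{x_k}\phi_k = 0$, which give $\nabla_h\cdot(W_k\phi_k)=0$. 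The zero-mean property of $\varphi_{\mu_v}$ transfers to $\phi_k$ and $W_k$, and the normalization yields
\begin{equation*}
    \int_{\T^2} W_k \phi_k \dx = \Big(\int_\T \varphi_{\mu_v}^2 \dy \Big) \mathbf{e}_k = \mathbf{e}_k.
\end{equation*}

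To construct $\Omega_k$, I would use the fact that $\phi_k$ is a function of $x_{k'}$ alone with zero mean, so its antiderivative $\Psi_k(x_{k'}) \coloneqq \int_0^{x_{k'}} \varphi_{\mu_v}(y)\dy - \int_\T \int_0^{x_{k'}}\varphi_{\mu_v}(y)\dy\,\dx_{k'}$ is a periodic, smooth, zero-mean function on $\T$ satisfying $\partial_{x_{k'}}\Psi_k = \varphi_{\mu_v}$. I would then set
\begin{equation*}
    \Omega_k \coloneqq \Psi_k\,(\mathbf{e}_k \otimes \mathbf{e}_{k'} - \mathbf{e}_{k'} \otimes \mathbf{e}_k),
\end{equation*}
which is skew-symmetric and zero-mean. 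A short calculation gives $\nabla_h\cdot \Omega_k = (\partial_{x_{k'}}\Psi_k)\mathbf{e}_k = W_k$, as required. Antidifferentiation once in $x_{k'}$ costs a factor of $\mu_v^{-1}$, so $\|\Psi_k\|_{W^{s,p}(\T)} \lesssim \mu_v^{-1/2-1/p+s}$, which transfers directly to $\|\Omega_k\|_{W^{s,p}(\T^2)}$ since $\Psi_k$ depends on a single variable and the embedding into $\T^2$ only changes the $L^p$ exponent by the total mass of $\T$ in the $x_k$ variable.

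Finally, the pointwise estimates \eqref{eq:est-phiv}, \eqref{eq:est-Wv} follow directly from the one-dimensional bound on $\varphi_{\mu_v}$: since $\phi_k(x_1,x_2) = \varphi_{\mu_v}(x_{k'})$, Fubini gives $\|\phi_k\|_{W^{s,p}(\T^2)} = \|\varphi_{\mu_v}\|_{W^{s,p}(\T)}$. For the interaction estimate \eqref{eq:est-WvWv} with $k \neq k'$, the key observation is that $W_k$ and $W_{k'}$ concentrate along transverse directions, so the product separates:
\begin{equation*}
    \|W_k \otimes W_{k'}\|_{L^p(\T^2)}^p = \int_{\T^2} |\varphi_{\mu_v}(x_{k'})|^p |\varphi_{\mu_v}(x_k)|^p \dx = \|\varphi_{\mu_v}\|_{L^p(\T)}^{2p} \lesssim \mu_v^{p-2},
\end{equation*}
which after taking the $p$-th root yields $\mu_v^{1-2/p}$, exactly \eqref{eq:est-WvWv}. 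There is no serious analytic obstacle; the only care points are (i) choosing $\varphi$ with strictly interior support so that the dilation by $\mu_v \in \N$ produces a well-defined periodic function, and (ii) enforcing the mean-zero normalization of $\varphi$ so that the antiderivative $\Psi_k$ is itself periodic, which is essential for the skew-symmetric potential $\Omega_k$ to be globally defined on $\T^2$.
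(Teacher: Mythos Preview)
Your construction is correct and is precisely the standard one the paper defers to (the paper does not supply a proof but cites \cite[Section~4.1]{cheskidovluo3}; indeed, after Proposition~\ref{prop:mikado-v-2D} the paper remarks that the one-dimensional density there ``coincides with the function $\phi_2$ from Proposition~\ref{prop:mikado-v}'', which is exactly your $\varphi_{\mu_v}$). One cosmetic point: you do not actually need $\mu_v\in\N$. Since $\varphi$ is compactly supported strictly inside a fundamental domain, the rescaling $y\mapsto \mu_v^{1/2}\varphi(\mu_v y)$ is smooth and compactly supported in $(0,1/\mu_v)\subset (0,1)$ for any real $\mu_v\geq 1$, and hence periodizes smoothly; this matches the paper's setup, where only $\sigma_h,\sigma_v$ are required to be integers.
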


Let us now recall the Mikado flows which we will use to treat $R_h$. In the following proposition $B_{1/2}(\mathbb{I})$ denotes the closed ball in $\mathcal{S}^{2\times 2}$ around the identity matrix $\mathbb{I}$ with radius $1/2$. For the proof we refer to \cite[Lemma~4.2,~Theorem~4.3]{cheskidovluo3}.

\begin{proposition} \label{prop:mikado-h}
    There exists $N\in \N$, $N\geq 3$ and for each $k\in \Lambda:=\{3,...,N\}$ there exists a flow $W_k\in C^\infty(\T^2;\R^2)$ (called Mikado flows) depending on a parameter $\mu_h$ and a function $\Gamma_k\in C^\infty(B_{1/2}(\mathbb{I});\R)$, with the following properties: 
    \begin{enumerate}
        \item The flows $W_k$ have zero mean, i.e. $\int_{\mathbb{T}^2} W_k \dx = 0$, for all $k\in \Lambda$. Moreover 
        \begin{equation} \label{eq:mean-WhWh}
            \sum_{k\in \Lambda} \Gamma_k^2(R) \int_{\T^2} W_k\otimes W_k \dx = R \quad \text{ for all }R\in B_{1/2}(\mathbb{I}).
        \end{equation}
        
        \item For any $k\in \Lambda$ there exists $\Omega_k\in C^\infty(\T^2;\mathcal{A}^{2\times 2})$ with zero mean such that $W_k=\nabla_h \cdot \Omega_k$. In particular, $\nabla_h\cdot W_k=0$. Moreover, $\nabla_h\cdot (W_k\otimes W_k)= W_k \cdot \nabla_h W_k = 0$.
        
        \item For all $s\geq 0$, $1\leq p\leq \infty$ and $k,k'\in \Lambda$ with $k\neq k'$ the following estimates hold:
        \begin{align}
            \|W_k\|_{W^{s,p}(\T^2)} &\lesssim \mu_h^{\frac{1}{2}-\frac{1}{p}+s}; \label{eq:est-Wh} \\
            \|\Omega_k\|_{W^{s,p}(\T^2)} &\lesssim \mu_h^{-\frac{1}{2}-\frac{1}{p}+s}; \label{eq:est-Oh} \\
            \|W_k\otimes W_{k'}\|_{L^p(\T^2)} &\lesssim \mu_h^{1-\frac{2}{p}}. \label{eq:est-WhWh}
        \end{align}
        Here the implicit constant may depend on $s,p$, but it does not depend on $\mu_h$.
    \end{enumerate}
\end{proposition}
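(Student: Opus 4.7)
The plan is to combine two classical ingredients: a Nash-type geometric decomposition for symmetric matrices near the identity, and a concentrated, divergence-free periodic profile along a fixed line direction.

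First, I would invoke the geometric lemma of Nash \cite{Nash54} in the form of \cite[Lemma~3.3]{szekelyhidi}: there exist finitely many pairwise non-parallel vectors $\xi_k\in\mathbb{Z}^2\setminus\{0\}$, $k\in\Lambda=\{3,\dots,N\}$, and smooth coefficient functions $\Gamma_k\in C^\infty(B_{1/2}(\mathbb{I});\mathbb{R})$ such that
\begin{equation*}
    R \;=\; \sum_{k\in\Lambda}\Gamma_k^2(R)\,\frac{\xi_k}{|\xi_k|}\otimes\frac{\xi_k}{|\xi_k|}, \qquad R\in B_{1/2}(\mathbb{I}).
\end{equation*}
That the $\xi_k$ may be chosen integer-valued---which is precisely what will make the associated Mikado flows $\mathbb{T}^2$-periodic---is a rational-approximation step inside the proof of that lemma. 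For each $k$ I then fix an integer vector $\eta_k\in\mathbb{Z}^2$ with $\gcd(\eta_k)=1$ and $\eta_k\perp\xi_k$.

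Second, I would build the concentrated Mikado flows in the spirit of \cite{modena,cheskidovluo3}. Pick, once and for all, a smooth mean-zero $\mathbb{T}$-periodic profile $\psi$ supported near the origin with $\int_{\mathbb{T}}\psi^2=1$, and for $\mu_h\in\mathbb{N}$ set
\begin{equation*}
    \phi_k(x) := \mu_h^{1/2}\,\psi(\mu_h\,\eta_k\cdot x), \qquad W_k(x) := \phi_k(x)\,\frac{\xi_k}{|\xi_k|}.
\end{equation*}
Because $\eta_k\in\mathbb{Z}^2$ and $\mu_h\in\mathbb{N}$, each $\phi_k$ descends to $\mathbb{T}^2$, and $\nabla_h\cdot W_k=0$ is immediate from $\xi_k\perp\eta_k$. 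A unimodular change of basis on $\mathbb{Z}^2$ gives $\int_{\mathbb{T}^2}\phi_k^2\dx=1$, so the Nash identity above pulls back to exactly \eqref{eq:mean-WhWh}. In two dimensions every divergence-free periodic field has a skew-symmetric matrix potential built from its stream function: letting $\Psi$ denote the mean-zero $\mathbb{T}$-periodic primitive of $\psi$, I write down a mean-zero $\Omega_k\in C^\infty(\mathbb{T}^2;\mathcal{A}^{2\times 2})$ involving $\mu_h^{-1/2}\Psi(\mu_h\eta_k\cdot x)$ and the components of $\xi_k$ such that $\nabla_h\cdot\Omega_k=W_k$.

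Third, the Sobolev bounds \eqref{eq:est-Wh} and \eqref{eq:est-Oh} follow by the one-dimensional change of variables $y=\mu_h\,\eta_k\cdot x$: each derivative in $x$ produces a factor $\mu_h$, while the change of variables itself contributes $\mu_h^{-1/p}$ in $L^p(\mathbb{T}^2)$, yielding $\|W_k\|_{W^{s,p}}\sim \mu_h^{1/2-1/p+s}$ and one fewer power of $\mu_h$ for $\Omega_k$. For $k\neq k'$, the supports of $\phi_k$ and $\phi_{k'}$ are unions of parallel strips of thickness $\sim\mu_h^{-1}$ in two distinct directions; they intersect in a periodic pattern of $\sim\mu_h^2$ parallelograms of area $\sim\mu_h^{-2}$ on which $\phi_k\phi_{k'}\sim \mu_h$, which integrates to $\|W_k\otimes W_{k'}\|_{L^p}\lesssim\mu_h^{1-2/p}$. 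The only genuine obstacle I foresee is book-keeping: simultaneously arranging that all Nash directions lie in $\mathbb{Z}^2$ and that a single integer $\mu_h$ makes every $\phi_k$ periodic on $\mathbb{T}^2$---this is the price of working on the torus rather than on $\mathbb{R}^2$, and it is handled by clearing denominators inside the proof of the geometric lemma.
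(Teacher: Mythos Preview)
Your approach is exactly what the paper invokes---it simply refers to \cite[Lemma~4.2, Theorem~4.3]{cheskidovluo3}---and the architecture (Nash decomposition with rational directions, concentrated one-dimensional profiles, skew-symmetric stream-function potential) is the correct one.

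There is, however, a normalization slip in your formula that makes the stated estimates fail as written. If $\psi$ is genuinely $\mathbb{T}$-periodic and you set $\phi_k(x)=\mu_h^{1/2}\psi(\mu_h\,\eta_k\cdot x)$ with $\mu_h\in\mathbb{N}$, then the unimodular change of variables gives
\[
\int_{\mathbb{T}^2}\phi_k^2\,\dx=\mu_h\int_{\mathbb{T}}\psi(\mu_h s)^2\,ds=\mu_h\int_{\mathbb{T}}\psi^2=\mu_h,
\]
not $1$, which breaks \eqref{eq:mean-WhWh}. The same issue corrupts the $L^p$ scaling: with a periodic $\psi$ your $\phi_k$ is supported on $\sim\mu_h$ parallel strips per period cell, not one, so your own count of $\sim\mu_h^2$ intersection parallelograms (area $\sim\mu_h^{-2}$, height $\sim\mu_h$) actually yields $\|\phi_k\phi_{k'}\|_{L^p}\sim\mu_h$, contradicting the claimed bound \eqref{eq:est-WhWh}.

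The fix is the standard one in \cite{modena,cheskidovluo3}: take $\psi_0\in C_c^\infty(\mathbb{R})$ (not periodic) with small support and $\int_{\mathbb{R}}\psi_0^2=1$, define $\phi_k=\mu_h^{1/2}\psi_0(\mu_h\,\eta_k\cdot x)$ on a fundamental domain, and then periodize. This produces a \emph{single} strip per direction per cell, after which all of your scaling arguments go through verbatim; as a bonus, no integrality of $\mu_h$ is needed for periodicity, only that $\mu_h$ be large enough for the support of $\psi_0(\mu_h\cdot)$ to fit in one period.
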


The following Lemma is a simple corollary of \eqref{eq:est-phiv}-\eqref{eq:est-Ov}, \eqref{eq:est-Wh} and \eqref{eq:est-Oh}.

\begin{lemma} \label{mikadobounds}
Let $\sigma \in \mathbb{N}$. Then we have the following bounds for all $s \geq 0$, $1 \leq p \leq \infty$ and $k\in \{1,2\}$, $k'\in \Lambda$:
\begingroup
\allowdisplaybreaks
\begin{align}
    \| \phi_k (\sigma \cdot ) \|_{W^{s,p}} &\lesssim (\sigma \mu_v)^s \mu_v^{\frac{1}{2} - \frac{1}{p}}, \label{mikadodensity} \\
    \| W_k (\sigma \cdot ) \|_{W^{s,p}} &\lesssim (\sigma \mu_v)^s \mu_v^{\frac{1}{2} - \frac{1}{p}}, \label{mikadoflow-v} \\
    \| W_{k'} (\sigma \cdot ) \|_{W^{s,p}} &\lesssim (\sigma \mu_h)^s \mu_h^{\frac{1}{2} - \frac{1}{p}}, \label{mikadoflow-h} \\
    \| \Omega_{k} (\sigma \cdot ) \|_{W^{s,p}} &\lesssim (\sigma \mu_v)^s \mu_v^{-\frac{1}{2} - \frac{1}{p}}, \label{Omega-v} \\
    \| \Omega_{k'} (\sigma \cdot ) \|_{W^{s,p}} &\lesssim (\sigma \mu_h)^s \mu_h^{-\frac{1}{2} - \frac{1}{p}}. \label{Omega-h}
\end{align} 
\endgroup
\end{lemma}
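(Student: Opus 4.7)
The plan is to observe that Lemma~\ref{mikadobounds} is a direct scaling consequence of the estimates already established in Propositions~\ref{prop:mikado-v} and \ref{prop:mikado-h}. The key point is that $\sigma \in \mathbb{N}$, so the map $x \mapsto \sigma x$ is a self-covering of $\mathbb{T}^2$, which both preserves $L^p$ norms and interacts cleanly with the chain rule.

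First I would establish the following abstract scaling lemma: for $\sigma \in \mathbb{N}$, $1 \leq p \leq \infty$, $s \geq 0$ and any smooth $f: \mathbb{T}^2 \to \mathbb{R}^m$,
\begin{equation*}
    \|f(\sigma\,\cdot)\|_{W^{s,p}(\mathbb{T}^2)} \lesssim \sigma^s \|f\|_{W^{s,p}(\mathbb{T}^2)}.
\end{equation*}
For integer $s$ this follows by combining two facts: (i) since $\sigma \in \mathbb{N}$, the change of variables $y = \sigma x$ shows $\|g(\sigma\,\cdot)\|_{L^p(\mathbb{T}^2)} = \|g\|_{L^p(\mathbb{T}^2)}$ for any $g$; and (ii) by the chain rule, $D^\alpha (f(\sigma\,\cdot)) = \sigma^{|\alpha|} (D^\alpha f)(\sigma\,\cdot)$, so applying (i) to $g = D^\alpha f$ yields the factor $\sigma^{|\alpha|}$. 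For fractional $s \geq 0$ one then interpolates between consecutive integer orders (the Gagliardo--Nirenberg or real interpolation interpretation of $W^{s,p}$ suffices), the interpolation constants being independent of $\sigma$.

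Next I would apply this scaling lemma to each of the five building blocks. For the vertical Mikado density $\phi_k$, Proposition~\ref{prop:mikado-v}\,(iii), specifically \eqref{eq:est-phiv}, gives $\|\phi_k\|_{W^{s,p}} \lesssim \mu_v^{1/2 - 1/p + s}$; combining with the scaling lemma,
\begin{equation*}
    \|\phi_k(\sigma\,\cdot)\|_{W^{s,p}} \lesssim \sigma^s \|\phi_k\|_{W^{s,p}} \lesssim \sigma^s \mu_v^{1/2 - 1/p + s} = (\sigma \mu_v)^s \mu_v^{1/2 - 1/p},
\end{equation*}
which is \eqref{mikadodensity}. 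The bounds \eqref{mikadoflow-v}, \eqref{Omega-v} for $W_k$ and $\Omega_k$ ($k \in \{1,2\}$) follow identically from \eqref{eq:est-Wv} and \eqref{eq:est-Ov}; the bounds \eqref{mikadoflow-h}, \eqref{Omega-h} for $W_{k'}, \Omega_{k'}$ ($k' \in \Lambda$) follow identically from \eqref{eq:est-Wh} and \eqref{eq:est-Oh}.

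There is no genuine obstacle here: the only subtlety is that the scaling lemma depends on $\sigma$ being an integer (so that $\sigma x$ remains on the torus), which is exactly the hypothesis $\sigma \in \mathbb{N}$ in the statement. The mildest possible pitfall is the fractional-$s$ case, which one handles either by interpolation as indicated, or, if one prefers, by replacing $W^{s,p}$ with the Besov-type definition via Littlewood--Paley projections and noting that the projection onto frequencies of size $2^j$ acting on $f(\sigma\,\cdot)$ corresponds to projecting $f$ onto frequencies of size $2^j/\sigma$, which again gives the $\sigma^s$ factor upon summing. Either way, the argument is essentially a one-line dilation computation and the lemma follows.
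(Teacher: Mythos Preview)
Your proposal is correct and takes essentially the same approach as the paper: the paper's proof simply says that for integer $s$ the estimates follow by taking derivatives (i.e., the chain rule plus the $L^p$-invariance of $x\mapsto\sigma x$ on $\mathbb{T}^2$) combined with \eqref{eq:est-phiv}--\eqref{eq:est-Ov}, \eqref{eq:est-Wh}, \eqref{eq:est-Oh}, and then one interpolates for general $s\geq 0$. Your abstract scaling lemma is just a slightly more explicit packaging of that same argument.
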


\begin{proof}
For any $s\in \N_0$, the estimates simply follow from taking derivatives and \eqref{eq:est-phiv}-\eqref{eq:est-Ov}, \eqref{eq:est-Wh} and \eqref{eq:est-Oh}. Then by interpolation we obtain the desired estimates for any $s\geq 0$.
\end{proof}

\subsection{Intermittency} \label{subsec:intermittency}

As was done in \cite{cheskidovluo1,cheskidovluo2,cheskidovluo3} we now introduce the temporal intermittency functions, which differ for the horizontal and vertical perturbations. We first fix a non-negative function $G \in C^\infty_c ((0,1/2))$ with
\begin{equation} \label{eq:intG}
    \int_{[0,1]} G^2 (t) \dt = 1.
\end{equation}

\subsubsection{Horizontal temporal intermittency functions} \label{subsubsec:htif}
We set
\begin{equation*}
    g_h (t) \coloneqq \kappa_h^{1/2} G(\kappa_h t),
\end{equation*} 
more precisely, $g_h$ is the 1-periodic extension of the right-hand side (where we require that $\kappa_h > 1$). Note that from \eqref{eq:intG} we obtain the normalisation identity
\begin{equation} \label{eq:intgh}
    \int_{[0,1]} g_h^2 \dt = 1,
\end{equation}
and furthermore it is straightforward to verify that 
\begin{equation} \label{eq:est-gh}
    \lVert g_h \rVert_{L^p([0,1])} \lesssim \kappa_h^{1/2 - 1/p},
\end{equation} 
for any $p\in [1,\infty]$. Subsequently, we introduce the temporal correction function
\begin{equation*}
    h_h(t) \coloneqq \int_0^t \big(g_h^2(\tau) - 1\big) \dtau.
\end{equation*}
Due to \eqref{eq:intgh}, $h_h$ is 1-periodic and we have 
\begin{equation} \label{eq:est-hh}
    \lVert h_h \rVert_{L^\infty([0,1])} \leq 1.
\end{equation}

\subsubsection{Vertical temporal intermittency functions} \label{subsubsec:vtif}
The vertical temporal oscillation functions are given by the 1-periodic extension (assuming that $\kappa_v > 1$) of
\begin{equation*}
    g_{v,1}^-(t) \coloneqq \kappa_v^{1/q_2} G(\kappa_v t), \quad g_{v,1}^+(t) \coloneqq \kappa_v^{1-1/q_2} G(\kappa_v t).
\end{equation*}
The corresponding temporal correction function is defined by 
\begin{equation*}
    h_{v,1}(t) \coloneqq \int_0^t \big(g_{v,1}^-(\tau)g_{v,1}^+(\tau) - 1\big) \dtau.
\end{equation*}

In addition to that we need temporal oscillation functions where the argument of $G$ is shifted. Those are defined as the 1-periodic extension of 
\begin{equation*}
    g_{v,2}^-(t) \coloneqq \kappa_v^{1/q_2} G(\kappa_v (t-1/2)), \quad g_{v,2}^+(t) \coloneqq \kappa_v^{1-1/q_2} G(\kappa_v (t-1/2)) 
\end{equation*}
with corresponding correction function
\begin{equation*}
    h_{v,2}(t) \coloneqq \int_0^t \big(g_{v,2}^-(\tau)g_{v,2}^+(\tau) - 1\big) \dtau .
\end{equation*}
Since $G$ has compact support in $(0,1/2)$ and $\kappa_v > 1$, the functions $g_{v,1}^\pm$ and $g_{v,2}^\pm$ have disjoint supports. 

Note that due to the fact that $q_2>2$, we have $1/q_2<1-1/q_2$ which justifies the notation $g_{v,k}^-$, $g_{v,k}^+$ for $k=1,2$.

Similarly to the horizontal temporal functions which we introduced in section~\ref{subsubsec:htif}, we have the following estimates for any $p\in[1,\infty]$ and $k\in\{1,2\}$
\begin{align}
    \lVert g_{v,k}^- \rVert_{L^p([0,1])} &\lesssim \kappa_v^{1/q_2 - 1/p}, \label{eq:est-gvm} \\
    \lVert g_{v,k}^+ \rVert_{L^p([0,1])} &\lesssim \kappa_v^{1-1/q_2 - 1/p}, \label{eq:est-gvp} \\
    \lVert h_{v,k} \rVert_{L^\infty([0,1])} &\leq 1. \label{eq:est-hv}
\end{align} 

Finally in a similar manner one can show that 
\begin{equation} \label{eq:est-gvm-sobolev}
    \lVert g_{v,k}^- \rVert_{W^{n,p}([0,1])} \lesssim \kappa_v^{1/q_2+n-1/p} 
\end{equation}
for any $n\in \mathbb{N}_0$ and $p\in[1,\infty]$.

\section{Velocity perturbation and new Reynolds stress tensor} \label{perturbationsection}

In sections~\ref{perturbationsection}, \ref{perturbationestimates} and \ref{reynoldsestimates} we prove Proposition~\ref{perturbativeproposition} hence we suppose that the assumptions of Proposition~\ref{perturbativeproposition} hold.

The perturbation will be written as
\begingroup
\allowdisplaybreaks
\begin{align}
    \overline{u}_p &= u_{p,h} + u_{c,h} + u_{t,h} , \label{eq:defn-up-bar} \\
    \widetilde{u}_p &= u_{p,v} + u_{c,v} + u_{t,v} , \label{eq:defn-up-tilde} \\
    w_p &= w_{p,v} + w_{t,v}, \label{eq:defn-wp}
\end{align}
\endgroup
where $u_{p,h}$ and $u_{p,v}$ are referred to as the horizontal and vertical principal parts of the perturbation, while $u_{c,h}$, $u_{c,v}$, $u_{t,h}$ and $u_{t,v}$ are referred to as the horizontal and vertical spatial and temporal correctors. 

\begin{remark} 
    We would like to remark that $u_{p,h}, u_{c,h}$ and $u_{t,h}$ do no depend on $z$, while $u_{p,v}, u_{c,v}$ and $u_{t,v}$ are mean-free with respect to $z$. Therefore, the first three are indeed a barotropic perturbation, while the latter three are a baroclinic perturbation. This is already hidden in \eqref{eq:defn-up-bar} and \eqref{eq:defn-up-tilde}.
\end{remark}

In sections~\ref{subsec:new-horizontal} and \ref{subsec:new-vertical} we define the horizontal perturbation $\overline{u}_p$ and the vertical perturbation $\widetilde{u}_p$, respectively. The pressure perturbation $P$ is determined in section~\ref{subsec:new-pressure}. Finally we define the new Reynolds stress tensors $R_{h,1}$ and $R_{v,1}$ in section~\ref{subsec:new-reynolds}.

\subsection{The horizontal perturbation} \label{subsec:new-horizontal}

We begin by constructing the horizontal perturbation which consists (see above) of a principal part $u_{p,h}$, a spatial corrector $u_{c,h}$ and a temporal corrector $u_{t,h}$.

In order to construct $u_{p,h}$, we introduce a cutoff function $\chi$. First we choose $\widetilde{\chi}\in C^\infty([0,\infty))$ to be increasing and satisfying
\begin{equation*} 
    \widetilde{\chi} (\sigma) = \left\{\begin{array}{ll}
    4 \lVert R_h \rVert_{L^1 (L^1)} & \text{ if } 0 \leq \sigma \leq \lVert R_h \rVert_{L^1 (L^1 )}, \\
    4 \sigma & \text{ if } \sigma \geq 2 \lVert R_h \rVert_{L^1 (L^1)}.
    \end{array} \right.
\end{equation*}
Next we define the function
\begin{equation*}
    \chi (x,t) \coloneqq \widetilde{\chi} \big(|R_h(x,t)| \big).
\end{equation*}
It is straightforward to check that $\mathbb{I} - \frac{R_h}{\chi} \in B_{1/2} (\mathbb{I})$ for all $(x,t) \in \mathbb{T}^2 \times [0,T]$. This means in particular that we can evaluate the functions $\Gamma_k$ (see Proposition~\ref{prop:mikado-h}) at $\mathbb{I} - \frac{R_h}{\chi}$.

We now introduce a temporal smooth cutoff function $\theta\in C^\infty([0,T];[0,1])$ which satisfies
\begin{equation} \label{eq:def-theta}
    \theta (t) = \begin{cases}
        1 \quad \text{if } \dist (t, I^c) \geq \tau, \\
        0 \quad \text{if } \dist (t, I^c) \leq \frac{1}{2}\tau,
    \end{cases} 
\end{equation}
in order to achieve the desired property of the supports of the perturbations $\overline{u}_p, \widetilde{u}_p, w_p$. The horizontal principal perturbation is then defined by
\begin{equation} \label{eq:def-uph}
    u_{p,h} (x,t) := \sum_{k \in \Lambda} a_k (x,t) W_k (\sigma_h x),
\end{equation}
where the $W_k$ are given by Proposition~\ref{prop:mikado-h} and the amplitude functions are
\begin{equation} \label{eq:def-ak}
    a_k (x,t) := \theta(t) g_h (\nu_h t) \chi^{1/2}(x,t) \Gamma_k \bigg( \mathbb{I} - \frac{R_h(x,t)}{\chi(x,t)} \bigg).
\end{equation}
Notice that $u_{p,h}$ does not need to be divergence free. To overcome this we define the corrector $u_{c,h}$ as 
\begin{equation} \label{eq:def-uch}
    u_{c,h} \coloneqq \sigma_h^{-1} \sum_{k \in \Lambda} \nabla_h a_k \cdot \Omega_k (\sigma_h x) . 
\end{equation}
Hence
\begin{equation} \label{eq:uph+uch}
    u_{p,h} + u_{c,h} = \sigma_h^{-1} \sum_{k \in \Lambda} \nabla_h \cdot (a_k (x,t) \Omega_k (\sigma_h x)),
\end{equation} 
which implies 
\begin{equation}
    \nabla_h \cdot (u_{p,h} + u_{c,h}) = \sigma_h^{-1} \sum_{k \in \Lambda} (\nabla_h \otimes \nabla_h) : (a_k (x,t) \Omega_k (\sigma_h x)) = 0,
\end{equation}
as $\Omega_k$ is skew-symmetric. Moreover, using the definition of $\theta$ in \eqref{eq:def-theta}, we have $u_{p,h}=u_{c,h}=0$ whenever $\dist (t, I^c) \leq \tau/2$.

Next, we define the horizontal temporal corrector to be 
\begin{equation} \label{eq:def-uth}
    u_{t,h} \coloneqq \nu_h^{-1} h_h (\nu_h t) (\nabla_h \cdot R_h - \nabla_h \Delta_h^{-1} [(\nabla_h \otimes \nabla_h) : R_h] ).
\end{equation}
It is straightforward to check that $(\nabla_h \otimes \nabla_h) : R_h$ is mean-free (so that the inverse Laplacian $\Delta_h^{-1}$ can be applied to this expression), $\nabla_h\cdot u_{t,h}=0$, and that $u_{t,h}=0$ whenever $\dist (t, I^c) \leq \tau/2$. 

Finally notice, that $u_{p,h}$, $u_{c,h}$ and $u_{t,h}$ are indeed independent of $z$.

\subsection{The pressure perturbation} \label{subsec:new-pressure}

The pressure perturbation is defined as follows
\begin{equation} \label{eq:def-P} 
    P \coloneqq - \theta^2 g_h^2 (\nu_h t) \chi + \nu_h^{-1} \Delta_h^{-1} (\nabla_h \otimes \nabla_h) : \partial_t (h_h (\nu_h t) R_h). 
\end{equation}
Note that $\partial_z P=0$, since $R_h$, and hence also $\chi$, are independent of $z$.

\subsection{The vertical perturbation} \label{subsec:new-vertical}

We denote the components of the vertical Reynolds stress tensor as $R_{v,k}$, $k=1,2$, i.e.,
\begin{equation*}
    R_v = \begin{pmatrix}
        R_{v,1} \\
        R_{v,2}
    \end{pmatrix}.
\end{equation*}
This allows us to define the vertical principal perturbation as
\begin{align} 
    u_{p,v}(x,t) &\coloneqq -\sum_{k=1}^2 \frac{g_{v,k}^- (\nu_v t) \theta (t) R_{v,k}(x,t) W_k (\sigma_v x)}{\lVert R_h \rVert_{L^1 (L^1 )}} , \label{eq:def-upv}\\ 
    w_{p,v}(x,t) &\coloneqq \sum_{k=1}^2 g_{v,k}^+ (\nu_v t) \theta (t) \phi_k (\sigma_v x) \lVert R_h \rVert_{L^1 (L^1 )}, \label{eq:def-wpv} 
\end{align}
where $W_k$ and $\phi_k$ are given by Proposition~\ref{prop:mikado-v}.

We now introduce the vertical spatial corrector in order to make the perturbation divergence free. We set
\begin{equation}
    u_{c,v} \coloneqq - \frac{\sigma_v^{-1}}{\lVert R_h \rVert_{L^1 (L^1 )}} \sum_{k=1}^2 \nabla_h \big(g_{v,k}^- (\nu_v t) \theta R_{v,k}\big) \Omega_k (\sigma_v x). 
\end{equation}
Observe that
\begin{equation} \label{eq:upv+ucv}
    u_{p,v} + u_{c,v} = -\frac{\sigma_v^{-1}}{\lVert R_h \rVert_{L^1 (L^1 )}} \sum_{k = 1}^2 \nabla_h \cdot \big(g_{v,k}^- (\nu_v t) \theta R_{v,k} \Omega_k (\sigma_v x)\big), 
\end{equation}
and hence
\begin{equation*}
    \nabla_h\cdot (u_{p,v} + u_{c,v}) = -\frac{\sigma_v^{-1}}{\lVert R_h \rVert_{L^1 (L^1 )}} \sum_{k = 1}^2 (\nabla_h\otimes \nabla_h) : \big(g_{v,k}^- (\nu_v t) \theta R_{v,k} \Omega_k (\sigma_v x)\big) = 0 ,
\end{equation*}
since $\Omega_k$ is skew-symmetric. Notice that $w_{p,v}$ is independent of $z$. Again according to the definition of $\theta$ in \eqref{eq:def-theta}, $u_{p,v}=u_{c,v}=w_{p,v}=0$ whenever $\dist (t, I^c) \leq \tau/2$. 

Moreover, we introduce the vertical temporal corrector to be
\begin{equation} \label{verttemporal1}
    u_{t,v} \coloneqq \nu_v^{-1} \sum_{k=1}^2 h_{v,k} (\nu_v t) \partial_z R_{v,k} \mathbf{e}_k .
\end{equation}
Since $u_{t,v}$ does not need to be divergence free, we introduce the corrector
\begin{equation} \label{verttemporal2}
    w_{t,v} \coloneqq - \nu_v^{-1} \sum_{k=1}^2 h_{v,k} (\nu_v t) \partial_{k} R_{v,k} .
\end{equation}
It is then simple to check that $\nabla_h\cdot u_{t,v} + \partial_z w_{t,v} = 0$. Similar to $u_{t,h}$, see above, we have $u_{t,v}=0$ and $w_{t,v}=0$ whenever $\dist(t,I^c)\leq \tau/2$.

Finally notice, that $u_{p,v}$, $u_{c,v}$ and $u_{t,v}$ are mean-free with respect to $z$ because $R_v$ is mean-free with respect to $z$. 

\subsection{New Reynolds stress tensors} \label{subsec:new-reynolds} 

The goal of this section is to define the new Reynolds stress tensors $R_h$ and $R_v$. These will consist of several pieces.

\subsubsection{Horizontal oscillation error}

Let us first define
\begin{align} 
    R_{\far} &\coloneqq \sum_{k,k' \in \Lambda, k \neq k'} a_{k} a_{k'} W_k (\sigma_h x) \otimes W_{k'} (\sigma_h x), \label{eq:defn-Rfar} \\
    R_{\osc,x,h} &\coloneqq \sum_{k \in \Lambda} \mathcal{B} \bigg( \nabla_h (a_k^2), W_k (\sigma_h x) \otimes W_k (\sigma_h x) - \int_{\mathbb{T}^2} W_k \otimes W_k \dx  \bigg), \label{eq:defn-Roscxh} \\ 
    R_{\osc,t,h} &\coloneqq \nu_h^{-1} h_h (\nu_h t) \partial_t R_h. \label{eq:defn-Roscth} 
\end{align}
where $\mathcal{B}$ is the bilinear inverse divergence operator from section~\ref{subsubsec:bid}. Moreover, we set 
\begin{equation} \label{eq:defn-Rosch}
    R_{\osc,h} = R_{\osc,x,h} + R_{\osc,t,h} + R_{\far}.
\end{equation}

\begin{lemma} \label{lemma:Rosch}
    We have
    \begin{equation} \label{eq:Rosch}
        \partial_t u_{t,h} + \nabla_h \cdot (u_{p,h} \otimes u_{p,h} + R_h) + \nabla_h P = \nabla_h \cdot R_{\osc,h} .
    \end{equation}
\end{lemma}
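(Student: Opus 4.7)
The strategy is to expand the nonlinearity $u_{p,h}\otimes u_{p,h}$ using the Mikado flows, isolate the mean-value piece via the geometric identity \eqref{eq:mean-WhWh}, and handle the remaining high-frequency oscillation by the bilinear inverse divergence $\mathcal{B}$. Separately, I unfold $\partial_t u_{t,h} + \nabla_h P$ using $h_h'=g_h^2-1$ and the explicit form of $P$. The two computations combine, modulo a term that vanishes by well-preparedness.

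First I write
\begin{equation*}
    u_{p,h}\otimes u_{p,h} = \sum_{k\in\Lambda} a_k^2\, (W_k\otimes W_k)(\sigma_h x) + R_{\far},
\end{equation*}
split $(W_k\otimes W_k)(\sigma_h x) = \int_{\T^2} W_k\otimes W_k + M_k(\sigma_h x)$ into mean plus fluctuation, and use \eqref{eq:mean-WhWh} with $R=\mathbb{I}-R_h/\chi$ together with \eqref{eq:def-ak} to identify
\begin{equation*}
    \sum_k a_k^2 \int_{\T^2} W_k\otimes W_k = \theta^2 g_h^2(\nu_h t)\, (\chi\mathbb{I} - R_h).
\end{equation*}
Taking divergence and using $\nabla_h(\theta^2 g_h^2)=0$ (these depend only on $t$) I obtain
\begin{equation*}
    \nabla_h\cdot(u_{p,h}\otimes u_{p,h}+R_h) = \nabla_h(\theta^2 g_h^2 \chi) + (1-\theta^2 g_h^2)\nabla_h\cdot R_h + \sum_k \nabla_h\cdot\bigl[a_k^2 M_k(\sigma_h\cdot)\bigr] + \nabla_h\cdot R_{\far}.
\end{equation*}
For each $k$, property 2 of Proposition~\ref{prop:mikado-h} gives $\nabla_h\cdot(W_k\otimes W_k)=0$, hence $\nabla_h\cdot M_k(\sigma_h\cdot)=0$, and so $\nabla_h\cdot[a_k^2 M_k(\sigma_h\cdot)] = (\nabla_h a_k^2)\cdot M_k(\sigma_h\cdot)$. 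Lemma~\ref{lemma:bid} (applicable because $M_k(\sigma_h\cdot)$ has zero mean) then yields $\nabla_h\cdot \mathcal{B}(\nabla_h a_k^2,M_k(\sigma_h\cdot)) = (\nabla_h a_k^2) M_k(\sigma_h\cdot) - \int (\nabla_h a_k^2) M_k(\sigma_h\cdot)\,dx$, and one integration by parts in $x$ shows that the constant-in-$x$ remainder equals zero (again by $\nabla_h\cdot M_k=0$). Summing over $k\in\Lambda$ therefore gives $\sum_k \nabla_h\cdot[a_k^2 M_k(\sigma_h\cdot)] = \nabla_h\cdot R_{\osc,x,h}$.

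Next I compute $\partial_t u_{t,h}+\nabla_h P$ directly from \eqref{eq:def-uth} and \eqref{eq:def-P}. Using $\partial_t h_h(\nu_h t)=\nu_h(g_h^2-1)$ and $\partial_t(h_h(\nu_h t)R_h)=\nu_h(g_h^2-1)R_h + h_h\partial_t R_h$, the two occurrences of $\nabla_h\Delta_h^{-1}[(\nabla_h\otimes\nabla_h){:}R_h]$ and the two of $\nabla_h\Delta_h^{-1}[(\nabla_h\otimes\nabla_h){:}\partial_t R_h]$ cancel pairwise, leaving
\begin{equation*}
    \partial_t u_{t,h}+\nabla_h P = (g_h^2-1)\nabla_h\cdot R_h + \nu_h^{-1} h_h(\nu_h t)\nabla_h\cdot\partial_t R_h - \nabla_h(\theta^2 g_h^2\chi) = (g_h^2-1)\nabla_h\cdot R_h + \nabla_h\cdot R_{\osc,t,h} - \nabla_h(\theta^2 g_h^2\chi).
\end{equation*}

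Adding both identities and recalling $R_{\osc,h}=R_{\osc,x,h}+R_{\osc,t,h}+R_{\far}$, the $\nabla_h(\theta^2 g_h^2\chi)$ pieces cancel, leaving the spurious term $[(g_h^2-1)+(1-\theta^2 g_h^2)]\nabla_h\cdot R_h = g_h^2(1-\theta^2)\nabla_h\cdot R_h$. The final step—which is the only point where well-preparedness enters—is to observe that this term vanishes identically: by Definition~\ref{wellprepareddef}, $R_h$ (and hence $\nabla_h\cdot R_h$) is zero on $\{t:\dist(t,I^c)\le\tau\}$, and smoothness forces all its derivatives to vanish on that closed set; conversely on $\{t:\dist(t,I^c)\ge\tau\}$ the cutoff $\theta\equiv 1$ so $1-\theta^2=0$ (see \eqref{eq:def-theta}). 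This yields \eqref{eq:Rosch}. I expect the only mildly delicate point to be the justification that the mean term produced by Lemma~\ref{lemma:bid} vanishes after one integration by parts; the rest is bookkeeping designed precisely so that the temporal corrector and the pressure absorb the respective low-frequency errors.
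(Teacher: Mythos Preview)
Your proof is correct and follows essentially the same route as the paper: expand $u_{p,h}\otimes u_{p,h}$ into diagonal and off-diagonal parts, extract the mean via the geometric identity \eqref{eq:mean-WhWh}, invoke $\nabla_h\cdot(W_k\otimes W_k)=0$ and Lemma~\ref{lemma:bid} to produce $R_{\osc,x,h}$, compute $\partial_t u_{t,h}$ and $\nabla_h P$ so that the Leray-projected pieces cancel, and kill the residual $g_h^2(1-\theta^2)\nabla_h\cdot R_h$ by well-preparedness. In fact you are slightly more careful than the paper in one spot: you explicitly verify via integration by parts and $\nabla_h\cdot M_k=0$ that the mean term $\int(\nabla_h a_k^2)M_k(\sigma_h\cdot)\,dx$ produced by Lemma~\ref{lemma:bid} vanishes, whereas the paper absorbs this step silently.
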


\begin{proof}
Let us first look at the term $\nabla_h \cdot ( u_{p,h} \otimes u_{p,h} + R_h )$. We may write 
\begin{align*}
    \nabla_h \cdot ( u_{p,h} \otimes u_{p,h} + R_h ) = \nabla_h \cdot \bigg( \sum_{k \in \Lambda} a_k^2 W_k (\sigma_h x) \otimes W_k (\sigma_h x) + R_h \bigg) + \nabla_h \cdot R_{\far}.
\end{align*}
Using the definition of the $a_k$, items 1 and 2 of Proposition~\ref{prop:mikado-h} and Lemma~\ref{lemma:bid} we find
\begingroup
\allowdisplaybreaks
\begin{align*}
    &\nabla_h \cdot \bigg( \sum_{k \in \Lambda} a_k^2 W_k (\sigma_h x) \otimes W_k (\sigma_h x) + R_h \bigg) \\
    &= \nabla_h \cdot \bigg[ \sum_{k \in \Lambda} a_k^2 \bigg( W_k (\sigma_h x) \otimes W_k (\sigma_h x) - \int_{\mathbb{T}^2} W_k \otimes W_k \dx + \int_{\T^2} W_k \otimes W_k \dx \bigg) + R_h \bigg] \\
    &= \sum_{k \in \Lambda} \nabla_h (a_k^2) \cdot \bigg( W_k (\sigma_h x) \otimes W_k (\sigma_h x) - \int_{\mathbb{T}^2} W_k \otimes W_k \dx  \bigg) \\
    &\qquad + \theta^2 g_h^2 (\nu_h t)  \nabla_h \chi + (1  - \theta^2 g_h^2(\nu_h t)) \nabla_h \cdot R_h \\
    &= \nabla_h \cdot R_{\osc,x,h} + \theta^2 g_h^2 (\nu_h t)  \nabla_h \chi + (1  - \theta^2 g_h^2(\nu_h t)) \nabla_h \cdot R_h .
\end{align*}
\endgroup

Next we compute
\begingroup
\allowdisplaybreaks
\begin{align*}
    \partial_t u_{t,h} &= \nu_h^{-1} \partial_t (h_h (\nu_h t)) \nabla_h \cdot R_h + \nu_h^{-1} h_h (\nu_h t) \nabla_h \cdot \partial_t R_h \\
    &\qquad - \nu_h^{-1} \nabla_h \Delta_h^{-1} (\nabla_h \otimes \nabla_h) : \partial_t (h_h (\nu_h t) R_h) ) \\
    &= (g_h^2 (\nu_h t) - 1) \nabla_h \cdot R_h + \nabla_h \cdot R_{\osc,t,h} - \nu_h^{-1} \nabla_h \Delta_h^{-1} (\nabla_h \otimes \nabla_h) : \partial_t (h_h (\nu_h t) R_h) .
\end{align*}
\endgroup

Hence we have shown
\begin{equation*}
    \partial_t u_{t,h} + \nabla_h \cdot (u_{p,h} \otimes u_{p,h} + R_h) + \nabla_h P = \nabla_h \cdot (R_{\osc,x,h} + R_{\osc,t,h} + R_{\far}) + g_h^2 (\nu_h t) (1 - \theta^2) \nabla_h \cdot R_h .
\end{equation*}
If $\dist(t, I^c)\leq \tau$, then $R_h=0$ by well-preparedness. If $\dist(t, I^c)\geq \tau$ then $\theta(t)=1$ and hence $1 - \theta^2=0$. This completes the proof of \eqref{eq:Rosch}.

\end{proof}

\subsubsection{Vertical oscillation error}

We define 
\begin{align}
    R_{\osc,x,v} &\coloneqq - \sum_{k=1}^2 g_{v,k}^- (\nu_v t) g_{v,k}^+ (\nu_v t) \theta^2  R_{v,k} \bigg( \phi_k (\sigma_v x) W_k (\sigma_v x) - \int_{\mathbb{T}^2} \phi_k W_k \dx \bigg), \label{eq:defn-Roscxv} \\
    R_{\osc,t,v} &\coloneqq \nu_v^{-1} \sum_{k=1}^2 h_{v,k} (\nu_v t) \partial_t R_{v,k} \mathbf{e}_k . \label{eq:defn-Rosctv} 
\end{align} 
and set
\begin{equation} \label{eq:defn-Roscv}
    R_{\osc,v} = R_{\osc,x,v} + R_{\osc,t,v}.
\end{equation}

\begin{lemma} \label{lemma:Roscv}
    We have
    \begin{equation} \label{eq:Roscv}
        \partial_t u_{t,v} + \partial_z ( w_{p,v} u_{p,v} + R_v) = \partial_z R_{\osc,v}.
    \end{equation}
\end{lemma}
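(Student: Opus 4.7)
The plan is to verify the identity by computing the left-hand side term by term and matching it with the definitions \eqref{eq:defn-Roscxv}, \eqref{eq:defn-Rosctv}, \eqref{eq:defn-Roscv}. The key structural features I would exploit are: (i) $h_{v,k}'(t)=g_{v,k}^-(t)g_{v,k}^+(t)-1$ (by definition of $h_{v,k}$), (ii) the disjoint-support property of $g_{v,1}^\pm$ and $g_{v,2}^\pm$, so that only the diagonal terms $k=k'$ survive in the product $w_{p,v}u_{p,v}$, (iii) the normalisation $\int_{\T^2}\phi_k W_k\dx = \mathbf{e}_k$ from Proposition~\ref{prop:mikado-v}, and (iv) the well-preparedness assumption on $R_v$.

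First I would compute $\partial_t u_{t,v}$ directly from \eqref{verttemporal1}. Using $h_{v,k}'=g_{v,k}^-g_{v,k}^+-1$, one gets
\[
\partial_t u_{t,v} \;=\; \sum_{k=1}^2 \bigl[(g_{v,k}^-g_{v,k}^+)(\nu_v t)-1\bigr]\,\partial_z R_{v,k}\,\mathbf{e}_k \;+\; \partial_z R_{\osc,t,v},
\]
where the second term is immediately identified with $\partial_z R_{\osc,t,v}$ by pulling $\partial_z$ inside the definition \eqref{eq:defn-Rosctv} (note that $h_{v,k}(\nu_v t)$ is independent of $z$).

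Next I would compute $\partial_z(w_{p,v}u_{p,v})$. The factors $\|R_h\|_{L^1(L^1)}$ in \eqref{eq:def-upv} and \eqref{eq:def-wpv} cancel, and the disjoint-support property of $g_{v,k}^\pm$ (see section~\ref{subsubsec:vtif}) collapses the double sum to a single sum over $k$. Since $W_k,\phi_k$ depend only on the horizontal variables, $\partial_z$ only falls on $R_{v,k}$, giving
\[
\partial_z(w_{p,v}u_{p,v}) \;=\; -\sum_{k=1}^2 (g_{v,k}^-g_{v,k}^+)(\nu_v t)\,\theta^2\,\partial_z R_{v,k}\,\phi_k(\sigma_v x)W_k(\sigma_v x).
\]
Splitting $\phi_k W_k = \bigl[\phi_k W_k - \int_{\T^2}\phi_k W_k\dx\bigr] + \mathbf{e}_k$ recognises the first piece as $\partial_z R_{\osc,x,v}$ (after pulling $\partial_z$ outside) and leaves a residual $-\sum_k (g_{v,k}^-g_{v,k}^+)(\nu_v t)\theta^2\,\partial_z R_{v,k}\mathbf{e}_k$.

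Finally I would add $\partial_z R_v = \sum_k \partial_z R_{v,k}\mathbf{e}_k$ and collect terms. The $\partial_z R_{\osc,x,v}$ and $\partial_z R_{\osc,t,v}$ pieces combine into $\partial_z R_{\osc,v}$, and the remaining factor of $\partial_z R_{v,k}\mathbf{e}_k$ carries the coefficient
\[
\bigl[(g_{v,k}^-g_{v,k}^+)(\nu_v t)-1\bigr] + \bigl[1 - (g_{v,k}^-g_{v,k}^+)(\nu_v t)\theta^2\bigr] \;=\; (g_{v,k}^-g_{v,k}^+)(\nu_v t)\bigl(1-\theta^2\bigr).
\]
To conclude I would use the well-preparedness dichotomy exactly as in the proof of Lemma~\ref{lemma:Rosch}: if $\dist(t,I^c)\le\tau$ then $R_v\equiv0$, hence $\partial_z R_{v,k}=0$; if $\dist(t,I^c)\ge\tau$ then $\theta(t)=1$ by \eqref{eq:def-theta}, so $1-\theta^2=0$. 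In either case the residual vanishes, establishing \eqref{eq:Roscv}. I do not anticipate any real obstacle; the argument is a bookkeeping check, and the only subtle point is remembering that the product $\phi_k W_k$ must be mean-corrected against $\mathbf{e}_k$ rather than against $0$.
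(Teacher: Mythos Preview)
Your proposal is correct and follows essentially the same approach as the paper's proof: both compute $w_{p,v}u_{p,v}$ using the disjoint-support property, split $\phi_k W_k$ against its mean $\mathbf{e}_k$ to extract $R_{\osc,x,v}$, use $h_{v,k}'=g_{v,k}^-g_{v,k}^+-1$ to extract $R_{\osc,t,v}$, and kill the residual $(1-\theta^2)\partial_z R_{v,k}$ via the well-preparedness dichotomy. The only cosmetic difference is the order in which the terms are grouped.
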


\begin{proof}
First we observe that 
\begin{equation*}
    w_{p,v} u_{p,v} = - \sum_{k=1}^2 g_{v,k}^- (\nu_v t) g_{v,k}^+ (\nu_v t) \theta^2 R_{v,k} \phi_{k} (\sigma_v x) W_{k} (\sigma_v x),
\end{equation*}
since $g_{v,1}^\pm g_{v,2}^\pm = 0$, see section~\ref{subsubsec:vtif}. Hence we obtain using Proposition~\ref{prop:mikado-v}
\begingroup
\allowdisplaybreaks
\begin{align*}
    &\partial_z (w_{p,v} u_{p,v} + R_v)\\
    &= - \sum_{k=1}^2 g_{v,k}^- (\nu_v t) g_{v,k}^+ (\nu_v t) \theta^2 \partial_z R_{v,k} \bigg( \phi_k (\sigma_v x)  W_k (\sigma_v x) - \int_{\mathbb{T}^2} \phi_k W_k \dx \bigg) \\
    &\qquad - \sum_{k=1}^2 \big( g_{v,k}^- (\nu_v t) g_{v,k}^+ (\nu_v t) - 1 \big) \partial_z R_{v,k} \mathbf{e}_k + \sum_{k=1}^2 g_{v,k}^- (\nu_v t) g_{v,k}^+ (\nu_v t) (1 - \theta^2) \partial_z R_{v,k} \mathbf{e}_k \\ 
    &= \partial_z R_{\osc,x,v} - \sum_{k=1}^2 \big( g_{v,k}^- (\nu_v t) g_{v,k}^+ (\nu_v t) - 1 \big) \partial_z R_{v,k} \mathbf{e}_k .
\end{align*}
\endgroup
Here we used that $(1 - \theta^2) \partial_z R_{v,k}=0$, see the proof of Lemma~\ref{lemma:Rosch}. Moreover a straightforward computation shows 
\begin{align*}
    \partial_t u_{t,v} - \sum_{k=1}^2 \big( g_{v,k}^- (\nu_v t) g_{v,k}^+ (\nu_v t) - 1 \big) \partial_z R_{v,k} \mathbf{e}_k &= \nu_v^{-1} \sum_{k=1}^2 h_{v,k}(\nu_v t) \partial_t \partial_z R_{v,k} \mathbf{e}_k \\
    &= \partial_z R_{\osc,t,v},
\end{align*}
which finishes the proof of Lemma~\ref{lemma:Roscv}.
\end{proof}

\subsubsection{Linear errors} \label{subsubsec:lin-errors}

Next we define the horizontal and vertical linear errors by 
\begingroup
\allowdisplaybreaks
\begin{align*} 
    R_{\lin,h} \coloneqq \mathcal{R}_h\bigg[ &\partial_t \big(u_{p,h}+u_{c,h}\big) + \nabla_h \cdot \bigg( \overline{u}\otimes \big(u_{p,h}+u_{c,h}+u_{t,h}\big) + \overline{u\otimes\big(u_{p,v}+u_{c,v}+u_{t,v}\big)} \\
    &\qquad\qquad + \big(u_{p,h}+u_{c,h}+u_{t,h}\big)\otimes \overline{u} + \overline{\big(u_{p,v}+u_{c,v}+u_{t,v}\big)\otimes u} \bigg) \bigg],
\end{align*}
\endgroup
and 
\begin{align*} 
    R_{\lin,v} \coloneqq \mathcal{R}_v\bigg[ &\partial_t \big(u_{p,v}+u_{c,v}\big) + \nabla_h \cdot \bigg( \widetilde{u}\otimes \big(u_{p,h}+u_{c,h}+u_{t,h}\big) + \reallywidetilde{u\otimes\big(u_{p,v}+u_{c,v}+u_{t,v}\big)} \\
    &\qquad\qquad + \big(u_{p,h}+u_{c,h}+u_{t,h}\big)\otimes \widetilde{u} + \reallywidetilde{\big(u_{p,v}+u_{c,v}+u_{t,v}\big)\otimes u} \bigg) \\
    &\qquad + \partial_z \bigg( w\big(u_{p,h}+u_{c,h}+u_{t,h} + u_{p,v}+u_{c,v}+u_{t,v}\big) + \big(w_{p,v}+w_{t,v}\big) u \bigg) \bigg].
\end{align*}
Note that the arguments of the operators $\mathcal{R}_h$ and $\mathcal{R}_v$ satisfy the required properties, i.e. they are independent of $z$ and mean-free with respect to $z$, respectively.

For convenience let us write
\begin{align*} 
    R_{\lin,t,h} &\coloneqq \mathcal{R}_h \partial_t \big(u_{p,h}+u_{c,h}\big), \\
    R_{\lin,t,v} &\coloneqq \mathcal{R}_v \partial_t \big(u_{p,v}+u_{c,v}\big),
\end{align*}
and 
\begin{align*} 
    R_{\lin,x,h} &\coloneqq R_{\lin,h} - R_{\lin,t,h}, \\
    R_{\lin,x,v} &\coloneqq R_{\lin,v} - R_{\lin,t,v}.
\end{align*}

\subsubsection{Corrector errors} \label{subsubsec:cor-errors}

Finally, we define the horizontal and vertical corrector errors by 
\begin{align*} 
    R_{\cor,h} \coloneqq \mathcal{R}_h\bigg[ &\nabla_h \cdot \bigg( \big(u_{c,h}+u_{t,h}\big) \otimes\big(u_{c,h}+u_{t,h}\big) + u_{p,h} \otimes\big(u_{c,h}+u_{t,h}\big) + \big(u_{c,h}+u_{t,h}\big) \otimes u_{p,h} \\
    &\qquad\qquad + \overline{\big(u_{p,v}+u_{c,v}+u_{t,v}\big) \otimes\big(u_{p,v}+u_{c,v}+u_{t,v}\big)} \bigg)\bigg]
\end{align*}
and
\begin{align*} 
    R_{\cor,v} \coloneqq \mathcal{R}_v\bigg[ &\nabla_h \cdot \bigg( \reallywidetilde{\big(u_{p,v}+u_{c,v}+u_{t,v}\big) \otimes\big(u_{p,v}+u_{c,v}+u_{t,v}\big)} \\
    &\qquad\qquad + \big(u_{p,h}+u_{c,h}+u_{t,h}\big) \otimes\big(u_{p,v}+u_{c,v}+u_{t,v}\big) \\
    &\qquad\qquad + \big(u_{p,v}+u_{c,v}+u_{t,v}\big) \otimes\big(u_{p,h}+u_{c,h}+u_{t,h}\big) \bigg) \\
    &\qquad + \partial_z \bigg( w_{t,v} \big(u_{p,h}+u_{c,h}+u_{t,h} + u_{p,v}+u_{c,v}+u_{t,v}\big) \\
    &\qquad\qquad + w_{p,v} \big(u_{c,v}+u_{t,v}\big) \bigg) \bigg].
\end{align*} 
As in section~\ref{subsubsec:lin-errors} we remark that the arguments of the operators $\mathcal{R}_h$ and $\mathcal{R}_v$ satisfy the required properties, i.e. they are independent of $z$ and mean-free with respect to $z$, respectively.

\subsubsection{Conclusion} 

The new Reynolds stress tensors $R_{h,1}, R_{v,1}$ are then given by
\begin{align*}
    R_{h,1} \coloneqq R_{\osc,h} + R_{\lin,h} + R_{\cor,h},\qquad R_{v,1} \coloneqq R_{\osc,v} + R_{\lin,v} + R_{\cor,v}.
\end{align*}

First, we note that by definition $R_{\osc,h}$, $R_{\lin,h}$ and $R_{\cor,h}$ (and consequently also $R_{h,1}$) are independent of $z$. Moreover, by definition $R_{\osc,v}$ is mean-free with respect to $z$, and so are $R_{\lin,v}$ and $R_{\cor,v}$ according to Lemma~\ref{Lpverticalbound} (consequently $R_{v,1}$ has the same property).

Next we remark that $R_{h,1} (x,t) = R_{v,1} (x,t) = 0$ whenever $\dist (t, I^c) \leq \frac{\tau}{2}$. Indeed, for the oscillation errors $R_{\osc,h}$, $R_{\osc,v}$ this follows from the fact that $R_h=R_v=0$ whenever $\dist(t,I^c)\leq \tau$, and the definition of $\theta$, see \eqref{eq:def-theta}. The fact that $R_{\lin,h}=0$, $R_{\lin,v}=0$, $R_{\cor,h}=0$, and $R_{\cor,v}=0$ whenever $\dist (t, I^c) \leq \frac{\tau}{2}$ immediately follows from item 1 of Proposition~\ref{perturbativeproposition}, which we already proved in section~\ref{subsec:new-horizontal} and \ref{subsec:new-vertical}

Finally, with the help of 
\begin{itemize}
    \item the fact that $\nabla_h\cdot (u+\overline{u}_p+\widetilde{u}_p) + \partial_z (w+w_p)=0$, 
    \item Lemmas~\ref{lemma:Rosch} and \ref{lemma:Roscv}, 
    \item Lemmas~\ref{lemma:hid} and \ref{Lpverticalbound}, 
    \item the fact that $\int_{\T^2} \big(u_{p,h} + u_{c,h}\big) \dx =0$, see \eqref{eq:uph+uch}, 
    \item and $\partial_z u_{p,h} = \partial_z u_{c,h} = \partial_z u_{t,h} = \partial_z w_{p,v} = 0$, see section~\ref{subsec:new-horizontal} and section~\ref{subsec:new-vertical}, 
\end{itemize}    
a long but straightforward computation shows 
\begin{align*}
    &\partial_t (u+\overline{u}_p+\widetilde{u}_p) + (u+\overline{u}_p+\widetilde{u}_p)\cdot \nabla_h (u+\overline{u}_p+\widetilde{u}_p) + (w+w_p) \partial_z (u+\overline{u}_p+\widetilde{u}_p) + \nabla_h (p+P) \\
    &= \partial_t \big(\overline{u}_p+\widetilde{u}_p\big) + \nabla_h \cdot \Big( u\otimes \big(\overline{u}_p+\widetilde{u}_p\big) + \big(\overline{u}_p+\widetilde{u}_p\big)\otimes u + \big(\overline{u}_p+\widetilde{u}_p\big)\otimes\big(\overline{u}_p+\widetilde{u}_p\big) \Big) \\
    &\qquad + \partial_z \Big( w\big(\overline{u}_p+\widetilde{u}_p\big) + w_p u + w_p \big(\overline{u}_p+\widetilde{u}_p\big)\Big) + \nabla_h P + \nabla_h \cdot R_h + \partial_z R_v \\
    &= \nabla_h \cdot R_{h,1} + \partial_z R_{v,1}.
\end{align*}
Hence $(u + \overline{u}_p + \widetilde{u}_p, w + w_p, p + P, R_{h,1}, R_{v,1})$ solves \eqref{eq:hyd-er-u}.

\section{Estimates on the perturbation} \label{perturbationestimates}

In the remaining sections we will use the following convention: for quantities $Q_1$ and $Q_2$ we write $Q_1 \lesssim Q_2$ if there exists a constant $C$ such that $Q_1 \leq C Q_2$. In general we require that $C$ does not depend on $(u,w,p,R_h,R_v)$. However if the right-hand side $Q_2$ only contains powers of the parameters $\mu_i,\sigma_i,\kappa_i,\nu_i,\lambda_i$ ($i=h,v$), then the implicit constant $C$ may depend on $(u,w,p,R_h,R_v)$. 

\subsection{Principal perturbation}

\subsubsection{Horizontal principal perturbation} \label{subsubsec:hpp}
First, we estimate the horizontal part of the principal perturbation. We recall the following Lemma from \cite{cheskidovluo3}.
\begin{lemma} \label{coefficientlemma}
    We have the following estimates for all $n,m\in \N_0$, $p\in [1,\infty]$
    \begingroup
    \allowdisplaybreaks
    \begin{align}
        \lVert \partial^n_t \nabla^m a_k \rVert_{L^p (L^\infty)} &\lesssim (\nu_h \kappa_h)^n \kappa_h^{1/2-1/p}, \label{eq:est-ak-1} \\ 
        \lVert a_k (t) \rVert_{L^2} &\lesssim \theta(t) g_h (\nu_h t) \bigg( \int_{\mathbb{T}^2} \chi (x,t) \dx \bigg)^{1/2}, \quad \text{ for any }t\in [0,T]. \label{eq:est-ak-2} 
    \end{align}
    \endgroup
    The implicit constant in \eqref{eq:est-ak-1} might depend on $u$ or $R_h$, whereas the implicit constant in \eqref{eq:est-ak-2} neither depends on $t$ nor on $u$ or $R_h$.
\end{lemma}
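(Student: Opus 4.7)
The plan is to handle the two estimates separately. Estimate \eqref{eq:est-ak-2} is a direct pointwise computation, while \eqref{eq:est-ak-1} reduces to a Leibniz-rule expansion of $\partial_t^n \nabla^m a_k$ combined with the scaling of $g_h$.

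For \eqref{eq:est-ak-2}, I would note that since $\chi \geq 4\|R_h\|_{L^1(L^1)} > 0$ by the construction of $\widetilde{\chi}$, the argument $\mathbb{I} - R_h/\chi$ stays inside $B_{1/2}(\mathbb{I})$, and hence $|\Gamma_k(\mathbb{I} - R_h/\chi)| \leq \|\Gamma_k\|_{L^\infty(B_{1/2}(\mathbb{I}))}$. This yields the pointwise bound $|a_k(x,t)|^2 \lesssim \theta(t)^2 g_h(\nu_h t)^2 \chi(x,t)$. Integration over $\mathbb{T}^2$ and taking the square root gives the claim, with an implicit constant depending only on the fixed function $\Gamma_k$, and thus independent of $t$, $u$ and $R_h$.

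For \eqref{eq:est-ak-1}, I would write $a_k = A(t) F(x,t)$ where $A(t) \coloneqq \theta(t) g_h(\nu_h t)$ and $F(x,t) \coloneqq \chi^{1/2}(x,t) \Gamma_k(\mathbb{I} - R_h(x,t)/\chi(x,t))$, apply the Leibniz rule to $\partial_t^n \nabla^m a_k$, take $L^\infty$ in $x$ first, and then $L^p$ in $t$. The crucial observation is that since $\chi$ is uniformly bounded below by the strictly positive constant $4\|R_h\|_{L^1(L^1)}$, both $\chi^{1/2}$ and $1/\chi$ are smooth in $(x,t)$; combined with the smoothness of $R_h$, of $\theta$, and of $\Gamma_k$ on the compact set $B_{1/2}(\mathbb{I})$, this shows that $F$ and $\theta$ have space-time derivatives of all orders bounded in $L^\infty_{x,t}$ by constants depending on $R_h$ but independent of the parameters $\nu_h$, $\kappa_h$, $\sigma_h$, $\mu_h$. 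Consequently, the only factor carrying parameter-dependent scaling is $g_h(\nu_h t)$, so in the Leibniz expansion the dominant term is the one where all $n$ time derivatives fall on $g_h(\nu_h t)$. Using $g_h^{(n)}(t) = \kappa_h^{n+1/2} G^{(n)}(\kappa_h t)$ together with the $1$-periodicity of $g_h$ and a change of variables (extending the reasoning behind \eqref{eq:est-gh}), one finds $\|\partial_t^n[g_h(\nu_h \cdot)]\|_{L^p([0,T])} \lesssim (\nu_h\kappa_h)^n \kappa_h^{1/2-1/p}$. Subdominant terms in the Leibniz expansion carry at most $n-1$ derivatives on $g_h$ and are absorbed into the main term since $\nu_h, \kappa_h \geq 1$ by Lemma \ref{parameterlemma}.

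The only real bookkeeping step is verifying that derivatives of the composition $\Gamma_k(\mathbb{I} - R_h/\chi)$ are truly parameter-independent. Since $\Gamma_k \in C^\infty(B_{1/2}(\mathbb{I}))$ has all derivatives bounded on this compact set, the Fa\`a di Bruno formula expresses any derivative of the composition as a polynomial in derivatives of $R_h$ and in negative powers of $\chi$, all of which are bounded by constants depending only on $R_h$ (via its $C^k_{x,t}$ norms and the lower bound $4\|R_h\|_{L^1(L^1)}$). This is the step where care is needed, but no conceptual obstacle arises.
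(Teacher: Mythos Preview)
Your argument is correct and follows the natural line of reasoning. The paper itself does not give a proof of this lemma but simply refers to \cite[Lemma~5.2]{cheskidovluo3}; your sketch reproduces essentially the argument one finds there: the pointwise bound $|a_k|^2 \lesssim \theta^2 g_h(\nu_h t)^2 \chi$ for \eqref{eq:est-ak-2}, and for \eqref{eq:est-ak-1} the observation that all parameter-dependence sits in $g_h(\nu_h t)$, whose derivatives scale like $(\nu_h\kappa_h)^n \kappa_h^{1/2-1/p}$ in $L^p_t$, while the factor $\chi^{1/2}\Gamma_k(\mathbb{I}-R_h/\chi)$ has space-time $C^k$ bounds depending only on $R_h$ thanks to the uniform lower bound $\chi\geq 4\|R_h\|_{L^1(L^1)}$.
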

For the proof we refer to \cite[Lemma~5.2]{cheskidovluo3}.

With Lemma~\ref{coefficientlemma} at hand we are ready to prove the estimates on the horizontal principal perturbation. 
\begin{lemma} \label{horizontalprinc} 
    If $\lambda_h$ is chosen sufficiently large (depending on $R_h$), then the horizontal principal perturbation satisfies the following estimates
    \begin{align}
        \lVert u_{p,h} \rVert_{L^2 (L^2)} &\lesssim \lVert R_h \rVert_{L^1(L^1)}^{1/2}, \label{eq:est-uph-1} \\ 
        \lVert u_{p,h} \rVert_{L^{q_1} (H^{s_1})} &\lesssim \lambda_h^{-\gamma_h}. \label{eq:est-uph-2}
    \end{align}
\end{lemma}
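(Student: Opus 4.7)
\textbf{Proof plan for Lemma~\ref{horizontalprinc}.} The plan is to exploit the scale separation: the amplitudes $a_k$ vary on the ``slow'' spatial scale (encoded by $R_h$ and $\chi$) while the building blocks $W_k(\sigma_h\cdot)$ oscillate at the very fast scale $\sigma_h$, and the factor $g_h(\nu_h t)$ oscillates fast in time with normalisation $\int_0^1 g_h^2=1$. Consequently, for $\lambda_h$ large enough the improved H\"older inequality (appendix~\ref{holdersection}) decouples the slow amplitude from the fast oscillating building block, reducing everything to the estimates on $a_k$ from Lemma~\ref{coefficientlemma} and on $W_k$ from Proposition~\ref{prop:mikado-h}.

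For the $L^2(L^2)$ bound \eqref{eq:est-uph-1}, I will first apply the improved H\"older inequality in the spatial variable to each summand $a_k(\cdot,t)W_k(\sigma_h\cdot)$, with the error term absorbed by choosing $\sigma_h$ (and hence $\lambda_h$) sufficiently large. Since $\|W_k\|_{L^2(\T^2)}\lesssim 1$ by \eqref{eq:est-Wh} with $p=2,s=0$, this yields $\|u_{p,h}(\cdot,t)\|_{L^2}^2\lesssim \sum_k\|a_k(\cdot,t)\|_{L^2}^2$. Using \eqref{eq:est-ak-2} and the construction of $\chi$, which implies $\int_{\T^2}\chi(x,t)\dx \lesssim \|R_h(\cdot,t)\|_{L^1}+\|R_h\|_{L^1(L^1)}$, this gives
\begin{equation*}
\|u_{p,h}(\cdot,t)\|_{L^2}^2 \lesssim \theta^2(t) g_h^2(\nu_h t)\Big(\|R_h(\cdot,t)\|_{L^1}+\|R_h\|_{L^1(L^1)}\Big).
\end{equation*}
Integrating in time and using the improved H\"older inequality in the time variable (again justified since $\nu_h$ is large), together with $\int_0^1 g_h^2=1$, yields $\|u_{p,h}\|_{L^2(L^2)}^2\lesssim \|R_h\|_{L^1(L^1)}$, which is the desired bound.

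For the $L^{q_1}(H^{s_1})$ bound \eqref{eq:est-uph-2}, I will separate variables. Write $u_{p,h}(x,t)=g_h(\nu_h t)\sum_k A_k(x,t) W_k(\sigma_h x)$ where $A_k\coloneqq \theta\,\chi^{1/2}\,\Gamma_k(\mathbb{I}-R_h/\chi)$ is smooth with $C^N$ norms depending on $R_h$ but independent of the intermittency parameters. Standard product (Moser-type) estimates in $H^{s_1}$ combined with Lemma~\ref{mikadobounds} give $\|A_k(\cdot,t)W_k(\sigma_h\cdot)\|_{H^{s_1}}\lesssim (\sigma_h\mu_h)^{s_1}$, where the implicit constant may depend on $R_h$ (consistent with the convention in section~\ref{perturbationestimates}). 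Taking the $L^{q_1}$ norm in time and using $\|g_h(\nu_h\cdot)\|_{L^{q_1}(0,T)}\lesssim \kappa_h^{1/2-1/q_1}$ (which follows from \eqref{eq:est-gh} and the $1$-periodicity), one arrives at
\begin{equation*}
\|u_{p,h}\|_{L^{q_1}(H^{s_1})} \lesssim \kappa_h^{1/2-1/q_1}(\sigma_h\mu_h)^{s_1},
\end{equation*}
and \eqref{parameterineq1} from Lemma~\ref{parameterlemma} closes the bound by $\lambda_h^{-\gamma_h}$.

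The main technical point, and the only one requiring care, is the justification of the improved H\"older inequalities in the proof of \eqref{eq:est-uph-1}: the error terms in that inequality scale as negative powers of $\sigma_h$ and $\nu_h$, so one must choose $\lambda_h$ large (depending on $R_h$) to absorb them into the stated constants. Everything else is a routine combination of the product/scaling estimates of sections~\ref{subsec:buildingblocks} and \ref{subsec:intermittency} with the amplitude bounds of Lemma~\ref{coefficientlemma}.
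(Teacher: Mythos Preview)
Your proposal is correct and follows essentially the same approach as the paper: improved H\"older in space (then in time) for \eqref{eq:est-uph-1}, combined with \eqref{eq:est-ak-2} and the bound $\|\chi\|_{L^1(L^1)}\lesssim\|R_h\|_{L^1(L^1)}$, with the $\sigma_h^{-1/2}$ and $\nu_h^{-1/2}$ error terms absorbed by taking $\lambda_h$ large; and for \eqref{eq:est-uph-2} the product estimate $\|a_kW_k(\sigma_h\cdot)\|_{H^{s_1}}\lesssim\|a_k\|_{W^{1,\infty}}\|W_k(\sigma_h\cdot)\|_{H^{s_1}}$ together with Lemma~\ref{coefficientlemma}, Lemma~\ref{mikadobounds} and \eqref{parameterineq1}. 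The only cosmetic difference is that the paper keeps $g_h(\nu_h t)$ inside $a_k$ and quotes \eqref{eq:est-ak-1} directly, whereas you factor it out explicitly; the resulting estimate is identical.
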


\begin{proof}
By applying the improved Hölder inequality in Lemma~\ref{holderlemma} and Lemmas \ref{mikadobounds} and \ref{coefficientlemma}, we find that
\begin{align*}
    \lVert u_{p,h} \rVert_{L^2 (L^2)} &\leq \sum_{k \in \Lambda} \lVert a_k  W_k(\sigma_h\cdot) \rVert_{L^2(L^2)} \lesssim \sum_{k \in \Lambda} \bigg( \lVert a_k \rVert_{L^2 (L^2)} \lVert W_k \rVert_{L^2} + \sigma_h^{-1/2} \lVert a_k \rVert_{L^2 (C^1)} \lVert W_k \rVert_{L^2} \bigg) \\
    &\lesssim \bigg\| g_h(\nu_h \cdot) \bigg( \int_{\mathbb{T}^2} \chi (\cdot,x) \dx \bigg)^{1/2} \bigg\|_{L^2} + C_{u,R_h} \sigma^{-1/2}_h 
\end{align*} 
with a constant $C_{u,R_h}$ depending on $u,R_h$. Since $t\mapsto \big( \int_{\mathbb{T}^2} \chi (\cdot,x) \dx \big)^{1/2}$ is smooth, we can apply Lemma~\ref{holderlemma} once again to obtain
\begin{align*}
    &\bigg\| g_h(\nu_h \cdot) \bigg( \int_{\mathbb{T}^2} \chi (\cdot,x) \dx \bigg)^{1/2} \bigg\|_{L^2} \\
    &\lesssim \bigg\| \bigg( \int_{\mathbb{T}^2} \chi (\cdot,x) \dx \bigg)^{1/2} \bigg\|_{L^2} \|g_h\|_{L^2} + \nu_h^{-1/2} \bigg\| \bigg( \int_{\mathbb{T}^2} \chi (\cdot,x) \dx \bigg)^{1/2} \bigg\|_{C^1} \|g_h\|_{L^2} \\
    &\lesssim \|\chi\|_{L^1(L^1)}^{1/2} + C_{u,R_h} \nu_h^{-1/2},
\end{align*}
where we made use of \eqref{eq:intgh}. Since $\chi(x,t)\leq 4|R_h(x,t)| + 4\|R_h\|_{L^1(L^1)}$, we have 
\begin{align*}
    \|\chi\|_{L^1(L^1)} \lesssim \|R_h\|_{L^1(L^1)} + \|R_h\|_{L^1(L^1)} \lesssim \|R_h\|_{L^1(L^1)}.
\end{align*}
So have shown that 
\begin{align*}
    \lVert u_{p,h} \rVert_{L^2 (L^2)} &\lesssim \|R_h\|_{L^1(L^1)}^{1/2} +  C_{u,R_h} \nu_h^{-1/2} + C_{u,R_h} \sigma^{-1/2}_h,
\end{align*}
which implies \eqref{eq:est-uph-1} by taking $\lambda_h$ sufficiently large (depending on $R_h$).

Furthermore by applying Lemmas \ref{parameterlemma}, \ref{mikadobounds} and \ref{coefficientlemma} we find that
\begin{align*}
    \lVert u_{p,h} \rVert_{L^{q_1} (H^{s_1})} &\leq \sum_{k \in \Lambda} \lVert a_k \lVert_{L^{q_1} (W^{1,\infty})} \lVert  W_k(\sigma_h\cdot) \rVert_{H^{s_1}} \lesssim  \kappa_h^{1/2 - 1/q_1} (\sigma_h \mu_h)^{s_1} \leq \lambda_h^{-\gamma_h}
\end{align*}
which proves \eqref{eq:est-uph-2}.
\end{proof}

\subsubsection{Vertical principal perturbation} \label{subsubsec:vpp}

Let us first show the following Lemma. 

\begin{lemma} \label{lemma:prod}
    For all $1\leq p\leq \infty$ and $k\in \{1,2\}$ we have
    \begin{equation}
        \bigg\lVert \bigg( \phi_k (\sigma_v \cdot) W_k (\sigma_v \cdot) - \int_{\mathbb{T}^2} \phi_k ( x) W_k (x) \dx \bigg) R_{v,k} \bigg\rVert_{L^p (B^{-1}_{1,\infty})} \lesssim \sigma_v^{-1} .
    \end{equation}
\end{lemma}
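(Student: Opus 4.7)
The plan is to exploit the one-variable structure of $\phi_k^2$ to write $\phi_k(\sigma_v\cdot)W_k(\sigma_v\cdot)-\int\phi_k W_k \dx$ as $\sigma_v^{-1}$ times a horizontal derivative of a function whose $L^\infty$ norm is uniform in $\mu_v$, and then to integrate by parts against $R_{v,k}$.

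First I would invoke Proposition~\ref{prop:mikado-v}: combining $W_k=\phi_k\mathbf{e}_k$ with the identity $\nabla_h\cdot(W_k\phi_k)=0$ yields $\partial_{x_k}(\phi_k^2)=0$, so $\phi_k^2$ is in fact a function of the single horizontal variable $x_{k'}$ (where $k'\in\{1,2\}\setminus\{k\}$), and $\int_{\mathbb{T}}\phi_k^2\dx_{k'}=1$ by \eqref{eq:mean-Wvphiv}. Hence $\phi_k^2-1$ is a smooth, mean-zero, one-variable periodic function, and admits a periodic primitive $\Psi_k\in C^\infty(\mathbb{T})$ with $\Psi_k'=\phi_k^2-1$. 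A direct pointwise computation gives the crucial bound $\|\Psi_k\|_{L^\infty(\mathbb{T})}\leq \|\phi_k^2-1\|_{L^1(\mathbb{T})}\leq 2$, which is \emph{independent} of $\mu_v$ even though $\|\phi_k\|_{L^\infty}\sim\mu_v^{1/2}$.

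After rescaling, $\phi_k^2(\sigma_v x_{k'})-1=\sigma_v^{-1}\partial_{x_{k'}}[\Psi_k(\sigma_v x_{k'})]$, and the product rule gives
$$R_{v,k}\bigl(\phi_k^2(\sigma_v x_{k'})-1\bigr)=\sigma_v^{-1}\partial_{x_{k'}}\bigl[R_{v,k}\,\Psi_k(\sigma_v x_{k'})\bigr]-\sigma_v^{-1}\Psi_k(\sigma_v x_{k'})\,\partial_{x_{k'}}R_{v,k}.$$
I would bound the first term in $B^{-1}_{1,\infty}(\mathbb{T}^3)$ using the boundedness of $\partial_{x_{k'}}\colon L^1\to B^{-1}_{1,\infty}$, and the second via the continuous embedding $L^1\hookrightarrow B^{-1}_{1,\infty}$. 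Both contributions are controlled by $\sigma_v^{-1}\|\Psi_k\|_{L^\infty}\|R_{v,k}\|_{W^{1,1}(\mathbb{T}^3)}$; taking the $L^p$ norm in time and using the smoothness of $R_v$ on $[0,T]$ then delivers the claimed bound $\lesssim\sigma_v^{-1}$, with an implicit constant depending on $R_v$ but independent of the parameters.

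The main obstacle is to avoid a $\mu_v$-dependence in this constant. A direct two-dimensional approach based on $\nabla_h\Delta_h^{-1}(\phi_k^2-1)$ would require Calder\'on-Zygmund theory, which yields only $L^q$ bounds scaling like $\|\phi_k^2-1\|_{L^q}\sim\mu_v^{1-1/q}$ for $q<\infty$, blowing up with $\mu_v$. The reduction to a one-dimensional antiderivative sidesteps this difficulty, since the $L^\infty$ norm of a 1D mean-zero primitive is controlled by the $L^1$ norm of its derivative, which is bounded by $2$ uniformly in $\mu_v$. Everything else is routine.
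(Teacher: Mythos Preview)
Your argument is correct. You exploit the identity $W_k=\phi_k\mathbf{e}_k$ together with $\nabla_h\cdot(W_k\phi_k)=0$ to see that $\phi_k^2$ depends only on $x_{k'}$ and has unit mean, then take an explicit one-dimensional antiderivative $\Psi_k$ of $\phi_k^2-1$ whose $L^\infty$ norm is at most $\|\phi_k^2-1\|_{L^1(\mathbb{T})}\leq 2$, uniformly in $\mu_v$. The integration-by-parts step and the embeddings $\partial_{x_{k'}}:L^1\to B^{-1}_{1,\infty}$ and $L^1\hookrightarrow B^{-1}_{1,\infty}$ then give the bound $\sigma_v^{-1}\|R_{v,k}\|_{L^p(W^{1,1})}$, which is $\lesssim\sigma_v^{-1}$ under the paper's convention that constants depending on $R_v$ are allowed when the right-hand side contains only powers of the parameters.

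The paper's proof follows the same integration-by-parts skeleton but in a two-dimensional and less explicit form: it writes $\phi_k(\sigma_v\cdot)W_k(\sigma_v\cdot)-\int\phi_kW_k$ as $\nabla_h\cdot\mathcal{R}_h[\,\cdot\,]$ via the horizontal inverse divergence $\mathcal{R}_h$ of section~\ref{subsubsec:hid}, moves the divergence onto $R_{v,k}$, and then invokes the scaling estimate $\|\mathcal{R}_h f(\sigma\cdot)\|_{L^1}\lesssim\sigma^{-1}\|f\|_{L^1}$ together with $\|\phi_kW_k-\int\phi_kW_k\|_{L^1}\lesssim 1$. So your concern that a two-dimensional approach would necessarily lose a power of $\mu_v$ is unfounded: the operator $\mathcal{R}_h$ is bounded on $L^1$ (Lemma~\ref{lemma:hid}), and the relevant $L^1$ norm of $\phi_kW_k$ is controlled by $\|\phi_k\|_{L^2}\|W_k\|_{L^2}\lesssim 1$. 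What your route buys is that it is entirely self-contained and avoids appealing to $\mathcal{R}_h$ and its mapping properties; what the paper's route buys is that it does not rely on the specific one-variable structure of $\phi_k^2$, and so generalises more readily (indeed, the paper also gives a paraproduct-based alternative in Appendix~\ref{subsec:ap-littlewood-alternative}).
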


\begin{proof}
Using equation \eqref{eq:lem-Rh-id1} we find
\begingroup
\allowdisplaybreaks
\begin{align*}
    &\bigg( \phi_k (\sigma_v x) W_k (\sigma_v x) - \int_{\mathbb{T}^2} \phi_k ( x) W_k (x) \dx \bigg) R_{v,k} \\
    &= R_{v,k} \nabla_h \cdot \bigg[ \mathcal{R}_h \bigg( \phi_k (\sigma_v x) W_k (\sigma_v x) - \int_{\mathbb{T}^2} \phi_k W_k \dx \bigg) \bigg] \\
    &= \nabla_h \cdot \bigg[ R_{v,k} \mathcal{R}_h \bigg( \phi_k (\sigma_v x) W_k (\sigma_v x) - \int_{\mathbb{T}^2} \phi_k W_k \dx \bigg) \bigg] \\
    &\qquad - \nabla_h R_{v,k} \cdot \mathcal{R}_h \bigg( \phi_k (\sigma_v x) W_k (\sigma_v x) - \int_{\mathbb{T}^2} \phi_k W_k \dx \bigg) . 
\end{align*}
\endgroup
Next, we observe that according to Lemma~\ref{mikadobounds}
\begin{equation} \label{eq:prod1}
    \bigg\lVert \phi_k W_k - \int_{\mathbb{T}^2} \phi_k ( x) W_k (x) \dx \bigg\rVert_{L^1} \lesssim \lVert \phi_k W_k\rVert_{L^1} + 1 \leq \| \phi_k\|_{L^2} \|W_k\|_{L^2} + 1 \lesssim 1.
\end{equation}
Then by Lemmas~\ref{lemma:hid} and \ref{lemma:essential-besov}, and inequality \eqref{eq:prod1} we obtain 
\begingroup 
\allowdisplaybreaks
\begin{align*}
    &\bigg\lVert \bigg( \phi_k (\sigma_v \cdot) W_k (\sigma_v \cdot) - \int_{\mathbb{T}^2} \phi_k ( x) W_k (x) \dx \bigg) R_{v,k} \bigg\rVert_{L^p (B^{-1}_{1,\infty})} \\
    &\lesssim \bigg\lVert \nabla_h \cdot \bigg[ R_{v,k} \mathcal{R}_h \bigg( \phi_k (\sigma_v \cdot) W_k (\sigma_v \cdot) - \int_{\mathbb{T}^2} \phi_k W_k \dx \bigg) \bigg]  \bigg\rVert_{L^p (B^{-1}_{1,\infty})} \\
    &\qquad + \bigg\lVert \nabla_h R_{v,k} \cdot \mathcal{R}_h \bigg( \phi_k (\sigma_v \cdot) W_k (\sigma_v \cdot) - \int_{\mathbb{T}^2} \phi_k W_k \dx \bigg) \bigg\rVert_{L^p (B^{-1}_{1,\infty})} \\
    &\lesssim \bigg\lVert R_{v,k} \mathcal{R}_h \bigg( \phi_k (\sigma_v \cdot) W_k (\sigma_v \cdot) - \int_{\mathbb{T}^2} \phi_k W_k \dx \bigg) \bigg\rVert_{L^p (L^1)} \\
    &\qquad + \bigg\lVert \nabla_h R_{v,k} \cdot \mathcal{R}_h \bigg( \phi_k (\sigma_v \cdot) W_k (\sigma_v \cdot) - \int_{\mathbb{T}^2} \phi_k W_k \dx \bigg) \bigg\rVert_{L^p (L^1)} \\
    &\lesssim \Big(\lVert R_{v,k} \rVert_{L^p(L^\infty)} + \lVert \nabla_h R_{v,k} \rVert_{L^p(L^\infty)}\Big) \bigg\lVert \mathcal{R}_h \bigg( \phi_k (\sigma_v \cdot) W_k (\sigma_v \cdot) - \int_{\mathbb{T}^2} \phi_k W_k \dx \bigg) \bigg\rVert_{L^1} \\
    &\lesssim \sigma_v^{-1} C_{R_v} \bigg\lVert \phi_k  W_k - \int_{\mathbb{T}^2} \phi_k W_k \dx \bigg\rVert_{L^1} \lesssim \sigma_v^{-1}.
\end{align*}
\endgroup 
\end{proof}

\begin{remark}
    We present an alternative proof of Lemma~\ref{lemma:prod} in the appendix, see section~\ref{subsec:ap-littlewood-alternative}. 
\end{remark}

Next we estimate the vertical principal perturbation.
\begin{lemma} \label{verticalprinc}
If $\lambda_v$ is chosen sufficiently large (depending on $R_v$), then the vertical principal perturbation satisfies the following inequalities
\begingroup 
\allowdisplaybreaks
\begin{align}
    \lVert u_{p,v} \rVert_{L^{q_3-} (H^{s_3})} &\lesssim \lambda_v^{-\gamma_v}, \label{eq:est-upv-1} \\
    \lVert u_{p,v} \rVert_{L^{q_2-} (L^2)} &\lesssim \lambda_v^{-\gamma_v}, \label{eq:est-upv-2} \\
    \lVert w_{p,v} \rVert_{L^{q_3'-} (H^{-s_3})} &\lesssim \lambda_v^{-\gamma_v}, \label{eq:est-wpv-1} \\ 
    \lVert w_{p,v} \rVert_{L^{q_2'-} (L^2)} &\lesssim \lambda_v^{-\gamma_v}, \label{eq:est-wpv-2} \\
    \lVert w_{p,v} \rVert_{L^{q_3'} (H^{-s_3})} &\lesssim \lVert R_h \rVert_{L^1(L^1)}, \label{eq:est-wpv-1e} \\ 
    \lVert w_{p,v} \rVert_{L^{q_2'} (L^2)} &\lesssim \lVert R_h \rVert_{L^1(L^1)}, \label{eq:est-wpv-2e} \\
    \lVert w_{p,v} u_{p,v} \rVert_{L^1 (B^{-1}_{1,\infty} )} &\lesssim \lVert R_v \rVert_{L^1 (B^{-1}_{1,\infty})}. \label{eq:est-wpvupv} 
\end{align}
\endgroup
\end{lemma}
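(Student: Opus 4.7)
The plan is to handle the seven estimates in two groups. The estimates \eqref{eq:est-upv-1}--\eqref{eq:est-wpv-2e} all share the same structure: each quantity is a product of a temporal intermittency function $g_{v,k}^{\pm}(\nu_v t)$, the cutoff $\theta$, a rescaled Mikado flow or density evaluated at $\sigma_v x$, and a smooth amplitude ($R_{v,k}$ or a constant multiple of $\|R_h\|_{L^1(L^1)}$). I would apply H\"older's inequality in time to separate the temporal factor from the spatial factor, bound the temporal factor via the rescaled versions of \eqref{eq:est-gvm}--\eqref{eq:est-gvp}, and the spatial factor via Lemma~\ref{mikadobounds}; smoothness of $R_v$ enters the implicit constant, which is allowed by the convention at the start of section~\ref{perturbationestimates}.

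For the $L^2$-in-space bounds \eqref{eq:est-upv-2}, \eqref{eq:est-wpv-2}, \eqref{eq:est-wpv-2e} one just uses $\|W_k(\sigma_v\cdot)\|_{L^2}, \|\phi_k(\sigma_v\cdot)\|_{L^2} \lesssim 1$. For \eqref{eq:est-upv-1} I would invoke $\|W_k(\sigma_v\cdot)\|_{H^{s_3}} \lesssim (\sigma_v\mu_v)^{s_3}$ from \eqref{mikadoflow-v}, combined with a standard product rule to absorb $R_{v,k}$. For the negative Sobolev bounds \eqref{eq:est-wpv-1}, \eqref{eq:est-wpv-1e}, the key observation is that $\phi_k$ is mean-zero (Proposition~\ref{prop:mikado-v}) and its Fourier mass is concentrated at the scale $\mu_v$; hence $\phi_k(\sigma_v \cdot)$ has Fourier mass concentrated at scale $\sigma_v\mu_v$ and by Parseval $\|\phi_k(\sigma_v\cdot)\|_{H^{-s_3}} \lesssim (\sigma_v\mu_v)^{-s_3}$. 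In every case the combined space-time bound is a power of $\kappa_v^{1/q_2-1/q_j}(\sigma_v\mu_v)^{\pm s_3}$, which by the balance identity \eqref{parameterineq3} reduces to $1$. The ``$-$'' in the Lebesgue exponents produces an extra $\kappa_v^{-\delta}$, which by \eqref{parameterineq5} is dominated by $\lambda_v^{-\gamma_v}$, yielding \eqref{eq:est-upv-1}--\eqref{eq:est-wpv-2}. The endpoint bounds \eqref{eq:est-wpv-1e}--\eqref{eq:est-wpv-2e} follow from the same computation without the $\delta$-correction, producing the explicit $\|R_h\|_{L^1(L^1)}$ factor on the right.

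The product bound \eqref{eq:est-wpvupv} requires more care. Using $W_k = \phi_k\mathbf{e}_k$ and the disjointness of the temporal supports of $g_{v,k}^{\pm}$ for $k=1,2$, I would write
\begin{equation*}
w_{p,v} u_{p,v} = -\sum_{k=1}^{2} g_{v,k}^{-}(\nu_v t)\, g_{v,k}^{+}(\nu_v t)\, \theta^2\, R_{v,k}\, (\phi_k W_k)(\sigma_v x),
\end{equation*}
and then split $(\phi_k W_k)(\sigma_v\cdot) = \mathbf{e}_k + \big[(\phi_k W_k)(\sigma_v\cdot) - \mathbf{e}_k\big]$ via the mean identity \eqref{eq:mean-Wvphiv}. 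The oscillatory remainder multiplied against $R_{v,k}$ is controlled by Lemma~\ref{lemma:prod}, yielding a spatial $B^{-1}_{1,\infty}$ bound of order $\sigma_v^{-1}$ that becomes negligible for $\lambda_v$ large. The main term $-\sum_k g_{v,k}^{-}g_{v,k}^{+}(\nu_v t)\theta^2 R_{v,k}\mathbf{e}_k$ is controlled directly by $\|R_v\|_{L^1(B^{-1}_{1,\infty})}$, with an implicit constant that is allowed to depend on the parameters but not on $R_v$.

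The main obstacle I anticipate is the sharp negative Sobolev bound on $\phi_k(\sigma_v\cdot)$. The naive mean-zero scaling delivers only $\|\phi_k(\sigma_v\cdot)\|_{H^{-s_3}} \lesssim \sigma_v^{-s_3}$, which is insufficient: without the additional gain of $\mu_v^{-s_3}$, the balance \eqref{parameterineq3} cannot close and the $w_{p,v}$ estimates cannot be made small. Exploiting the Fourier concentration of $\phi_k$ at scale $\mu_v$ (through its explicit construction in Proposition~\ref{prop:mikado-v}) is what produces the correct $(\sigma_v\mu_v)^{-s_3}$ and unlocks the whole scheme.
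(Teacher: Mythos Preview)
Your overall strategy matches the paper's proof closely: separate the temporal and spatial factors, use the intermittency bounds \eqref{eq:est-gvm}--\eqref{eq:est-gvp} in time and Lemma~\ref{mikadobounds} in space, and close via the parameter balance \eqref{parameterineq3} together with the $\kappa_v^{-\delta}$ gain from \eqref{parameterineq5}. For the $H^{-s_3}$ bound on $\phi_k(\sigma_v\cdot)$ the paper does not invoke Parseval or Fourier concentration; it uses $\phi_k\mathbf{e}_k=W_k=\nabla_h\cdot\Omega_k$ from Proposition~\ref{prop:mikado-v}, writes $\phi_k(\sigma_v\cdot)\mathbf{e}_k=\sigma_v^{-1}\nabla_h\cdot[\Omega_k(\sigma_v\cdot)]$, and then applies \eqref{Omega-v} with exponent $1-s_3\geq 0$ to obtain $(\sigma_v\mu_v)^{-s_3}$ directly. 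This is the concrete implementation of your heuristic, so that part is fine.

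There is, however, a genuine gap in your treatment of the main term for \eqref{eq:est-wpvupv}. You assert that $\sum_k g_{v,k}^-g_{v,k}^+(\nu_v t)\theta^2 R_{v,k}\mathbf{e}_k$ is controlled in $L^1(B^{-1}_{1,\infty})$ by $\|R_v\|_{L^1(B^{-1}_{1,\infty})}$ with a constant independent of $R_v$, but you do not say how. The temporal factor $g_{v,k}^-g_{v,k}^+(\nu_v\cdot)$ has $L^\infty$ norm of order $\kappa_v$, so a na\"ive H\"older in time yields either a blow-up factor $\kappa_v$ multiplying $\|R_v\|_{L^1(B^{-1}_{1,\infty})}$, or the wrong norm $\|R_v\|_{L^\infty(B^{-1}_{1,\infty})}$ multiplying the harmless $L^1$ norm of the temporal factor. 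Since the right-hand side of \eqref{eq:est-wpvupv} is $\|R_v\|_{L^1(B^{-1}_{1,\infty})}$, the convention at the start of section~\ref{perturbationestimates} forbids $R_v$-dependent implicit constants, and neither option is acceptable. The paper resolves this by applying the improved H\"older inequality (Lemma~\ref{holderlemma}) in the time variable with $p=1$: since $g_{v,k}^-g_{v,k}^+$ is $1$-periodic with $\|g_{v,k}^-g_{v,k}^+\|_{L^1}\lesssim 1$ and oscillates at frequency $\nu_v$, one obtains
\begin{equation*}
\big\|g_{v,k}^-g_{v,k}^+(\nu_v\cdot)\,F_k\big\|_{L^1(B^{-1}_{1,\infty})} \lesssim \|g_{v,k}^-g_{v,k}^+\|_{L^1}\|F_k\|_{L^1(B^{-1}_{1,\infty})} + \nu_v^{-1}\|g_{v,k}^-g_{v,k}^+\|_{L^1}\|F_k\|_{C^1(B^{-1}_{1,\infty})},
\end{equation*}
with $F_k=\phi_k(\sigma_v\cdot)W_k(\sigma_v\cdot)R_{v,k}$. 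The first term, after the mean-splitting and Lemma~\ref{lemma:prod}, yields $\|R_v\|_{L^1(B^{-1}_{1,\infty})}+C_{R_v}\sigma_v^{-1}$; the second gives $C_{R_v}\nu_v^{-1}$. The $C_{R_v}$-terms are then absorbed by taking $\lambda_v$ large. Without this device (or the equivalent integration by parts using $g_{v,k}^-g_{v,k}^+ = 1 + \partial_t[\nu_v^{-1}h_{v,k}(\nu_v\cdot)]$) the estimate does not close.
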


\begin{proof}
According to \eqref{eq:est-gvm} and Lemmas~\ref{parameterlemma} and \ref{mikadobounds}, we obtain 
\begin{align*}
    \lVert u_{p,v} \rVert_{L^{q_3-} (H^{s_3})} &\leq \frac{1}{\lVert R_h \rVert_{L^1 (L^1)}} \sum_{k=1}^2 \lVert g_{v,k}^- (\nu_v \cdot) \rVert_{L^{q_3-}} \lVert R_v \rVert_{L^\infty (W^{1,\infty})} \lVert W_k (\sigma_v \cdot ) \rVert_{H^{s_3}} \\
    &\lesssim \kappa_v^{1/q_2 - 1/q_3 - \delta} (\sigma_v \mu_v)^{s_3} = \kappa_v^{-\delta} \leq \lambda_v^{-\gamma_v}.
\end{align*}
Similarly, 
\begin{align}
    \lVert u_{p,v} \rVert_{L^{q_2-} (L^2)} &\leq \frac{1}{\lVert R_h \rVert_{L^1 (L^1)}} \sum_{k=1}^2 \lVert g_{v,k}^- (\nu_v \cdot) \rVert_{L^{q_2-}} \lVert R_v \rVert_{L^\infty (L^{\infty})} \lVert W_k (\sigma_v \cdot ) \rVert_{L^2} \notag\\
    &\lesssim \kappa_v^{1/q_2 - 1/q_2 - \delta} = \kappa_v^{-\delta} \leq \lambda_v^{-\gamma_v}. \label{eq:513}
\end{align}
So we have shown \eqref{eq:est-upv-1}, \eqref{eq:est-upv-2}.

Next, notice that in accordance with Proposition~\ref{prop:mikado-v} and Lemma~\ref{mikadobounds} (keeping in mind that $s_3\leq 1$ and hence $-s_3+1\geq 0$)
\begin{align*}
    \lVert \phi_k (\sigma_v \cdot ) \rVert_{H^{-s_3}} &= \lVert W_k (\sigma_v \cdot ) \rVert_{H^{-s_3}} = \sigma_v^{-1} \big\lVert \nabla_h\cdot \big[\Omega_k (\sigma_v \cdot ) \big]\big\rVert_{H^{-s_3}} \\
    &\lesssim \sigma_v^{-1} \lVert \Omega_k (\sigma_v \cdot ) \rVert_{H^{-s_3+1}} \lesssim \sigma_v^{-1} (\sigma_v \mu_v)^{-s_3+1} \mu_v^{-1} = (\sigma_v \mu_v)^{-s_3}.
\end{align*}
Together with \eqref{eq:est-gvp} and Lemma~\ref{parameterlemma} this yields
\begin{align}
    \lVert w_{p,v} \rVert_{L^{q_3'-} (H^{-s_3})} &\leq \sum_{k=1}^2 \lVert \phi_k (\sigma_v \cdot ) \rVert_{H^{-s_3}} \lVert R_h \rVert_{L^1(L^1)} \lVert g_{v,k}^+ (\nu_v \cdot) \rVert_{L^{q_3'-}} \notag\\
    &\lesssim \kappa_v^{1 - 1/q_2 - 1/q_3' -\delta} (\sigma_v \mu_v)^{-s_3} = \kappa_v^{-\delta} \leq \lambda_v^{-\gamma_v}. \label{eq:514}
\end{align} 
Similarly, using Lemma~\ref{mikadobounds}, we obtain
\begin{align}
    \lVert w_{p,v} \rVert_{L^{q_2'-} (L^2)} &\leq \sum_{k=1}^2 \lVert \phi_k (\sigma_v \cdot ) \rVert_{L^2} \lVert R_h \rVert_{L^1(L^1)} \lVert g_{v,k}^+ (\nu_v \cdot) \rVert_{L^{q_2'-}} \notag\\
    &\lesssim \kappa_v^{1 - 1/q_2 - 1/q_2' -\delta} = \kappa_v^{-\delta} \leq \lambda_v^{-\gamma_v}. \label{eq:515}
\end{align} 
Hence we have proven \eqref{eq:est-wpv-1}, \eqref{eq:est-wpv-2}. Additionally, from \eqref{eq:514} and \eqref{eq:515} we see that \eqref{eq:est-wpv-1e} and \eqref{eq:est-wpv-2e} hold.

Finally, we derive estimate \eqref{eq:est-wpvupv} for the product $u_{p,v} w_{p,v}$. Because $g_{v,1}^\pm g_{v,2}^\pm = 0$, see section~\ref{subsubsec:vtif}, and the improved H\"older inequality (Lemma \ref{holderlemma}) we have 
\begin{align*}
    \lVert w_{p,v} u_{p,v} \rVert_{L^1 (B^{-1}_{1,\infty} )} &\lesssim \sum_{k=1}^2 \Big\lVert g_{v,k}^- (\nu_v \cdot) g_{v,k}^+ (\nu_v \cdot) \phi_k (\sigma_v \cdot) W_k (\sigma_v \cdot) R_{v,k} \Big\rVert_{L^1 (B^{-1}_{1,\infty})} \\
    &\lesssim \sum_{k=1}^2 \lVert g_{v,k}^- g_{v,k}^+ \rVert_{L^1} \Big\lVert \phi_k (\sigma_v \cdot) W_k (\sigma_v \cdot) R_{v,k} \Big\rVert_{L^1 (B^{-1}_{1,\infty})} \\
    &\qquad + \sum_{k=1}^2 \nu_v^{-1} \lVert g_{v,k}^- g_{v,k}^+ \rVert_{L^1} \Big\lVert \phi_k (\sigma_v \cdot) W_k (\sigma_v \cdot) R_{v,k} \Big\rVert_{C^1 (B^{-1}_{1,\infty})}.
\end{align*}
First, observe that 
\begin{equation} \label{eq:l1-gvpgvm}
    \lVert g_{v,k}^- g_{v,k}^+ \rVert_{L^1} \leq \lVert g_{v,k}^- \rVert_{L^2} \lVert g_{v,k}^+ \rVert_{L^2} \lesssim \kappa_v^{1/q_2 - 1/2 +1 -1/q_2 -1/2} = 1,
\end{equation}
according to \eqref{eq:est-gvm}, \eqref{eq:est-gvp}. Next, we estimate 
\begin{align*}
    \Big\lVert \phi_k (\sigma_v \cdot) W_k (\sigma_v \cdot) R_{v,k} \Big\rVert_{C^1 (B^{-1}_{1,\infty})} &\lesssim \Big\lVert \phi_k (\sigma_v \cdot) W_k (\sigma_v \cdot) R_{v,k} \Big\rVert_{C^1 (L^1)} \\
    &\lesssim \lVert R_{v,k} \rVert_{C^1(L^\infty)} \lVert \phi_k (\sigma_v \cdot) W_k (\sigma_v \cdot) \rVert_{L^1} \\
    &\lesssim C_{R_v} \lVert \phi_k (\sigma_v \cdot) \rVert_{L^2} \lVert W_k (\sigma_v \cdot) \rVert_{L^2} \\
    &\lesssim C_{R_v}, 
\end{align*} 
where $C_{R_v}$ is a constant depending on $R_v$, and where we used Lemmas~\ref{mikadobounds} and \ref{lemma:essential-besov}. Moreover, we obtain by Lemma~\ref{lemma:prod}
\begin{align*}
    &\Big\lVert \phi_k (\sigma_v \cdot) W_k (\sigma_v \cdot) R_{v,k} \Big\rVert_{L^1 (B^{-1}_{1,\infty})} \\
    &\leq \bigg\lVert \bigg( \phi_k (\sigma_v \cdot) W_k (\sigma_v \cdot) - \int_{\mathbb{T}^2} \phi_k ( x) W_k (x) \dx \bigg) R_{v,k} \bigg\rVert_{L^1 (B^{-1}_{1,\infty})} \\
    &\qquad + \bigg\lvert \int_{\mathbb{T}^2 } \phi_k (x) W_k (x) \dx \bigg\rvert \lVert R_v \rVert_{L^1 (B^{-1}_{1,\infty})} \\
    & \lesssim \sigma_v^{-1} + \lVert R_v \rVert_{L^1 (B^{-1}_{1,\infty})}.
\end{align*}

Hence we have shown 
\begin{equation} \label{eq:prod2}
    \lVert w_{p,v} u_{p,v} \rVert_{L^1 (B^{-1}_{1,\infty} )} \lesssim \|R_v\|_{L^1(B^{-1}_{1,\infty})} + C_{R_v} \Big( \sigma_v^{-1} + \nu_v^{-1} \Big)
\end{equation}
which implies \eqref{eq:est-wpvupv} by choosing $\lambda_v$ sufficiently large (depending on $R_v$).
\end{proof}

\begin{remark} \label{rem:uptilde-endpoint-proof}
    As already mentioned in Remark~\ref{rem:uptilde-endpoint} we can establish \eqref{eq:main-utilde1e}, \eqref{eq:main-utilde2e} instead of \eqref{eq:main-w1e}, \eqref{eq:main-w2e}. To this end we have to replace \eqref{eq:def-upv} and \eqref{eq:def-wpv} by
    \begingroup
    \allowdisplaybreaks
    \begin{align*} 
        u_{p,v}(x,t) &\coloneqq -\sum_{k=1}^2 g_{v,k}^- (\nu_v t) \theta (t) R_{v,k}(x,t) W_k (\sigma_v x)\frac{\lVert R_h \rVert_{L^1 (L^1 )}}{\lVert R_v \rVert_{L^\infty (W^{1,\infty} )}} , \\ 
        w_{p,v}(x,t) &\coloneqq \sum_{k=1}^2 g_{v,k}^+ (\nu_v t) \theta (t) \phi_k (\sigma_v x) \frac{\lVert R_v \rVert_{L^\infty (W^{1,\infty} )}}{\lVert R_h \rVert_{L^1 (L^1 )}}.
    \end{align*}
    \endgroup
    Then \eqref{eq:est-wpv-1e}, \eqref{eq:est-wpv-2e} are no longer true. Instead we find
    \begin{align*}
        \lVert u_{p,v} \rVert_{L^{q_3} (H^{s_3})} &\leq \frac{\lVert R_h \rVert_{L^1 (L^1 )}}{\lVert R_v \rVert_{L^\infty (W^{1,\infty} )}} \sum_{k=1}^2 \lVert g_{v,k}^- (\nu_v \cdot) \rVert_{L^{q_3}} \lVert R_v \rVert_{L^\infty (W^{1,\infty})} \lVert W_k (\sigma_v \cdot ) \rVert_{H^{s_3}} \\
        &\lesssim \lVert R_h \rVert_{L^1 (L^1 )}\kappa_v^{1/q_2 - 1/q_3} (\sigma_v \mu_v)^{s_3} = \lVert R_h \rVert_{L^1 (L^1 )} , \\
        \lVert u_{p,v} \rVert_{L^{q_2} (L^2)} &\leq \frac{\lVert R_h \rVert_{L^1 (L^1 )}}{\lVert R_v \rVert_{L^\infty (W^{1,\infty} )}} \sum_{k=1}^2 \lVert g_{v,k}^- (\nu_v \cdot) \rVert_{L^{q_2}} \lVert R_v \rVert_{L^\infty (L^{\infty})} \lVert W_k (\sigma_v \cdot ) \rVert_{L^2} \\
        &\lesssim \lVert R_h \rVert_{L^1 (L^1 )}\kappa_v^{1/q_2 - 1/q_2} = \lVert R_h \rVert_{L^1 (L^1 )}. 
    \end{align*}
    Further modifications are straightforward.
\end{remark}

\subsection{Spatial correctors}
\begin{lemma} \label{correctorlemma}
    The spatial correctors satisfy the following estimates
    \begin{align*}
        \lVert u_{c,h} \rVert_{L^{q_1} (H^{s_1})} + \lVert u_{c,h} \rVert_{L^2(L^\infty)} &\lesssim \lambda_h^{-\gamma_h}, \\ 
        \lVert u_{c,v} \rVert_{L^{q_2} (L^\infty)} + \lVert u_{c,v} \rVert_{L^{q_3} (H^{s_3})} &\lesssim \lambda_v^{-\gamma_v}. 
    \end{align*}
\end{lemma}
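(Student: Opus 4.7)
Both bounds follow the same template as the estimates for the principal perturbations in Lemmas~\ref{horizontalprinc} and \ref{verticalprinc}: expand the product, place the Mikado building block in the relevant Sobolev norm and the smooth amplitude in $W^{s,\infty}$, take the Lebesgue norm in time, and close using Lemmas~\ref{coefficientlemma}, \ref{mikadobounds}, \eqref{eq:est-gvm} together with the parameter constraints of Lemma~\ref{parameterlemma}. The key new feature is the extra factor $\sigma_i^{-1}$ in the definitions \eqref{eq:def-uch} of $u_{c,h}$ and of $u_{c,v}$, which supplies more than enough room to absorb the cost of an additional horizontal derivative on the amplitude.

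For $u_{c,h}$, I would apply the product estimate $\|fg\|_{H^{s_1}} \lesssim \|f\|_{W^{s_1,\infty}} \|g\|_{H^{s_1}}$ (valid for $s_1 \geq 0$, e.g.\ by paramultiplication) to each summand $\nabla_h a_k \cdot \Omega_k(\sigma_h \cdot)$ in \eqref{eq:def-uch}. Lemma~\ref{coefficientlemma} (with $n=0$ and $m$ sufficiently large) gives $\|\nabla_h a_k\|_{L^{q_1}(W^{s_1,\infty})} \lesssim \kappa_h^{1/2-1/q_1}$, and Lemma~\ref{mikadobounds} gives $\|\Omega_k(\sigma_h \cdot)\|_{H^{s_1}} \lesssim (\sigma_h\mu_h)^{s_1}\mu_h^{-1}$. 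Combining these with \eqref{parameterineq1} yields
\begin{equation*}
    \|u_{c,h}\|_{L^{q_1}(H^{s_1})} \lesssim \sigma_h^{-1}\mu_h^{-1}\,\kappa_h^{1/2-1/q_1}(\sigma_h\mu_h)^{s_1} \leq \sigma_h^{-1}\mu_h^{-1}\lambda_h^{-\gamma_h} \leq \lambda_h^{-\gamma_h}.
\end{equation*}
For the $L^2(L^\infty)$ bound I would use Lemma~\ref{coefficientlemma} with $p=2$ to get $\|\nabla_h a_k\|_{L^2(L^\infty)} \lesssim 1$ and \eqref{Omega-h} with $s=0$, $p=\infty$ to get $\|\Omega_k(\sigma_h \cdot)\|_{L^\infty} \lesssim \mu_h^{-1/2}$, so that $\|u_{c,h}\|_{L^2(L^\infty)} \lesssim \sigma_h^{-1}\mu_h^{-1/2} \leq \lambda_h^{-\gamma_h}$, since $\sigma_h \geq \lambda_h^{\gamma_h}$.

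For $u_{c,v}$, the amplitude $\nabla_h\bigl(g_{v,k}^-(\nu_v t)\,\theta R_{v,k}\bigr)/\|R_h\|_{L^1(L^1)}$ splits into the temporally intermittent factor $g_{v,k}^-(\nu_v t)$ and the smooth function $\nabla_h(\theta R_{v,k})/\|R_h\|_{L^1(L^1)}$, whose $C^m$ norms are bounded by constants depending on $R_v$ but independent of the parameters. The same product-rule strategy, together with \eqref{eq:est-gvm}, Lemma~\ref{mikadobounds} and the identity \eqref{parameterineq3}, gives
\begin{equation*}
    \|u_{c,v}\|_{L^{q_3}(H^{s_3})} \lesssim \sigma_v^{-1}\|g_{v,k}^-(\nu_v \cdot)\|_{L^{q_3}}\|\Omega_k(\sigma_v\cdot)\|_{H^{s_3}} \lesssim \sigma_v^{-1}\mu_v^{-1}\,\kappa_v^{1/q_2-1/q_3}(\sigma_v\mu_v)^{s_3} = \sigma_v^{-1}\mu_v^{-1} \leq \lambda_v^{-\gamma_v},
\end{equation*}
while the $L^{q_2}(L^\infty)$ bound follows from $\|g_{v,k}^-(\nu_v\cdot)\|_{L^{q_2}} \lesssim 1$ and $\|\Omega_k(\sigma_v\cdot)\|_{L^\infty} \lesssim \mu_v^{-1/2}$, giving $\sigma_v^{-1}\mu_v^{-1/2} \leq \lambda_v^{-\gamma_v}$.

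The proof is essentially routine bookkeeping and presents no real obstacle; the only point requiring some care is the product estimate in fractional Sobolev norms, which I would justify via the standard paramultiplication inequality $\|fg\|_{H^s} \lesssim \|f\|_{W^{s,\infty}}\|g\|_{H^s}$ for $s \geq 0$, or equivalently by Leibniz expansion to the nearest integer followed by interpolation.
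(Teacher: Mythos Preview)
Your proposal is correct and follows essentially the same approach as the paper: separate the smooth amplitude from the oscillating building block via a product estimate, invoke Lemma~\ref{coefficientlemma} (resp.\ \eqref{eq:est-gvm}) for the amplitudes and Lemma~\ref{mikadobounds} for the Mikado potentials $\Omega_k$, and close using the parameter inequalities \eqref{parameterineq1}, \eqref{parameterineq3} together with the extra $\sigma_i^{-1}\mu_i^{-1}$ gain. The only cosmetic difference is that the paper places the amplitude in $W^{1,\infty}$ rather than $W^{s_1,\infty}$ (harmless since $s_1<1$), but the computation and final bounds are identical.
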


\begin{proof}
By using Lemmas \ref{parameterlemma}, \ref{mikadobounds} and \ref{coefficientlemma} as well as estimate \eqref{eq:est-gvm}, one gets that
\begingroup
\allowdisplaybreaks
\begin{align*}
    \lVert u_{c,h} \rVert_{L^{q_1} (H^{s_1})} &\leq \sigma_h^{-1} \sum_{k \in \Lambda} \lVert \nabla a_k \rVert_{L^{q_1} (W^{1,\infty})} \lVert \Omega_k (\sigma_h \cdot ) \rVert_{H^{s_1}} \lesssim \sigma_h^{-1} \kappa_h^{1/2 - 1/q_1} (\sigma_h \mu_h)^{s_1} \mu_h^{-1} \lesssim \lambda_h^{-\gamma_h}, \\
    \lVert u_{c,h} \rVert_{L^2 (L^\infty)} &\leq \sigma_h^{-1} \sum_{k \in \Lambda} \lVert \nabla a_k \rVert_{L^{2} (L^\infty)} \lVert \Omega_k (\sigma_h \cdot ) \rVert_{L^\infty} \lesssim \sigma_h^{-1} \mu_h^{-1/2} \lesssim \lambda_h^{-\gamma_h}, \\ 
    \lVert u_{c,v} \rVert_{L^{q_2} (L^\infty)} &\leq \frac{\sigma_v^{-1}}{\|R_h\|_{L^1(L^1)}} \sum_{k=1}^2 \lVert g_{v,k}^- (\nu_v \cdot ) \rVert_{L^{q_2}} \lVert \nabla_h R_{v,k} \rVert_{L^\infty (L^\infty)} \lVert \Omega_k (\sigma_v \cdot) \rVert_{L^\infty} \\
    &\lesssim \sigma_v^{-1} \mu_v^{-1/2}  \lesssim \lambda_v^{-\gamma_v}, \\ 
    \lVert u_{c,v} \rVert_{L^{q_3} (H^{s_3})} &\leq \frac{\sigma_v^{-1}}{\|R_h\|_{L^1(L^1)}} \sum_{k=1}^2 \lVert g_{v,k}^- (\nu_v \cdot ) \rVert_{L^{q_3}} \lVert \nabla_h R_{v,k} \rVert_{L^\infty (W^{1,\infty})} \lVert \Omega_k (\sigma_v \cdot) \rVert_{H^{s_3}} \\
    &\lesssim \sigma_v^{-1} \kappa_v^{1/q_2 - 1/q_3} (\sigma_v \mu_v)^{s_3} \mu_v^{-1} \lesssim \lambda_v^{-\gamma_v}.
\end{align*}
\endgroup
\end{proof}

\subsection{Temporal correctors} 
\begin{lemma} \label{temporallemma}
    The temporal correctors satisfy the estimates
    \begingroup
    \allowdisplaybreaks
    \begin{align*}
        \lVert u_{t,h} \rVert_{L^\infty (W^{n,\infty})} &\lesssim \lambda_h^{-\gamma_h}, \\
        \lVert u_{t,v} \rVert_{L^\infty (W^{n,\infty})} &\lesssim \lambda_v^{-\gamma_v}, \\
        \lVert w_{t,v} \rVert_{L^\infty (W^{n,\infty})} &\lesssim \lambda_v^{-\gamma_v},
    \end{align*} 
    \endgroup
    where $n\in \mathbb{N}$ is arbitrary, and the implicit constant may depend on $n$.
\end{lemma}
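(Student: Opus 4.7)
The overall strategy is to observe that each of $u_{t,h}$, $u_{t,v}$, $w_{t,v}$ has the explicit structure of a temporal factor $\nu_i^{-1} h(\nu_i t)$ (with $h$ uniformly bounded by \eqref{eq:est-hh}, \eqref{eq:est-hv}) multiplied by a spatial quantity built from the (smooth) Reynolds stress tensors $R_h$, $R_v$ via a bounded operator. Since the spatial quantity has finite $W^{n,\infty}$ norm and is independent of the parameters, the only surviving $\lambda_i$-dependent factor is $\nu_i^{-1}$, which by Lemma \ref{parameterlemma} satisfies $\nu_i^{-1} = \lambda_i^{-a_i} \leq \lambda_i^{-\gamma_i}$.

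I will begin with $u_{t,v}$ and $w_{t,v}$ since these are the easier cases. Plugging the definitions \eqref{verttemporal1}, \eqref{verttemporal2} into a spatial $W^{n,\infty}$ norm and using \eqref{eq:est-hv} yields
\begin{equation*}
    \lVert u_{t,v}(\cdot,t) \rVert_{W^{n,\infty}} + \lVert w_{t,v}(\cdot,t) \rVert_{W^{n,\infty}} \lesssim \nu_v^{-1} \sum_{k=1}^2 \lVert h_{v,k} \rVert_{L^\infty} \lVert R_{v,k} \rVert_{W^{n+1,\infty}} \lesssim \nu_v^{-1} C_{R_v},
\end{equation*}
where $C_{R_v}$ is finite by smoothness of $R_v$ and is absorbed into the implicit constant (per the convention recalled at the start of section \ref{perturbationestimates}). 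Taking the supremum in time and applying $\nu_v^{-1} \leq \lambda_v^{-\gamma_v}$ completes this case.

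For $u_{t,h}$ the definition \eqref{eq:def-uth} involves the additional operator $\nabla_h \Delta_h^{-1}(\nabla_h \otimes \nabla_h)$, which is a Calder\'on-Zygmund operator and hence not bounded on $L^\infty$. The main (and only) obstacle is to bypass this: I plan to use Sobolev embedding on the torus, estimating $\lVert \nabla_h \Delta_h^{-1}(\nabla_h \otimes \nabla_h) R_h \rVert_{W^{n,\infty}} \lesssim \lVert \nabla_h \Delta_h^{-1}(\nabla_h \otimes \nabla_h) R_h \rVert_{W^{n+1,p}}$ for some $p > 2$, and then invoking the $L^p$-boundedness of Calder\'on-Zygmund operators to dominate by $\lVert R_h \rVert_{W^{n+1,p}}$, which is finite by smoothness. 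Combined with \eqref{eq:est-hh} this yields
\begin{equation*}
    \lVert u_{t,h}(\cdot,t) \rVert_{W^{n,\infty}} \lesssim \nu_h^{-1} \lVert h_h \rVert_{L^\infty} \bigl( \lVert R_h \rVert_{W^{n+1,\infty}} + \lVert R_h \rVert_{W^{n+1,p}} \bigr) \lesssim \nu_h^{-1} C_{R_h},
\end{equation*}
and again $\nu_h^{-1} = \lambda_h^{-a_h} \leq \lambda_h^{-\gamma_h}$ from Lemma \ref{parameterlemma} closes the argument after taking the supremum in time.
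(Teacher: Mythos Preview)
Your proof is correct and follows essentially the same approach as the paper: bound the temporal factor $\nu_i^{-1} h(\nu_i t)$ using \eqref{eq:est-hh}, \eqref{eq:est-hv}, absorb the spatial part into a constant depending on $R_h$ or $R_v$, and invoke $\nu_i^{-1}\leq \lambda_i^{-\gamma_i}$ from Lemma~\ref{parameterlemma}. The only difference is that for $u_{t,h}$ the paper simply writes $C_{R_h}$ for the spatial factor (relying on the smoothness of $R_h$ to guarantee that $\nabla_h \Delta_h^{-1}(\nabla_h\otimes\nabla_h):R_h$ is smooth and hence in every $W^{n,\infty}$), whereas you supply an explicit justification via Sobolev embedding and $L^p$-boundedness of Calder\'on--Zygmund operators; this is a harmless elaboration of a detail the paper leaves implicit.
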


\begin{proof}
Using \eqref{eq:est-hh} and \eqref{eq:est-hv} we obtain
\begingroup
\allowdisplaybreaks
\begin{align*}
    \lVert u_{t,h} \rVert_{L^\infty (W^{n,\infty})} &\leq \nu_h^{-1} \lVert h_h (\nu_h \cdot) \rVert_{L^\infty} C_{R_h} \lesssim \nu_h^{-1} \leq \lambda_h^{-\gamma_h}, \\
    \lVert u_{t,v} \rVert_{L^\infty (W^{n,\infty})} &\leq \nu_v^{-1} \lVert h_{v,k} (\nu_v \cdot) \rVert_{L^\infty} C_{R_v} \lesssim \nu_v^{-1} \lesssim \lambda_v^{-\gamma_v}, \\
    \lVert w_{t,v} \rVert_{L^\infty (W^{n,\infty})} &\leq \nu_v^{-1} \lVert h_{v,k} (\nu_v \cdot) \rVert_{L^\infty} C_{R_v} \lesssim \nu_v^{-1} \lesssim \lambda_v^{-\gamma_v}.
\end{align*}
\endgroup
\end{proof}

\subsection{Conclusion} 
We have already shown in section~\ref{perturbationsection} that $\overline{u}_p$, $\widetilde{u}_p$ and $w_p$ satisfy 
$$
    \nabla_h \cdot (\overline{u}_p + \widetilde{u}_p ) + \partial_z w_p =0,
$$ 
as well as item 1 of Proposition~\ref{perturbativeproposition}. Hence $(u+\overline{u}_p + \widetilde{u}_p,w+w_p)$ fulfills \eqref{eq:hyd-er-div}. Additionally, we have shown in section~\ref{perturbationsection} that $\partial_z P=0$ and hence $p+P$ satisfies \eqref{eq:hyd-er-p}. Moreover, we proved that \eqref{eq:hyd-er-u} holds. Consequently $(u + \overline{u}_p + \widetilde{u}_p, w + w_p, p + P, R_{h,1}, R_{v,1})$ is indeed a solution of the Euler-Reynolds system \eqref{eq:hyd-er-u}-\eqref{eq:hyd-er-div}. We also showed in section~\ref{perturbationsection} that $(u + \overline{u}_p + \widetilde{u}_p, w + w_p, p + P, R_{h,1}, R_{v,1})$ is well-prepared for the time interval $I$ and parameter $\tau/2$.

Furthermore, estimates \eqref{eq:main-ubar1}-\eqref{eq:main-ubar2} of Proposition~\ref{perturbativeproposition} are a simple consequence of Lemmas~\ref{horizontalprinc}, \ref{verticalprinc}, \ref{correctorlemma} and \ref{temporallemma}, where one has to choose $\lambda_h$, $\lambda_v$ sufficiently large, depending on $R_h$ and $R_v$, respectively. 

In addition, estimate \eqref{eq:main-product} can be derived from Lemmas~\ref{verticalprinc}, \ref{correctorlemma}, \ref{temporallemma} as well. Indeed, Lemma~\ref{verticalprinc} already proves $\lVert w_{p,v} u_{p,v} \rVert_{L^1 (B^{-1}_{1,\infty} )} \lesssim \lVert R_v \rVert_{L^1 (B^{-1}_{1,\infty})}$. Moreover, from Lemmas~\ref{lemma:essential-besov}, \ref{verticalprinc}, \ref{correctorlemma} and \ref{temporallemma} we obtain 
\begin{align*} 
    \lVert w_{p,v} (u_{c,v} + u_{t,v}) \rVert_{L^1 (B^{-1}_{1,\infty})} &\lesssim \lVert w_{p,v} (u_{c,v} + u_{t,v}) \rVert_{L^1 (L^1)} \\
    &\lesssim \lVert w_{p,v} \rVert_{L^{q_2'} (L^2)} ( \lVert u_{c,v} \rVert_{L^{q_2} (L^2)} + \lVert u_{t,v} \rVert_{L^{q_2} (L^2)} ) \lesssim \lambda_v^{-\gamma_v}.
\end{align*}
Similarly (from the proof of Lemma~\ref{verticalprinc} we obtain $\|u_{p,v}\|_{L^{q_2}(L^2)}\lesssim C_{R_h,R_v}$) 
\begin{equation*} 
    \lVert w_{t,v} \widetilde{u}_p \rVert_{L^1 (B^{-1}_{1,\infty})} \lesssim \lVert w_{t,v} \widetilde{u}_p \rVert_{L^1 (L^1)} \lesssim \lVert w_{t,v} \rVert_{L^{q_2'} (L^2)} \lVert \widetilde{u}_p \rVert_{L^{q_2} (L^2)} \lesssim \lambda_v^{-\gamma_v}.
\end{equation*}
Finally, Lemmas~\ref{verticalprinc}, \ref{correctorlemma} and \ref{temporallemma} yield 
\begin{align*}
    \lVert w \widetilde{u}_p + w_p u \rVert_{L^1 (B^{-1}_{1,\infty})} &\lesssim \lVert w \widetilde{u}_p + w_p u \rVert_{L^1 (L^1)} \\
    &\lesssim \lVert w \rVert_{L^\infty (L^\infty)} \lVert \widetilde{u}_p \rVert_{L^1 (L^1)} + \lVert w_p \rVert_{L^1 (L^1)} \lVert u \rVert_{L^\infty (L^\infty)} \lesssim \lambda_v^{-\gamma_v}.
\end{align*}
Hence, if $\lambda_v$ is chosen sufficiently large, depending on $R_v$, we obtain \eqref{eq:main-product}.

\section{Estimates on the stress tensor} \label{reynoldsestimates} 

In order to finish the proof of Proposition~\ref{perturbativeproposition} it remains to show estimates \eqref{eq:main-Rh}, \eqref{eq:main-Rv}. These two estimates simply follow from Lemmas~\ref{horizontaloscestimate}, \ref{verticaloscestimate}, \ref{correctorestimate} and \ref{linearestimate}, which we prove in this section, below.

\subsection{Oscillation error}
\subsubsection{Horizontal part}

\begin{lemma} \label{horizontaloscestimate}
    If $\lambda_h$ is chosen sufficiently large (depending on $R_h$), then the horizontal oscillation error satisfies
    \begin{equation} \label{eq:est-Rosch}
        \lVert R_{\osc,h} \rVert_{L^1 (L^1)} \leq \frac{\epsilon}{3}. 
    \end{equation}
\end{lemma}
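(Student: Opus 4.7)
The plan is to estimate each of the three summands $R_{\far}$, $R_{\osc,x,h}$ and $R_{\osc,t,h}$ in the decomposition \eqref{eq:defn-Rosch} separately, and to show that each one can be made smaller than $\epsilon/9$ by taking $\lambda_h$ sufficiently large (depending on $R_h$). Since all the relevant powers of $\lambda_h$ arising in the bounds will be negative by Lemma~\ref{parameterlemma}, this will suffice.

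First I would handle $R_{\osc,t,h}$, which is the easiest: using \eqref{eq:est-hh} I have $\|h_h(\nu_h\cdot)\|_{L^\infty}\leq 1$, so directly
\begin{equation*}
    \|R_{\osc,t,h}\|_{L^1(L^1)} \leq \nu_h^{-1} \|\partial_t R_h\|_{L^1(L^1)} \lesssim \nu_h^{-1} \lesssim \lambda_h^{-\gamma_h},
\end{equation*}
where the implicit constant depends on $R_h$. Next, for $R_{\far}$, by H\"older in time and the improved H\"older inequality (Lemma~\ref{holderlemma}) applied on each time slice,
\begin{equation*}
    \|a_k a_{k'} W_k(\sigma_h\cdot)\otimes W_{k'}(\sigma_h\cdot)\|_{L^1(L^1)} \lesssim \|a_k a_{k'}\|_{L^1(L^\infty)} \|W_k\otimes W_{k'}\|_{L^1(\T^2)} + \text{corrector},
\end{equation*}
and \eqref{eq:est-ak-1} together with \eqref{eq:est-WhWh} (giving $\|W_k\otimes W_{k'}\|_{L^1}\lesssim \mu_h^{-1}$ for $k\neq k'$) yields a bound $\lesssim \mu_h^{-1}$ up to a constant depending on $R_h$, plus lower-order corrector terms controlled by a positive power of $\sigma_h^{-1}$. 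All of these are $\lesssim \lambda_h^{-\gamma_h}$.

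The main term is $R_{\osc,x,h}$. Here I apply Lemma~\ref{lemma:bid} to get
\begin{equation*}
    \|R_{\osc,x,h}\|_{L^1(L^1)} \leq \sum_{k\in\Lambda} \|\nabla_h(a_k^2)\|_{L^1(C^1)}\, \bigg\|\mathcal{R}_h\bigg(W_k(\sigma_h\cdot)\otimes W_k(\sigma_h\cdot) - \int_{\T^2} W_k\otimes W_k \dx\bigg)\bigg\|_{L^\infty(L^\infty)}.
\end{equation*}
Since the tensor inside $\mathcal{R}_h$ is mean-free on $\T^2$, the scaling estimate \eqref{eq:lem-Rh-est2} (applied with $p=\infty$ after an intermediate reduction, or via interpolation from $L^p$ bounds) together with \eqref{eq:est-WhWh} gives a gain of $\sigma_h^{-1}$ in this factor. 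The factor $\|\nabla_h(a_k^2)\|_{L^1(C^1)}$ is bounded by a constant depending on $R_h$ via Lemma~\ref{coefficientlemma}. Hence this contribution is $\lesssim \sigma_h^{-1} \lesssim \lambda_h^{-\gamma_h}$ as well. The one subtle point — and the main thing one has to be careful about — is that the $\mathcal{R}_h$-estimate with scaling gain requires the mean-free argument, which is precisely why $\int_{\T^2} W_k\otimes W_k\dx$ was subtracted in \eqref{eq:defn-Roscxh}; this is the standard oscillation-cancellation mechanism.

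Combining the three bounds and choosing $\lambda_h$ large enough (depending on $R_h,\epsilon$) so that the sum of the three terms is below $\epsilon/3$ completes the proof of \eqref{eq:est-Rosch}.
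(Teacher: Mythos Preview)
Your overall strategy matches the paper's proof: estimate $R_{\osc,t,h}$, $R_{\far}$ and $R_{\osc,x,h}$ separately and gain a negative power of $\lambda_h$ in each. Two points deserve correction, though.

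For $R_{\osc,x,h}$ you place the $\mathcal{R}_h$-factor in $L^\infty$, but this is lossy: $\|W_k\otimes W_k\|_{L^\infty}\lesssim \mu_h$, so \eqref{eq:lem-Rh-est2} with $p=\infty$ only yields $\sigma_h^{-1}\mu_h$, and the parameter choices in Lemma~\ref{parameterlemma} do \emph{not} in general force $\sigma_h^{-1}\mu_h\to 0$. The fix is exactly what Lemma~\ref{lemma:bid} with $p=1$ actually gives: the $\mathcal{R}_h$-factor goes into $L^1$, and then \eqref{eq:lem-Rh-est2} with $p=1$ together with
\[
\bigg\|W_k\otimes W_k - \int_{\T^2} W_k\otimes W_k\dx\bigg\|_{L^1}\lesssim \|W_k\|_{L^2}^2+1\lesssim 1
\]
(from \eqref{eq:est-Wh}, not \eqref{eq:est-WhWh}, which is the off-diagonal estimate for $k\neq k'$) gives the clean $\sigma_h^{-1}$ gain. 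This is what the paper does.

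For $R_{\far}$, the improved H\"older inequality is unnecessary. Plain H\"older in space, $\|a_k\|_{L^2(L^\infty)}\|a_{k'}\|_{L^2(L^\infty)}\|W_k(\sigma_h\cdot)\otimes W_{k'}(\sigma_h\cdot)\|_{L^1}$, already suffices since by periodicity the last factor equals $\|W_k\otimes W_{k'}\|_{L^1}\lesssim\mu_h^{-1}$ via \eqref{eq:est-WhWh}.
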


\begin{proof}
Using Lemmas~\ref{parameterlemma}, \ref{lemma:hid}, \ref{lemma:bid} and \ref{coefficientlemma} we estimate $R_{\osc,x,h}$ as follows
\begingroup
\allowdisplaybreaks
\begin{align*}
    \lVert R_{\osc,x,h} \rVert_{L^1 (L^1)} &= \bigg\lVert \sum_{k \in \Lambda} \mathcal{B} \bigg( \nabla_h (a_k^2), W_k (\sigma_h \cdot) \otimes W_k (\sigma_h \cdot) - \int_{\mathbb{T}^2} W_k \otimes W_k \dx  \bigg) \bigg\rVert_{L^1(L^1)} \\
    &\leq \sum_{k \in \Lambda} \lVert \nabla_h (a_k^2) \rVert_{L^1 (C^1)} \bigg\lVert \mathcal{R}_h \bigg( W_k (\sigma_h \cdot) \otimes W_k (\sigma_h \cdot) - \int_{\mathbb{T}^2} W_k \otimes W_k \dx  \bigg) \bigg\rVert_{L^1} \\
    &\leq \sigma_h^{-1} \sum_{k \in \Lambda} \lVert \nabla_h (a_k^2) \rVert_{L^1 (C^1)} \bigg\lVert W_k \otimes W_k - \int_{\mathbb{T}^2} W_k \otimes W_k \dx  \bigg\rVert_{L^1} \\
    &\lesssim \sigma_h^{-1} \kappa_h^{-1/2} .
\end{align*}
\endgroup
Here we have used that (similar to \eqref{eq:prod1}) 
\begin{equation}
    \bigg\lVert W_k \otimes W_k - \int_{\mathbb{T}^2} W_k \otimes W_k \dx  \bigg\rVert_{L^1} \lesssim \|W_k\otimes W_k\|_{L^1} + 1 \leq \|W_k\|_{L^2}^2 + 1 \lesssim 1,
\end{equation}
according to Lemma~\ref{mikadobounds}.

Next, we obtain from \eqref{eq:est-hh}
\begin{align*}
    \lVert R_{\osc,t,h} \rVert_{L^1 (L^1)} &\leq \nu_h^{-1} \lVert h_h (\nu_h \cdot) \rVert_{L^\infty} \lVert \partial_t R_h \rVert_{L^1 (L^1)} \lesssim \nu_h^{-1}. 
\end{align*}
Finally, using Lemma~\ref{coefficientlemma} and Proposition~\ref{prop:mikado-h} we get
\begin{align*}
    \lVert R_{\far} \rVert_{L^1 (L^1)} &\leq \bigg\lVert \sum_{k,k' \in \Lambda, k \neq k'} a_{k} a_{k'} W_k (\sigma_h \cdot) \otimes W_{k'} (\sigma_h \cdot ) \bigg\rVert_{L^1(L^1)} \\
    &\lesssim  \sum_{k,k' \in \Lambda, k \neq k'} \|a_{k}\|_{L^2(L^\infty)} \|a_{k'}\|_{L^2(L^\infty)} \Big\lVert W_k (\sigma_h \cdot) \otimes W_{k'} (\sigma_h \cdot ) \Big\rVert_{L^1} \\
    &\lesssim  \sum_{k,k' \in \Lambda, k \neq k'} \lVert W_k \otimes W_{k'} \rVert_{L^1}\lesssim \mu_h^{-1}.
\end{align*}
By choosing $\lambda_h$ large enough (depending on $R_h$), we conclude with \eqref{eq:est-Rosch}.
\end{proof}

\subsubsection{Vertical part} 
\begin{lemma} \label{verticaloscestimate}
    If $\lambda_v$ is chosen sufficiently large (depending on $R_v$), then the vertical oscillation error satisfies 
    \begin{equation} \label{eq:est-Roscv}
        \lVert R_{\osc,v} \rVert_{L^1 (B^{-1}_{1,\infty})} \leq \frac{\epsilon}{3}. 
    \end{equation}
\end{lemma}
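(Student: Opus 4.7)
The plan is to split $R_{\osc,v}=R_{\osc,x,v}+R_{\osc,t,v}$ according to \eqref{eq:defn-Roscv} and estimate each piece separately in $L^1(B^{-1}_{1,\infty})$. In both cases the bound will reduce to a negative power of $\lambda_v$ times a constant depending on $R_v$, so choosing $\lambda_v$ large (depending on $R_v$) will force the total under $\epsilon/3$.

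For the spatial oscillation piece $R_{\osc,x,v}$, the approach mirrors the argument used for $w_{p,v}u_{p,v}$ in the proof of Lemma~\ref{verticalprinc}. Apply the improved H\"older inequality (Lemma~\ref{holderlemma}) in the temporal variable to separate the fast-oscillating factor $g_{v,k}^-(\nu_v t)g_{v,k}^+(\nu_v t)$ from the slowly varying spatial-temporal factor. Using the normalization $\|g_{v,k}^- g_{v,k}^+\|_{L^1}\lesssim 1$ from \eqref{eq:l1-gvpgvm} gives a leading term of the form $\|R_{v,k}[\phi_k(\sigma_v\cdot)W_k(\sigma_v\cdot)-\int\phi_k W_k\,dx]\|_{L^1(B^{-1}_{1,\infty})}$ plus a correction carrying a factor $\nu_v^{-1}$. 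The leading term is precisely what Lemma~\ref{lemma:prod} controls, yielding a gain of $\sigma_v^{-1}$. The $\nu_v^{-1}$ correction is absorbed by a $C^1$-in-time bound on $R_{v,k}$ combined with Lemma~\ref{lemma:essential-besov} and the smoothness estimates for the Mikado building blocks. The net estimate is
$$\|R_{\osc,x,v}\|_{L^1(B^{-1}_{1,\infty})}\lesssim \sigma_v^{-1}+\nu_v^{-1}\,C_{R_v}.$$

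For the temporal oscillation piece $R_{\osc,t,v}$, work directly from \eqref{eq:defn-Rosctv}. The factor $\nu_v^{-1}$ is already explicit, $\|h_{v,k}\|_{L^\infty}\leq 1$ by \eqref{eq:est-hv}, and $\partial_t R_v$ is smooth. Together with the continuous embedding $L^1\hookrightarrow B^{-1}_{1,\infty}$ (Lemma~\ref{lemma:essential-besov}) this gives
$$\|R_{\osc,t,v}\|_{L^1(B^{-1}_{1,\infty})}\lesssim \nu_v^{-1}\|\partial_t R_v\|_{L^1(L^1)}\lesssim \nu_v^{-1}\,C_{R_v}.$$
Since $\sigma_v=\lambda_v^{b_v}$ and $\nu_v=\lambda_v^{a_v}$ with $a_v,b_v>0$ by Lemma~\ref{parameterlemma}, both contributions decay like negative powers of $\lambda_v$, and taking $\lambda_v$ large enough (depending on $R_v$) concludes the proof.

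The only subtlety is the careful application of the improved H\"older inequality in the presence of the space-time coupled factor $R_{v,k}(x,t)$: this is handled exactly as in the proof of Lemma~\ref{verticalprinc}, first forming the $B^{-1}_{1,\infty}$ norm in the spatial variable and then applying the one-dimensional temporal improved H\"older to the resulting scalar functions of $t$. The use of Lemma~\ref{lemma:prod} to extract the $\sigma_v^{-1}$ gain from the mean-free spatial oscillation is the critical step, as without it the spatial piece would lose the smallness needed to close the iteration.
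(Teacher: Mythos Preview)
Your proof is correct, but it takes a slightly more elaborate route than the paper's. The key difference is in the treatment of $R_{\osc,x,v}$: you invoke the improved H\"older inequality (Lemma~\ref{holderlemma}) in time, mirroring the argument for $w_{p,v}u_{p,v}$ in Lemma~\ref{verticalprinc}, which produces a main term plus a $\nu_v^{-1}$ correction. The paper instead uses the ordinary H\"older inequality with an $L^1_t$--$L^\infty_t$ splitting:
\[
\lVert R_{\osc,x,v}\rVert_{L^1(B^{-1}_{1,\infty})}\lesssim \sum_{k=1}^2\lVert g_{v,k}^-(\nu_v\cdot)g_{v,k}^+(\nu_v\cdot)\rVert_{L^1}\,\lVert\theta^2\rVert_{L^\infty}\,\Big\lVert R_{v,k}\Big(\phi_k(\sigma_v\cdot)W_k(\sigma_v\cdot)-\textstyle\int\phi_kW_k\Big)\Big\rVert_{L^\infty(B^{-1}_{1,\infty})}\lesssim \sigma_v^{-1},
\]
applying Lemma~\ref{lemma:prod} with $p=\infty$ rather than $p=1$. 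This is simpler because, unlike in Lemma~\ref{verticalprinc}, the spatial mean of $\phi_kW_k$ has already been subtracted in the definition \eqref{eq:defn-Roscxv}, so the $\sigma_v^{-1}$ gain is available at any time integrability and there is no need to isolate an $L^1_t$ norm via the improved H\"older. Your extra $\nu_v^{-1}$ correction is harmless but unnecessary. For $R_{\osc,t,v}$ your argument agrees with the paper's.
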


\begin{proof}

Using \eqref{eq:l1-gvpgvm} and Lemma~\ref{lemma:prod} we find
\begingroup
\allowdisplaybreaks
\begin{align*}
    &\lVert R_{\osc,x,v} \rVert_{L^1 (B^{-1}_{1,\infty})} \\
    &\lesssim \sum_{k=1}^2 \Big\lVert g_{v,k}^-(\nu_v\cdot)g_{v,k}^+(\nu_v\cdot)\Big\rVert_{L^1} \lVert\theta^2 \rVert_{L^\infty} \bigg\lVert  R_{v,k} \bigg( \phi_k (\sigma_v \cdot) W_k (\sigma_v \cdot) - \int_{\mathbb{T}^2} \phi_k W_k \dx \bigg) \bigg\rVert_{L^\infty(B^{-1}_{1,\infty})} \\ 
    &\lesssim \sigma_v^{-1}.
\end{align*} 
\endgroup
For the temporal part of the error we obtain by \eqref{eq:est-hv}
\begin{align*}
    \lVert R_{\osc,t,v} \rVert_{L^1 (B^{-1}_{1,\infty})} \lesssim \lVert R_{\osc,t,v} \rVert_{L^\infty (L^\infty)} = \nu_v^{-1}  \sum_{k=1}^2 \| h_{v,k} (\nu_v \cdot )\|_{L^\infty} \|\partial_t R_{v,k} \|_{L^\infty(L^\infty)} &\lesssim \nu_v^{-1}.
\end{align*}
Consequently \eqref{eq:est-Roscv} follows by choosing $\lambda_v$ sufficiently large, depending on $R_v$.
\end{proof}

\subsection{Corrector error}
\begin{lemma} \label{correctorestimate} 
    If $\lambda_h$ and $\lambda_v$ are sufficiently large (depending on $R_h$ and $R_v$, respectively), then the corrector errors satisfy the estimates
    \begingroup
    \allowdisplaybreaks
    \begin{align}
        \lVert R_{\cor,h} \rVert_{L^1 (L^1)} &\leq \frac{\epsilon}{3}, \label{eq:est-Rcorh} \\
        \lVert R_{\cor,v} \rVert_{L^1 (B^{-1}_{1,\infty})} &\leq \frac{\epsilon}{3}. \label{eq:est-Rcorv} 
    \end{align}
    \endgroup
\end{lemma}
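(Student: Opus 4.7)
The plan is to decompose $R_{\cor,h}$ and $R_{\cor,v}$ into their constituent tensor products, push the inverse-divergence operators through using Lemmas~\ref{lemma:hid} and \ref{Lpverticalbound}, and then bound each product in the target norm by Hölder's inequality fed with the perturbation estimates from section~\ref{perturbationestimates}. Every summand carries at least one factor with a small prefactor $\lambda_h^{-\gamma_h}$ or $\lambda_v^{-\gamma_v}$, so choosing the master parameters large enough will drive each piece below $\epsilon/3$.

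For $R_{\cor,h}$ I would invoke the Calderón--Zygmund bound from item~3 of Lemma~\ref{lemma:hid}: for any $1<p<\infty$,
\begin{equation*}
    \|R_{\cor,h}\|_{L^1(L^p)} \lesssim \|T_h\|_{L^1(L^p)},
\end{equation*}
where $T_h$ is the tensor sitting inside $\mathcal{R}_h\nabla_h\cdot$. Since $\mathbb{T}^2$ has finite measure, $L^p\hookrightarrow L^1$, and picking $p$ slightly above $1$ suffices. The purely horizontal pieces (corrector--corrector and principal--corrector) are then controlled by Hölder, combining the $L^2(L^\infty)$ bound on $u_{c,h}+u_{t,h}$ (Lemmas~\ref{correctorlemma},~\ref{temporallemma}) with the $L^2(L^2)$ bound on $u_{p,h}$ (Lemma~\ref{horizontalprinc}), producing respectively $\lambda_h^{-2\gamma_h}$ and $\|R_h\|_{L^1(L^1)}^{1/2}\lambda_h^{-\gamma_h}$. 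The barotropic vertical piece $\overline{(u_{p,v}+u_{c,v}+u_{t,v})\otimes(u_{p,v}+u_{c,v}+u_{t,v})}$ is handled similarly after using $\|\overline{f}\|_{L^p(\mathbb{T}^2)}\leq \|f\|_{L^p(\mathbb{T}^3)}$ (Jensen), with the leading $u_{p,v}\otimes u_{p,v}$ contribution estimated by Hölder in time (recall $q_2>2$) together with the spatial bound $\|W_k(\sigma_v\cdot)\|_{L^{2p}}\lesssim \mu_v^{1/2-1/(2p)}$ from Lemma~\ref{mikadobounds}.

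For $R_{\cor,v}$ I would use the commutation $\mathcal{R}_v\nabla_h\cdot = \nabla_h\cdot\mathcal{R}_v$ and the identity $\mathcal{R}_v\partial_z v = v-\overline{v}$ from \eqref{eq:lem-Rv-est4}, both of which preserve $L^p$-size. Combined with the embedding $L^1\hookrightarrow B^0_{1,\infty}\hookrightarrow B^{-1}_{1,\infty}$ and the one-derivative loss $\|\nabla_h v\|_{B^{-1}_{1,\infty}}\lesssim \|v\|_{L^1}$, this reduces the estimate of $R_{\cor,v}$ in $L^1(B^{-1}_{1,\infty})$ to an $L^1(L^1)$ bound on the underlying tensor $T_v$. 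Here the $w_{t,v}$-terms are absorbed by $\|w_{t,v}\|_{L^\infty(W^{n,\infty})}\lesssim \lambda_v^{-\gamma_v}$ (Lemma~\ref{temporallemma}); the term $w_{p,v}(u_{c,v}+u_{t,v})$ is handled by pairing $\|w_{p,v}\|_{L^{q_2'}(L^2)}\lesssim \|R_h\|_{L^1(L^1)}$ (Lemma~\ref{verticalprinc}) with $\|u_{c,v}+u_{t,v}\|_{L^{q_2}(L^\infty)}\lesssim \lambda_v^{-\gamma_v}$ (Lemmas~\ref{correctorlemma}, \ref{temporallemma}); the horizontal--vertical cross-terms are controlled by pairing $\|u_{p,h}\|_{L^2(L^2)}$ with $\|u_{p,v}\|_{L^{q_2-}(L^2)}$ (and similarly for the correctors), producing in every instance at least one $\lambda^{-\gamma}$ factor.

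The main obstacle is simply the bookkeeping: for each of the many cross-products one has to verify that the two time-integrability exponents are Hölder-conjugate to some $r\geq 1$ (so that finite time-measure yields $L^1$ in $t$) and that spatial Hölder yields the required $L^p$ norm. The one genuinely delicate point, flagged above, is the vertical-vertical piece of $R_{\cor,h}$: applying Calderón--Zygmund forces $p>1$, and $\|W_k(\sigma_v\cdot)\otimes W_k(\sigma_v\cdot)\|_{L^p}$ then carries the positive power $\mu_v^{1-1/p}$, which must be defeated by the temporal gain $\kappa_v^{-2\delta}$ coming from $\|g_{v,k}^-\|_{L^{q_2-}}^2$. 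Since $\mu_v=\lambda_v$ and $\kappa_v=\lambda_v^{c_v}$, this is achievable whenever $p$ is chosen close enough to $1$ in terms of $\delta$ and $c_v$. Once all exponents are balanced in this way, taking $\lambda_h$ and $\lambda_v$ sufficiently large (depending on $R_h$, $R_v$ and $\epsilon$) delivers \eqref{eq:est-Rcorh} and \eqref{eq:est-Rcorv}.
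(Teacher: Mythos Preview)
Your proposal is correct and follows essentially the same route as the paper. In particular, you correctly isolate the one genuinely delicate point---the $u_{p,v}\otimes u_{p,v}$ contribution to $R_{\cor,h}$, where the Calder\'on--Zygmund bound forces $p>1$ and one must balance the resulting spatial loss $\mu_v^{1-1/p}$ against the temporal gain from $\kappa_v$; the paper handles this by fixing $1<r\leq 2$ with $\frac{1}{r}\geq 1-\tfrac{1}{2}\delta c_v$ and proving the auxiliary bound $\|u_{p,v}\|_{L^{q_2-}(L^{\widetilde r})}\lesssim \lambda_v^{-\gamma_v/2}$ (with $\frac{1}{\widetilde r}=\frac{1}{r}-\frac{1}{2}$), which is exactly the quantitative version of your ``choose $p$ close enough to $1$ in terms of $\delta$ and $c_v$''.
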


\begin{proof}
First, we estimate $R_{\cor,h}$. Since estimate \eqref{eq:lem-Rh-est3} does not hold for $p=1$, we have to introduce a suitable $r>1$. Let us fix $1<r\leq 2$ such that $1-\frac{1}{2}\delta c_v\leq \frac{1}{r}$, where $c_v>0$ is given by Lemma~\ref{parameterlemma}. More precisely, if $1-\frac{1}{2}\delta c_v>0$, we choose $1<r\leq \min\left\{2,\frac{1}{1-\frac{1}{2}\delta c_v}\right\}$ which is possible due to $1-\frac{1}{2}\delta c_v<1$. On the other hand, if $1-\frac{1}{2}\delta c_v\leq 0$, we simply take $1<r\leq 2$. Moreover, we set $\frac{1}{\widetilde{r}}=\frac{1}{r}-\frac{1}{2}$. Then (similar to \eqref{eq:513}), we obtain by \eqref{parameters}, \eqref{eq:est-gvm} and Lemmas~\ref{parameterlemma} and \ref{mikadobounds}
\begin{align}
    \lVert u_{p,v} \rVert_{L^{q_2-} (L^{\widetilde{r}})} &\leq \frac{1}{\lVert R_h \rVert_{L^1 (L^1)}} \sum_{k=1}^2 \lVert g_{v,k}^- (\nu_v \cdot) \rVert_{L^{q_2-}} \lVert R_v \rVert_{L^\infty (L^{\infty})} \lVert W_k (\sigma_v \cdot ) \rVert_{L^{\widetilde{r}}} \notag\\
    &\lesssim \kappa_v^{1/q_2 - 1/q_2 - \delta} \mu_v^{1/2-1/\widetilde{r}} = \kappa_v^{-\delta} \mu_v^{1-1/r}\lesssim \kappa_v^{-\delta} \mu_v^{\frac{1}{2}\delta c_v} = \kappa_v^{-\delta + \frac{1}{2}\delta} \lesssim \lambda_v^{-\frac{1}{2}\gamma_v}. \label{eq:66}
\end{align}

Now we are ready to estimate $R_{\cor,h}$. Using Lemmas~\ref{lemma:hid}, \ref{horizontalprinc}, \ref{verticalprinc}, \ref{correctorlemma} and \ref{temporallemma}, and bound \eqref{eq:66} we get\footnote{To be precise in the following we will use $q_2->2$. Note that $q_2>2$ by assumption \eqref{constraints-prop} and we may assume without loss of generality that $\delta$ is small enough such that $q_2->2$. Indeed shrinking $\delta$ makes the result in Proposition~\ref{perturbativeproposition} stronger.}
\begingroup
\allowdisplaybreaks
\begin{align*}
    \lVert R_{\cor,h} \rVert_{L^1 (L^1)} &\lesssim \lVert R_{\cor,h} \rVert_{L^1 (L^r)} \\
    &\lesssim \bigg\lVert \big( u_{c,h} + u_{t,h} \big) \otimes \big( u_{c,h} + u_{t,h} \big) + u_{p,h} \otimes \big(u_{c,h} + u_{t,h} \big) + \big(u_{c,h} + u_{t,h} \big)\otimes u_{p,h} \\
    &\qquad + \overline{\big( u_{p,v} + u_{c,v} + u_{t,v} \big) \otimes \big( u_{p,v} + u_{c,v} + u_{t,v} \big)} \bigg\rVert_{L^1 (L^r)} \\
    &\lesssim \lVert u_{p,h} \rVert_{L^2 (L^2)} \Big( \lVert u_{c,h} \rVert_{L^2 (L^\infty)} + \lVert u_{t,h} \rVert_{L^2(L^\infty)}\Big) + \lVert u_{c,h} \rVert_{L^2 (L^\infty)}^2 + \lVert u_{t,h} \rVert_{L^2 (L^\infty)}^2 \\
    &\qquad + \lVert u_{p,v} \rVert_{L^2 (L^2)} \lVert u_{p,v} \rVert_{L^2 (L^{\widetilde{r}})} + \lVert u_{c,v} \rVert_{L^2 (L^\infty)}^2 + \lVert u_{t,v} \rVert_{L^2 (L^\infty)}^2 \\
    &\lesssim \lVert R_h \rVert_{L^1(L^1)}^{1/2} \lambda_h^{-\gamma_h} + \lambda_h^{-2\gamma_h} + \lambda_v^{-\frac{3}{2}\gamma_v} + \lambda_v^{-2\gamma_v}, 
\end{align*}
\endgroup
which implies \eqref{eq:est-Rcorh} as soon as $\lambda_h$ and $\lambda_v$ are suitably large (depending on $R_h$ and $R_v$, respectively). 

Finally, according to Lemmas~\ref{Lpverticalbound}, \ref{horizontalprinc}, \ref{verticalprinc}, \ref{correctorlemma}, \ref{temporallemma} and \ref{lemma:essential-besov}
\begingroup
\allowdisplaybreaks
\begin{align*}
    &\lVert R_{\cor,v} \rVert_{L^1 (B^{-1}_{1,\infty})} \\
    &\lesssim \bigg\lVert \nabla_h \cdot \bigg( \reallywidetilde{\big( u_{p,v} + u_{c,v} + u_{t,v} \big) \otimes \big( u_{p,v} + u_{c,v} + u_{t,v} \big)} \\
    &\qquad \qquad+ \big( u_{p,h} + u_{c,h} + u_{t,h} \big) \otimes \big( u_{p,v} + u_{c,v} + u_{t,v} \big) \\
    &\qquad \qquad+ \big( u_{p,v} + u_{c,v} + u_{t,v} \big) \otimes \big( u_{p,h} + u_{c,h} + u_{t,h} \big)  \bigg) \bigg\rVert_{L^1 (B^{-1}_{1,\infty})} \\
    &\qquad + \Big\lVert w_{t,v} \big(u_{p,h}+u_{c,h}+u_{t,h} + u_{p,v}+u_{c,v}+u_{t,v}\big) + w_{p,v} \big( u_{c,v} + u_{t,v} \big) \Big\rVert_{L^1 (L^1)} \\
    &\lesssim \bigg\lVert \reallywidetilde{\big( u_{p,v} + u_{c,v} + u_{t,v} \big) \otimes \big( u_{p,v} + u_{c,v} + u_{t,v} \big)} \\
    &\qquad \qquad+ \big( u_{p,h} + u_{c,h} + u_{t,h} \big) \otimes \big( u_{p,v} + u_{c,v} + u_{t,v} \big) \\
    &\qquad \qquad+ \big( u_{p,v} + u_{c,v} + u_{t,v} \big) \otimes \big( u_{p,h} + u_{c,h} + u_{t,h} \big) \bigg\rVert_{L^1 (L^1)} \\
    &\qquad + \Big\lVert w_{t,v} \big(u_{p,h}+u_{c,h}+u_{t,h} + u_{p,v}+u_{c,v}+u_{t,v}\big) + w_{p,v} \big( u_{c,v} + u_{t,v} \big) \Big\rVert_{L^1 (L^1)} \\
    &\lesssim \lVert u_{p,v} \rVert_{L^2 (L^2)}^2 + \lVert u_{c,v} \rVert_{L^2 (L^2)}^2 + \lVert u_{t,v} \rVert_{L^2 (L^2)}^2 \\
    &\qquad + \Big( \lVert u_{p,h} \rVert_{L^2 (L^2)} + \lVert u_{c,h} \rVert_{L^2 (L^2)} + \lVert u_{t,h} \rVert_{L^2 (L^2)} \Big) \Big( \lVert u_{p,v} \rVert_{L^2 (L^2)} + \lVert u_{c,v} \rVert_{L^2 (L^2)} + \lVert u_{t,v} \rVert_{L^2 (L^2)} \Big) \\
    &\qquad + \lVert w_{t,v} \rVert_{L^2 (L^2)} \Big( \lVert u_{p,h} \rVert_{L^2 (L^2)} + \lVert u_{c,h} \rVert_{L^2 (L^2)} + \lVert u_{t,h} \rVert_{L^2 (L^2)} \Big) \\
    &\qquad + \lVert w_{t,v} \rVert_{L^2 (L^2)} \Big( \lVert u_{p,v} \rVert_{L^2 (L^2)} + \lVert u_{c,v} \rVert_{L^2 (L^2)} + \lVert u_{t,v} \rVert_{L^2 (L^2)} \Big) \\
    &\qquad + \lVert w_{p,v} \rVert_{L^{q_2'} (L^2)} \Big( \lVert u_{c,v} \rVert_{L^{q_2} (L^2)} + \lVert u_{t,v} \rVert_{L^{q_2} (L^2)} \Big) \\
    &\lesssim \lambda_v^{-2\gamma_v} + \big(\lVert R_h \rVert_{L^1 (L^1)}^{1/2} + \lambda_h^{-\gamma_h}\big) \lambda_v^{-\gamma_v} + \lVert R_h \rVert_{L^1 (L^1)} \lambda_v^{-\gamma_v} .
\end{align*}
\endgroup
In these estimates we have used the fact that the time interval $[0,T]$ is finite. Then \eqref{eq:est-Rcorv} follows by choosing $\lambda_h$ and $\lambda_v$ large enough (again depending on $R_h$ and $R_v$, respectively). 
\end{proof}

\subsection{Linear error}
\begin{lemma} \label{linearestimate}
    If $\lambda_h$ and $\lambda_v$ are chosen sufficiently large (depending on $R_h$ and $R_v$, respectively), then the linear errors satisfy the estimates
    \begin{align}
        \lVert R_{\lin,h} \rVert_{L^1 (L^1)} &\leq \frac{\epsilon}{3}, \label{eq:est-Rlinh} \\
        \lVert R_{\lin,v} \rVert_{L^1 (B^{-1}_{1,\infty})} &\leq \frac{\epsilon}{3}. \label{eq:est-Rlinv}
    \end{align} 
\end{lemma}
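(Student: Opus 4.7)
The plan is to split each linear error into its temporal and spatial parts, $R_{\lin,i} = R_{\lin,t,i} + R_{\lin,x,i}$ for $i=h,v$, and handle the four pieces separately, exploiting in every case that the inner argument is already in divergence form so that a derivative can be passed through $\mathcal{R}_h$ or $\mathcal{R}_v$.

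For the temporal horizontal error I would exploit the divergence representation \eqref{eq:uph+uch} to write
\[
R_{\lin,t,h} = \sigma_h^{-1}\sum_{k\in\Lambda}\mathcal{R}_h\,\nabla_h\cdot\bigl(\partial_t a_k\,\Omega_k(\sigma_h x)\bigr),
\]
and apply the Calder\'on--Zygmund estimate \eqref{eq:lem-Rh-est3} in $L^r$ for some $r>1$ (since $\mathcal{R}_h\nabla_h\cdot$ is not $L^1$-bounded). Combining Lemma~\ref{coefficientlemma} for $\partial_t a_k$ with the Mikado estimate \eqref{Omega-h}, then passing from $L^r(L^r)$ to $L^1(L^1)$ on the finite space-time domain, produces a bound of the form $\sigma_h^{-1}\nu_h\kappa_h^{1/2}\mu_h^{-1}$ multiplied by an additional power of $\mu_h^{-1/2}$; the main factor is controlled by the parameter constraint \eqref{parameterineq2} from Lemma~\ref{parameterlemma}, and the spare $\mu_h^{-1/2}$ absorbs the loss from $r>1$. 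The vertical temporal error $R_{\lin,t,v}$ is handled analogously using \eqref{eq:upv+ucv}, the commutation $\mathcal{R}_v\nabla_h = \nabla_h\mathcal{R}_v$, the $L^p$-boundedness \eqref{eq:lem-Rv-est1} of $\mathcal{R}_v$, and the embedding $L^1\hookrightarrow B^{-1}_{1,\infty}$ from Lemma~\ref{lemma:essential-besov}; the analogous constraint \eqref{parameterineq4} closes the estimate.

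For the spatial horizontal error, every summand of $R_{\lin,x,h}$ is $\mathcal{R}_h$ applied to $\nabla_h\cdot$ of a product of the smooth background $(u,w)$ with a piece of the perturbation. Applying \eqref{eq:lem-Rh-est3} on $L^r$ for $r$ just above $1$ reduces the task to estimating $L^r$-norms of such products, which by H\"older are bounded by $L^\infty$-norms of $\overline u, \widetilde u$ (smooth, hence bounded) times $L^r$-norms of the perturbation factors. Using Sobolev embedding $H^{s_1}(\mathbb{T}^2), H^{s_3}(\mathbb{T}^3)\hookrightarrow L^r$ for $r$ near $1$, together with Lemmas~\ref{horizontalprinc}, \ref{verticalprinc}, \ref{correctorlemma} and \ref{temporallemma}, each summand is $\lesssim \lambda_h^{-\gamma_h}$ or $\lambda_v^{-\gamma_v}$. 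The spatial vertical error $R_{\lin,x,v}$ is easier: since $\mathcal{R}_v$ is bounded on $L^p$ and commutes with $\nabla_h$, while $\mathcal{R}_v\partial_z v = v-\overline v$, I would estimate everything directly in $L^1(L^1)$ and then embed into $L^1(B^{-1}_{1,\infty})$.

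The main obstacle is the failure of $\mathcal{R}_h\nabla_h\cdot$ to be bounded on $L^1$; this forces the detour through $L^r$, $r>1$, and a careful tracking of the loss $\kappa_h^{-1/r}\mu_h^{-1/r}$ against the gains from \eqref{parameterineq2} and the extra $\mu_h^{-1/2}$ present in the $\Omega_k$ estimates. A secondary difficulty is the factor $\|R_h\|_{L^1(L^1)}^{-1}$ appearing in the vertical perturbations \eqref{eq:def-upv}, \eqref{eq:def-wpv}, which is large when $R_h$ is small; this is absorbed by letting $\lambda_v$ depend on both $R_h$ and $R_v$. Once all four pieces are bounded by $\lesssim \lambda_h^{-\gamma_h/2}+\lambda_v^{-\gamma_v/2}$, choosing $\lambda_h,\lambda_v$ sufficiently large yields \eqref{eq:est-Rlinh} and \eqref{eq:est-Rlinv}.
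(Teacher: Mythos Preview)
Your proposal is correct and follows the same overall strategy as the paper: split each linear error into temporal and advective parts, exploit the divergence representations \eqref{eq:uph+uch} and \eqref{eq:upv+ucv} for the temporal pieces, and pass the spatial derivative through $\mathcal{R}_h$ or $\mathcal{R}_v$ (using the Calder\'on--Zygmund bound for the former and the $B^{-1}_{1,\infty}$ embedding for the latter).

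The one noteworthy difference is your choice of the intermediate Lebesgue exponent. You repeatedly take $r$ just above $1$ and then track the resulting losses against spare factors of $\mu_h^{-1/2}$, worrying about $\kappa_h^{-1/r}\mu_h^{-1/r}$. The paper simply takes $r=2$: for $R_{\lin,t,h}$ it estimates $\lVert\mathcal{R}_h\nabla_h\cdot(\partial_t a_k\,\Omega_k(\sigma_h\cdot))\rVert_{L^1(L^2)}\lesssim \sigma_h^{-1}\lVert\partial_t a_k\rVert_{L^1(L^\infty)}\lVert\Omega_k\rVert_{L^2}\lesssim \sigma_h^{-1}\nu_h\kappa_h^{1/2}\mu_h^{-1}$, which is exactly \eqref{parameterineq2}; and for $R_{\lin,x,h}$ it estimates the products in $L^1(L^2)$, using $\lVert u_{p,h}\rVert_{L^1(L^2)}\lesssim\lVert u_{p,h}\rVert_{L^{q_1}(H^{s_1})}$ (trivially, since $s_1>0$ and $q_1\geq 1$). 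This avoids the Sobolev embeddings $H^{s_1}\hookrightarrow L^r$ for small $r$ and the bookkeeping of ``spare'' $\mu_h$-powers, and gives the parameter inequalities on the nose. Your route works too, but the $r=2$ choice is cleaner. Also, the dependence of the implicit constants on $\lVert R_h\rVert_{L^1(L^1)}^{-1}$ is already part of the paper's convention (stated at the start of section~\ref{perturbationestimates}), so there is no need to single it out.
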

In order to prove this Lemma, we consider the time derivative (see section~\ref{subsubsec:lin-err-time}) and advective terms (see section~\ref{subsubsec:lin-err-adv}) separately.

\begin{proof}[Proof of Lemma~\ref{linearestimate}]
We simply conclude using Lemmas~\ref{timederivativeestimate} and \ref{advectionestimate} below by choosing $\lambda_h$ and $\lambda_v$ large enough.
\end{proof}

\subsubsection{Time derivative} \label{subsubsec:lin-err-time} 
\begin{lemma} \label{timederivativeestimate}
    For the time derivative part of the linear error, the following bounds hold
    \begingroup
    \allowdisplaybreaks
    \begin{align}
        \lVert R_{\lin,t,h} \rVert_{L^1 (L^1)} &\lesssim \lambda_h^{-\gamma_h}, \\
        \lVert R_{\lin,t,v} \rVert_{L^1 (B^{-1}_{1,\infty})} &\lesssim \lambda_v^{-\gamma_v}.
    \end{align}
    \endgroup
\end{lemma}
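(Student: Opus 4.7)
The key observation is that both $u_{p,h}+u_{c,h}$ and $u_{p,v}+u_{c,v}$ can be written in horizontal divergence form (see \eqref{eq:uph+uch} and \eqref{eq:upv+ucv}), with time-dependence entering only through scalar amplitudes. After commuting $\partial_t$ through these divergence-form expressions and composing with the respective inverse divergences $\mathcal{R}_h$, $\mathcal{R}_v$, one spatial derivative cancels; the remaining large factor of $\nu_h$ or $\nu_v$ produced by $\partial_t$ hitting the temporal intermittency functions is absorbed precisely by \eqref{parameterineq2} or \eqref{parameterineq4}.

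For the horizontal estimate, since $\Omega_k(\sigma_h x)$ is time-independent, \eqref{eq:uph+uch} gives
$$R_{\lin,t,h} = \sigma_h^{-1}\sum_{k\in \Lambda} \mathcal{R}_h \nabla_h\cdot\big(\partial_t a_k\,\Omega_k(\sigma_h x)\big).$$
The operator $\mathcal{R}_h\nabla_h\cdot$ is a Calder\'on--Zygmund operator by item~3 of Lemma~\ref{lemma:hid} and therefore not bounded on $L^1$, so we route through $L^2$: combining the embedding $L^1_t L^1_x\hookrightarrow L^1_t L^2_x$ on the bounded domain $\mathbb{T}^2$, Lemma~\ref{coefficientlemma} which yields $\|\partial_t a_k\|_{L^1(L^\infty)}\lesssim \nu_h \kappa_h^{1/2}$, and Lemma~\ref{mikadobounds} which gives $\|\Omega_k(\sigma_h\cdot)\|_{L^2}\lesssim \mu_h^{-1}$, we obtain $\|R_{\lin,t,h}\|_{L^1(L^1)}\lesssim \sigma_h^{-1}\nu_h \kappa_h^{1/2}\mu_h^{-1}\leq \lambda_h^{-\gamma_h}$ by \eqref{parameterineq2}.

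For the vertical estimate, \eqref{eq:upv+ucv} combined with the commutativity of $\mathcal{R}_v$ with $\nabla_h$ (which applies since $R_{v,k}$ is mean-free in $z$ and $\Omega_k(\sigma_v x)$ is $z$-independent, so the argument is mean-free in $z$) yields
$$R_{\lin,t,v} = -\frac{\sigma_v^{-1}}{\|R_h\|_{L^1(L^1)}}\sum_{k=1}^{2}\nabla_h\cdot \mathcal{R}_v\big(\partial_t[g_{v,k}^-(\nu_v t)\theta R_{v,k}]\,\Omega_k(\sigma_v x)\big).$$
The embedding $L^1(\mathbb{T}^3)\hookrightarrow B^{-1}_{1,\infty}(\mathbb{T}^3)$ together with Lemma~\ref{lemma:essential-besov} yields $\|\nabla_h\cdot G\|_{B^{-1}_{1,\infty}}\lesssim \|G\|_{L^1}$, and $\|\mathcal{R}_v F\|_{L^1}\lesssim \|F\|_{L^1}$ by \eqref{eq:lem-Rv-est1}. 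The dominant contribution to $\partial_t[g_{v,k}^-(\nu_v t)\theta R_{v,k}]$ is $\nu_v (g_{v,k}^-)'(\nu_v t)\theta R_{v,k}$, bounded in $L^1(L^\infty)$ by $\nu_v \kappa_v^{1/q_2}\|R_v\|_{L^\infty(L^\infty)}$ via \eqref{eq:est-gvm-sobolev}, while $\|\Omega_k(\sigma_v\cdot)\|_{L^1(\mathbb{T}^3)}=\|\Omega_k(\sigma_v\cdot)\|_{L^1(\mathbb{T}^2)}\lesssim \mu_v^{-3/2}$ by Lemma~\ref{mikadobounds}. Since $q_2>2$ implies $\kappa_v^{1/q_2}\leq \kappa_v^{1/2}$ and $\mu_v^{-3/2}\leq \mu_v^{-1}$, we conclude $\|R_{\lin,t,v}\|_{L^1(B^{-1}_{1,\infty})}\lesssim C_{R_h,R_v}\sigma_v^{-1}\nu_v \kappa_v^{1/2}\mu_v^{-1}\leq \lambda_v^{-\gamma_v}$ by \eqref{parameterineq4}, after choosing $\lambda_v$ large enough to absorb $C_{R_h,R_v}$.

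The main subtlety is that $\mathcal{R}_h\nabla_h\cdot$ fails to be bounded on $L^1$, forcing the horizontal estimate through $L^2$; for the vertical estimate the commutativity of $\mathcal{R}_v$ with $\nabla_h$ is what enables the simple $L^1\hookrightarrow B^{-1}_{1,\infty}$ route and avoids the need for a delicate Calder\'on--Zygmund argument. The accounting of parameter powers is otherwise routine, driven in each case by the single $\nu$-factor produced when $\partial_t$ hits a high-frequency temporal function.
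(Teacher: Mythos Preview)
Your proof is correct and follows essentially the same route as the paper: write $u_{p,h}+u_{c,h}$ and $u_{p,v}+u_{c,v}$ in horizontal divergence form via \eqref{eq:uph+uch}, \eqref{eq:upv+ucv}, push $\partial_t$ onto the amplitudes, and use \eqref{parameterineq2}, \eqref{parameterineq4} to close. Two small remarks: your embedding is written backwards (on $\mathbb{T}^2$ one has $L^2\hookrightarrow L^1$, i.e.\ $\|\cdot\|_{L^1}\lesssim\|\cdot\|_{L^2}$, which is what you actually use); and for the vertical part the paper avoids commuting $\mathcal{R}_v$ with $\nabla_h$ by instead invoking the Besov boundedness \eqref{eq:lem-Rv-est2} of $\mathcal{R}_v$ directly and then \eqref{eq:besov-3}, but your commutation argument is equally valid and leads to the same bound $\sigma_v^{-1}\nu_v\kappa_v^{1/q_2}\mu_v^{-3/2}$.
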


\begin{proof}
According to \eqref{eq:uph+uch} we have
\begin{equation*}
    \partial_t (u_{p,h} + u_{c,h}) = \sigma_h^{-1} \sum_{k \in \Lambda} \nabla_h \cdot (\partial_t a_k (x,t) \Omega_k (\sigma_h x)) .
\end{equation*}
Using Lemmas~\ref{parameterlemma}, \ref{lemma:hid}, \ref{mikadobounds} and \ref{coefficientlemma} we thus find
\begin{align*}
    \lVert R_{\lin,t,h} \rVert_{L^1 (L^1)} &\lesssim \lVert \mathcal{R}_h \partial_t (u_{p,h} + u_{c,h}) \rVert_{L^1 (L^2)} \\
    &\lesssim \sigma_h^{-1} \sum_{k \in \Lambda} \lVert \partial_t a_k \rVert_{L^1 (L^\infty)} \lVert \Omega_k (\sigma_h \cdot ) \rVert_{L^2} \leq \sigma_h^{-1} \nu_h \kappa_h^{1/2} \mu_h^{-1} \lesssim \lambda_h^{-\gamma_h}. 
\end{align*}

Similarly \eqref{eq:upv+ucv} implies 
\begin{equation*}
    \partial_t (u_{p,v} + u_{c,v}) = - \frac{\sigma_v^{-1}}{\lVert R_h \rVert_{L^1 (L^1 )}} \sum_{k = 1}^2 \nabla_h \cdot \partial_t\big(g_{v,k}^- (\nu_v t) \theta R_{v,k} \Omega_k (\sigma_v x)\big).
\end{equation*}
Hence from Lemmas~\ref{parameterlemma}, \ref{Lpverticalbound}, \ref{mikadobounds} and \ref{lemma:essential-besov}, the assumption $q_2>2$, and estimate \eqref{eq:est-gvm-sobolev} we obtain 
\begingroup
\allowdisplaybreaks
\begin{align*}
    &\lVert R_{\lin,t,v} \rVert_{L^1 (B^{-1}_{1,\infty})} \\
    &\lesssim \frac{\sigma_v^{-1}}{\lVert R_h \rVert_{L^1 (L^1 )}} \sum_{k = 1}^2 \Big\lVert \mathcal{R}_v \nabla_h \cdot \partial_t\big(g_{v,k}^- (\nu_v \cdot) \theta R_{v,k} \Omega_k (\sigma_v \cdot)\big) \Big\rVert_{L^1 (B^{-1}_{1,\infty})} \\
    &\lesssim \frac{\sigma_v^{-1}}{\lVert R_h \rVert_{L^1 (L^1 )}} \sum_{k = 1}^2 \Big\lVert \nabla_h \cdot \partial_t\big(g_{v,k}^- (\nu_v \cdot) \theta R_{v,k} \Omega_k (\sigma_v \cdot)\big) \Big\rVert_{L^1 (B^{-1}_{1,\infty})} \\
    &\lesssim \frac{\sigma_v^{-1}}{\lVert R_h \rVert_{L^1 (L^1 )}} \sum_{k = 1}^2 \Big\lVert \partial_t\big(g_{v,k}^- (\nu_v \cdot) \theta R_{v,k} \Omega_k (\sigma_v \cdot)\big) \Big\rVert_{L^1 (L^1)} \\
    &\lesssim \frac{\sigma_v^{-1}}{\lVert R_h \rVert_{L^1 (L^1 )}} \sum_{k = 1}^2 \lVert g_{v,k}^- (\nu_v \cdot)\rVert_{W^{1,1}} \lVert\theta\rVert_{W^{1,\infty}} \lVert R_{v,k}\rVert_{W^{1,\infty}(L^\infty)} \lVert\Omega_k (\sigma_v \cdot)\rVert_{L^1} \\
    &\lesssim \sigma_v^{-1} \nu_v \kappa_v^{1/q_2} \mu_v^{-3/2} \lesssim \lambda_v^{-\gamma_v}. 
\end{align*}
\endgroup
\end{proof}

\subsubsection{Advective terms} \label{subsubsec:lin-err-adv}
\begin{lemma} \label{advectionestimate}
    For the advective part of the linear error, the following bounds hold
    \begingroup
    \allowdisplaybreaks
    \begin{align*}
        \lVert R_{\lin,x,h} \rVert_{L^1 (L^1)} &\lesssim \lambda_h^{-\gamma_h} + \lambda_v^{-\gamma_v}, \\
        \lVert R_{\lin,x,v} \rVert_{L^1 (B^{-1}_{1,\infty})} &\lesssim \lambda_h^{-\gamma_h} + \lambda_v^{-\gamma_v}.
    \end{align*} 
    \endgroup
\end{lemma}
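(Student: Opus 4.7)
The plan is to exploit the smoothness of the background solution $(u,w)$, whose $L^\infty$ norms can be absorbed into the implicit constants, and to bound the remaining perturbation pieces in weak integrability norms that decay polynomially in $\lambda_h,\lambda_v$.

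For $R_{\lin,x,h}$, I would first use the embedding $L^r(\T^2)\hookrightarrow L^1(\T^2)$ for some $r>1$ close to $1$, reducing the task to an $L^1(L^r)$ estimate. Writing $R_{\lin,x,h}=\mathcal{R}_h\nabla_h\cdot F$ with $F$ collecting the advective products of $\overline{u}$ (or $u$) with perturbation pieces (vertical averages of products are dominated by the full quantities), the Calder\'on--Zygmund bound \eqref{eq:lem-Rh-est3} yields
$$
\|R_{\lin,x,h}\|_{L^1(L^r)} \lesssim \|u\|_{L^\infty(L^\infty)}\sum_{P} \|P\|_{L^1(L^r)},
$$
where $P$ runs over $u_{p,h},u_{c,h},u_{t,h},u_{p,v},u_{c,v},u_{t,v}$. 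A direct computation with Lemmas~\ref{coefficientlemma} and \ref{mikadobounds}, using $\|g_h\|_{L^1}\lesssim \kappa_h^{-1/2}$ and $\|g_{v,k}^-\|_{L^1}\lesssim \kappa_v^{1/q_2-1}$, shows that the principal parts satisfy $\|u_{p,h}\|_{L^1(L^r)}\lesssim \kappa_h^{-1/2}\mu_h^{1/2-1/r}$ and $\|u_{p,v}\|_{L^1(L^r)}\lesssim \kappa_v^{1/q_2-1}\mu_v^{1/2-1/r}$, each of which is $\lesssim \lambda_h^{-\gamma_h}+\lambda_v^{-\gamma_v}$ for $r$ close enough to $1$. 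The corrector and temporal contributions are controlled more easily by Lemmas~\ref{correctorlemma} and \ref{temporallemma}.

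For $R_{\lin,x,v}$, the key observation is that $\mathcal{R}_v$ acts only in $z$ and hence commutes with $\nabla_h\cdot$. After pulling the horizontal divergence outside of $\mathcal{R}_v$, Lemma~\ref{lemma:essential-besov} (in the form $\|\nabla_h\cdot F\|_{B^{-1}_{1,\infty}}\lesssim \|F\|_{L^1}$) combined with the $L^1$-boundedness of $\mathcal{R}_v$ \eqref{eq:lem-Rv-est1} reduces the horizontal-transport contributions to estimates of the form $\|\widetilde u\otimes P\|_{L^1(L^1)}$, handled as in the horizontal case. For the two $\partial_z$ pieces, identity \eqref{eq:lem-Rv-est4} and the embedding $L^1\hookrightarrow B^{-1}_{1,\infty}$ give
$$
\|\mathcal{R}_v\partial_z(wP)\|_{B^{-1}_{1,\infty}}\lesssim \|w\|_{L^\infty(L^\infty)}\|P\|_{L^1(L^1)},
$$
and similarly $\|\mathcal{R}_v\partial_z((w_{p,v}+w_{t,v})u)\|_{B^{-1}_{1,\infty}}\lesssim \|u\|_{L^\infty(L^\infty)}\|w_{p,v}+w_{t,v}\|_{L^1(L^1)}$, both bounded using Lemmas~\ref{verticalprinc} and \ref{temporallemma}.

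The main obstacle is calibrating the auxiliary $r>1$: the factor $\mu^{1/2-1/r}$ arising from the $L^r$-norm of the Mikado building blocks has a positive exponent on $\mu_h,\mu_v$, which must be strictly dominated by the negative contributions from $\kappa_h^{-1/2}$ and $\kappa_v^{1/q_2-1}$ (the latter negative because $q_2>2$). Since $\kappa_i=\lambda_i^{c_i}$ with $c_i>0$ by Lemma~\ref{parameterlemma} and $\mu^{1/2-1/r}\to\mu^{-1/2}$ as $r\to 1$, there is ample room to choose $r$ so that the net exponent in $\lambda_i$ is strictly negative, whereupon the conclusion follows by taking $\lambda_h,\lambda_v$ sufficiently large (depending on $u,w,R_h,R_v$).
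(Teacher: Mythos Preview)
Your approach is correct and follows essentially the same route as the paper's proof. For $R_{\lin,x,v}$ the argument is virtually identical (the paper uses the boundedness of $\mathcal{R}_v$ on $B^{-1}_{1,\infty}$ from \eqref{eq:lem-Rv-est2} instead of commuting $\mathcal{R}_v$ with $\nabla_h$, but this is equivalent).

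For $R_{\lin,x,h}$ there is a small difference worth noting. The paper simply takes $r=2$ rather than $r$ close to $1$: since $\lVert W_k\rVert_{L^2}\lesssim 1$, one gets $\lVert u_{p,h}\rVert_{L^1(L^2)}\lesssim \kappa_h^{-1/2}$ and $\lVert u_{p,v}\rVert_{L^1(L^2)}\lesssim \kappa_v^{1/q_2-1}$ with no $\mu$ factor at all; alternatively, these $L^1(L^2)$ bounds follow directly from the estimates already established in Lemmas~\ref{horizontalprinc}--\ref{temporallemma} via $L^{q_1}(H^{s_1})\hookrightarrow L^1(L^2)$ and $L^{q_2-}(L^2)\hookrightarrow L^1(L^2)$. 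This avoids the calibration of $r$ altogether.

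On that point, your ``main obstacle'' is in fact a non-issue: for $1<r<2$ the exponent $1/2-1/r$ is \emph{negative} (tending to $-1/2$ as $r\to 1$, as you yourself observe), so the $\mu$ factor only helps and there is nothing to be dominated by the $\kappa$ contribution.
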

        
\begin{proof}
Lemmas~\ref{lemma:hid}, \ref{horizontalprinc}, \ref{verticalprinc}, \ref{correctorlemma} and \ref{temporallemma} 
yield
\begingroup
\allowdisplaybreaks
\begin{align*}
    &\lVert R_{\lin,x,h} \rVert_{L^1 (L^1)} \\
    &=\bigg\lVert \mathcal{R}_h \bigg[ \nabla_h \cdot \bigg( \overline{u} \otimes \big( u_{p,h} + u_{c,h} + u_{t,h} \big) + \overline{u \otimes \big( u_{p,v} + u_{c,v} + u_{t,v} \big) } \\
    &\qquad + \big( u_{p,h} + u_{c,h} + u_{t,h} \big) \otimes \overline{u} + \overline{\big( u_{p,v} + u_{c,v} + u_{t,v} \big) \otimes u}\bigg) \bigg] \bigg\rVert_{L^1 (L^1)} \\
    &\lesssim \bigg\lVert \overline{u} \otimes \big( u_{p,h} + u_{c,h} + u_{t,h} \big) + \overline{u \otimes \big( u_{p,v} + u_{c,v} + u_{t,v} \big) } \\
    &\qquad + \big( u_{p,h} + u_{c,h} + u_{t,h} \big) \otimes \overline{u} + \overline{\big( u_{p,v} + u_{c,v} + u_{t,v} \big) \otimes u} \bigg\rVert_{L^1 (L^2)} \\ 
    &\lesssim \lVert u\rVert_{L^\infty(L^\infty)} \Big( \lVert u_{p,h} \rVert_{L^1(L^2)} + \lVert u_{c,h} \rVert_{L^1(L^2)} + \lVert u_{t,h} \rVert_{L^1(L^2)} \Big) \\
    &\qquad + \lVert u\rVert_{L^\infty(L^\infty)} \Big( \lVert u_{p,v} \rVert_{L^1(L^2)} + \lVert u_{c,v} \rVert_{L^1(L^2)} + \lVert u_{t,v} \rVert_{L^1(L^2)} \Big) \\
    & \lesssim \lambda_h^{-\gamma_h} + \lambda_v^{-\gamma_v}.
\end{align*} 
\endgroup

For the vertical advective terms we have according to Lemmas~\ref{Lpverticalbound}, \ref{horizontalprinc}, \ref{verticalprinc}, \ref{correctorlemma}, \ref{temporallemma} and \ref{lemma:essential-besov} 
\begingroup
\allowdisplaybreaks
\begin{align*}
    &\lVert R_{\lin,x,v} \rVert_{L^1 (B^{-1}_{1,\infty})} \\
    &=\bigg\lVert \mathcal{R}_v \bigg[ \nabla_h \cdot \bigg( \widetilde{u} \otimes \big( u_{p,h} + u_{c,h} + u_{t,h} \big) + \reallywidetilde{u\otimes\big(u_{p,v}+u_{c,v}+u_{t,v}\big)} \\
    &\qquad\qquad +  \big( u_{p,h} + u_{c,h} + u_{t,h} \big) \otimes \widetilde{u} + \reallywidetilde{\big( u_{p,v} + u_{c,v} + u_{t,v} \big) \otimes u} \bigg)  \\ 
    &\qquad + \partial_z \bigg( w \big( u_{p,h} + u_{c,h} + u_{t,h} + u_{p,v} + u_{c,v} + u_{t,v} \big) + \big( w_{p,v} + w_{t,v} \big) u \bigg) \bigg] \bigg\rVert_{L^1 (B^{-1}_{1,\infty})} \\
    &\lesssim \bigg\lVert \widetilde{u} \otimes \big( u_{p,h} + u_{c,h} + u_{t,h} \big) + \reallywidetilde{u\otimes\big(u_{p,v}+u_{c,v}+u_{t,v}\big)} \\
    &\qquad\qquad +  \big( u_{p,h} + u_{c,h} + u_{t,h} \big) \otimes \widetilde{u} + \reallywidetilde{\big( u_{p,v} + u_{c,v} + u_{t,v} \big) \otimes u} \bigg\rVert_{L^1 (L^1)}  \\ 
    &\qquad + \bigg\lVert  w \big( u_{p,h} + u_{c,h} + u_{t,h} + u_{p,v} + u_{c,v} + u_{t,v} \big) + \big( w_{p,v} + w_{t,v} \big) u \bigg\rVert_{L^1 (L^1)} \\
    &\lesssim \Big(\lVert u\rVert_{L^\infty(L^\infty)} + \lVert w\rVert_{L^\infty(L^\infty)}\Big) \Big( \lVert u_{p,h} \rVert_{L^1(L^2)} + \lVert u_{c,h} \rVert_{L^1(L^2)} + \lVert u_{t,h} \rVert_{L^1(L^2)} \Big) \\
    &\qquad + \Big(\lVert u\rVert_{L^\infty(L^\infty)} + \lVert w\rVert_{L^\infty(L^\infty)}\Big) \Big( \lVert u_{p,v} \rVert_{L^1(L^2)} + \lVert u_{c,v} \rVert_{L^1(L^2)} + \lVert u_{t,v} \rVert_{L^1(L^2)} \Big) \\
    &\qquad + \lVert u\rVert_{L^\infty(L^\infty)} \Big( \lVert w_{p,v}\rVert_{L^1(L^1)} + \lVert w_{t,v}\rVert_{L^1(L^1)} \Big) \\
    &\lesssim \lambda_h^{-\gamma_h} + \lambda_v^{-\gamma_v}.
\end{align*}
\endgroup
\end{proof}

\section{The viscous primitive equations} \label{viscoussection}

In this section we consider the viscous primitive equations \eqref{eq:visc-u}-\eqref{eq:visc-div}. We begin by stating the viscous primitive-Reynolds system
\begin{align}
    \partial_t u - \nu_h^* \Delta_h u - \nu_v^* \partial_{zz} u + u \cdot \nabla_h u + w \partial_z u + \nabla_h p &= \nabla_h \cdot R_h + \partial_z R_v, \label{eq:visc-er-u} \\
    \partial_z p &= 0, \label{eq:visc-er-p}\\
    \nabla_h \cdot u + \partial_z w &=0 . \label{eq:visc-er-div} 
\end{align}
We prove the following version of Proposition~\ref{perturbativeproposition}. Theorem~\ref{viscousmainthm} can be proven in exactly the same fashion as Theorem~\ref{mainresult}. 

\begin{proposition} \label{viscousperturbativeprop}
	Suppose $(u,w,p,R_h,R_v)$ is a smooth solution of the viscous primitive-Reynolds system \eqref{eq:visc-er-u}-\eqref{eq:visc-er-div}, which is well-prepared with associated time interval $I$ and parameter $\tau>0$. Moreover consider parameters $1\leq q_1,q_2,q_3\leq \infty$ and $0<s_1,s_3$ which satisfy the following constraints\footnote{Again the constraints \eqref{visc-constraints-prop} are weaker than \eqref{viscousconstraints}, cf. the footnote in Proposition~\ref{perturbativeproposition}.}
	\begin{equation} \label{visc-constraints-prop} 
        q_2 > 2, \quad \frac{2}{q_1} > s_1 + 1, \quad \frac{2}{q_3} > s_3 + \frac{2}{q_2}, \quad s_3> \frac{1}{2\left(1-\frac{1}{q_2}\right)} \left(\frac{1}{q_3} - \frac{1}{q_2}\right).
    \end{equation} 
	Finally let $\delta,\epsilon>0$ be arbitrary. Then there exists another smooth solution $(u+\overline{u}_p+\widetilde{u}_p,w+w_p,p+P,R_{h,1},R_{v,1})$ of the viscous primitive-Reynolds system \eqref{eq:visc-er-u}-\eqref{eq:visc-er-div} which is well-prepared with respect to the same time interval $I$ and parameter $\tau/2$, and has the following properties:
	\begin{enumerate}
    	\item $(\overline{u}_p,\widetilde{u}_p,w_p)(x,t)=(0,0,0)$ whenever $\dist (t, I^c) \leq \tau/2$.
    	
    	\item The perturbation and Reynolds stress tensors satisfy the following estimates 
    	\begingroup
    	\allowdisplaybreaks
    	\begin{align}
        	\lVert R_{h,1} \rVert_{L^1 (L^1)} &\leq \epsilon, \label{eq:visc-Rh} \\
        	\lVert R_{v,1} \rVert_{L^1 (B^{-1}_{1,\infty})} &\leq \epsilon, \label{eq:visc-Rv} \\
        	\lVert \overline{u}_p \rVert_{L^1 (W^{1,1})} &\leq \epsilon, \label{eq:visc-ubar1} \\
        	\lVert \overline{u}_p \rVert_{L^{q_1} (H^{s_1})} &\leq \epsilon, \label{eq:visc-ubar2} \\
            \lVert \widetilde{u}_p \rVert_{L^1 (W^{1,1})} &\leq \epsilon, \label{eq:visc-utilde1} \\ 
        	\lVert \widetilde{u}_p \rVert_{L^{q_2-} (L^2)} &\leq \epsilon, \label{eq:visc-utilde2} \\
        	\lVert \widetilde{u}_p \rVert_{L^{q_3-} (H^{s_3})} &\leq \epsilon, \label{eq:visc-utilde3} \\
        	\lVert w_p \rVert_{L^{q_2'-} (L^2)} &\leq \epsilon, \label{eq:visc-w1} \\
        	\lVert w_p \rVert_{L^{q_3'-} (H^{-s_3})} &\leq \epsilon. \label{eq:visc-w2}
    	\end{align}
    	\endgroup
    
    	\item Moreover, we have the following bounds
        \begin{align}
            \lVert \overline{u}_p \rVert_{L^{2} (L^2)} &\lesssim \lVert R_h \rVert_{L^1 (L^1)}^{1/2}, \label{eq:visc-ubar3} \\
            \lVert w_p \widetilde{u}_p + w \widetilde{u}_p + w_p u \rVert_{L^1 (B^{-1}_{1,\infty})} &\lesssim \lVert R_v \rVert_{L^1 (B^{-1}_{1,\infty})}. \label{eq:visc-product} 
        \end{align}
	\end{enumerate}
\end{proposition}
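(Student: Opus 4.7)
The plan is to follow the convex integration scheme of Sections 3--5 essentially verbatim, with three modifications: (i)~enlarging the list of inequalities in the analogue of Lemma~\ref{parameterlemma} to absorb the viscous contributions and the $L^1(W^{1,1})$ bounds; (ii)~adding viscous terms to the linear errors $R_{\lin,h}, R_{\lin,v}$; and (iii)~verifying the two new perturbation estimates \eqref{eq:visc-ubar1}, \eqref{eq:visc-utilde1}. The definitions of $\overline{u}_p, \widetilde{u}_p, w_p, P$ from Section~\ref{perturbationsection} are used without change, and the oscillation and corrector errors are also kept identical; only the linear errors acquire the extra contributions $\nu_h^* \mathcal{R}_h \Delta_h (u_{p,h}+u_{c,h}+u_{t,h})$ (for the horizontal part) and $\nu_h^*\mathcal{R}_v\Delta_h(u_{p,v}+u_{c,v}+u_{t,v}) + \nu_v^*\mathcal{R}_v\partial_{zz}(u_{p,v}+u_{c,v}+u_{t,v})$ (for the vertical part). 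With these definitions, the argument of Section~\ref{subsec:new-reynolds} shows that $(u+\overline{u}_p+\widetilde{u}_p, w+w_p, p+P, R_{h,1}, R_{v,1})$ solves the viscous primitive-Reynolds system.

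The first task is to adapt Lemma~\ref{parameterlemma}. Besides \eqref{parameterineq1}--\eqref{parameterineq5} one needs the inequalities $\nu_h^* \sigma_h\mu_h^{1/2} \kappa_h^{-1/2} \leq \lambda_h^{-\gamma_h}$ and $(\nu_h^*\sigma_v^2 + \nu_v^*)\kappa_v^{1/q_2-1}\mu_v^{1/2} \leq \lambda_v^{-\gamma_v}$ to control the viscous linear error, together with $\kappa_h^{-1/2}\sigma_h \mu_h^{1/2}\leq \lambda_h^{-\gamma_h}$ and $\kappa_v^{1/q_2-1}\sigma_v\mu_v^{1/2}\leq\lambda_v^{-\gamma_v}$ to produce \eqref{eq:visc-ubar1}, \eqref{eq:visc-utilde1}. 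Translating to the exponents $a_i, b_i, c_i$ via \eqref{parameters} and using the equality constraint $(1/q_3-1/q_2) c_v = s_3(b_v+1)$ from \eqref{parameterineq3} to eliminate $c_v$, the most restrictive new inequality reduces to a lower bound on $s_3$ of the form
\begin{equation*}
    s_3 > \frac{1}{2(1-1/q_2)}\bigg(\frac{1}{q_3} - \frac{1}{q_2}\bigg),
\end{equation*}
which is exactly the new hypothesis in \eqref{visc-constraints-prop}; the horizontal inequalities are already compatible with \eqref{constraints-prop} (shrinking $a_h, a_v$ further if needed).

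With the parameters in hand, the estimates \eqref{eq:visc-ubar2}--\eqref{eq:visc-w2}, \eqref{eq:visc-ubar3} and the interaction bound \eqref{eq:visc-product} are copies of Lemmas~\ref{horizontalprinc}, \ref{verticalprinc}, \ref{correctorlemma}, \ref{temporallemma}, and their proofs carry over without any change. The new bounds \eqref{eq:visc-ubar1}, \eqref{eq:visc-utilde1} follow by a direct computation using $\|g_h\|_{L^1}\lesssim \kappa_h^{-1/2}$, $\|g_{v,k}^-\|_{L^1}\lesssim\kappa_v^{1/q_2-1}$, the $W^{1,1}$ bounds for the building blocks in Lemma~\ref{mikadobounds}, and the corresponding estimates for the correctors and temporal correctors; by construction these are now absorbed by the additional parameter inequalities above. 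The oscillation and corrector errors satisfy \eqref{eq:visc-Rh}, \eqref{eq:visc-Rv} exactly as in Lemmas~\ref{horizontaloscestimate}--\ref{correctorestimate}, while the viscous contributions to the linear error are bounded via Lemmas~\ref{lemma:hid}, \ref{Lpverticalbound} by $\sigma_h\mu_h^{1/2}\kappa_h^{-1/2}$ and $(\sigma_v^2+1)\kappa_v^{1/q_2-1}\mu_v^{1/2}$, respectively, both controlled by the new parameter bounds.

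The main obstacle is the bookkeeping verification that the system of exponent inequalities — the inviscid ones, the ones needed for the viscous linear error, and the ones needed for the $L^1(W^{1,1})$ bound — has a strict simultaneous solution. As the calculation sketched in the previous paragraph shows, the unique new obstruction comes from the baroclinic $L^1(W^{1,1})$ estimate and is precisely the extra constraint in \eqref{visc-constraints-prop}; all other inequalities are strict consequences of \eqref{constraints-prop} (or of $q_3<q_1$, which also strengthens the compatibility between $q_3$ and $q_1$ compared with the inviscid setting). Once this solvability is established, Theorem~\ref{viscousmainthm} follows from Proposition~\ref{viscousperturbativeprop} by iteration, exactly as Theorem~\ref{mainresult} was deduced from Proposition~\ref{perturbativeproposition} in Section~\ref{inductivesection}, using the strict inequality $q_3 < q_1$ to secure the $L^1$-in-time integrability of the limiting product $\overline{u}\,w$ via interpolation together with Lemma~\ref{lemma:paradiff-summary}.
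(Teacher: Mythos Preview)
Your proposal is correct and follows essentially the same route as the paper: reuse the inviscid perturbations and pressure from Section~\ref{perturbationsection} unchanged, append the viscous contributions to the linear errors, enlarge the parameter lemma by the two extra inequalities $\kappa_h^{-1/2}\sigma_h\mu_h^{1/2}\leq\lambda_h^{-\gamma_h}$ and $\kappa_v^{1/q_2-1}\sigma_v\mu_v^{1/2}\leq\lambda_v^{-\gamma_v}$, and observe that the $L^1(W^{1,1})$ bounds on $\overline{u}_p,\widetilde{u}_p$ both control the viscous error (via the identities $\mathcal{R}_h\Delta_h v=\nabla_h v+\nabla_h v^T$ and $\mathcal{R}_v\partial_{zz}v=\partial_z v$, plus the $B^{-1}_{1,\infty}$ embedding) and yield \eqref{eq:visc-ubar1}, \eqref{eq:visc-utilde1}. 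Your factor $\sigma_v^2$ in the vertical viscous bound is an overestimate---using $\|\Delta_h f\|_{B^{-1}_{1,\infty}}\lesssim\|\nabla_h f\|_{L^1}$ one only needs $\sigma_v$---but this is harmless since, as you note, the limiting constraint as $b_v\to 0$ is the same and coincides with the fourth inequality in \eqref{visc-constraints-prop}.
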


In order to prove Proposition \ref{viscousperturbativeprop}, we need the following version of Lemma \ref{parameterlemma}.

\begin{lemma} \label{viscousparameter}
    Let $1 \leq q_1, q_2, q_3 \leq \infty$ and $0<s_1, s_3$ satisfy the conditions \eqref{visc-constraints-prop}. Then we can choose $a_i,b_i,c_i>0$ for $i=h,v$ in \eqref{parameters} with the property that there exist $\gamma_h, \gamma_v > 0$ such that
    \begingroup
    \allowdisplaybreaks
    \begin{align}
        \kappa_h^{-1/2} \sigma_h \mu_h^{1/2} &\leq \lambda_h^{-\gamma_h}, \label{visc-parameterineq1} \\
        \kappa_v^{1/q_2 - 1} \sigma_v \mu_v^{1/2} &\leq \lambda_v^{-\gamma_v}, \label{visc-parameterineq2} 
    \end{align}
    \endgroup
    in addition to \eqref{parameterineq1}-\eqref{parameterineq5} and $\mu_i,\sigma_i,\kappa_i,\nu_i\geq \lambda_i^{\gamma_i}$ for $i=h,v$.
\end{lemma}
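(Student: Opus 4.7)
The strategy is to preserve as much as possible of the exponent choices made in the proof of Lemma~\ref{parameterlemma}, while enlarging $c_h$ (since the ad hoc identity $c_h = 2b_h$ is incompatible with \eqref{visc-parameterineq1}) and relaxing the ad hoc identity $c_v = 2b_v$ in the vertical block, where $b_v$ becomes a free parameter whose range is pinned down by the new inequality \eqref{visc-parameterineq2}.

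For the horizontal exponents, take $b_h := \frac{2s_1}{2/q_1 - s_1 - 1}$ exactly as in Lemma~\ref{parameterlemma}, fix any $\eta \in (1,2)$ (for concreteness, $\eta = 3/2$), set $c_h := 2b_h + \eta$, and pick $a_h \in (0,1-\eta/2)$ small. Taking logarithms and re-doing the calculation carried out in Lemma~\ref{parameterlemma}, \eqref{parameterineq1} reduces to $s_1 + \eta\,(1/q_1 - 1/2) \geq \gamma_h$, which is strictly positive since $1/q_1 > 1/2$; \eqref{parameterineq2} reduces to $1 - \eta/2 - a_h \geq \gamma_h$, strictly positive by the choice of $a_h$; and the new bound \eqref{visc-parameterineq1} reduces to $(\eta - 1)/2 \geq \gamma_h$, which holds because $\eta > 1$. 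All three inequalities are then strict for $\gamma_h > 0$ sufficiently small.

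For the vertical exponents, impose the equality \eqref{parameterineq3} directly by setting $c_v := \frac{s_3(b_v+1)}{1/q_3 - 1/q_2}$, keeping $b_v > 0$ as a free parameter. Write $M := s_3(1 - 1/q_2)$ and $N := 1/q_3 - 1/q_2 > 0$. After taking logarithms and substituting the formula for $c_v$, the new constraint \eqref{visc-parameterineq2} becomes
\[
  \frac{b_v\,(M - N) + (M - N/2)}{N} \;\geq\; \gamma_v .
\]
The hypothesis $s_3 > \frac{1}{2(1 - 1/q_2)}(1/q_3 - 1/q_2)$ built into \eqref{visc-constraints-prop} is exactly $M > N/2$. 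If $M \geq N$, the numerator is bounded below by $M - N/2 > 0$ for every $b_v > 0$; if $M < N$, the numerator is a decreasing linear function of $b_v$ taking the positive value $M - N/2$ at $b_v = 0$, so the inequality survives for any $b_v > 0$ sufficiently small. In either case a suitable $b_v > 0$ exists. The remaining constraint \eqref{parameterineq4} then reads $(b_v + 1)\bigl(1 - s_3/(2N)\bigr) - a_v \geq \gamma_v$ and is positive for $a_v \in (0,1)$ small, since $s_3 < 2N$ by \eqref{visc-constraints-prop}; \eqref{parameterineq5} holds provided $\gamma_v \leq \delta c_v$; and the auxiliary bounds $\mu_i,\sigma_i,\kappa_i,\nu_i \geq \lambda_i^{\gamma_i}$ reduce to $\gamma_i \leq \min\{1, a_i, b_i, c_i\}$, which is secured by shrinking $\gamma_h, \gamma_v$.

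The main obstacle is the compatibility of the equality \eqref{parameterineq3} with the new inequality \eqref{visc-parameterineq2}: imposing both forces the positivity $M - N/2 > 0$, which is precisely the additional lower bound on $s_3$ distinguishing \eqref{viscousconstraints} from \eqref{3dconstraints}. The horizontal modification is by comparison cosmetic; one only needs $c_h > 2b_h + 1$ (which rules out the slick choice $c_h = 2b_h$) but is otherwise harmless, provided $a_h$ is taken small enough to preserve \eqref{parameterineq2}.
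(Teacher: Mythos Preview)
Your proof is correct and follows essentially the same route as the paper's. Both arguments fix $c_v = \frac{s_3(b_v+1)}{1/q_3-1/q_2}$ to enforce \eqref{parameterineq3} and then observe that the new constraint \eqref{visc-parameterineq2} reduces to the positivity of $b_v(M-N)+(M-N/2)$, which the extra hypothesis in \eqref{visc-constraints-prop} guarantees for small $b_v$; on the horizontal side your explicit choice $c_h = 2b_h + \eta$ with $\eta\in(1,2)$ is a concrete instance of the paper's more abstractly phrased selection of $c_h$ from the interval determined by \eqref{eq:vp1}, \eqref{eq:vp2}, \eqref{eq:vp5}.
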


\begin{proof} 
Similar to the proof of Lemma~\ref{parameterlemma}, it suffices to show that there is a choice of $a_i,b_i,c_i>0$ for $i=h,v$ such that 
\begingroup
\allowdisplaybreaks
\begin{align}
    -\bigg( \frac{1}{2} - \frac{1}{q_1} \bigg) c_h - s_1 (b_h+1) &>0, \label{eq:vp1}\\
    b_h - a_h - \frac{1}{2} c_h + 1 &>0, \label{eq:vp2}\\
    -\bigg( \frac{1}{q_2} - \frac{1}{q_3} \bigg) c_v - s_3 (b_v+1) &=0 , \label{eq:vp3}\\
    b_v - a_v - \frac{1}{2} c_v + 1 &>0, \label{eq:vp4}\\
    \frac{1}{2} c_h - b_h - \frac{1}{2} &>0 , \label{eq:vp5}\\
    -\left(\frac{1}{q_2} - 1\right) c_v - b_v - \frac{1}{2} &>0 . \label{eq:vp6}
\end{align}
\endgroup
Let us first fix $0<a_h<1/2$, and then $b_h>0$ such that 
\begin{equation} \label{eq:751}
    b_h \left(s_1 + 1 - \frac{2}{q_1} \right) < -s_1 + \left(2-2a_h\right) \left(\frac{1}{q_1}- \frac{1}{2} \right).
\end{equation}
Note that such a choice is possible since $s_1 + 1 - \frac{2}{q_1} <0$ according to \eqref{visc-constraints-prop}. Because \eqref{visc-constraints-prop} implies $q_1<2$, \eqref{eq:751} is equivalent to 
\begin{equation} \label{eq:752}
    \frac{s_1 (b_h + 1)}{\frac{1}{q_1} - \frac{1}{2}} < 2 b_h + 2 - 2a_h .
\end{equation}
Moreover as $a_h<1/2$, we have 
\begin{equation} \label{eq:753}
    2b_h + 1 < 2b_h + 2 - 2a_h.
\end{equation}
From \eqref{eq:752} and \eqref{eq:753} we deduce that there exists $c_h>0$ with 
\begingroup
\allowdisplaybreaks
\begin{align*}
    \frac{s_1 (b_h + 1)}{\frac{1}{q_1} - \frac{1}{2}} &< c_h, \\
    2b_h + 2 - 2a_h &> c_h, \\
    2b_h +1 &< c_h,
\end{align*}
\endgroup
which are equivalent to \eqref{eq:vp1}, \eqref{eq:vp2} and \eqref{eq:vp5} respectively.

Next we choose $a_v,b_v,c_v>0$. We simply deduce from \eqref{visc-constraints-prop} that 
$$
    \frac{s_3 \left(1-\frac{1}{q_2} \right)}{\frac{1}{q_3} - \frac{1}{q_2}} > \frac{1}{2}.
$$ 
Thus we can choose $0<b_v \ll 1$ such that 
\begin{equation} \label{eq:755}
    b_v \left( - 1 + \frac{s_3 \left(1-\frac{1}{q_2} \right)}{\frac{1}{q_3} - \frac{1}{q_2}} \right) - \frac{1}{2} + \frac{s_3 \left(1-\frac{1}{q_2} \right)}{\frac{1}{q_3} - \frac{1}{q_2}} > 0. 
\end{equation}
Then we fix 
$$
    c_v := \frac{s_3 \left(b_v +1\right)}{\frac{1}{q_3} - \frac{1}{q_2}},
$$
which is positive as $\frac{1}{q_3} - \frac{1}{q_2}>0$ which in turn follows from \eqref{visc-constraints-prop}. The choice of $c_v$ immediately implies \eqref{eq:vp3}, while \eqref{eq:755} is equivalent to \eqref{eq:vp6}. Finally \eqref{visc-constraints-prop} ensures 
$$
    (b_v+1) \left( 1 - \frac{s_3}{2 \left(\frac{1}{q_3} - \frac{1}{q_2}\right)}\right) >0.
$$
This is equivalent to 
$$
    b_v - \frac{1}{2} c_v + 1 > 0,
$$
which in turn allows for the choice of a small $a_v>0$ such that \eqref{eq:vp4} holds.
\end{proof}

Now we can prove Proposition \ref{viscousperturbativeprop}.

\begin{proof}[Proof of Proposition \ref{viscousperturbativeprop}]
We make the same choice of perturbations $\overline{u}_p$, $\widetilde{u}_p$, $w_p$, $P$ as we did in the inviscid case, see section \ref{perturbationsection}. The only errors that change compared to the inviscid case are the linear errors, which now contain the additional terms 
\begin{equation} \label{eq:visc-additionalterms}
    \mathcal{R}_h (\nu_h^* \Delta_h \overline{u}_p) , \qquad \mathcal{R}_v \big(\nu_h^* \Delta_h \widetilde{u}_p + \nu_v^* \partial_{zz} (\overline{u}_p + \widetilde{u}_p) \big),
\end{equation}
respectively. Thus the validity of \eqref{eq:visc-ubar2}, \eqref{eq:visc-utilde2}-\eqref{eq:visc-product} follows immediately from sections \ref{perturbationsection}-\ref{reynoldsestimates}.

As in the proof of \eqref{eq:est-uph-2} one can deduce from \eqref{visc-parameterineq1} that 
\begin{align*}
    \lVert u_{p,h} \rVert_{L^1(W^{1,1})} &\leq \sum_{k \in \Lambda} \lVert a_k \lVert_{L^{1} (W^{1,\infty})} \lVert  W_k(\sigma_h\cdot) \rVert_{W^{1,1}} \lesssim  \kappa_h^{-1/2} \sigma_h \mu_h^{1/2} \leq \lambda_h^{-\gamma_h}.
\end{align*}
Analogously we find 
\begin{align*}
    \lVert u_{c,h} \rVert_{L^{1} (W^{1,1})} &\leq \sigma_h^{-1} \sum_{k \in \Lambda} \lVert \nabla a_k \rVert_{L^{1} (W^{1,\infty})} \lVert \Omega_k (\sigma_h \cdot ) \rVert_{W^{1,1}} \lesssim \sigma_h^{-1} \kappa_h^{-1/2} \sigma_h \mu_h^{-1/2} \lesssim \lambda_h^{-\gamma_h}.
\end{align*}
Similarly we obtain from \eqref{visc-parameterineq2}
\begin{align*}
    \lVert u_{p,v} \rVert_{L^{1} (W^{1,1})} &\leq \frac{1}{\lVert R_h \rVert_{L^1 (L^1)}} \sum_{k=1}^2 \lVert g_{v,k}^- (\nu_v \cdot) \rVert_{L^{1}} \lVert R_v \rVert_{L^\infty (W^{1,\infty})} \lVert W_k (\sigma_v \cdot ) \rVert_{W^{1,1}} \\
    &\lesssim \kappa_v^{1/q_2 - 1} \sigma_v \mu_v^{1/2} \leq \lambda_v^{-\gamma_v}
\end{align*}
and
\begin{align*}
    \lVert u_{c,v} \rVert_{L^{1} (W^{1,1})} &\leq \frac{\sigma_v^{-1}}{\|R_h\|_{L^1(L^1)}} \sum_{k=1}^2 \lVert g_{v,k}^- (\nu_v \cdot ) \rVert_{L^{1}} \lVert \nabla_h R_{v,k} \rVert_{L^\infty (W^{1,\infty})} \lVert \Omega_k (\sigma_v \cdot) \rVert_{W^{1,1}} \\
    &\lesssim \sigma_v^{-1} \kappa_v^{1/q_2 - 1} \sigma_v \mu_v^{-1/2} \leq \lambda_v^{-\gamma_v}.
\end{align*}
Since $\lVert u_{t,h} \rVert_{L^1(W^{1,1})}\lesssim \lambda_h^{-\gamma_h}$ and $ \lVert u_{t,v} \rVert_{L^1(W^{1,1})}\lesssim \lambda_v^{-\gamma_v}$, which follow immediately from Lemma~\ref{temporallemma}, we deduce \eqref{eq:visc-ubar1} and \eqref{eq:visc-utilde1} by choosing $\lambda_h,\lambda_v$ sufficiently large (depending on $R_h$ and $R_v$, respectively).

In order to show \eqref{eq:visc-Rh} and \eqref{eq:visc-Rv}, it remains to estimate the terms in \eqref{eq:visc-additionalterms}. Using Lemmas~\ref{lemma:hid}, \ref{Lpverticalbound} and \ref{lemma:essential-besov}, as well as $\lVert\overline{u}_p \rVert_{L^1 (W^{1,1})} \lesssim \lambda_h^{-\gamma_h}$ and $\lVert\widetilde{u}_p \rVert_{L^1 (W^{1,1})} \lesssim \lambda_v^{-\gamma_v}$ which we established above, we find
\begin{align*}
    \lVert \mathcal{R}_h (\nu_h^* \Delta_h \overline{u}_p ) \rVert_{L^1 (L^1) } &= \nu_h^* \lVert \nabla_h \overline{u}_p + \nabla_h \overline{u}_p^T \rVert_{L^1 (L^1)} \lesssim \lVert \overline{u}_p \rVert_{L^1 (W^{1,1})} \lesssim \lambda_h^{-\gamma_h}, \\
    \lVert \mathcal{R}_v (\nu_h^* \Delta_h \widetilde{u}_p + \nu_v^* \partial_{zz} (\overline{u}_p + \widetilde{u}_p) ) \rVert_{L^1 (B^{-1}_{1,\infty}) } &\lesssim \lVert \Delta_h \widetilde{u}_p \rVert_{L^1 (B^{-1}_{1,\infty}) } + \lVert \partial_{z} (\overline{u}_p + \widetilde{u}_p) \rVert_{L^1 (B^{-1}_{1,\infty}) } \\
    &\lesssim \lVert \nabla_h \widetilde{u}_p \rVert_{L^1 (L^1) } + \lVert \overline{u}_p + \widetilde{u}_p \rVert_{L^1 (L^1) } \\
    &\lesssim \lVert \overline{u}_p \rVert_{L^1 (W^{1,1} )} + \lVert \widetilde{u}_p \rVert_{L^1 (W^{1,1} )} \lesssim \lambda_h^{-\gamma_h} + \lambda_v^{-\gamma_v}.
\end{align*}
\end{proof}

\section{Two-dimensional hydrostatic Euler equations} \label{2Dsection} 

In this section we will develop a convex integration scheme for the two-dimensional hydrostatic Euler equations \eqref{2Dhydreuler1}-\eqref{2Dhydreuler3}, which is somewhat different in nature to the scheme in the three-dimensional case. A similar scheme will be established in section~\ref{prandtlsection} for the (two-dimensional) Prandtl equations \eqref{eq:prandtl-u}-\eqref{eq:prandtl-div}.

In two dimensions the hydrostatic Euler-Reynolds system \eqref{eq:hyd-er-u}-\eqref{eq:hyd-er-div} reduces to
\begingroup
\allowdisplaybreaks
\begin{align*}
    \partial_t u + u \partial_{x_1} u + w \partial_z u + \partial_{x_1} p &= \partial_{x_1} R_h + \partial_z R_v, \\
    \partial_z p &= 0,  \\
    \partial_{x_1} u + \partial_z w &=0 . 
\end{align*}
\endgroup
We observe that in contrast to the three-dimensional case $u$, $R_h$ and $R_v$ are now just scalar quantities, where $R_h$ does not depend on $z$ and $R_v$ is mean-free with respect to $z$. The former allows to include $R_h$ as part of the pressure. In other words by setting 
\begin{equation*}
    p' = p - R_h
\end{equation*}
we may assume without loss of generality that $R_h = 0$ (up to a redefinition of the pressure). So all in all the two-dimensional hydrostatic Euler-Reynolds system we will work with, reads 
\begingroup
\allowdisplaybreaks
\begin{align}
    \partial_t u + u \partial_{x_1} u + w \partial_z u + \partial_{x_1} p &= \partial_z R_v, \label{eq:2D-er-u}\\
    \partial_z p &= 0, \label{eq:2D-er-p} \\
    \partial_{x_1} u + \partial_z w &=0 . \label{eq:2D-er-div}
\end{align}
\endgroup
with unknowns $u,w,p$ and $R_v$. As $R_h$ is no longer there, the only task is to minimise $R_v$. For this reason we will only have a baroclinic perturbation $\widetilde{u}_p$ in Proposition~\ref{prop:2D-perturbative} below. 

In this section, we will prove the following version of the inductive proposition (cf. Proposition \ref{perturbativeproposition}). Theorem~\ref{2Dtheorem} then follows exactly as Theorem~\ref{mainresult}.

\begin{proposition} \label{prop:2D-perturbative}
Suppose $(u,w,p,R_v)$ is a smooth solution of the two-di\-men\-sio\-nal hydrostatic Euler-Reynolds system \eqref{eq:2D-er-u}-\eqref{eq:2D-er-div}, which is well-prepared with associated time interval $I$ and parameter $\tau>0$. Moreover, consider parameters $1 \leq q_2, q_3 \leq \infty$ and $0<s_3 $ which satisfy the constraints in \eqref{2dconstraints}. Finally, let $\delta,\epsilon > 0$ be arbitrary. Then there exists another smooth solution $(u + \widetilde{u}_p, w + w_p, p+P, R_{v,1} )$ of the two-dimensional hydrostatic Euler-Reynolds system \eqref{eq:2D-er-u}-\eqref{eq:2D-er-div} which is well-prepared with respect to the same time interval $I$ and parameter $\tau/2$, and has the following properties:
\begin{enumerate}
    \item $(\widetilde{u}_p,w_p)(x,t)=(0,0)$ whenever $\dist (t, I^c) \leq \tau/2$.
    \item It satisfies the following estimates
    \begingroup
        \allowdisplaybreaks
        \begin{align}
            \lVert R_{v,1} \rVert_{L^1 (B^{-1}_{1,\infty})} &\leq \epsilon, \label{eq:2d-main-Rv} \\ 
            \lVert \widetilde{u}_p \rVert_{L^{q_2-} (L^2)} &\leq \epsilon, \label{eq:2d-main-u1} \\
            \lVert \widetilde{u}_p \rVert_{L^{q_3-} (H^{s_3})} &\leq \epsilon, \label{eq:2d-main-u2} \\
            \lVert w_p \rVert_{L^{q_2'-} (L^2)} &\leq \epsilon, \label{eq:2d-main-w1} \\
            \lVert w_p \rVert_{L^{q_3'-} (H^{-s_3})} &\leq \epsilon. \label{eq:2d-main-w2}
        \end{align} 
    \endgroup
    \item Moreover, we have that
    \begin{equation} \label{eq:2d-main-wu}
        \lVert w_p \widetilde{u}_p + w \widetilde{u}_p + w_p u \rVert_{L^1 (B^{-1}_{1,\infty})} \lesssim \lVert R_v \rVert_{L^1 (B^{-1}_{1,\infty})}.
    \end{equation}
\end{enumerate}
\end{proposition}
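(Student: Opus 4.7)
The plan is to adapt the vertical part of the scheme from sections~\ref{perturbationsection}-\ref{reynoldsestimates} to the simpler two-dimensional setting. Since the horizontal Reynolds stress $R_h$ has been absorbed into the pressure, no barotropic perturbation is required: I would set
\[
\widetilde{u}_p = u_p + u_c + u_t, \qquad w_p = w_{p,v} + w_t,
\]
with principal parts $(u_p, w_{p,v})$, a spatial corrector $u_c$ enforcing the incompressibility $\partial_{x_1}\widetilde{u}_p + \partial_z w_p=0$, and temporal correctors $(u_t, w_t)$ designed as in \eqref{verttemporal1}-\eqref{verttemporal2} in order to absorb the time-derivative error. The new Reynolds tensor would then be decomposed as $R_{v,1}=R_{\osc,v}+R_{\lin,v}+R_{\cor,v}$ via the vertical inverse divergence $\mathcal{R}_v$ from section~\ref{subsubsec:vid}.

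The first ingredient I would need is a two-dimensional analogue of the Mikado flows in Proposition~\ref{prop:mikado-v}. Since $R_v$ is scalar, only a single Mikado pair is necessary. In 3D one built $W_k=\phi_k\mathbf{e}_k$ on the horizontal plane with $\phi_k$ depending only on the coordinate perpendicular to $\mathbf{e}_k$, which automatically gave $\nabla_h\cdot W_k=0$ and $W_k\cdot\nabla\phi_k=0$; this perpendicular-coordinate trick is unavailable in one horizontal dimension. Instead I would build a divergence-free Mikado flow on the full spatial torus $\mathbb{T}^2=\mathbb{T}_{x_1}\times\mathbb{T}_z$: fix an integer vector $K=(K_1,K_2)$ with $K_1 K_2\ne 0$ and $\gcd(K_1,K_2)=1$, let $K^\perp=(-K_2,K_1)$, take a concentrated profile $\phi_{\mu_v}\in C^\infty(\mathbb{T})$ with $\int \phi_{\mu_v}=0$ and $\int \phi_{\mu_v}^2=1$, and set
\[
\phi(x) = \phi_{\mu_v}(K^\perp\cdot x), \qquad W(x) = \phi(x)\, K/|K|.
\]
Then $\nabla\cdot W=0$, $W\cdot\nabla\phi=0$, and $\int_{\mathbb{T}^2} W_1 W_2\,dx = K_1 K_2/|K|^2\ne 0$, enabling cancellation of the scalar $R_v$.

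With this building block in hand, the principal perturbation would mirror \eqref{eq:def-upv}-\eqref{eq:def-wpv}:
\[
u_p(x,t) = -g^-(\nu_v t)\,\theta(t)\, R_v(x,t)\,\frac{W_1(\sigma_v x)}{c}, \qquad w_{p,v}(x,t) = g^+(\nu_v t)\,\theta(t)\, W_2(\sigma_v x)\, c,
\]
with an appropriate normalizing constant $c$; the spatial and temporal correctors would be defined in direct analogy with section~\ref{subsec:new-vertical}. From here the estimates \eqref{eq:2d-main-Rv}-\eqref{eq:2d-main-wu} follow from arguments essentially identical to those in sections~\ref{perturbationestimates}-\ref{reynoldsestimates}: $R_{\osc,v}$ is handled by the two-dimensional analogue of Lemma~\ref{lemma:prod}, while $R_{\lin,v}$ and $R_{\cor,v}$ are treated via the arguments of sections~\ref{subsubsec:lin-err-time}-\ref{subsubsec:lin-err-adv} and Lemma~\ref{correctorestimate}.

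The main obstacle I expect is verifying that the parameter constraints \eqref{2dconstraints} are consistent with all required smallness estimates, i.e.\ proving the two-dimensional analogue of Lemma~\ref{parameterlemma}. In the absence of a separate barotropic direction, a single set of exponents must simultaneously balance the Sobolev norms $\|u_p\|_{L^{q_3-}(H^{s_3})}$ and $\|w_{p,v}\|_{L^{q_3'-}(H^{-s_3})}$ against the corrector error $\|R_{\cor,v}\|_{L^1(B^{-1}_{1,\infty})}$, and this double role is what produces the upper bound $s_3<\tfrac{3}{2}q_2(1/q_3-1/q_2)$ in \eqref{2dconstraints} (the lower bound $s_3>(1/q_3-1/q_2)/(1-2/q_2)$ being the analogue of the equality \eqref{parameterineq3}). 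Once this parameter lemma is in place, the remainder of the proof would specialize the three-dimensional vertical argument with only notational changes.
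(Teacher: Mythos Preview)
Your plan has a genuine gap in the incompressibility step. You put different temporal intermittency factors on the two components, $u_p\sim g^-\theta R_v W_1(\sigma_v\cdot)$ and $w_{p,v}\sim g^+\theta W_2(\sigma_v\cdot)$, but the building block $W=(W_1,W_2)$ is only divergence-free as a \emph{pair}: once the amplitudes differ, $\partial_{x_1}u_p+\partial_z w_{p,v}$ contains the large high-frequency contribution $g^+\theta c\,\partial_z\bigl[W_2(\sigma_v\cdot)\bigr]$. Because $W_2$ depends on $z$ through $K^\perp\cdot x$ (you need $K_1\neq 0$), this term does not vanish, and any corrector killing it must live on the $u$-side with the $g^+$ prefactor; indeed $\partial_z W_2(\sigma_v\cdot)=-(K_1/K_2)\partial_{x_1}W_2(\sigma_v\cdot)$ forces a corrector $u_c^{(2)}\sim g^+\theta\, W_2(\sigma_v\cdot)$, whose $L^{q_2-}(L^2)$ norm scales like $\kappa_v^{1-2/q_2-\delta}$ and blows up since $q_2>2$. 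A second, related problem is that your $u_p$ is not mean-free in $z$ (both $R_v$ and $W_1$ now depend on $z$), so it is not purely baroclinic.

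The paper avoids this by taking the opposite route: it uses a \emph{one}-dimensional building block $\phi\in C^\infty(\mathbb{T}_{x_1})$, so that $u_{p,v}=-g_{v,2}^-\theta R_v\phi(\sigma_v x_1)$ and $w_{p,v}=g_{v,2}^+\theta\phi(\sigma_v x_1)$. Then $w_{p,v}$ is $z$-independent, hence $\partial_z w_{p,v}=0$, and the only residual divergence is $\partial_{x_1}u_{p,v}$, which is mean-free in $z$ (because $R_v$ is and $\phi$ is $z$-independent). This is absorbed by a corrector on the \emph{vertical} velocity, $w_{c,v}=\mathcal{R}_v\bigl(g_{v,2}^-\theta\,\partial_{x_1}(R_v\phi(\sigma_v\cdot))\bigr)$, which carries the $g^-$ factor and is small in all required norms precisely when $\kappa_v^{2/q_2-1}\sigma_v\mu_v\leq\lambda_v^{-\gamma_v}$. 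So there is no $u_c$ at all in two dimensions, and the extra constraint you anticipated in the parameter lemma comes not from $R_{\cor,v}$ but from bounding this new $w_{c,v}$; this is what produces the upper bound on $s_3$ in \eqref{2dconstraints}.
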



\subsection{Preliminaries}
Due to the fact that there is no longer an error $R_h$ in the two-dimensional hydrostatic Euler-Reynolds system \eqref{eq:2D-er-u}-\eqref{eq:2D-er-div}, there is no need for a barotropic perturbation $\overline{u}_p$. Thus there is only a baroclinic perturbation $\widetilde{u}_p$ in Proposition~\ref{prop:2D-perturbative}. For this reason only the `vertical' parameters $\mu_v,\sigma_v,\kappa_v,\nu_v$ are used in sections \ref{2Dsection} and \ref{prandtlsection}. Regarding the two-dimensional hydrostatic Euler equations \eqref{2Dhydreuler1}-\eqref{2Dhydreuler3}, we need the following version of Lemma~\ref{parameterlemma}.

\begin{lemma} \label{2Dparameterlemma} 
Let $1 \leq q_2,  q_3 \leq \infty$ and $0<s_3$ satisfy the constraints \eqref{2dconstraints}. Then we can choose $a_v,b_v,c_v>0$ in \eqref{parameters} with the property that there exists $\gamma_v>0$ such that 
\begin{align} 
    \kappa_v^{2/q_2 - 1} \sigma_v \mu_v &\leq \lambda_v^{-\gamma_v}, \label{parameterineq-add-2d} \\
    \sigma_v^{-1} \nu_v \kappa_v^{1/q_2} \mu_v^{-3/2} &\leq \lambda_v^{-\gamma_v}, \label{parameterineq-improved-2d} 
\end{align}
in addition to \eqref{parameterineq3}, \eqref{parameterineq5} and $\mu_v,\sigma_v,\kappa_v,\nu_v\geq \lambda_v^{\gamma_v}$.
\end{lemma}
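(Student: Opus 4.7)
The plan is to mimic the proof of Lemma~\ref{parameterlemma}, namely to take $\log_{\lambda_v}$ of all the required (in)equalities and to reduce the statement to finding $a_v,b_v,c_v>0$ and $\gamma_v>0$ satisfying a finite system of linear (in)equalities. Using $\mu_v=\lambda_v$, one finds that \eqref{parameterineq3} is equivalent to
\begin{equation*}
    \Big(\tfrac{1}{q_3}-\tfrac{1}{q_2}\Big) c_v = s_3(b_v+1),
\end{equation*}
\eqref{parameterineq5} to $\delta c_v\geq \gamma_v$, \eqref{parameterineq-add-2d} to $(1-2/q_2)c_v-b_v-1\geq \gamma_v$, and \eqref{parameterineq-improved-2d} to $b_v-a_v-c_v/q_2+3/2\geq \gamma_v$. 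Since \eqref{2dconstraints} forces $q_3<q_2$, the first of these relations determines $c_v>0$ uniquely as a function of $b_v\geq 0$, namely $c_v=\frac{s_3(b_v+1)}{1/q_3-1/q_2}$.

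Substituting this expression for $c_v$ into \eqref{parameterineq-add-2d} gives
\begin{equation*}
    (b_v+1)\left[\frac{s_3(1-2/q_2)}{1/q_3-1/q_2}-1\right]\geq \gamma_v,
\end{equation*}
and the bracket is strictly positive precisely when $s_3>\frac{1/q_3-1/q_2}{1-2/q_2}$, which is the \emph{lower} bound on $s_3$ in \eqref{2dconstraints}. Similarly, setting $\alpha\coloneqq\frac{s_3}{q_2(1/q_3-1/q_2)}$, the inequality from \eqref{parameterineq-improved-2d} becomes
\begin{equation*}
    b_v(1-\alpha)+\tfrac{3}{2}-a_v-\alpha\geq \gamma_v,
\end{equation*}
and the \emph{upper} bound on $s_3$ in \eqref{2dconstraints} is exactly the statement $\alpha<3/2$. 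Hence, by continuity, we can take $b_v>0$ small (so that $b_v|1-\alpha|$ is small) and then $a_v>0$ small so that the left-hand side is strictly positive.

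It remains to fix $\gamma_v>0$ small enough so that all the above strict inequalities hold as non-strict inequalities with right-hand side $\gamma_v$, and additionally $\gamma_v\leq \min\{1,a_v,b_v,c_v,\delta c_v\}$, which is possible since all these quantities are positive. The key observation, and the only real content of the lemma, is that the two bounds on $s_3$ in \eqref{2dconstraints} are precisely the compatibility conditions for \eqref{parameterineq-add-2d} and \eqref{parameterineq-improved-2d} given the rigid choice of $c_v$ forced by the equality \eqref{parameterineq3}; no further obstacle arises, and the argument is entirely parallel to (and slightly simpler than) the one for Lemma~\ref{viscousparameter}.
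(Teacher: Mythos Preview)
Your proof is correct and follows essentially the same approach as the paper: both arguments take logarithms to reduce the parameter inequalities to a linear system in $a_v,b_v,c_v$, solve the equality \eqref{parameterineq3} for $c_v$ in terms of $b_v$, and then identify the two bounds on $s_3$ in \eqref{2dconstraints} as precisely what is needed for \eqref{parameterineq-add-2d} and \eqref{parameterineq-improved-2d} respectively, choosing $b_v,a_v,\gamma_v$ small at the end. Your introduction of $\alpha$ is a cosmetic difference only.
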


\begin{proof} 
Similar to the proof of Lemma~\ref{parameterlemma}, it suffices to show that there is a choice of $a_v,b_v,c_v>0$ such that 
\begingroup
\allowdisplaybreaks
\begin{align}
    -\bigg( \frac{1}{q_2} - \frac{1}{q_3} \bigg) c_v - s_3 (b_v+1) &=0 , \label{eq:2p1}\\
    b_v - a_v - \frac{1}{q_2} c_v + \frac{3}{2} &>0, \label{eq:2p2}\\
    -\left(\frac{2}{q_2} - 1\right) c_v - b_v - 1 &>0 . \label{eq:2p3}
\end{align}
\endgroup
We know from \eqref{2dconstraints} that 
$$
    \frac{3}{2} - \frac{s_3}{q_2\left(\frac{1}{q_3} - \frac{1}{q_2}\right)} > 0.
$$
Hence we can choose $0<b_v \ll 1$ such that 
$$
    b_v \left( 1 -\frac{s_3}{q_2\left(\frac{1}{q_3} - \frac{1}{q_2}\right)}\right) + \frac{3}{2} - \frac{s_3}{q_2\left(\frac{1}{q_3} - \frac{1}{q_2}\right)} > 0.
$$
By setting 
$$
    c_v := \frac{s_3 \left(b_v +1\right)}{\frac{1}{q_3} - \frac{1}{q_2}},
$$
the latter implies that 
$$
    b_v - \frac{1}{q_2} c_v + \frac{3}{2} >0,
$$
which in turn allows for the choice of a small $a_v>0$ such that \eqref{eq:2p2} holds. The definition of $c_v$ immediately implies \eqref{eq:2p1}. Finally \eqref{2dconstraints} guarantees that 
$$
    \left(1-\frac{2}{q_2}\right)\frac{s_3}{\left(\frac{1}{q_3} - \frac{1}{q_2}\right)} - 1 > 0,
$$
hence 
$$
    (b_v+1) \left(\left(1-\frac{2}{q_2}\right)\frac{s_3}{\left(\frac{1}{q_3} - \frac{1}{q_2}\right)} - 1 \right) > 0,
$$
which is equivalent to \eqref{eq:2p3}.
\end{proof}

\begin{remark} 
Notice that compared to Lemma~\ref{parameterlemma} we have replaced \eqref{parameterineq4} by \eqref{parameterineq-improved-2d}, where the latter is a weaker restriction than the former. Indeed it is simple to see that \eqref{parameterineq4} implies \eqref{parameterineq-improved-2d} provided $q_2>2$. Note furthermore that \eqref{parameterineq-improved-2d} suffices to prove Lemma~\ref{timederivativeestimate}. Moreover, the additional inequality \eqref{parameterineq-add-2d} is needed to deal with an additional spatial corrector for the vertical velocity.
\end{remark}

Moreover, we need a two-dimensional version of the vertical inverse divergence operator $\mathcal{R}_v$, cf. Definition~\ref{defn:vid}.
\begin{definition} \label{defn:vid-2D}
    We define the map\footnote{Again we denote the space of all functions in $C^\infty(\T^2;\R)$ which have zero-mean with respect to $z$ by $C^\infty_{0,z}(\T^2;\R)$.} $\mathcal{R}_v: C^\infty_{0,z}(\T^2;\R) \to C^\infty(\T^2;\R)$ by 
    \begin{equation}
        (\mathcal{R}_v v) (x_1,z) \coloneqq \int_0^z v (x_1,z') \dz' - \int_0^1 \int_0^{z'} v (x_1,z'') \dz'' \dz'. 
    \end{equation} 
\end{definition}

Note that the vertical inverse divergence defined in Definition~\ref{defn:vid-2D} has the same properties as stated in Lemma~\ref{Lpverticalbound}.

Regarding the building blocks we will use the following version of Proposition~\ref{prop:mikado-v}.
\begin{proposition} \label{prop:mikado-v-2D} 
    There exists a function $\phi\in C^\infty(\T;\R)$ (referred to as the Mikado density) depending on a parameter $\mu_v$, with the following properties. 
    \begin{enumerate}
        \item The function $\phi$ has zero mean. Moreover $\int_{\T} \phi^2 \dx = 1$.
        
        \item There exists $\Omega\in C^\infty(\T;\R)$ with zero mean such that $\phi=\partial_{x_1} \Omega$. 
        
        \item For all $s\geq 0$ and $1\leq p\leq \infty$ the following estimates hold:
        \begin{align*}
            \|\phi\|_{W^{s,p}(\T)} &\lesssim \mu_v^{\frac{1}{2}-\frac{1}{p} + s} , \\
            \|\Omega\|_{W^{s,p}(\T)} &\lesssim \mu_v^{-\frac{1}{2}-\frac{1}{p}+s}. 
        \end{align*} 
        Here the implicit constant may depend on $s,p$ but it does not depend on $\mu_v$.
    \end{enumerate}
\end{proposition}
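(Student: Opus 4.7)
My plan is to build $\phi$ by concentrating and rescaling a fixed one-dimensional profile, which is the standard Mikado construction, only much simpler than in Proposition~\ref{prop:mikado-v} because in one spatial dimension no geometric lemma is needed (there is just one direction to exploit). The proof will be essentially a computation with the substitution $y=\mu_v x$.

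First I would pick once and for all a reference profile $\widetilde{\phi}\in C^\infty(\T;\R)$ satisfying $\int_\T \widetilde{\phi}\,\dx=0$ and $\int_\T \widetilde{\phi}^2\,\dx=1$. The existence is trivial (for instance $\widetilde{\phi}(x)=\sqrt{2}\cos(2\pi x)$, or a rescaled zero-mean bump supported in $(0,1/2)$ if one later wants disjoint supports). Because $\widetilde{\phi}$ has zero mean, it admits a unique zero-mean primitive $\widetilde{\Omega}\in C^\infty(\T;\R)$, i.e.\ $\partial_{x_1}\widetilde{\Omega}=\widetilde{\phi}$ and $\int_\T\widetilde{\Omega}\,\dx=0$. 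I would then define, assuming $\mu_v\in\N$,
\begin{equation*}
    \phi(x_1)\coloneqq \mu_v^{1/2}\,\widetilde{\phi}(\mu_v x_1), \qquad \Omega(x_1)\coloneqq \mu_v^{-1/2}\,\widetilde{\Omega}(\mu_v x_1),
\end{equation*}
both of which are smooth and $1$-periodic.

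Next I would verify the claimed properties by elementary change of variables. Zero mean and the $L^2$-normalisation in item~1 follow from $\int_\T \widetilde{\phi}=0$, $\int_\T \widetilde{\phi}^2=1$ and the substitution $y=\mu_v x_1$; the factor $\mu_v^{1/2}$ is precisely the one that makes $\int_\T\phi^2=1$. Item~2 is immediate: $\partial_{x_1}\Omega=\mu_v^{-1/2}\cdot\mu_v\cdot\widetilde{\phi}(\mu_v x_1)=\phi$, and $\Omega$ has zero mean because $\widetilde{\Omega}$ does. For the estimates in item~3, I would first establish them for $s\in\N_0$: a direct computation gives $\partial_{x_1}^n\phi=\mu_v^{1/2+n}\widetilde{\phi}^{(n)}(\mu_v\cdot)$ and a change of variables yields
\begin{equation*}
    \|\partial_{x_1}^n\phi\|_{L^p(\T)}=\mu_v^{1/2-1/p+n}\|\widetilde{\phi}^{(n)}\|_{L^p(\T)}\lesssim \mu_v^{1/2-1/p+n},
\end{equation*}
and similarly $\|\partial_{x_1}^n\Omega\|_{L^p(\T)}\lesssim \mu_v^{-1/2-1/p+n}$. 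The implicit constants depend only on the fixed profile $\widetilde{\phi}$ (and on $n$, $p$), not on $\mu_v$. The case of general $s\geq 0$ then follows by standard interpolation of Sobolev norms.

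There is no serious obstacle here; the only points that require a bit of care are (i) choosing $\widetilde{\phi}$ with zero mean so that the concentrated copy $\phi$ is itself mean-free and $\widetilde{\Omega}$ exists periodically, and (ii) tracking the exponents of $\mu_v$ that come from the $L^p$ rescaling (the $-1/p$ coming from the change of variables, the $+s$ from taking $s$ derivatives, and the $\pm 1/2$ from the normalisation prefactors). Everything else is routine.
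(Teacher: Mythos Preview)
Your overall strategy is the standard one and matches what the paper invokes (it simply refers to \cite[Section~4.1]{cheskidovluo3} and remarks that $\phi$ is the function $\phi_2$ of Proposition~\ref{prop:mikado-v}). However, there is a genuine slip in your implementation: you take the reference profile $\widetilde{\phi}\in C^\infty(\T;\R)$, i.e.\ genuinely periodic, and even suggest $\widetilde{\phi}(x)=\sqrt{2}\cos(2\pi x)$. For such a periodic $\widetilde{\phi}$ and $\mu_v\in\N$ one has $\int_\T f(\mu_v x)\,\dx=\int_\T f(x)\,\dx$, so your substitution $y=\mu_v x$ does \emph{not} produce the factor $\mu_v^{-1/p}$. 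Concretely, with your definition,
\[
\int_\T \phi^2\,\dx=\mu_v\int_\T \widetilde{\phi}^2(\mu_v x)\,\dx=\mu_v\int_\T \widetilde{\phi}^2\,\dx=\mu_v,
\]
not $1$, and likewise $\|\phi\|_{L^p}=\mu_v^{1/2}\|\widetilde{\phi}\|_{L^p}$ rather than $\mu_v^{1/2-1/p}\|\widetilde{\phi}\|_{L^p}$. So both item~1 and the estimates in item~3 fail as written.

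The fix is exactly the ``concentration'' that the Mikado construction is meant to capture: choose $\widetilde{\phi}\in C_c^\infty(\R)$ supported in, say, $(0,1/2)$ with $\int\widetilde{\phi}=0$ and $\int\widetilde{\phi}^2=1$, set $\phi$ to be the $1$-periodic extension of $x\mapsto \mu_v^{1/2}\widetilde{\phi}(\mu_v x)$ (one concentrated copy per period, supported in $(0,\tfrac{1}{2\mu_v})$), and define $\Omega$ analogously from the compactly supported primitive $\widetilde{\Omega}$. With this choice your change-of-variables computations become correct verbatim and all three items follow. The interpolation step for non-integer $s$ is fine.
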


Similar to Proposition~\ref{prop:mikado-v}, Proposition~\ref{prop:mikado-v-2D} can be proven as in \cite[Section~4.1]{cheskidovluo3}. In fact, the function $\phi$ in Proposition~\ref{prop:mikado-v-2D} coincides with the function $\phi_2$ from Proposition~\ref{prop:mikado-v}.

Analogously to Lemma~\ref{mikadobounds} the estimates 
\begin{align}
    \| \phi (\sigma \cdot ) \|_{W^{s,p}} &\lesssim (\sigma \mu_v)^s \mu_v^{\frac{1}{2} - \frac{1}{p}},  \label{eq:est-phi-2D} \\
    \| \Omega (\sigma \cdot ) \|_{W^{s,p}} &\lesssim (\sigma \mu_v)^s \mu_v^{-\frac{1}{2} - \frac{1}{p}}, \label{eq:est-omega-2D}
\end{align} 
hold for all $\sigma\in \mathbb{N}$, $s\geq 0$ and $1\leq p\leq \infty$.

In what follows we will always write $\phi(x)$, but let us clarify that actually $\phi$ only depends on $x_1$.

Finally, we remark that for the two-dimensional convex integration scheme developed in this section, we will work with the same temporal intermittency functions as defined in section~\ref{subsubsec:vtif}.

\subsection{Definition of the perturbation}
The velocity perturbation will be written as 
\begin{align*} 
    \widetilde{u}_p &= u_{p,v} + u_{t,v}, \\
    w_p &= w_{p,v} + w_{c,v} + w_{t,v},
\end{align*}
so in contrast to the three-dimensional case, a spatial corrector $u_{c,v}$ is not needed, however we will have spatial corrector $w_{c,v}$ for the vertical velocity.

We make the following choice for the principal part of the perturbation 
\begin{align}
    u_{p,v}(x,t) &\coloneqq - g_{v,2}^-  (\nu_v t) \theta(t) R_{v}(x,t) \phi (\sigma_v x), \label{eq:2D-def-upv} \\
    w_{p,v}(x,t) &\coloneqq g_{v,2}^+ (\nu_v t) \theta(t)  \phi (\sigma_v x) . \label{eq:2D-def-wpv}
\end{align}

\begin{remark} \label{rem:endpoint-2D-proof}
In order to achieve endpoint time integrability for $w$ (cf. Remark~\ref{rem:endpoint-2D}) one has to multiply the right-hand side of \eqref{eq:2D-def-wpv} by $\lVert R_v \rVert_{L^1(B^{-1}_{1,\infty})}$ and divide the right-hand side of \eqref{eq:2D-def-upv} by the same factor. Further modifications are straightforward. In order to get endpoint time integrability for $u$ one proceeds as described in Remarks~\ref{rem:endpoint}, \ref{rem:uptilde-endpoint} and \ref{rem:uptilde-endpoint-proof}.
\end{remark}

Then we introduce a corrector for the vertical velocity in order to get a divergence-free perturbation:
\begin{equation} \label{eq:2D-def-wcv}
    w_{c,v} \coloneqq \mathcal{R}_v \Big( g_{v,2}^-  (\nu_v t) \theta \partial_{x_1} (R_{v} \phi (\sigma_v x)) \Big),
\end{equation} 
where $\mathcal{R}_v$ is now given by Definition~\ref{defn:vid-2D}. Note that $R_v$ is mean-free with respect to $z$ and hence the operator $\mathcal{R}_v$ can be applied to the expression in \eqref{eq:2D-def-wcv}. With Lemma~\ref{Lpverticalbound} it is simple to see that $\partial_{x_1} u_{p,v} + \partial_z w_{p,v} = 0$.

Finally, we introduce a temporal corrector of the form
\begin{equation}
    u_{t,v} \coloneqq \nu_v^{-1} h_{v,2} (\nu_v t) \partial_z R_v.
\end{equation}
To keep the whole velocity field divergence-free, we now must introduce a temporal corrector for the vertical velocity
\begin{equation}
    w_{t,v} \coloneqq -\nu_v^{-1} h_{v,2} (\nu_v t) \partial_{x_1} R_v.
\end{equation}

We observe that $\partial_{x_1} \widetilde{u}_p + \partial_z w_p = 0$, item 1 of Proposition~\ref{prop:2D-perturbative} holds and $u_{p,v}$ and $u_{t,v}$ are indeed mean-free with respect to $z$.

\subsection{The new Reynolds stress tensor} 
As in the three-dimensional scheme, the new Reynolds stress tensor $R_{v,1}$ will be written as 
$$
    R_{v,1} = R_{\osc,v} + R_{\lin,v} + R_{\cor,v}. 
$$

First, we set $R_{\osc,v}=R_{\osc,x,v}+R_{\osc,t,v}$, where
\begin{align*}
    R_{\osc,x,v} &\coloneqq - g_{v,2}^- (\nu_v t) g_{v,2}^+ (\nu_v t) \theta^2  R_v \bigg( \phi^2 (\sigma_v x) - \int_{\mathbb{T}} \phi^2 (x) \dx \bigg), \\
    R_{\osc,t,v} &\coloneqq \nu_v^{-1} h_{v,2} (\nu_v t) \partial_t R_v. 
\end{align*}
Exactly as in Lemma~\ref{lemma:Roscv} one can show that 
\begin{equation} \label{eq:Roscv-2D}
    \partial_t u_{t,v} + \partial_z ( w_{p,v} u_{p,v} + R_v) = \partial_z R_{\osc,v}.
\end{equation}

Next, we define the linear and corrector errors by 
\begin{align*} 
    R_{\lin,v} \coloneqq \mathcal{R}_v\bigg[ &\partial_t u_{p,v} + 2\partial_{x_1} \bigg( \reallywidetilde{u \big(u_{p,v}+u_{t,v}\big)}\bigg) + \partial_z \bigg( w\big(u_{p,v}+u_{t,v}\big) + \big(w_{p,v}+w_{c,v}+w_{t,v}\big) u \bigg) \bigg]
\end{align*} 
and 
\begin{align*} 
   R_{\cor,v} \coloneqq \mathcal{R}_v\bigg[ &\partial_{x_1} \bigg( \reallywidetilde{\big(u_{p,v}+u_{t,v}\big)^2 } \bigg) + \partial_z \bigg( \big(w_{c,v}+w_{t,v}\big) \big( u_{p,v}+u_{t,v}\big) + w_{p,v} u_{t,v} \bigg) \bigg].
\end{align*} 
Note that the arguments of the operator $\mathcal{R}_v$ are indeed mean-free with respect to $z$.

In the three-dimensional convex integration scheme we introduced a pressure perturbation, see section~\ref{subsec:new-pressure}. An analogue of this perturbation is not needed in two dimensions as there is no barotropic perturbation. However, the pressure has to absorb some terms that were covered by the horizontal Reynolds stress tensor in the three-dimensional scheme. To this end we define
\begin{equation}
    P \coloneqq - 2 \overline{u \big(u_{p,v}+u_{t,v}\big)} - \overline{\big(u_{p,v}+u_{t,v}\big)^2}.
\end{equation}
We observe that $\partial_z P = 0$.

Let us finally remark that it is straightforward to see that $R_{v,1}$ is mean-free with respect to $z$, that $R_{v,1}(x,t)=0$ whenever $\dist(t,I^c)\leq \frac{\tau}{2}$, and that $(u + \widetilde{u}_p, w + w_p, p+P, R_{v,1} )$ solves \eqref{eq:2D-er-u}.

\subsection{Estimates on the perturbation} 
Now we claim the following estimates on the perturbation.
\begin{lemma} \label{2dprincperturblemma}
If $\lambda_v$ is chosen sufficiently large (depending on $R_v$), then we have that 
\begin{align}
    \lVert u_{p,v} \rVert_{L^{q_2-} (L^2)} &\lesssim \lambda_v^{-\gamma_v}, \label{eq:est-upv1-2D} \\
    \lVert u_{p,v} \rVert_{L^{q_3-} (H^{s_3})} &\lesssim \lambda_v^{-\gamma_v}, \label{eq:est-upv2-2D} \\
    \lVert w_{p,v} \rVert_{L^{q_2'-} (L^2)} &\lesssim \lambda_v^{-\gamma_v}, \label{eq:est-wpv1-2D} \\
    \lVert w_{p,v} \rVert_{L^{q_3'-} (H^{-s_3})} &\lesssim \lambda_v^{-\gamma_v}, \label{eq:est-wpv2-2D} \\
    \lVert w_{p,v} u_{p,v} \rVert_{L^1 (B^{-1}_{1,\infty} )} &\lesssim \lVert R_v \rVert_{L^1 (B^{-1}_{1,\infty})}. \label{eq:est-wpvupv-2D} 
\end{align} 
\end{lemma}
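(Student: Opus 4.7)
The strategy mirrors that of Lemma~\ref{verticalprinc}: estimates \eqref{eq:est-upv1-2D}--\eqref{eq:est-wpv2-2D} follow by separating time and space and applying the temporal estimates \eqref{eq:est-gvm}, \eqref{eq:est-gvp} together with the spatial estimate \eqref{eq:est-phi-2D} and the parameter relations of Lemma~\ref{2Dparameterlemma}, while estimate \eqref{eq:est-wpvupv-2D} requires a two-dimensional analogue of Lemma~\ref{lemma:prod}.

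For \eqref{eq:est-upv1-2D}--\eqref{eq:est-upv2-2D}, I will bound
\begin{align*}
    \|u_{p,v}\|_{L^{q_2-}(L^2)} &\leq \|g_{v,2}^-(\nu_v \cdot)\|_{L^{q_2-}} \|R_v\|_{L^\infty(L^\infty)} \|\phi(\sigma_v\cdot)\|_{L^2}, \\
    \|u_{p,v}\|_{L^{q_3-}(H^{s_3})} &\leq \|g_{v,2}^-(\nu_v \cdot)\|_{L^{q_3-}} \|R_v\|_{L^\infty(W^{s_3,\infty})} \|\phi(\sigma_v\cdot)\|_{H^{s_3}}.
\end{align*}
Using \eqref{eq:est-gvm} and \eqref{eq:est-phi-2D} these reduce to $\kappa_v^{-\delta}$ and $\kappa_v^{1/q_2-1/q_3-\delta}(\sigma_v\mu_v)^{s_3}$, respectively; by \eqref{parameterineq3} the latter collapses to $\kappa_v^{-\delta}$, and \eqref{parameterineq5} then gives $\lambda_v^{-\gamma_v}$. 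Estimates \eqref{eq:est-wpv1-2D}--\eqref{eq:est-wpv2-2D} are handled identically, using \eqref{eq:est-gvp} and the fact that $\|\phi(\sigma_v\cdot)\|_{H^{-s_3}} \lesssim (\sigma_v\mu_v)^{-s_3}$, which follows from $\phi = \partial_{x_1}\Omega$ and \eqref{eq:est-omega-2D} (noting that $-s_3+1 \geq 0$).

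For \eqref{eq:est-wpvupv-2D}, I compute
\begin{equation*}
    w_{p,v}u_{p,v} = -g_{v,2}^-(\nu_v t)g_{v,2}^+(\nu_v t)\,\theta^2\,R_v\,\phi^2(\sigma_v x).
\end{equation*}
Apply the improved H\"older inequality (Lemma~\ref{holderlemma}) in time, using $\|g_{v,2}^-g_{v,2}^+\|_{L^1}\lesssim 1$ (cf.\ \eqref{eq:l1-gvpgvm}), to reduce matters to estimating the spatial factor $R_v \phi^2(\sigma_v x)$ in $L^p(B^{-1}_{1,\infty})$ for appropriate $p$. Now split
\begin{equation*}
    \phi^2(\sigma_v x) = \Big(\phi^2(\sigma_v x) - \int_{\mathbb{T}} \phi^2\,dy\Big) + 1,
\end{equation*}
using $\int_{\mathbb{T}}\phi^2=1$ from Proposition~\ref{prop:mikado-v-2D}. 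The mean part gives $-g_{v,2}^-g_{v,2}^+\theta^2 R_v$ which lies in $L^1(B^{-1}_{1,\infty})$ with norm controlled by $\|R_v\|_{L^1(B^{-1}_{1,\infty})}$, and the mean-free part will be shown to produce a $\sigma_v^{-1}$ decay, so choosing $\lambda_v$ large (depending on $R_v$) yields the claim.

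The main obstacle is the two-dimensional analogue of Lemma~\ref{lemma:prod}, namely
\begin{equation*}
    \Big\|\Big(\phi^2(\sigma_v\cdot) - \int_{\mathbb{T}}\phi^2\,dy\Big)R_v\Big\|_{L^p(B^{-1}_{1,\infty})} \lesssim \sigma_v^{-1}.
\end{equation*}
Since $\phi$ depends only on $x_1$ and $\phi^2 - \int\phi^2$ has zero mean on $\mathbb{T}$, there is a smooth bounded primitive $\Phi\in C^\infty(\mathbb{T})$ with $\partial_{x_1}\Phi = \phi^2 - \int\phi^2$ and $\|\Phi\|_{L^\infty}\lesssim 1$. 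Consequently
\begin{equation*}
    \Big(\phi^2(\sigma_v x) - \int_{\mathbb{T}}\phi^2\Big)R_v = \sigma_v^{-1}\partial_{x_1}\bigl(\Phi(\sigma_v x)\,R_v\bigr) - \sigma_v^{-1}\Phi(\sigma_v x)\,\partial_{x_1} R_v.
\end{equation*}
The first term has its $L^p(B^{-1}_{1,\infty})$ norm bounded by $\sigma_v^{-1}\|\Phi(\sigma_v\cdot)R_v\|_{L^p(L^1)} \lesssim \sigma_v^{-1}\|R_v\|_{L^p(L^1)}$ via Lemma~\ref{lemma:essential-besov}, and the second is directly $O(\sigma_v^{-1})$ in $L^p(L^1) \hookrightarrow L^p(B^{-1}_{1,\infty})$. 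Combining these steps completes the proof of \eqref{eq:est-wpvupv-2D}.
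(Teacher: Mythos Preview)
Your proposal is correct and follows essentially the same approach as the paper. The paper frames the key step for \eqref{eq:est-wpvupv-2D} by introducing a one-dimensional horizontal inverse divergence operator $\mathcal{R}_h$ (defined just like $\mathcal{R}_v$ in Definition~\ref{defn:vid-2D}) and mimicking the proof of Lemma~\ref{lemma:prod}; your primitive $\Phi$ is exactly $\mathcal{R}_h\big(\phi^2-\int_{\mathbb{T}}\phi^2\big)$, so the two arguments coincide up to notation.
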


\begin{proof}
In fact the proof of \eqref{eq:est-upv1-2D}-\eqref{eq:est-wpv2-2D} can be taken verbatim from Lemma~\ref{verticalprinc}. Estimate \eqref{eq:est-wpvupv-2D} can also be proven in a similar way as in Lemma~\ref{verticalprinc}. To this end we need a version of Lemma~\ref{lemma:prod}, namely the estimate
\begin{equation} \label{eq:801}
    \bigg\lVert \bigg( \phi^2 (\sigma_v \cdot) - \int_{\mathbb{T}} \phi^2 \dx \bigg) R_v \bigg\rVert_{L^p (B^{-1}_{1,\infty})} \lesssim \sigma_v^{-1},
\end{equation}
which holds for any $1\leq p\leq \infty$. To show \eqref{eq:801}, we mimic the proof of Lemma~\ref{lemma:prod}. This requires to introduce a one-dimensional horizontal inverse divergence operator $\mathcal{R}_h$, which is defined analogously to the vertical inverse divergence, cf. Definition~\ref{defn:vid-2D}, specifically
\begin{equation*}
    \mathcal{R}_h v (x) \coloneqq \int_0^x v (x') \dx' - \int_0^1 \int_0^{x'} v (x'') \dx'' \dx'. 
\end{equation*} 
Consequently $\mathcal{R}_h$ has the properties stated in Lemma~\ref{Lpverticalbound}. Using property \eqref{eq:lem-Rv-est3} we are able to prove \eqref{eq:801} and thus \eqref{eq:est-wpvupv-2D} follows. 
\end{proof}

The spatial and temporal correctors can be estimated as follows.

\begin{lemma} \label{2dcorrectorlemma}
    The spatial and temporal correctors satisfy the following estimates
    \begin{align} 
        \lVert w_{c,v} \rVert_{L^{q_2'} (L^2)} + \lVert w_{c,v} \rVert_{L^{q_3'} (H^{-s_3} )} &\lesssim \lambda_v^{-\gamma_v}, \label{eq:est-wcv-2D} \\
        \lVert u_{t,v} \rVert_{L^\infty (W^{n,\infty} )} &\lesssim \lambda_v^{-\gamma_v}, \label{eq:est-utv-2D} \\
        \lVert w_{t,v} \rVert_{L^\infty (W^{n,\infty} )} &\lesssim \lambda_v^{-\gamma_v},  \label{eq:est-wtv-2D}
    \end{align}
    where $n\in \mathbb{N}$ is arbitrary, and the implicit constant may depend on $n$.
\end{lemma}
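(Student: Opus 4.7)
The temporal correctors $u_{t,v}$ and $w_{t,v}$ are the easy part: since both are explicit expressions of the form $\nu_v^{-1} h_{v,2}(\nu_v t) \cdot (\text{smooth derivative of } R_v)$, and $\|h_{v,2}\|_{L^\infty} \leq 1$ by \eqref{eq:est-hv}, for any $n \in \mathbb{N}$ we obtain
\begin{equation*}
    \|u_{t,v}\|_{L^\infty(W^{n,\infty})} \leq \nu_v^{-1} \|h_{v,2}(\nu_v \cdot)\|_{L^\infty} \|\partial_z R_v\|_{L^\infty(W^{n,\infty})} \lesssim \nu_v^{-1} C_{R_v} \lesssim \lambda_v^{-\gamma_v}
\end{equation*}
(for $\lambda_v$ sufficiently large depending on $R_v$, using $\nu_v \geq \lambda_v^{\gamma_v}$ from Lemma~\ref{2Dparameterlemma}), and analogously for $w_{t,v}$. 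This mirrors the proof of Lemma~\ref{temporallemma}.

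For $w_{c,v}$ the key structural observation is that the vertical inverse divergence $\mathcal{R}_v$ commutes with $\partial_{x_1}$, because $\mathcal{R}_v$ only integrates in $z$. Hence
\begin{equation*}
    w_{c,v} = g_{v,2}^-(\nu_v t)\,\theta(t)\, \partial_{x_1}\!\bigl[\mathcal{R}_v\bigl(R_v\, \phi(\sigma_v x)\bigr)\bigr].
\end{equation*}
For the $L^{q_2'}(L^2)$ bound, I apply \eqref{eq:lem-Rv-est2}, the product rule $\partial_{x_1}[R_v \phi(\sigma_v x)] = (\partial_{x_1} R_v)\phi(\sigma_v x) + \sigma_v R_v \phi'(\sigma_v x)$, and the bounds $\|\phi(\sigma_v \cdot)\|_{L^2} \lesssim 1$ and $\|\phi'(\sigma_v\cdot)\|_{L^2} \lesssim \mu_v$ from \eqref{eq:est-phi-2D}; together with $\|g_{v,2}^-(\nu_v \cdot)\|_{L^{q_2'}} \lesssim \kappa_v^{1/q_2 - 1/q_2'} = \kappa_v^{2/q_2 - 1}$ from \eqref{eq:est-gvm}, this gives
\begin{equation*}
    \|w_{c,v}\|_{L^{q_2'}(L^2)} \lesssim C_{R_v}\, \kappa_v^{2/q_2 - 1} \sigma_v \mu_v \lesssim \lambda_v^{-\gamma_v},
\end{equation*}
using the new parameter inequality \eqref{parameterineq-add-2d}.

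The $H^{-s_3}$ estimate is the main technical point. Using Lemma~\ref{Lpverticalbound} applied to the Besov scale (which yields $\|\mathcal{R}_v f\|_{H^{-s_3}} \lesssim \|f\|_{H^{-s_3}}$, cf. \eqref{eq:lem-Rv-est2}) and $\|\partial_{x_1} \cdot\|_{H^{-s_3}} \lesssim \|\cdot\|_{H^{1-s_3}}$, and then a Moser-type product estimate (valid since $1 - s_3 \in [0,1]$) combined with $\|\phi(\sigma_v \cdot)\|_{H^{1-s_3}} \lesssim (\sigma_v \mu_v)^{1-s_3}$ from \eqref{eq:est-phi-2D}, I obtain
\begin{equation*}
    \|w_{c,v}\|_{L^{q_3'}(H^{-s_3})} \lesssim \|g_{v,2}^-(\nu_v \cdot)\|_{L^{q_3'}} \, C_{R_v}\, (\sigma_v \mu_v)^{1-s_3} \lesssim C_{R_v}\, \kappa_v^{1/q_2 - 1/q_3'} (\sigma_v \mu_v)^{1 - s_3}.
\end{equation*}
The cleanest way to close the argument is to use \eqref{parameterineq3} to substitute $(\sigma_v \mu_v)^{-s_3} = \kappa_v^{1/q_2 - 1/q_3}$, which collapses the right-hand side to $C_{R_v}\, \kappa_v^{2/q_2 - 1} \sigma_v \mu_v$, and then invoke \eqref{parameterineq-add-2d} once more. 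The principal obstacle — and the reason the two-dimensional scheme needs the new constraint \eqref{parameterineq-add-2d} as opposed to only the three-dimensional ones — is precisely that the $x_1$-derivative inside $w_{c,v}$ must be paid for by an extra factor of $\sigma_v \mu_v$, which neither \eqref{parameterineq3} nor \eqref{parameterineq-improved-2d} can absorb on their own; the interplay with the horizontal oscillation scale $\sigma_v \mu_v$ is what dictates the exponent in \eqref{parameterineq-add-2d}.
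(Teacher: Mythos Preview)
Your proof is correct and follows essentially the same route as the paper's. The paper also handles $u_{t,v},w_{t,v}$ by pointing back to Lemma~\ref{temporallemma}, and for $w_{c,v}$ it likewise uses the boundedness of $\mathcal{R}_v$ in $L^2$ and in $H^{-s_3}$ (via \eqref{eq:lem-Rv-est2}), passes from $\partial_{x_1}$ in $H^{-s_3}$ to $H^{1-s_3}$, bounds the product by $\lVert R_v\rVert_{L^\infty(W^{1,\infty})}\lVert \phi(\sigma_v\cdot)\rVert_{H^{1-s_3}}$, and closes with the same factorisation $\kappa_v^{1/q_2-1/q_3'}(\sigma_v\mu_v)^{1-s_3}=(\kappa_v^{2/q_2-1}\sigma_v\mu_v)(\kappa_v^{1/q_3-1/q_2}(\sigma_v\mu_v)^{-s_3})$ using \eqref{parameterineq3} and \eqref{parameterineq-add-2d}. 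Your explicit commutation $\mathcal{R}_v\partial_{x_1}=\partial_{x_1}\mathcal{R}_v$ is just a cosmetic reorganisation of the same computation.
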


\begin{proof} 
We only prove \eqref{eq:est-wcv-2D}, as the proof of the estimates for the temporal correctors \eqref{eq:est-utv-2D}, \eqref{eq:est-wtv-2D} is similar to the proof of Lemma \ref{temporallemma}. We obtain using Lemmas~\ref{Lpverticalbound} and \ref{2Dparameterlemma}, and equations~\eqref{eq:est-gvm}, \eqref{eq:est-phi-2D}
\begin{align*}
    \lVert w_{c,v} \rVert_{L^{q_2'} (L^2)} &\lesssim \lVert g_{v,2}^- (\nu_v \cdot) \rVert_{L^{q_2'}} \lVert \theta\rVert_{L^\infty} \lVert R_v \rVert_{L^\infty (W^{1,\infty})} \lVert \phi (\sigma_v \cdot) \rVert_{H^{1}} \\
    &\lesssim \kappa_v^{1/q_2 - 1/q_2'} \sigma_v \mu_v = \kappa_v^{2/q_2 - 1} \sigma_v \mu_v \leq \lambda_v^{-\gamma_v}. 
\end{align*}
Analogously 
\begin{align*} 
    \lVert w_{c,v} \rVert_{L^{q_3'} (H^{-s_3})} &\lesssim \lVert g_{v,2}^- (\nu_v \cdot) \rVert_{L^{q_3'}} \lVert \theta\rVert_{L^\infty} \lVert R_v \rVert_{L^\infty (W^{1,\infty})} \lVert \phi (\sigma_v \cdot) \rVert_{H^{1-s_3}} \\
    &\lesssim \kappa_v^{1/q_2 - 1/q_3'} (\sigma_v \mu_v)^{1-s_3} = \big(\kappa_v^{2/q_2 - 1} \sigma_v \mu_v \big)\big(\kappa_v^{1/q_3 - 1/q_2} (\sigma_v\mu_v)^{-s_3} \big)\leq \lambda_v^{-\gamma_v}. 
\end{align*} 
Here we have used that $\mathcal{R}_v$ is bounded in $H^{-s_3}$, see \eqref{eq:lem-Rv-est2} and keeping in mind that $H^{-s_3}=B^{-s_3}_{2,2}$, and $s_3\leq 1$.
\end{proof}

Lemmas~\ref{2dprincperturblemma} and \ref{2dcorrectorlemma} show that estimates \eqref{eq:2d-main-u1}-\eqref{eq:2d-main-wu} hold. 

\subsection{Estimates on the Reynolds stress tensor} 
In order to finish the proof of Proposition~\ref{prop:2D-perturbative}, it remains to show \eqref{eq:2d-main-Rv}.

\begin{lemma} \label{lemma:2d-error}
    If $\lambda_v$ is chosen sufficiently large (depending on $R_v$), then the errors satisfy the following estimates 
    \begingroup
    \allowdisplaybreaks
    \begin{align*}
        \lVert R_{\osc,v} \rVert_{L^1 (B^{-1}_{1,\infty})} &\leq \frac{\epsilon}{3}, \\
        \lVert R_{\cor,v} \rVert_{L^1 (B^{-1}_{1,\infty})} &\leq \frac{\epsilon}{3}, \\
        \lVert R_{\lin,v} \rVert_{L^1 (B^{-1}_{1,\infty})} &\leq \frac{\epsilon}{3}.
    \end{align*}
    \endgroup
\end{lemma}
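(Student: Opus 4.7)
The plan is to treat each of the three errors $R_{\osc,v}$, $R_{\cor,v}$, $R_{\lin,v}$ in turn, in each case reducing to an $L^1(L^1)$ estimate via Lemma~\ref{lemma:essential-besov} (the embedding $L^1\hookrightarrow B^{-1}_{1,\infty}$) whenever this is affordable, and using the boundedness of $\mathcal{R}_v$ from Lemma~\ref{Lpverticalbound} to strip off applications of $\mathcal{R}_v$. In each case the resulting bound will be a negative power of $\lambda_v$, so choosing $\lambda_v$ large enough (depending on $R_v$) produces the claimed $\epsilon/3$.

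For the oscillation error $R_{\osc,v}=R_{\osc,x,v}+R_{\osc,t,v}$ I would argue verbatim as in Lemma~\ref{verticaloscestimate}. For the spatial part, pull out the bounded factor $\|g_{v,2}^-(\nu_v\cdot)\,g_{v,2}^+(\nu_v\cdot)\|_{L^1}\lesssim 1$ via the improved H\"older inequality and then invoke the two-dimensional analogue of Lemma~\ref{lemma:prod}, namely the auxiliary estimate \eqref{eq:801} already proved inside Lemma~\ref{2dprincperturblemma}, to obtain $\|R_{\osc,x,v}\|_{L^1(B^{-1}_{1,\infty})}\lesssim \sigma_v^{-1}\lesssim \lambda_v^{-\gamma_v}$. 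For the temporal part the crude bound $\|R_{\osc,t,v}\|_{L^1(B^{-1}_{1,\infty})}\lesssim \|R_{\osc,t,v}\|_{L^\infty(L^\infty)}\lesssim \nu_v^{-1}$ is enough.

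For the corrector error $R_{\cor,v}$, I would use $\|\mathcal{R}_v\partial_z(\cdot)\|_{L^p}\lesssim \|\cdot\|_{L^p}$ (equation \eqref{eq:lem-Rv-est4}) for the $\partial_z$ terms and the Besov boundedness of $\mathcal{R}_v$ combined with Lemma~\ref{lemma:essential-besov} for the $\partial_{x_1}$ term, to reduce everything to $L^1(L^1)$ norms of the three products $\widetilde{(u_{p,v}+u_{t,v})^2}$, $(w_{c,v}+w_{t,v})(u_{p,v}+u_{t,v})$ and $w_{p,v}u_{t,v}$. The first two split by H\"older into pairs of $L^2(L^2)$ factors each bounded by $\lambda_v^{-\gamma_v}$ via Lemmas~\ref{2dprincperturblemma} and \ref{2dcorrectorlemma}; for the third, the uniform bound $\|w_{p,v}\|_{L^{q_2'}(L^2)}\lesssim 1$ combines with $\|u_{t,v}\|_{L^{q_2}(L^2)}\lesssim \|u_{t,v}\|_{L^\infty(L^\infty)}\lesssim \lambda_v^{-\gamma_v}$ to give the decay.

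The hard part will be the linear error $R_{\lin,v}$, and in particular the piece $\mathcal{R}_v\partial_t u_{p,v}$: in contrast to the three-dimensional case there is no spatial corrector $u_{c,v}$ allowing one to rewrite $u_{p,v}+u_{c,v}$ as a spatial divergence in order to gain a factor $\sigma_v^{-1}$. My plan is to exploit $\phi(\sigma_v x)=\sigma_v^{-1}\partial_{x_1}[\Omega(\sigma_v x)]$ from Proposition~\ref{prop:mikado-v-2D} to recast
\begin{equation*}
u_{p,v}=-\sigma_v^{-1}\partial_{x_1}\bigl[g_{v,2}^-(\nu_v t)\,\theta\,R_v\,\Omega(\sigma_v x)\bigr]+\sigma_v^{-1}\,g_{v,2}^-(\nu_v t)\,\theta\,(\partial_{x_1}R_v)\,\Omega(\sigma_v x),
\end{equation*}
then differentiate in $t$, apply $\mathcal{R}_v$, and use Lemma~\ref{lemma:essential-besov} to absorb the outer $\partial_{x_1}$ into the $B^{-1}_{1,\infty}$ norm. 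The leading piece is then bounded by $\sigma_v^{-1}\|\partial_t[g_{v,2}^-(\nu_v\cdot)\theta R_v\Omega(\sigma_v\cdot)]\|_{L^1(L^1)}\lesssim \sigma_v^{-1}\nu_v\kappa_v^{1/q_2}\mu_v^{-3/2}$, after using \eqref{eq:est-gvm-sobolev} for $(g_{v,2}^-)'$ and \eqref{eq:est-omega-2D} for $\|\Omega(\sigma_v\cdot)\|_{L^1}\lesssim \mu_v^{-3/2}$; this is precisely the content of \eqref{parameterineq-improved-2d}, which was tailored for this purpose, and hence $\lesssim \lambda_v^{-\gamma_v}$. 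The second piece is of lower order. Finally, the advective terms in $R_{\lin,v}$ follow exactly as in Lemma~\ref{advectionestimate}: apply $\mathcal{R}_v$-boundedness and $L^1\hookrightarrow B^{-1}_{1,\infty}$ to reduce to $L^1(L^1)$ of the products $u(u_{p,v}+u_{t,v})$, $w(u_{p,v}+u_{t,v})$ and $(w_{p,v}+w_{c,v}+w_{t,v})u$, and then pair the fixed $L^\infty$ norms of $u,w$ against the small $\lambda_v^{-\gamma_v}$ factors supplied by Lemmas~\ref{2dprincperturblemma} and \ref{2dcorrectorlemma}.
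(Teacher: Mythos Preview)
Your proposal is essentially the paper's own argument, and in particular your treatment of the time-derivative piece $\mathcal{R}_v\partial_t u_{p,v}$ via the identity $\phi(\sigma_v x)=\sigma_v^{-1}\partial_{x_1}[\Omega(\sigma_v x)]$, followed by absorbing the outer $\partial_{x_1}$ into the $B^{-1}_{1,\infty}$ norm and invoking \eqref{parameterineq-improved-2d}, is exactly what the paper does.

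Two small points. First, for $R_{\osc,x,v}$ there is no need for the improved H\"older inequality: since the Besov norm is spatial, ordinary H\"older in $t$ already separates $\|g_{v,2}^-(\nu_v\cdot)g_{v,2}^+(\nu_v\cdot)\|_{L^1}\lesssim 1$ from the remaining $L^\infty_t(B^{-1}_{1,\infty})$ factor, to which \eqref{eq:801} applies. Second, in your $R_{\cor,v}$ estimate the claimed $L^2(L^2)$--$L^2(L^2)$ splitting does not work for the term $w_{c,v}\,u_{p,v}$: with the present parameter choices one only has $\|w_{c,v}\|_{L^{q_2'}(L^2)}\lesssim \lambda_v^{-\gamma_v}$ (Lemma~\ref{2dcorrectorlemma}), whereas $\|w_{c,v}\|_{L^2(L^2)}\lesssim \kappa_v^{1/q_2-1/2}\sigma_v\mu_v$ is not controlled by \eqref{parameterineq-add-2d} since $q_2>2$. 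The fix is immediate: pair $\|w_{c,v}\|_{L^{q_2'}(L^2)}$ with $\|u_{p,v}\|_{L^{q_2}(L^2)}\lesssim C_{R_v}$ (the latter following from $\|g_{v,2}^-\|_{L^{q_2}}\lesssim 1$ and $\|\phi(\sigma_v\cdot)\|_{L^2}\lesssim 1$), exactly as in the proof of Lemma~\ref{correctorestimate}.
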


\begin{proof}
Lemma~\ref{lemma:2d-error} can be proven similarly to Lemmas~\ref{verticaloscestimate}-\ref{advectionestimate}, with only small modifications: To obtain the required estimate for $R_{\osc,x,v}$ one has to use \eqref{eq:801} rather than Lemma~\ref{lemma:prod}. Moreover, since in two dimensions there is no spatial corrector $u_{c,v}$, the time derivative part of the linear error $R_{\lin,t,v}$ must be estimated slightly differently compared to Lemma~\ref{timederivativeestimate}. To this end we write 
\begin{align*}
    u_{p,v} &= - g_{v,2}^-  (\nu_v t) \theta R_{v} \phi (\sigma_v x) \\
    & = - \sigma_v^{-1} g_{v,2}^-  (\nu_v t) \theta R_{v} \partial_{x_1} \big[\Omega (\sigma_v x)\big] \\
    & = \partial_{x_1} \big[ - \sigma_v^{-1} g_{v,2}^-  (\nu_v t) \theta R_{v} \Omega (\sigma_v x) \big] + \sigma_v^{-1} g_{v,2}^-  (\nu_v t) \theta (\partial_{x_1} R_{v}) \Omega (\sigma_v x) .
\end{align*}
Hence we find (by using inequality \eqref{parameterineq-improved-2d})
\begingroup
\allowdisplaybreaks
\begin{align*}
    &\Big\lVert \mathcal{R}_v \big[ \partial_t u_{p,v} \big] \Big\rVert_{L^1 (B^{-1}_{1,\infty})} \\
    &\lesssim \lVert \partial_t u_{p,v} \rVert_{L^1 (B^{-1}_{1,\infty})} \\
    &\lesssim \Big\lVert \partial_t \big[ \sigma_v^{-1} g_{v,2}^-  (\nu_v \cdot) \theta R_{v} \Omega (\sigma_v \cdot) \big] \Big\rVert_{L^1(L^1)} + \Big\lVert \partial_t \big[ \sigma_v^{-1} g_{v,2}^-  (\nu_v \cdot) \theta (\partial_{x_1} R_{v}) \Omega (\sigma_v \cdot) \big] \Big\rVert_{L^1(L^1)} \\
    &\lesssim \sigma_v^{-1} \lVert g_{v,2}^-  (\nu_v \cdot) \rVert_{W^{1,1}} \lVert \theta \rVert_{W^{1,\infty}} \lVert R_v\rVert_{W^{1,\infty}(W^{1,\infty})} \lVert \Omega (\sigma_v \cdot) \rVert_{L^1} \\
    &\lesssim \sigma_v^{-1} \nu_v \kappa_v^{1/q_2} \mu_v^{-3/2} \leq \lambda_v^{\gamma_v}.
\end{align*}
\endgroup
\end{proof}
This finishes the proof of Proposition~\ref{prop:2D-perturbative}.

\section{The two-dimensional Prandtl equations} \label{prandtlsection}

In this section we will study the following Prandtl-Reynolds system
\begin{align}
    \partial_t u - \nu_v^* \partial_{zz} u + u \partial_{x_1} u + w \partial_z u + \partial_{x_1} p &= \partial_z R_v, \label{eq:prandtl-er-u} \\ 
    \partial_z p &= 0, \label{eq:prandtl-er-p} \\
    \partial_{x_1} u + \partial_z w &=0 , \label{eq:prandtl-er-div}
\end{align}
with unknowns $(u,w,p,R_v)$. As in section \ref{2Dsection} there is no horizontal Reynolds stress tensor $R_h$. In this setting we have the following version of the inductive proposition (cf. Proposition~\ref{perturbativeproposition}). As before, the proof of Theorem \ref{prandtltheorem} then works in the same way as the proof of Theorem~\ref{mainresult}.

\begin{proposition} \label{prop:prandtl-perturbative}
Suppose $(u,w,p,R_v)$ is a smooth solution of the Prandtl-Reynolds system \eqref{eq:prandtl-er-u}-\eqref{eq:prandtl-er-div}, which is well-prepared with associated time interval $I$ and parameter $\tau>0$. Moreover, consider parameters $1 \leq q_2, q_3 \leq \infty$ and $0<s_3$ which satisfy the constraints in \eqref{prandtlconstraints}. Finally, let $\delta,\epsilon > 0$ be arbitrary. Then there exists another smooth solution $(u + \widetilde{u}_p, w + w_p, p+P , R_{v,1} )$ of the Prandtl-Reynolds system \eqref{eq:prandtl-er-u}-\eqref{eq:prandtl-er-div}, which is well-prepared with respect to the same time interval $I$ and parameter $\tau/2$, and has the following properties:
\begin{enumerate}
    \item $(\widetilde{u}_p,w_p)(x,t)=(0,0)$ whenever $\dist (t, I^c) \leq \tau/2$. 
    \item The perturbation and the new Reynolds stress tensor satisfy the following estimates
    \begingroup
        \allowdisplaybreaks
        \begin{align}
            \lVert R_{v,1} \rVert_{L^1 (B^{-1}_{1,\infty})} &\leq \epsilon, \label{eq:prandtl-Rv} \\
            \lVert \widetilde{u}_p \rVert_{L^{1} (W^{1,1})} &\leq \epsilon, \label{eq:prandtl-utilde1} \\ 
            \lVert \widetilde{u}_p \rVert_{L^{q_2-} (L^2)} &\leq \epsilon, \label{eq:prandtl-utilde2} \\
            \lVert \widetilde{u}_p \rVert_{L^{q_3-} (H^{s_3})} &\leq \epsilon, \label{eq:prandtl-utilde3} \\
            \lVert w_p \rVert_{L^{q_2'-} (L^2)} &\leq \epsilon, \label{eq:prandtl-w1}\\
            \lVert w_p \rVert_{L^{q_3'-} (H^{-s_3})} &\leq \epsilon, \label{eq:prandtl-w2}
        \end{align} 
    \endgroup
    \item Finally, the products of the vertical and horizontal perturbations satisfy that
    \begin{equation} \label{eq:prandtl-prod}
        \lVert w_p \widetilde{u}_p + w \widetilde{u}_p + w_p u \rVert_{L^1 (B^{-1}_{1,\infty})} \lesssim \lVert R_v \rVert_{L^1 (B^{-1}_{1,\infty})}.
    \end{equation}
\end{enumerate}
\end{proposition}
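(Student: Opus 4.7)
The plan is to mirror the convex integration scheme from section~\ref{2Dsection} essentially verbatim, treating the viscous term $-\nu_v^*\partial_{zz} u$ as a perturbative contribution to the linear error. Concretely, I would take the same perturbations $u_{p,v},u_{t,v},w_{p,v},w_{c,v},w_{t,v}$ defined in \eqref{eq:2D-def-upv}-\eqref{eq:2D-def-wcv} and the same pressure correction $P$. The oscillation and corrector errors $R_{\osc,v}$ and $R_{\cor,v}$ remain unchanged, and equation \eqref{eq:Roscv-2D} still holds since the viscosity does not interact with the principal/principal cancellation. The only modification is that the linear error now reads
\begin{equation*}
    R_{\lin,v}^{\mathrm{Pr}} \coloneqq R_{\lin,v} \,-\, \nu_v^*\,\mathcal{R}_v\big[\partial_{zz}\widetilde{u}_p\big],
\end{equation*}
and one checks as in section~\ref{subsec:new-reynolds} that the quintuple $(u+\widetilde{u}_p,w+w_p,p+P,R_{v,1})$ solves \eqref{eq:prandtl-er-u}-\eqref{eq:prandtl-er-div}. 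Item~1 and estimates \eqref{eq:prandtl-utilde2}-\eqref{eq:prandtl-prod} are then immediate from Lemmas~\ref{2dprincperturblemma}-\ref{lemma:2d-error}, since the perturbation bounds proven there are insensitive to the equation being solved.

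The two genuinely new tasks are (a) producing the $L^1(W^{1,1})$ estimate \eqref{eq:prandtl-utilde1}, and (b) bounding the additional viscous contribution in the linear error. For (b), using identity \eqref{eq:lem-Rv-id3} I would simply write $\mathcal{R}_v\partial_{zz}\widetilde{u}_p=\partial_z\widetilde{u}_p$, and then from $L^1\hookrightarrow B^{-1}_{1,\infty}$ together with the definition of $u_{p,v}$ and $u_{t,v}$ (noting that $\phi=\phi(x_1)$ so $\partial_z u_{p,v}$ only hits $R_v$) obtain
\begin{equation*}
    \|\nu_v^*\mathcal{R}_v\partial_{zz}\widetilde{u}_p\|_{L^1(B^{-1}_{1,\infty})} \lesssim \|\partial_z u_{p,v}\|_{L^1(L^1)} + \|\partial_z u_{t,v}\|_{L^1(L^1)} \lesssim \kappa_v^{1/q_2-1}\mu_v^{-1/2} + \nu_v^{-1},
\end{equation*}
which is $\lesssim \lambda_v^{-\gamma_v}$ for the parameter choices already in force.

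For (a), I would proceed as in section~\ref{viscoussection}: a direct computation using \eqref{eq:est-gvm} and \eqref{eq:est-phi-2D}, together with Lemma~\ref{Lpverticalbound} for $w_{c,v}$, gives
\begin{equation*}
    \|u_{p,v}\|_{L^1(W^{1,1})} \lesssim \kappa_v^{1/q_2-1}\sigma_v\mu_v^{1/2},\qquad \|u_{t,v}\|_{L^1(W^{1,1})}\lesssim \nu_v^{-1}.
\end{equation*}
Hence the missing piece is a parameter inequality of the form
\begin{equation*}
    \kappa_v^{1/q_2-1}\sigma_v\mu_v^{1/2}\leq \lambda_v^{-\gamma_v},
\end{equation*}
which in terms of the exponents $a_v,b_v,c_v$ in \eqref{parameters} translates into $(1-1/q_2)c_v>b_v+1/2$. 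I would verify that this can be added to the system \eqref{eq:2p1}-\eqref{eq:2p3} of Lemma~\ref{2Dparameterlemma} without conflict: plugging the choice $c_v=s_3(b_v+1)/(1/q_3-1/q_2)$ from that proof and sending $b_v\to 0^+$ reduces the new constraint to $s_3>\tfrac{1}{2(1-1/q_2)}(1/q_3-1/q_2)$, which is strictly weaker than the lower bound $s_3>\tfrac{1}{1-2/q_2}(1/q_3-1/q_2)$ already required in \eqref{prandtlconstraints} (since $q_2-2<2(q_2-1)$ whenever $q_2>2$). Hence a sufficiently small $b_v>0$ and a correspondingly chosen $a_v>0$ deliver all the inequalities simultaneously.

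The main obstacle I anticipate is precisely the bookkeeping in this last step: one has to confirm that adding the new inequality does not destroy the delicate interplay between \eqref{eq:2p1}-\eqref{eq:2p3} at the endpoints of the allowed parameter range. Beyond this, everything reduces to invoking the estimates already proven in Lemmas~\ref{2dprincperturblemma}, \ref{2dcorrectorlemma} and \ref{lemma:2d-error}, together with the observation that the viscous term is negligible under the new parameter regime.
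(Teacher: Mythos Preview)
Your proposal is correct and follows essentially the same route as the paper: reuse the 2D hydrostatic scheme of section~\ref{2Dsection} verbatim, add the viscous term $\mathcal{R}_v(\nu_v^*\partial_{zz}\widetilde{u}_p)$ to the linear error, and verify the extra parameter inequality $\kappa_v^{1/q_2-1}\sigma_v\mu_v^{1/2}\leq\lambda_v^{-\gamma_v}$ by checking that it reduces (via $c_v=s_3(b_v+1)/(1/q_3-1/q_2)$ and $b_v\to 0^+$) to a condition strictly weaker than \eqref{prandtlconstraints}. The only cosmetic difference is that for the viscous contribution you bound $\|\partial_z u_{p,v}\|_{L^1(L^1)}$ directly by exploiting that $\phi$ depends only on $x_1$, whereas the paper simply invokes $\|\widetilde{u}_p\|_{L^1(W^{1,1})}\lesssim\lambda_v^{-\gamma_v}$ (which it has just established); both arguments are valid and yours is marginally sharper.
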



\begin{proof}
In order to prove Proposition~\ref{prop:prandtl-perturbative} we modify the proof of Proposition~\ref{prop:2D-perturbative} in the same way as we did in the three-dimensional case, cf. proof of Proposition~\ref{viscousperturbativeprop}. Specifically we choose $\widetilde{u}_p$, $w_p$ and $P$ as in the proof of Proposition~\ref{prop:2D-perturbative}, while the linear error now contains the additional term 
\begin{equation} \label{eq:902}
    \mathcal{R}_v (\nu_v^* \partial_{zz} \widetilde{u}_p ).
\end{equation}
Then it follows from section \ref{2Dsection} that estimates \eqref{eq:prandtl-utilde2}-\eqref{eq:prandtl-prod} hold. In order to show \eqref{eq:prandtl-utilde1} we proceed as in the proof of Proposition~\ref{viscousperturbativeprop}. To this end we need the additional parameter estimate
$$
    \kappa_v^{1/q_2 - 1} \sigma_v \mu_v^{1/2} \leq \lambda_v^{-\gamma_v} . 
$$
We can achieve this as soon as 
\begin{equation} \label{eq:901}
    \frac{s_3\left(1 - \frac{1}{q_2}\right)}{\frac{1}{q_3} - \frac{1}{q_2}} > \frac{1}{2}, 
\end{equation}
see the proof of Lemma~\ref{viscousparameter}. Using 
$$
    \frac{1}{1 - \frac{2}{q_2}} > \frac{1}{2\left(1 - \frac{1}{q_2}\right)}
$$
we see that \eqref{eq:901} holds according to \eqref{prandtlconstraints}. 

It remains to estimate the additional term in \eqref{eq:902}. This works exactly as in the proof of Proposition~\ref{viscousperturbativeprop}.
\end{proof}

\section*{Acknowledgements}
D.W.B. acknowledges support from the Cambridge Trust, the Cantab Capital Institute for Mathematics of Information and the Prince Bernhard Culture fund. S.M. acknowledges support from the Alexander von Humboldt foundation. The work of E.S.T. has benefited from the inspiring environment of the CRC 1114 "Scaling Cascades in Complex Systems", Project Number 235221301, Project C06, funded by Deutsche Forschungsgemeinschaft (DFG). The authors would like to thank the Isaac Newton Institute for Mathematical Sciences, Cambridge, for support and hospitality during the programme ``Mathematical aspects of turbulence: where do we stand?'' where part of this work was undertaken and supported by EPSRC grant no.~EP/K032208/1.

\section*{Declaration}
Data sharing is not applicable to this article as no datasets were generated or analysed during the current study.

\begin{appendices}

\section{Littlewood-Paley theory, Besov spaces and paradifferential calculus} \label{paradifferentialappendix}
In this appendix, we state some basic definitions from Littlewood-Paley theory and paradifferential calculus, which will be used throughout the paper. More details can be found in \cite{bahouri,mourrat1,leoni,sawano}. 

\subsection{Littlewood-Paley theory and Besov spaces} \label{subsec:ap-littlewood-besov}
We first introduce a dyadic partition of unity $\{ \rho_j \}_{j=-1}^\infty$ as follows
\begin{equation*}
    \rho_0 (\xi) = \rho (\xi), \quad \rho_j (\xi) = \rho (2^{-j} \xi) \text{ for }j=1,2,\ldots, \quad \rho_{-1} (\xi) = 1 - \sum_{j=0}^\infty \rho_j (\xi).
\end{equation*}
Then for $f \in \mathcal{S}' (\mathbb{T}^3)$ the Littlewood-Paley blocks are given by
\begin{equation*}
    \widehat{\Delta_j f} (\xi) = \rho_j (\xi) \widehat{f} (\xi), \quad j=-1,0, \ldots.
\end{equation*}
The Besov space $B^{s}_{p,q} (\mathbb{T}^3)$ is then defined in terms of the Littlewood-Paley based norm, which reads for $q < \infty$
\begin{equation*}
    \lVert f \rVert_{B^{s}_{p,q}} \coloneqq \lVert \Delta_{-1} f \rVert_{L^p} + \bigg( \sum_{j=0}^\infty 2^{s j q} \lVert \Delta_j f \rVert_{L^p}^q \bigg)^{1/q}.
\end{equation*}
If $q = \infty$, the norm is defined as follows
\begin{equation*}
    \lVert f \rVert_{B^s_{p,\infty}} \coloneqq \lVert \Delta_{-1} f \rVert_{L^p} + \sup_{j \geq 0} \big( 2^{s j } \lVert \Delta_j f \rVert_{L^p} \big).
\end{equation*}
It is also possible to define Besov spaces using difference quotients. We first define the forward finite difference operator
\begin{equation*} 
    \Delta^1_h f (x) = f (x+h) - f(x).
\end{equation*}
The higher order finite differences are defined inductively. For $m \geq 2$, we define that
\begin{equation*}
    \Delta^m_h f(x) = \Delta^1_h ( \Delta_h^{m-1} f (x)).
\end{equation*}
Let $s > 0$ and $1 \leq p , q \leq \infty$ and let $\lfloor s \rfloor$ denote the integer part of $s$, then Besov norm may be defined as follows (if $q < \infty$)
\begin{equation*}
    \lVert f \rVert_{B^s_{p,q}} = \lVert f \rVert_{L^p} + \bigg( \int_{\mathbb{T}^3} \lVert \Delta_h^{\lfloor s \rfloor + 1} f \rVert_{L^p}^q \frac{{\rm d}h}{\lvert h \rvert^{3 + s q}} \bigg)^{1/q}.
\end{equation*}
If $q = \infty$, the Besov norm is given by
\begin{equation*}
    \lVert f \rVert_{B^s_{p,\infty}} \coloneqq \lVert f \rVert_{L^p} + \sup_{h \in \mathbb{R}^3 \backslash \{ 0 \}} \frac{\lVert \Delta_h^{\lfloor s \rfloor + 1} f \rVert_{L^p}}{\lvert h \rvert^s}.
\end{equation*}
These two different definitions of the Besov norm are equivalent.

\begin{remark}
    Note that the index $h$ in the notation for the finite differences $\Delta_h^m$ represents the shift. This is in contrast to the main body of this paper where the index $h$ always means ``horizontal''.
\end{remark}

\begin{remark} \label{rem:besov-sobolev}
    The reader should note that $B^s_{p,p} (\mathbb{T}^3) = W^{s,p} (\mathbb{T}^3)$ when $s \in \mathbb{R} \backslash \mathbb{Z}$ and $1 \leq p \leq \infty$. This is stated in \cite[~Equation 3.5]{amann} for example. The equivalence of function spaces in the case $0 < s < 1$ can also be found in \cite[~Definition 32.2]{tartar} (when the interpolation space definition of Besov spaces is used) and in \cite[~Proposition 2]{simon} and \cite[~Page 1686]{brasseur} (where the Sobolev spaces are defined using the Sobolev-Slobodeckij seminorm). 
    We note that the case $0 < s < 1$ can easily be extended to all $s > 0$ with $s \notin \mathbb{N}$ by using Theorem 2.3 in \cite{sawano}. Finally, we recall that $B^s_{2,2} (\mathbb{T}^3) = H^s (\mathbb{T}^3)$ for all $s \in \mathbb{R}$ (even when $s$ is an integer), see \cite[~Page 99]{bahouri}. The latter is used several times in this paper.
\end{remark}

Next we recall some essential estimates regarding the Besov norm. 
\begin{lemma} \label{lemma:essential-besov}
    For any $1\leq p,q,q_1,q_2\leq \infty$, $\alpha\in \mathbb{R}$, $\delta>0$ the following estimates hold
    \begin{align}
        \lVert f \rVert_{B^0_{p,\infty}} &\lesssim\lVert f \rVert_{L^p} \lesssim \lVert f \rVert_{B^0_{p,1}} , \label{eq:besov-1}\\
        \lVert f \rVert_{B^\alpha_{p,q_1}} &\lesssim \lVert f \rVert_{B^{\alpha+\delta}_{p,q_2}}, \label{eq:besov-2}\\
        \lVert f \rVert_{B^\alpha_{p,q_1}} &\lesssim \lVert f \rVert_{B^{\alpha}_{p,q_2}}, \quad \text{ if }q_1\geq q_2 \label{eq:besov-2a}\\
        \lVert \partial_i f \rVert_{B^{\alpha-1}_{p,q}} &\lesssim \lVert f \rVert_{B^\alpha_{p,q}}. \label{eq:besov-3}
    \end{align}
\end{lemma}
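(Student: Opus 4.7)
The plan is to use the Littlewood-Paley definition of the Besov norms and derive each of the four estimates from elementary properties of weighted $\ell^q$ sequences together with Bernstein-type estimates on the frequency-localized pieces $\Delta_j f$. Throughout, the low-frequency block $\Delta_{-1}$ requires a brief separate treatment but poses no real difficulty.

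For \eqref{eq:besov-1}, I would first observe that each Littlewood-Paley projector $\Delta_j$ is convolution with a Schwartz function whose $L^1(\mathbb{T}^3)$ norm is bounded uniformly in $j\geq -1$ (this uses the scaling $\rho_j(\xi)=\rho(2^{-j}\xi)$ for $j\geq 0$). Young's convolution inequality then gives $\lVert \Delta_j f\rVert_{L^p}\lesssim \lVert f\rVert_{L^p}$ uniformly in $j$, which is the left inequality. For the right inequality one uses the Littlewood-Paley decomposition $f=\sum_{j=-1}^{\infty}\Delta_j f$ together with the triangle inequality, so that $\lVert f\rVert_{L^p}\leq \sum_{j=-1}^{\infty}\lVert \Delta_j f\rVert_{L^p}=\lVert f\rVert_{B^0_{p,1}}$.

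Estimate \eqref{eq:besov-2a} is the embedding $\ell^{q_2}\hookrightarrow\ell^{q_1}$ for $q_1\geq q_2$ applied to the sequence $(2^{\alpha j}\lVert \Delta_j f\rVert_{L^p})_{j\geq 0}$, together with the trivial bound on the $\Delta_{-1}$ term. For \eqref{eq:besov-2} I would write
\[
    2^{\alpha j}\lVert \Delta_j f\rVert_{L^p}=2^{-\delta j}\cdot 2^{(\alpha+\delta)j}\lVert \Delta_j f\rVert_{L^p}
\]
and apply Hölder's inequality on the sequence level with exponents chosen so that the summable weight $2^{-\delta j}$ absorbs the passage from $q_2$ to $q_1$; this reduces \eqref{eq:besov-2} to a sum of a geometric series in $j$ (when $q_1<\infty$) or to a simple supremum-times-sum estimate (when $q_1=\infty$), depending on the relation between $q_1$ and $q_2$.

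Finally, \eqref{eq:besov-3} follows from Bernstein's inequality: for $j\geq 0$ the Fourier support of $\Delta_j f$ is contained in an annulus of radius $\sim 2^j$, so $\lVert \partial_i\Delta_j f\rVert_{L^p}\lesssim 2^j\lVert \Delta_j f\rVert_{L^p}$, while for $j=-1$ the Fourier support is compact and the derivative is bounded by a constant times $\lVert \Delta_{-1}f\rVert_{L^p}$. Multiplying the weight $2^{(\alpha-1)j}$ by $2^j$ recovers $2^{\alpha j}$, and taking $\ell^q$ norms yields \eqref{eq:besov-3}. The whole argument is essentially bookkeeping; I do not anticipate any substantive obstacle, since all four inequalities are standard consequences of the Littlewood-Paley machinery recalled in this appendix and could also be quoted directly from, e.g., \cite{bahouri}.
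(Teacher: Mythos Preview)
Your proof sketch is correct: each of the four estimates is a standard consequence of the Littlewood-Paley definition of the Besov norms, and your arguments (Young's inequality and the triangle inequality for \eqref{eq:besov-1}, the $\ell^{q_2}\hookrightarrow\ell^{q_1}$ embedding for \eqref{eq:besov-2a}, H\"older with the geometric weight $2^{-\delta j}$ for \eqref{eq:besov-2}, and Bernstein's inequality together with $\Delta_j\partial_i=\partial_i\Delta_j$ for \eqref{eq:besov-3}) are exactly the usual ones.

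The paper, by contrast, does not give a proof at all: it simply quotes Propositions~2.1, 2.2, 2.3 and Theorem~2.2 in \cite{sawano}. So the difference is not one of mathematical strategy but of presentation --- you supply a self-contained argument while the paper defers to a reference. Your sketch is essentially what one would find in \cite{sawano} or \cite{bahouri} anyway, so there is no conflict; your version is more informative for a reader who does not want to consult the external source.
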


\begin{proof}
Estimates \eqref{eq:besov-1}, \eqref{eq:besov-2}, \eqref{eq:besov-2a} and \eqref{eq:besov-3} can be found in \cite{sawano} Propositions 2.1, 2.2, 2.3 and Theorem 2.2, respectively.
\end{proof}

Note that \eqref{eq:besov-2} implies that
\begin{equation*}
    \lVert f \rVert_{B^\alpha_{p,q}} \lesssim \lVert f \rVert_{B^{\beta}_{p,q}} 
\end{equation*}
for any $\alpha\leq \beta$.

\subsection{Paradifferential calculus} \label{subsec:ap-littlewood-paraproducts}
We recall Bony's product decomposition
\begin{equation}
f g = T_f g + T_g f + R(f,g).
\end{equation}
The terms $T_f g$ and $T_g f$ are called paraproducts and are given by
\begin{align}
T_f g = \sum_{j=-1}^\infty \sum_{i=-1}^{j-2} \Delta_i f \Delta_j g, \quad T_g f = \sum_{j=-1}^\infty \sum_{i=-1}^{j-2} \Delta_i g \Delta_j f.
\end{align}
The term $R(f,g)$ is referred to as the resonance term and is defined as
\begin{equation}
R(f,g) = \sum_{\lvert k - j \rvert \leq 1} \Delta_k f \Delta_j g. 
\end{equation}
We will also use the notation $T(f,g)\coloneqq T_f (g)$ and $T(g,f)\coloneqq T_g(f)$.

One can estimate the three terms of the product decomposition separately. For the paraproducts we have the following estimates.
\begin{lemma}[Lemma 2.1 in \cite{promel}] \label{paraproduct} 
Let $\alpha,\beta\in \mathbb{R}$ and $1\leq p,p_1,p_2,q,q_1,q_2\leq \infty$ with 
$$
    \frac{1}{p} = \frac{1}{p_1} + \frac{1}{p_2}, \qquad \frac{1}{q} = \frac{1}{q_1} + \frac{1}{q_2}.
$$
\begin{itemize}
    \item For any $f \in L^{p_1} (\mathbb{T}^3) $ and $g \in B^{\beta}_{p_2,q} (\mathbb{T}^3)$ we have that
    \begin{equation*}
        \lVert T_f (g) \rVert_{B^\beta_{p,q}} \lesssim \lVert f \rVert_{L^{p_1}} \lVert g \rVert_{B^\beta_{p_2,q}}.
    \end{equation*}

    \item If $\alpha  < 0$ then for any $f \in B^{\alpha}_{p_1, q_1} (\mathbb{T}^3)$ and $g \in B^\beta_{p_2,q_2} (\mathbb{T}^3)$ we have 
    \begin{equation*}
        \lVert T_f (g) \rVert_{B^{\alpha + \beta}_{p,q}} \lesssim \lVert f \rVert_{B^\alpha_{p_1,q_1}} \lVert g \rVert_{B^\beta_{p_2,q_2}}.
    \end{equation*}
\end{itemize}
\end{lemma}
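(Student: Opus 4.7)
The plan is to establish both estimates via the standard Fourier-localization argument for paraproducts. First I would rewrite
$$
T_f(g) = \sum_{j\geq -1} S_{j-2}f \cdot \Delta_j g, \qquad S_{j-2}f \coloneqq \sum_{i=-1}^{j-2}\Delta_i f.
$$
Since $S_{j-2}f$ has Fourier support in a ball of radius $\sim 2^{j-2}$ and $\Delta_j g$ in a dyadic annulus of radius $\sim 2^j$, each summand is frequency-localized in an annulus of radius $\sim 2^j$. Consequently $\Delta_k T_f(g)$ vanishes unless $|k-j|\leq N$ for some fixed $N\in\mathbb{N}$, and Hölder's inequality gives the pointwise-in-frequency bound
$$
\|\Delta_k T_f(g)\|_{L^p} \lesssim \sum_{|j-k|\leq N} \|S_{j-2}f\|_{L^{p_1}} \|\Delta_j g\|_{L^{p_2}}.
$$

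For the first claim, the partial sum operators $S_{j-2}$ are uniformly bounded on $L^{p_1}$ (they are convolution against a fixed Schwartz kernel rescaled), so $\|S_{j-2}f\|_{L^{p_1}}\lesssim \|f\|_{L^{p_1}}$ uniformly in $j$. Substituting, multiplying by $2^{k\beta}$, and taking the $\ell^q$ norm in $k$ yields
$$
\|T_f(g)\|_{B^\beta_{p,q}} \lesssim \|f\|_{L^{p_1}}\, \bigl\|2^{k\beta}\textstyle\sum_{|j-k|\leq N}\|\Delta_j g\|_{L^{p_2}}\bigr\|_{\ell^q_k},
$$
and a shift-invariance argument for weighted $\ell^q$ sequences reduces the right-hand factor to $\|g\|_{B^\beta_{p_2,q}}$.

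For the second claim, when $\alpha<0$ I would refine the partial-sum bound as
$$
\|S_{j-2}f\|_{L^{p_1}} \leq \sum_{i\leq j-2} 2^{-i\alpha}\bigl(2^{i\alpha}\|\Delta_i f\|_{L^{p_1}}\bigr),
$$
and apply Hölder in $i$ with exponents $(q_1',q_1)$. Because $\alpha<0$, the geometric series $\sum_{i\leq j-2} 2^{-i\alpha q_1'}$ converges and is $\lesssim 2^{-j\alpha q_1'}$, so $\|S_{j-2}f\|_{L^{p_1}}\lesssim 2^{-j\alpha}\|f\|_{B^\alpha_{p_1,q_1}}$. Plugging into the localization bound produces an extra $2^{-j\alpha}\sim 2^{-k\alpha}$ factor, which combines with $2^{k\beta}$ to give $2^{k(\alpha+\beta)}$, the correct weight for $B^{\alpha+\beta}_{p,q}$. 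A final Hölder in $k$ with conjugate exponents $(q_1,q_2)$ (recalling $1/q=1/q_1+1/q_2$) separates the two sequence norms and yields the product $\|f\|_{B^\alpha_{p_1,q_1}}\|g\|_{B^\beta_{p_2,q_2}}$.

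The main obstacle is the bookkeeping of Hölder exponents in the sequence spaces in the second estimate; in particular the sign condition $\alpha<0$ is essential for the geometric series summation, without which $\|S_{j-2}f\|_{L^{p_1}}$ could only be estimated with a logarithmic (or worse) loss in $j$ and the paraproduct would fail to gain $\alpha$ derivatives. One must also verify the routine but non-trivial shift-invariance lemma that replacing $\|\Delta_j g\|_{L^{p_2}}$ by $\sum_{|j-k|\leq N}\|\Delta_j g\|_{L^{p_2}}$ is harmless at the level of Besov norms.
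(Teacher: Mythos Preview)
The paper does not prove this lemma; it is quoted from \cite{promel} (and is essentially Theorem~2.82 in \cite{bahouri}). Your outline is the standard argument, and for the first bullet it is correct as written.

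For the second bullet your description contains a small ordering inconsistency. Once you write $\|S_{j-2}f\|_{L^{p_1}}\lesssim 2^{-j\alpha}\|f\|_{B^\alpha_{p_1,q_1}}$ and substitute, the $f$-contribution has become a constant in $j$; taking the $\ell^q$-norm in $k$ then yields $\|f\|_{B^\alpha_{p_1,q_1}}\|g\|_{B^\beta_{p_2,q}}$, which is \emph{weaker} than the stated bound because $q\leq q_2$ and hence $\|g\|_{B^\beta_{p_2,q}}\geq\|g\|_{B^\beta_{p_2,q_2}}$. The ``final H\"older in $k$ with exponents $(q_1,q_2)$'' you invoke cannot be applied at that stage, since only one $k$-dependent sequence remains. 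The correct ordering is to keep $c_j:=2^{j\alpha}\|S_{j-2}f\|_{L^{p_1}}$ as a sequence, observe that $c_j\leq\sum_{i\leq j-2}2^{(j-i)\alpha}\,a_i$ with $a_i:=2^{i\alpha}\|\Delta_i f\|_{L^{p_1}}$, deduce $\|c\|_{\ell^{q_1}}\lesssim\|a\|_{\ell^{q_1}}=\|f\|_{B^\alpha_{p_1,q_1}}$ by Young's convolution inequality (here $\alpha<0$ makes the convolution kernel summable), and only then apply H\"older in $k$ to the product sequence $c_k\cdot\bigl(2^{k\beta}\|\Delta_k g\|_{L^{p_2}}\bigr)$ with exponents $(q_1,q_2)$. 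This is presumably what you intended; the ingredients are all there, just rearranged.
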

We recall the following estimate on the resonance term.
\begin{lemma}[Theorem 2.85 in \cite{bahouri}] \label{resonance}
Let $\alpha,\beta\in \mathbb{R}$ and $1\leq p,p_1,p_2,q,q_1,q_2\leq \infty$ with 
$$
    \frac{1}{p} = \frac{1}{p_1} + \frac{1}{p_2}, \qquad \frac{1}{q} = \frac{1}{q_1} + \frac{1}{q_2}.
$$
\begin{itemize}
    \item If $\alpha+\beta>0$ then we have for any $f \in B^{\alpha}_{p_1,q_1} (\mathbb{T}^3)$ and $g \in B^{\beta}_{p_2,q_2} (\mathbb{T}^3)$
    $$
        \lVert R(f,g) \rVert_{B^{\alpha+\beta}_{p,q}} \lesssim \lVert f \rVert_{B^{\alpha}_{p_1,q_1}} \lVert g \rVert_{B^{\beta}_{p_2,q_2}}.
    $$
    
    \item If $\alpha+\beta=0$ and $q=1$ then we have for any $f \in B^{\alpha}_{p_1,q_1} (\mathbb{T}^3)$ and $g \in B^{\beta}_{p_2,q_2} (\mathbb{T}^3)$
    $$
        \lVert R(f,g) \rVert_{B^{0}_{p,\infty}} \lesssim \lVert f \rVert_{B^{\alpha}_{p_1,q_1}} \lVert g \rVert_{B^{\beta}_{p_2,q_2}}.
    $$
\end{itemize}
\end{lemma}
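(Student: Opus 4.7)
\medskip

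\noindent\textbf{Proof proposal.} The strategy is the classical Bony-Meyer paradifferential argument: we estimate the Littlewood-Paley blocks $\Delta_i R(f,g)$ by exploiting the spectral localisation of the summands, apply Bernstein's inequality to convert the $L^p$ bound into a product via H\"older, and finally sum over $i,j$ using either a geometric series (when $\alpha+\beta>0$) or a simple pointwise bound in $j$ (when $\alpha+\beta=0$ and $q=1$). First I would record the following spectral observation: for each $|k-j|\leq 1$, the function $\Delta_k f\,\Delta_j g$ has Fourier support in a ball of radius $C\cdot 2^{\max(j,k)}$. Consequently there exists $N\in\N$ (depending only on the profile $\rho$) such that $\Delta_i(\Delta_k f\,\Delta_j g)\equiv 0$ whenever $\max(j,k)<i-N$. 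Writing $\widetilde{\Delta}_j \coloneqq \Delta_{j-1}+\Delta_j+\Delta_{j+1}$ to absorb the condition $|k-j|\leq 1$, this gives
\begin{equation*}
   \Delta_i R(f,g) \;=\; \sum_{j\geq i-N}\Delta_i\bigl(\Delta_j f\,\widetilde{\Delta}_j g\bigr).
\end{equation*}

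Next I would apply the $L^p$-boundedness of the Littlewood-Paley projector, H\"older's inequality with exponents $p_1,p_2$, and the almost-orthogonality of the blocks to obtain
\begin{equation*}
   \|\Delta_i R(f,g)\|_{L^p} \;\lesssim\; \sum_{j\geq i-N} \|\Delta_j f\|_{L^{p_1}}\,\|\widetilde{\Delta}_j g\|_{L^{p_2}}.
\end{equation*}
Multiplying by $2^{i(\alpha+\beta)}$ and writing $2^{i(\alpha+\beta)}=2^{(i-j)(\alpha+\beta)}\cdot 2^{j\alpha}\cdot 2^{j\beta}$, the inner product matches the natural summands of the Besov norms of $f$ and $g$.

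The main obstacle, and the point where the hypothesis $\alpha+\beta>0$ (respectively $q=1$ in the endpoint) enters, is the summation step. For the first case, I would set $c_j\coloneqq 2^{j\alpha}\|\Delta_j f\|_{L^{p_1}}\in \ell^{q_1}$ and $d_j\coloneqq 2^{j\beta}\|\widetilde{\Delta}_j g\|_{L^{p_2}}\in \ell^{q_2}$, so that
\begin{equation*}
   2^{i(\alpha+\beta)}\|\Delta_i R(f,g)\|_{L^p} \;\lesssim\; \sum_{j\geq i-N} 2^{(i-j)(\alpha+\beta)}\,c_j\,d_j.
\end{equation*}
Since $\alpha+\beta>0$ the sequence $k\mapsto 2^{-k(\alpha+\beta)}\mathbf{1}_{k\leq N}$ lies in $\ell^1(\Z)$, and after noting $c_j d_j\in \ell^{q}$ by H\"older's inequality on sequences (with exponents $q_1,q_2$), a discrete Young's convolution inequality ($\ell^1 * \ell^q \hookrightarrow \ell^q$) gives the desired $\ell^q$ bound on $i\mapsto 2^{i(\alpha+\beta)}\|\Delta_i R(f,g)\|_{L^p}$, which is precisely $\|R(f,g)\|_{B^{\alpha+\beta}_{p,q}}$. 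For the endpoint case $\alpha+\beta=0$ and $q=1$, the factor $2^{(i-j)(\alpha+\beta)}$ equals $1$, so one simply estimates $\sup_i \|\Delta_i R(f,g)\|_{L^p}\leq \sum_j c_j d_j\leq \|c\|_{\ell^{q_1}}\|d\|_{\ell^{q_2}}$ by H\"older, which bounds the $B^0_{p,\infty}$-norm. The low-frequency block $\Delta_{-1}R(f,g)$ is handled separately by a trivial $L^p$ bound, using that only finitely many $j$ contribute to $\Delta_{-1}$ after the spectral localisation. The subtlety to watch is that we do \emph{not} need $\alpha$ or $\beta$ individually to have a sign; only their sum matters, which is the characteristic feature distinguishing the resonance estimate from the paraproduct estimates of Lemma~\ref{paraproduct}.
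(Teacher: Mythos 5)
The paper does not prove Lemma~\ref{resonance}; it cites it directly as Theorem~2.85 of \cite{bahouri} (Bahouri--Chemin--Danchin), and the appendix of the paper only contains the statement. Your proposal reproduces the standard argument from that reference, and the strategy is correct: the key observation that $\Delta_k f\,\Delta_j g$ with $|k-j|\leq 1$ is Fourier-supported in a \emph{ball} of radius $\sim 2^{\max(j,k)}$ rather than an annulus (which forces the summation range $j\gtrsim i$ and makes $\alpha+\beta>0$, rather than a sign condition on $\alpha$ or $\beta$ separately, the operative hypothesis); the H\"older step in $L^p$ and in $\ell^q$; the Young convolution step for $\alpha+\beta>0$; and the trivial $\sup_i$ bound for the endpoint. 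One clerical slip: with $k\coloneqq j-i$, the constraint $j\geq i-N$ reads $k\geq -N$, so the relevant kernel is $k\mapsto 2^{-k(\alpha+\beta)}\mathbf{1}_{\{k\geq -N\}}$, which is summable precisely when $\alpha+\beta>0$; your text writes $\mathbf{1}_{\{k\leq N\}}$, which would give a divergent sum. This is clearly a transcription error rather than a conceptual gap, and once corrected the argument closes as you describe. The phrase ``almost-orthogonality of the blocks'' is not actually needed for the $L^p$ estimate (the triangle inequality plus uniform $L^p$-boundedness of $\Delta_i$ suffices), but it does no harm.
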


Combining these estimates leads to the following result. 
\begin{lemma} \label{lemma:paradiff-summary}
    Let $\alpha < 0 < \beta$, $\beta + \alpha > 0$, and $1 \leq p_1, p_2, p, q_1,q_2 \leq \infty$ with 
    \begin{equation*}
        \frac{1}{p_1} + \frac{1}{p_2} = \frac{1}{p}.
    \end{equation*}
    \begin{itemize}
    \item We have for any $f \in B^\alpha_{p_1,q_1} (\mathbb{T}^3)$ and $g \in B^\beta_{p_2,q_2} (\mathbb{T}^3)$
    \begin{equation} \label{eq:paradiff-2}
        \lVert fg \rVert_{B^\alpha_{p,q_1}} \lesssim \lVert f \rVert_{B^\alpha_{p_1,q_1}} \lVert g \rVert_{B^\beta_{p_2,q_2}}.
    \end{equation}

    \item Let $\{f_n\},\{g_n\}$ be sequences such that $f_n \to f$ in $B^\alpha_{p_1,q_1} (\mathbb{T}^3)$ and $g_n \to g$ in $B^\beta_{p_2,q_2} (\mathbb{T}^3)$. Then $f_n g_n\to fg$ in $B^\alpha_{p,q_1}$. 
    \end{itemize}
\end{lemma}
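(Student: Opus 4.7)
The plan is to prove part (i) by Bony's decomposition
\[
    fg = T_f g + T_g f + R(f,g),
\]
estimating each of the three pieces separately via Lemmas~\ref{paraproduct}--\ref{resonance} and the embeddings collected in Lemma~\ref{lemma:essential-besov}, and then to deduce part (ii) by a routine bilinearity argument.

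For the paraproduct $T_f g$, since $\alpha < 0$, the second bullet of Lemma~\ref{paraproduct} with $1/q = 1/q_1 + 1/q_2$ yields
\[
    \lVert T_f g \rVert_{B^{\alpha+\beta}_{p,q}} \lesssim \lVert f \rVert_{B^\alpha_{p_1,q_1}} \lVert g \rVert_{B^\beta_{p_2,q_2}}.
\]
Because $\beta > 0$, the embedding \eqref{eq:besov-2} gives $B^{\alpha+\beta}_{p,q} \hookrightarrow B^\alpha_{p,q_1}$, so $T_f g$ is controlled in the target space. For the other paraproduct $T_g f$, I first upgrade $g$ to $L^{p_2}$: using $\beta > 0$ and the chain of embeddings \eqref{eq:besov-2}, \eqref{eq:besov-2a} and \eqref{eq:besov-1} one has $B^\beta_{p_2,q_2} \hookrightarrow B^0_{p_2,1} \hookrightarrow L^{p_2}$. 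Then the first bullet of Lemma~\ref{paraproduct} (applied with the roles of ``$f$'' and ``$g$'' swapped) gives directly
\[
    \lVert T_g f \rVert_{B^\alpha_{p,q_1}} \lesssim \lVert g \rVert_{L^{p_2}} \lVert f \rVert_{B^\alpha_{p_1,q_1}} \lesssim \lVert g \rVert_{B^\beta_{p_2,q_2}} \lVert f \rVert_{B^\alpha_{p_1,q_1}}.
\]
For the resonance term, the hypothesis $\alpha + \beta > 0$ places us in the first bullet of Lemma~\ref{resonance}, giving $\lVert R(f,g) \rVert_{B^{\alpha+\beta}_{p,q}} \lesssim \lVert f \rVert_{B^\alpha_{p_1,q_1}} \lVert g \rVert_{B^\beta_{p_2,q_2}}$, and again \eqref{eq:besov-2} absorbs the change from $B^{\alpha+\beta}_{p,q}$ into $B^\alpha_{p,q_1}$. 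Summing the three contributions proves \eqref{eq:paradiff-2}.

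For part (ii), I decompose bilinearly as
\[
    f_n g_n - fg = (f_n - f)\,g_n + f\,(g_n - g),
\]
and apply the product estimate \eqref{eq:paradiff-2} from part (i) to each summand. Since $\{g_n\}$ converges in $B^\beta_{p_2,q_2}$ it is bounded there, so the first term is dominated by $C\lVert f_n - f \rVert_{B^\alpha_{p_1,q_1}}\to 0$; the second term is bounded by $C\lVert f\rVert_{B^\alpha_{p_1,q_1}}\lVert g_n - g\rVert_{B^\beta_{p_2,q_2}}\to 0$. Convergence in $B^\alpha_{p,q_1}$ follows.

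I do not anticipate a genuine obstacle: the proof is a bookkeeping exercise in which the only mildly non-obvious step is realizing that the third indices $q_1, q_2$ can be freely adjusted at the end by embeddings of the form $B^{\alpha+\beta}_{p,q}\hookrightarrow B^\alpha_{p,q_1}$ (losing a positive amount $\beta$ of regularity), which is exactly where the hypothesis $\beta > 0$ beyond $\alpha + \beta > 0$ is used for the $T_f g$ and $R(f,g)$ pieces. The only other small subtlety is verifying $B^\beta_{p_2,q_2}\hookrightarrow L^{p_2}$ to make the $T_g f$ piece fit the first bullet of Lemma~\ref{paraproduct}; this is handled by chaining the elementary embeddings \eqref{eq:besov-1}--\eqref{eq:besov-2a}.
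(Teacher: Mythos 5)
Your proof is correct and follows exactly the route the paper intends: Bony's decomposition together with Lemmas~\ref{paraproduct}, \ref{resonance} and the embeddings of Lemma~\ref{lemma:essential-besov} for part (i), and bilinearity plus \eqref{eq:paradiff-2} for part (ii) (the paper merely cites \cite[Prop.~A.7]{mourrat1} and sketches the same bilinearity argument). One small point worth making explicit when invoking the second bullet of Lemma~\ref{paraproduct} and Lemma~\ref{resonance}: the required relation $1/q=1/q_1+1/q_2$ may force $q<1$, so one should first pass to $B^{\alpha}_{p_1,\infty}$ and $B^{\beta}_{p_2,\infty}$ via \eqref{eq:besov-2a} (taking $q=\infty$), which is harmless since the subsequent embedding $B^{\alpha+\beta}_{p,\infty}\hookrightarrow B^{\alpha}_{p,q_1}$ from \eqref{eq:besov-2} (using $\beta>0$) recovers the desired third index.
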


\begin{proof}
Estimate \eqref{eq:paradiff-2} is a simple consequence of Lemmas~\ref{paraproduct} and \ref{resonance}, see also \cite[Prop.~A.7]{mourrat1} for a similar proof. 

The convergence claimed in the second bullet point can be easily deduced from \eqref{eq:paradiff-2}. Indeed we have 
\begin{align*}
    \lVert f_n g_n - fg \rVert_{B^\alpha_{p,q_1}} &\lesssim \lVert f_n (g_n - g) \rVert_{B^\alpha_{p,q_1}} + \lVert (f_n - f) g \rVert_{B^\alpha_{p,q_1}} \\
    &\lesssim \lVert f_n \rVert_{B^\alpha_{p_1,q_1}} \lVert g_n-g \rVert_{B^\beta_{p_2,q_2}} + \lVert f_n-f \rVert_{B^\alpha_{p_1,q_1}} \lVert g \rVert_{B^\beta_{p_2,q_2}} \to 0.
\end{align*}
\end{proof}

\subsection{Alternative proof of Lemma~\ref{lemma:prod}} \label{subsec:ap-littlewood-alternative}
In this section we present an alternative proof of Lemma~\ref{lemma:prod} using a paraproduct decompostion and Lemmas~\ref{paraproduct} and \ref{resonance}.

\begin{proof}[Alternative Proof of Lemma~\ref{lemma:prod}]

We start by decomposing the term under consideration in terms of Bony's decomposition, i.e.
\begingroup
\allowdisplaybreaks
\begin{align*}
    &\bigg\lVert \bigg( \phi_k (\sigma_v \cdot) W_k (\sigma_v \cdot) - \int_{\mathbb{T}^2} \phi_k ( x) W_k (x) \dx \bigg) R_{v,k} \bigg\rVert_{L^p (B^{-1}_{1,\infty})} \\
    &\leq \bigg\lVert T \bigg( \bigg( \phi_k (\sigma_v \cdot) W_k (\sigma_v \cdot) - \int_{\mathbb{T}^2} \phi_k ( x) W_k (x) \dx \bigg),  R_{v,k} \bigg) \bigg\rVert_{L^p (B^{-1}_{1,\infty})} \\
    &\qquad + \bigg\lVert T \bigg( R_{v,k}, \bigg( \phi_k (\sigma_v \cdot) W_k (\sigma_v \cdot) - \int_{\mathbb{T}^2} \phi_k ( x) W_k (x) \dx \bigg) \bigg) \bigg\rVert_{L^p (B^{-1}_{1,\infty})} \\
    &\qquad + \bigg\lVert R \bigg( \bigg( \phi_k (\sigma_v \cdot) W_k (\sigma_v \cdot) - \int_{\mathbb{T}^2} \phi_k ( x) W_k (x) \dx \bigg), R_{v,k} \bigg) \bigg\rVert_{L^p (B^{-1}_{1,\infty})}.
\end{align*}
\endgroup 
Using Lemma~\ref{paraproduct} we find 
\begingroup
\allowdisplaybreaks
\begin{align}
    &\bigg\lVert T \bigg( \bigg( \phi_k (\sigma_v \cdot) W_k (\sigma_v \cdot) - \int_{\mathbb{T}^2} \phi_k ( x) W_k (x) \dx \bigg),  R_{v,k} \bigg) \bigg\rVert_{L^p (B^{-1}_{1,\infty})} \notag\\
    &\lesssim \bigg\lVert \phi_k (\sigma_v \cdot) W_k (\sigma_v \cdot) - \int_{\mathbb{T}^2} \phi_k ( x) W_k (x) \dx \bigg\rVert_{B^{-1}_{1,\infty}} \lVert R_{v,k} \rVert_{L^p(B^0_{\infty,\infty})} . \label{eq:paraproduct-alternative} 
\end{align}
\endgroup 
Another application of Lemma~\ref{paraproduct} yields
\begingroup
\allowdisplaybreaks
\begin{align*}
    &\bigg\lVert T \bigg( R_{v,k}, \bigg( \phi_k (\sigma_v \cdot) W_k (\sigma_v \cdot) - \int_{\mathbb{T}^2} \phi_k ( x) W_k (x) \dx \bigg) \bigg) \bigg\rVert_{L^p (B^{-1}_{1,\infty})} \\
    &\lesssim \bigg\lVert \phi_k (\sigma_v \cdot) W_k (\sigma_v \cdot) - \int_{\mathbb{T}^2} \phi_k ( x) W_k (x) \dx \bigg\rVert_{B^{-1}_{1,\infty}} \lVert R_{v,k} \rVert_{L^p (L^\infty)} .
\end{align*}
\endgroup 
Finally Lemmas~\ref{lemma:essential-besov} and \ref{resonance} lead to 
\begingroup
\allowdisplaybreaks
\begin{align*}
    &\bigg\lVert R \bigg( \bigg( \phi_k (\sigma_v \cdot) W_k (\sigma_v \cdot) - \int_{\mathbb{T}^2} \phi_k ( x) W_k (x) \dx \bigg), R_{v,k} \bigg) \bigg\rVert_{L^p (B^{-1}_{1,\infty})} \\
    &\lesssim \bigg\lVert R \bigg( \bigg( \phi_k (\sigma_v \cdot) W_k (\sigma_v \cdot) - \int_{\mathbb{T}^2} \phi_k ( x) W_k (x) \dx \bigg), R_{v,k} \bigg) \bigg\rVert_{L^p (B^{s}_{1,\infty})} \\
    &\lesssim \bigg\lVert \phi_k (\sigma_v \cdot) W_k (\sigma_v \cdot) - \int_{\mathbb{T}^2} \phi_k ( x) W_k (x) \dx \bigg\rVert_{B^{-1}_{1,\infty}} \lVert R_{v,k} \rVert_{L^p (B^{1+s}_{\infty,\infty})}
\end{align*}
\endgroup 
where $0<s\ll 1$ can be chosen arbitrary.

To conclude we proceed similar to the original proof. Due Lemmas~\ref{lemma:hid} and \ref{lemma:essential-besov}, and estimate \eqref{eq:prod1} 
\begingroup
\allowdisplaybreaks
\begin{align*}
    &\bigg\lVert \phi_k (\sigma_v \cdot) W_k (\sigma_v \cdot) - \int_{\mathbb{T}^2} \phi_k ( x) W_k (x) \dx \bigg\rVert_{B^{-1}_{1,\infty}} \\
    &= \bigg\lVert \nabla_h \cdot \bigg[ \mathcal{R}_h\bigg( \phi_k (\sigma_v \cdot) W_k (\sigma_v \cdot) - \int_{\mathbb{T}^2} \phi_k ( x) W_k (x) \dx \bigg)\bigg]\bigg\rVert_{B^{-1}_{1,\infty}} \\
    &\lesssim \bigg\lVert \mathcal{R}_h\bigg( \phi_k (\sigma_v \cdot) W_k (\sigma_v \cdot) - \int_{\mathbb{T}^2} \phi_k ( x) W_k (x) \dx \bigg)\bigg\rVert_{B^{0}_{1,\infty}} \\
    &\lesssim \bigg\lVert \mathcal{R}_h\bigg( \phi_k (\sigma_v \cdot) W_k (\sigma_v \cdot) - \int_{\mathbb{T}^2} \phi_k ( x) W_k (x) \dx \bigg)\bigg\rVert_{L^1} \\ 
    &\lesssim \sigma_v^{-1} \bigg\lVert \phi_k W_k  - \int_{\mathbb{T}^2} \phi_k ( x) W_k (x) \dx \bigg\rVert_{L^1} \lesssim \sigma_v^{-1} . 
\end{align*} 
\endgroup
This finishes the proof of Lemma~\ref{lemma:prod}.
\end{proof}

\section{Other estimates} \label{holdersection}
\subsection{Improved H\"older inequality} 

Let us recall the following estimate from \cite{modena}.

\begin{lemma}[Improved H\"older inequality] \label{holderlemma}
For any $\sigma \in \mathbb{N}$, $1 \leq p \leq \infty$ and all functions $f\in C^1(\mathbb{T}^d)$, $g\in L^p(\mathbb{T}^d)$ it holds that
\begin{equation}
    \Big\lvert \lVert f (\cdot ) g (\sigma \cdot ) \rVert_{L^p} - \lVert f \rVert_{L^p} \lVert g \rVert_{L^p} \Big\rvert \lesssim \sigma^{-1/p} \lVert f \rVert_{C^1} \lVert g \rVert_{L^p}.
\end{equation}
\end{lemma}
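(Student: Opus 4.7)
The plan is to follow the standard argument due to Modena-Székelyhidi (where an equivalent statement is proved as \cite[Lemma~2.1]{modena}). I would partition the torus $\mathbb{T}^d$ into $\sigma^d$ disjoint cubes $Q_j$ of side length $1/\sigma$ (this is possible because $\sigma \in \mathbb{N}$), and on each cube pick a representative point $x_j \in Q_j$. Then I would use the fact that $f$ is close to a constant on each $Q_j$, while $g(\sigma \cdot)$ completes one full period over each $Q_j$, so its integral can be computed exactly by rescaling.

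The two key ingredients are:
\begin{itemize}
    \item \emph{Periodicity/rescaling:} For every $j$ and $p \in [1,\infty)$,
    \begin{equation*}
        \int_{Q_j} |g(\sigma x)|^p \dx = \sigma^{-d} \int_{\mathbb{T}^d} |g(y)|^p \dy = \sigma^{-d} \|g\|_{L^p}^p .
    \end{equation*}
    \item \emph{Pointwise regularity of $f$:} $|f(x) - f(x_j)| \leq \sigma^{-1} \|f\|_{C^1}$ for all $x \in Q_j$.
\end{itemize}

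Assuming $p<\infty$ (the case $p=\infty$ is trivial since the right-hand side carries no $\sigma$-gain and follows from $\|f g(\sigma\cdot)\|_{L^\infty} \leq \|f\|_{L^\infty}\|g\|_{L^\infty}$), I would then estimate
\begin{equation*}
    \bigg| \int_{Q_j} |f(x) g(\sigma x)|^p \dx - |f(x_j)|^p \int_{Q_j} |g(\sigma x)|^p \dx \bigg|
\end{equation*}
by applying the elementary inequality $\bigl||a|^p - |b|^p\bigr| \lesssim \bigl(|a|^{p-1}+|b|^{p-1}\bigr)|a-b|$ to $a=f(x)$, $b=f(x_j)$, giving a bound $\lesssim \sigma^{-1} \|f\|_{C^1} \|f\|_{L^\infty}^{p-1} \int_{Q_j} |g(\sigma x)|^p \dx$. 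Summing in $j$ and using the periodicity identity yields
\begin{equation*}
    \bigg| \|f\,g(\sigma\cdot)\|_{L^p}^p - \sigma^{-d}\sum_j |f(x_j)|^p \|g\|_{L^p}^p \bigg| \lesssim \sigma^{-1} \|f\|_{C^1}^p \|g\|_{L^p}^p ,
\end{equation*}
and the Riemann-sum term $\sigma^{-d}\sum_j |f(x_j)|^p$ differs from $\|f\|_{L^p}^p$ by an amount of order $\sigma^{-1}\|f\|_{C^1}^p$, by the same $C^1$-approximation argument.

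The last step is to pass from a bound on the difference of $p$-th powers to a bound on the difference of norms. This uses the elementary inequality $|A-B| \leq |A^p - B^p|^{1/p}$ valid for $A,B \geq 0$ and $p \geq 1$ (which follows from $(A-B)^p \leq A^p - B^p$ when $A \geq B \geq 0$). Applying this with $A = \|f\,g(\sigma\cdot)\|_{L^p}$ and $B = \|f\|_{L^p}\|g\|_{L^p}$ turns the $\sigma^{-1}$ gain on $p$-th powers into the required $\sigma^{-1/p}$ gain on the norms. The only mildly delicate step is the passage from $L^p$ powers to $L^p$ norms, and in particular keeping track that the error factor $\|f\|_{C^1}^p \|g\|_{L^p}^p$ yields $\|f\|_{C^1}\|g\|_{L^p}$ after taking the $p$-th root; everything else is a straightforward bookkeeping argument.
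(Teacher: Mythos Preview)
Your proposal is correct and follows exactly the argument of \cite[Lemma~2.1]{modena}, which is precisely what the paper does: it states the lemma and refers the reader to that reference for the proof, without reproducing it. Your reconstruction of the cube-partition / Riemann-sum argument is accurate, including the final step of passing from $p$-th powers to norms via $|A-B|\le |A^p-B^p|^{1/p}$.
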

The proof can be found in \cite[Lemma 2.1]{modena}.

\subsection{Oscillatory paraproduct estimate} \label{subsec:ap-other-osc} 

We are going to prove a version of Lemma~\ref{holderlemma} in the case of Besov spaces. 

\begin{lemma} \label{oscillparaproduct} 
For any $\sigma\in \mathbb{N}$, $1\leq p,q\leq \infty$, $s\in\mathbb{R}$, $0<\epsilon\leq 1$ and all functions $f \in L^p (\mathbb{T}^3)$, $g \in B^{s+\epsilon}_{p,q} (\mathbb{T}^3) \cap B^{s+1+\epsilon}_{\infty,q} (\mathbb{T}^3)$ it holds that
\begin{equation}
    \lVert T (f_\sigma, g) \rVert_{B^{s}_{p,q}} \lesssim \lVert f \rVert_{L^p} \lVert g \rVert_{B^{s+\epsilon}_{p,q}} + \sigma^{-1/p} \lVert f \rVert_{L^p} \lVert g \rVert_{B^{s+1+\epsilon}_{\infty,q}},
\end{equation}
where we define $f_\sigma (x) \coloneqq f(\sigma x)$.
\end{lemma}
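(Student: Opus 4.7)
My plan is to expand the paraproduct definition $T(f_\sigma, g) = \sum_k S_{k-2}(f_\sigma) \Delta_k g$ (where $S_{k-2} = \sum_{i \leq k-2} \Delta_i$) and exploit the highly concentrated Fourier spectrum of $f_\sigma$ on the torus. Writing the Fourier series $f = \sum_{n \in \mathbb{Z}^3} \hat f(n) e^{2\pi i n\cdot x}$ yields $f_\sigma(x) = \sum_n \hat f(n) e^{2\pi i \sigma n\cdot x}$, so the spectrum of $f_\sigma$ is contained in $\sigma \mathbb{Z}^3$. The crucial consequence is that there is a threshold $k_\sigma = \log_2 \sigma + O(1)$ (determined by the support of $\rho$) such that for every $k \leq k_\sigma$ the only element of $\sigma \mathbb{Z}^3$ lying in the Fourier support of $S_{k-2}$ is the origin, so $S_{k-2}(f_\sigma) \equiv \hat f(0)$ — a mere constant.

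This motivates the split $T(f_\sigma,g) = T^A + T^B$, where $T^A$ collects the indices $k \leq k_\sigma$ and $T^B$ the indices $k > k_\sigma$. For $T^A$ we have $T^A = \hat f(0)\, S_{k_\sigma}(g)$, so using $|\hat f(0)| \leq \|f\|_{L^1} \leq \|f\|_{L^p}$ (finite measure), the $B^s_{p,q}$-boundedness of the low-frequency projection $S_{k_\sigma}$, and \eqref{eq:besov-2}, we obtain
\begin{equation*}
\|T^A\|_{B^s_{p,q}} \lesssim \|f\|_{L^p} \|g\|_{B^s_{p,q}} \lesssim \|f\|_{L^p} \|g\|_{B^{s+\epsilon}_{p,q}},
\end{equation*}
which is the first term on the right-hand side of the desired inequality.

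For $T^B$ I use the standard frequency-localization property $\Delta_j T^B = \sum_{|k-j|\leq N_0,\, k > k_\sigma} \Delta_j\!\left(S_{k-2}(f_\sigma) \Delta_k g\right)$ together with H\"older's inequality, bounding $\|S_{k-2}(f_\sigma)\|_{L^p} \leq \|f_\sigma\|_{L^p} = \|f\|_{L^p}$. This produces
\begin{equation*}
\|\Delta_j T^B\|_{L^p} \lesssim \|f\|_{L^p} \sum_{\substack{|k-j|\leq N_0 \\ k>k_\sigma}} \|\Delta_k g\|_{L^\infty}.
\end{equation*}
The restriction $k > k_\sigma \sim \log_2\sigma$ forces $j \gtrsim \log_2 \sigma$ on the support of $\Delta_j T^B$, and writing $2^{js}\|\Delta_k g\|_{L^\infty} \lesssim 2^{-k(1+\epsilon)} \cdot 2^{k(s+1+\epsilon)}\|\Delta_k g\|_{L^\infty}$ with $2^{-k(1+\epsilon)} \lesssim \sigma^{-(1+\epsilon)}$ yields, after taking the $\ell^q_j$-norm and absorbing the finite convolution in $|k-j| \leq N_0$ via Minkowski,
\begin{equation*}
\|T^B\|_{B^s_{p,q}} \lesssim \sigma^{-(1+\epsilon)} \|f\|_{L^p} \|g\|_{B^{s+1+\epsilon}_{\infty,q}}.
\end{equation*}
Since $1+\epsilon > 1 \geq 1/p$ and $\sigma \geq 1$, we have $\sigma^{-(1+\epsilon)} \leq \sigma^{-1/p}$, giving the second term of the claimed bound and completing the proof when combined with the $T^A$ estimate.

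The main obstacle — really the only subtlety — is pinning down the threshold $k_\sigma$ precisely enough to legitimise the claim that $S_{k-2}(f_\sigma)$ reduces to the zero mode for $k \leq k_\sigma$. This requires careful bookkeeping of the support of the dyadic partition $\{\rho_j\}$ against the lattice $\sigma\mathbb{Z}^3$; once this is done, the two resulting pieces are handled by essentially routine paraproduct manipulations.
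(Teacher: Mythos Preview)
Your argument is correct, but it follows a genuinely different route from the paper's proof. The paper does not use the lattice structure of the spectrum of $f_\sigma$ at all; instead it writes $\Delta_i f_\sigma = \phi_i * f_\sigma$, applies Minkowski's integral inequality in the convolution variable, and then invokes the improved H\"older inequality (Lemma~\ref{holderlemma}) to bound $\lVert f(\sigma\cdot - \sigma y)\,\Delta_j g\rVert_{L^p}$ pointwise in $y$. This produces, for each pair $(i,j)$, the estimate $\lVert \Delta_i f_\sigma\, \Delta_j g\rVert_{L^p} \lesssim \lVert f\rVert_{L^p}\lVert \Delta_j g\rVert_{L^p} + \sigma^{-1/p}\lVert f\rVert_{L^p}\lVert \Delta_j g\rVert_{C^1}$; summing over $i\leq j-2$ costs a factor $j$, which is then absorbed by the $\epsilon$ of extra regularity via $j\,2^{-j\epsilon}\lesssim 1$. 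Your approach, by contrast, splits at the frequency threshold $k_\sigma\sim\log_2\sigma$ using that $\widehat{f_\sigma}$ is supported on $\sigma\mathbb{Z}^3$; below the threshold $S_{k-2}f_\sigma$ collapses to a constant and above it the restriction $k>k_\sigma$ yields the stronger gain $\sigma^{-(1+\epsilon)}\leq \sigma^{-1/p}$. Your proof is more elementary (it does not call Lemma~\ref{holderlemma}) and in fact gives a sharper exponent on the second term, but it is specific to the torus: on $\mathbb{R}^d$ the spectrum of $f_\sigma$ is not sparse and the threshold argument fails, whereas the paper's Minkowski-plus-improved-H\"older route works verbatim. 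The paper's method also makes transparent that Lemma~\ref{oscillparaproduct} is the paraproduct analogue of Lemma~\ref{holderlemma}, which is the organising theme of section~\ref{subsec:ap-other-osc}.
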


\begin{proof} 
We first recall the low-frequency cut-off operator $S_j$ from \cite{bahouri}:
\begin{equation*}
    S_j f \coloneqq \sum_{i=-1}^{j-1} \Delta_i f.
\end{equation*}
Hence we may write $T (f_\sigma, g) = \sum_{j=-1}^\infty S_{j-1} f_\sigma \Delta_j g = \sum_{j=1}^\infty S_{j-1} f_\sigma \Delta_j g$, where the latter equation follows from the fact that $S_{-2}f_\sigma=S_{-1}f_\sigma=0$. In order to estimate the Besov norm of $T (f_\sigma, g)$, we will use \cite[Lemma~2.69]{bahouri}. To be able to use this lemma we need to show that $\supp \big(\mathcal{F} (S_{j-1} f_\sigma \Delta_j g) \big)$ lies in $2^j \mathcal{C}$ for any $j\in \mathbb{N}_0$, where $\mathcal{C}$ is a fixed annulus. 

In order to show this, let $j\in\mathbb{N}$ and $i\in \{-1,...,j-2\}$. By construction of the dyadic partition of unity, there exist radii $0<r<r_0<R$ such that the support of $\rho_{-1}$ is contained in the ball with radius $r_0$, the support of $\rho_0$ is contained in the annulus with inner radius $r$ and outer radius $R$, and $r_0<2r< R< 4r$. Next we observe
\begin{equation*}
    \supp \big( \mathcal{F} (\Delta_i f_\sigma \Delta_j g) \big) = \supp \big( (\rho_i \widehat{f}_\sigma ) * (\rho_j \widehat{g}) \big).
\end{equation*}
For $i=-1$ this yields that $\supp \big( \mathcal{F} (\Delta_i f_\sigma \Delta_j g) \big)$ is contained in the annulus with inner radius $2^j(r-2^{-j}r_0)$ and outer radius $2^j(R+2^{-j}r_0)$, which is in turn a subset of the annulus with inner and outer radii $2^j(r-\frac{1}{2}r_0)$ and $2^j(R+\frac{1}{2}r_0)$. Note that the inner radius is positive due to $2r>r_0$. Similarly for $i\geq 0$ we obtain that $\supp \big( \mathcal{F} (\Delta_i f_\sigma \Delta_j g) \big)$ is contained in the annulus with inner radius $2^j(r-2^{i-j}R)$ and outer radius $2^j(R+2^{i-j}R)$, which is in turn subset of the annulus with inner and outer radii $2^j(r-\frac{1}{4}R)$ and $2^j(R+\frac{1}{4}R)$. Again note that $4r>R$ implies that the inner radius is positive. Hence there exists an annulus $\mathcal{C}$ such that 
\begin{equation*}
    \supp \big(\mathcal{F} (S_{j-1} f_\sigma \Delta_j g) \big) \subset 2^j \mathcal{C}.
\end{equation*}
Hence we may apply Lemma 2.69 from \cite{bahouri} to conclude that 
\begin{equation} \label{eq:proof-5}
    \lVert T (f_\sigma, g) \rVert_{B^{s}_{p,q}} \lesssim \Big\lVert \big(2^{js} \lVert S_{j-1} f_\sigma \Delta_j g \rVert_{L^p}\big)_{j\in \mathbb{N}} \Big\rVert_{l^q(\mathbb{N})}.
\end{equation}

In order to estimate $\lVert S_{j-1} f_\sigma \Delta_j g \rVert_{L^p}$ we define $\phi_i\coloneqq \mathcal{F}^{-1} \rho_i$. Hence we have $\Delta_i f = \phi_i * f$. A direct computation shows $\lVert \phi_i\rVert_{L^1} = \lVert \phi_0 \rVert_{L^1}$ for any $i\in \mathbb{N}$. Hence we obtain for $1\leq p <\infty$ and for any $i,j$ by Minkowski's integral inequality and Lemma~\ref{holderlemma}
\begingroup
\allowdisplaybreaks
\begin{align*}
    \lVert \Delta_{i} f_\sigma \Delta_j g \rVert_{L^p} &= \bigg( \int_{\mathbb{R}^n} \bigg\lvert \Delta_j g (x)  \bigg( \int_{\mathbb{R}^n } \phi_i (y) f_\sigma (x - y) \dy \bigg) \bigg\rvert^p \dx \bigg)^{1/p} \\
    &\leq \int_{\mathbb{R}^n} \bigg( \int_{\mathbb{R}^n} \lvert f_\sigma (x-y) \phi_i (y) \Delta_j g (x) \rvert^p \dx \bigg)^{1/p} \dy \\
    &= \int_{\mathbb{R}^n} \lvert \phi_i (y) \rvert \lVert f(\cdot \sigma - \sigma y) \Delta_j g ( \cdot ) \rVert_{L^p} \dy \\
    &\lesssim \int_{\mathbb{R}^n} \lvert \phi_i (y) \rvert \bigg(  \lVert f (\cdot - \sigma y) \rVert_{L^p} \lVert \Delta_j g  \rVert_{L^p} + \sigma^{-1/p} \lVert \Delta_j g \rVert_{C^1} \lVert f (\cdot - \sigma y ) \rVert_{L^p} \bigg) \dy \\
    &\leq \lVert \phi_0 \rVert_{L^1} \Big(\lVert f \rVert_{L^p} \lVert \Delta_j g  \rVert_{L^p} + \sigma^{-1/p} \lVert \Delta_j g \rVert_{C^1} \lVert f \rVert_{L^p}\Big) \\
    &\lesssim \lVert f \rVert_{L^p} \lVert \Delta_j g  \rVert_{L^p} + \sigma^{-1/p} \lVert \Delta_j g \rVert_{C^1} \lVert f \rVert_{L^p}.
\end{align*}
\endgroup 
For the case $p=\infty$ one obtains the same result, the details are left to the reader. Hence 
\begingroup
\allowdisplaybreaks
\begin{align}
    \lVert S_{j-1} f_\sigma \Delta_j g \rVert_{L^p} &\leq \sum_{i=-1}^{j-2} \lVert \Delta_{i} f_\sigma \Delta_j g \rVert_{L^p} \notag\\
    &\lesssim j \Big(\lVert f \rVert_{L^p} \lVert \Delta_j g  \rVert_{L^p} + \sigma^{-1/p} \lVert \Delta_j g \rVert_{C^1} \lVert f \rVert_{L^p} \Big) \label{eq:proof-6}
\end{align}
\endgroup 
for any $j\in \mathbb{N}$. Combining \eqref{eq:proof-5} and \eqref{eq:proof-6} we get for any $1\leq q<\infty$
\begin{align*} 
    &\lVert T (f_\sigma, g) \rVert_{B^{s}_{p,q}} \\
    &\lesssim \bigg(\sum_{j=1}^\infty 2^{jsq} \lVert S_{j-1}f_\sigma \Delta_j g\rVert_{L^p}^q \bigg)^{1/q} \\
    &\lesssim \bigg(\sum_{j=1}^\infty 2^{jsq} j^q \Big(\lVert f \rVert_{L^p} \lVert \Delta_j g  \rVert_{L^p} + \sigma^{-1/p} \lVert \Delta_j g \rVert_{C^1} \lVert f \rVert_{L^p} \Big)^q \bigg)^{1/q} \\
    &\lesssim \lVert f \rVert_{L^p} \bigg(\sum_{j=1}^\infty 2^{j(s+\epsilon)q} j^q 2^{-j\epsilon q} \lVert \Delta_j g \rVert_{L^p}^q \bigg)^{1/q} + \sigma^{-1/p} \lVert f \rVert_{L^p} \bigg(\sum_{j=1}^\infty 2^{j(s+\epsilon)q} j^q 2^{-j\epsilon q} \lVert \Delta_j g \rVert_{C^1}^q \bigg)^{1/q} \\
    &\lesssim \lVert f \rVert_{L^p} \lVert g \rVert_{B_{p,q}^{s+\epsilon}} + \sigma^{-1/p} \lVert f \rVert_{L^p} \lVert g \rVert_{B_{\infty,q}^{s+1+\epsilon}},
\end{align*}
where we have used that $\lVert \Delta_j g \rVert_{C^1} \leq \lVert \nabla \Delta_j g \rVert_{L^\infty} + \lVert \Delta_j g \rVert_{L^\infty}$, $\nabla \Delta_j g= \Delta_j \nabla g$ and Lemma~\ref{lemma:essential-besov}. For the case $q=\infty$ we proceed analogously. 
\end{proof}

Lemma~\ref{oscillparaproduct} can be used to prove the following slightly weaker version of Lemma~\ref{lemma:prod}.

\begin{lemma} \label{lemma:prod-alternative}
    For all $1\leq p\leq \infty$, $\delta>0$ and $k\in \{1,2\}$ we have
    \begin{equation}
        \bigg\lVert \bigg( \phi_k (\sigma_v \cdot) W_k (\sigma_v \cdot) - \int_{\mathbb{T}^2} \phi_k ( x) W_k (x) \dx \bigg) R_{v,k} \bigg\rVert_{L^p (B^{-1-\delta}_{1,\infty})} \lesssim \sigma_v^{-1} + \lVert R_{v} \rVert_{L^p (B^{-1}_{1,\infty})}.
    \end{equation}
\end{lemma}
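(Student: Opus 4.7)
The plan is to apply Bony's paraproduct decomposition
\begin{equation*}
\Psi_{\sigma_v} R_{v,k} = T(\Psi_{\sigma_v}, R_{v,k}) + T(R_{v,k}, \Psi_{\sigma_v}) + R(\Psi_{\sigma_v}, R_{v,k}),
\end{equation*}
where $\Psi := \phi_k W_k - \int_{\mathbb{T}^2} \phi_k W_k \dx$ and $\Psi_{\sigma_v}(x) := \Psi(\sigma_v x)$, and to estimate each of the three pieces in $L^p(B^{-1-\delta}_{1,\infty})$. Before attacking the three pieces, I would establish the key auxiliary smallness estimate $\|\Psi_{\sigma_v}\|_{B^{-1}_{1,\infty}} \lesssim \sigma_v^{-1}$ by writing $\Psi = \nabla_h \cdot \mathcal{R}_h \Psi$ (since $\Psi$ is mean-free), rescaling to get $\Psi_{\sigma_v} = \sigma_v^{-1} \nabla_h \cdot \bigl[(\mathcal{R}_h \Psi)(\sigma_v \cdot)\bigr]$, and combining Lemmas~\ref{lemma:hid} and \ref{lemma:essential-besov} with the a priori $L^1$-bound $\|\Psi\|_{L^1} \lesssim 1$ from \eqref{eq:prod1}.

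For the second paraproduct I would apply the first bullet of Lemma~\ref{paraproduct} with $p_1 = \infty$, $p_2 = 1$ together with the auxiliary estimate above and the embedding \eqref{eq:besov-2}, yielding $\|T(R_{v,k}, \Psi_{\sigma_v})\|_{B^{-1-\delta}_{1,\infty}} \lesssim \sigma_v^{-1}$, where the implicit constant carries $\|R_{v,k}\|_{L^\infty}$. For the resonance term I would invoke Lemma~\ref{resonance} (first bullet) with $\alpha = -1-\delta/2$ and $\beta = 1+\delta$, so that $\alpha+\beta = \delta/2 > 0$; this places the resonance in $B^{\delta/2}_{1,\infty}$ with smallness $\sigma_v^{-1}$ inherited from the auxiliary estimate, and I would then embed into $B^{-1-\delta}_{1,\infty}$ via Lemma~\ref{lemma:essential-besov}, paying only a constant that depends on the smoothness of $R_{v,k}$.

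The hard part will be the first paraproduct $T(\Psi_{\sigma_v}, R_{v,k})$, because the low-frequency truncation $S_{j-1}\Psi_{\sigma_v}$ entering this paraproduct cannot be made small via the negative-regularity bound on $\Psi_{\sigma_v}$: those partial sums become nontrivial only once $2^j$ reaches the oscillation scale $\sigma_v$, which is precisely where the $\sigma_v^{-1}$ gain is balanced out. This is exactly the scenario for which Lemma~\ref{oscillparaproduct} is designed, and I would apply it with $s = -1-\delta$, $p = 1$, $q = \infty$, $\epsilon = \delta$, $f = \Psi$, $g = R_{v,k}$, obtaining
\begin{equation*}
\|T(\Psi_{\sigma_v}, R_{v,k})\|_{B^{-1-\delta}_{1,\infty}} \lesssim \|\Psi\|_{L^1} \|R_{v,k}\|_{B^{-1}_{1,\infty}} + \sigma_v^{-1} \|\Psi\|_{L^1} \|R_{v,k}\|_{B^{0}_{\infty,\infty}}.
\end{equation*}
The first summand contributes $\|R_v\|_{L^p(B^{-1}_{1,\infty})}$ after integration in time and is the unavoidable loss explaining why the conclusion is strictly weaker than that of Lemma~\ref{lemma:prod}; the second summand absorbs into $\sigma_v^{-1}$ after bounding $\|R_{v,k}\|_{B^0_{\infty,\infty}}\lesssim \|R_v\|_{L^\infty}$ by a constant.

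Combining the three bounds, taking the $L^p$-norm in time (which only affects the $R_{v,k}$-dependent factors), and using $\|\Psi\|_{L^1} \lesssim 1$, one obtains the asserted inequality. No further estimate on $\Psi$ or $W_k$ beyond those already available in Propositions~\ref{prop:mikado-v}, \ref{prop:mikado-v-2D} and in \eqref{eq:prod1} is needed.
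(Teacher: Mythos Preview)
Your proposal is correct and follows essentially the same route as the paper: the paper's proof also refers back to the Bony decomposition used in the alternative proof of Lemma~\ref{lemma:prod}, treats $T(R_{v,k},\Psi_{\sigma_v})$ and the resonance exactly as you do (via Lemmas~\ref{paraproduct}, \ref{resonance} and the auxiliary bound $\|\Psi_{\sigma_v}\|_{B^{-1}_{1,\infty}}\lesssim\sigma_v^{-1}$), and the sole novelty is precisely the use of Lemma~\ref{oscillparaproduct} with $s=-1-\delta$, $\epsilon=\delta$ on $T(\Psi_{\sigma_v},R_{v,k})$, which you carry out with the same parameters and the same conclusion.
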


\begin{proof}
Compared to the proof presented in section~\ref{subsec:ap-littlewood-alternative} the only difference is how we handle the paraproduct in \eqref{eq:paraproduct-alternative}. We use Lemma~\ref{oscillparaproduct} and estimate \eqref{eq:prod1} to find
\begingroup
\allowdisplaybreaks
\begin{align*}
    &\bigg\lVert T \bigg( \bigg( \phi_k (\sigma_v \cdot) W_k (\sigma_v \cdot) - \int_{\mathbb{T}^2} \phi_k ( x) W_k (x) \dx \bigg),  R_{v,k} \bigg) \bigg\rVert_{L^p (B^{-1-\delta}_{1,\infty})} \\
    &\lesssim \bigg\lVert \phi_k W_k - \int_{\mathbb{T}^2} \phi_k ( x) W_k (x) \dx \bigg\rVert_{L^1} \lVert R_{v} \rVert_{L^p (B^{-1}_{1,\infty})} \\
    &\qquad + \sigma_v^{-1} \bigg\lVert \phi_k W_k - \int_{\mathbb{T}^2} \phi_k (x) W_k (x) \dx \bigg\rVert_{L^1} \lVert R_{v} \rVert_{L^p (B^{0}_{\infty,\infty})}\\ 
    &\lesssim \lVert R_{v} \rVert_{L^p (B^{-1}_{1,\infty})} + C_{R_v} \sigma_v^{-1}  .
\end{align*} 
\endgroup 
\end{proof}

With Lemma~\ref{lemma:prod-alternative} one can show 
\begin{equation*}
    \lVert w_{p,v} u_{p,v} \rVert_{L^1(B^{-1-\delta}_{1,\infty})} \lesssim \lVert R_v \rVert_{L^1(B^{-1}_{1,\infty})}
\end{equation*}
which is slightly weaker than \eqref{eq:est-wpvupv}. This finally allows to prove a version of Theorem~\ref{mainresult} where the regularity parameter is $s=1+$. 

\end{appendices}
\bibliographystyle{abbrv}
{\footnotesize \bibliography{Nonuniqueness_primitive_equations}}

\end{document}